\documentclass[reqno, 11pt,usenames,dvipsnames]{amsart}
\usepackage{graphicx, enumitem} %svg
\usepackage{caption}
\usepackage{subcaption}
\usepackage{amsmath,amssymb,amsthm, mathrsfs, mathtools}
\usepackage{cite}
\usepackage[foot]{amsaddr}
\usepackage{thmtools}
\usepackage{hyperref}
\usepackage{bm}
\usepackage{xcolor}
\usepackage{float}
\usepackage{appendix}
\hypersetup{
    colorlinks,
    linkcolor={red!60!black},
    citecolor={green!60!black},
    urlcolor={blue!60!black}
}
\usepackage[T1]{fontenc}
\usepackage{lmodern}
\usepackage[babel]{microtype}
\usepackage[english]{babel}
\usepackage{comment}
\usepackage{multirow}

\linespread{1.1}
\usepackage{geometry}
\geometry{left=25mm,right=25mm, top=25mm, bottom=35mm}

\numberwithin{equation}{section}
\numberwithin{figure}{section}

\usepackage{enumitem}
\usepackage{xcolor,colortbl}
\usepackage{enumitem}

\theoremstyle{plain}
\newtheorem{thm}{Theorem}[section]
\newtheorem{theorem}[thm]{Theorem}

\newtheorem{corollary}[thm]{Corollary}
\newtheorem{conj}[thm]{Conjecture}
\newtheorem{lemma}[thm]{Lemma}

\newtheorem{problem}[thm]{Problem}
 %custom appendix lemma

\theoremstyle{definition}
\newtheorem{definition}[thm]{Definition}

\newtheorem{secclaim}{Claim}[subsection]
\newtheorem{claim}{Claim}[section]
\newtheorem{fact}[thm]{Fact}
\newtheorem{construction}[thm]{Construction}

\theoremstyle{remark}
\newtheorem{rem}[thm]{Remark}

\newcommand{\ext}{\mathop{\mathrm{ext}}}

% LABELS

\newcommand{\eps}{\varepsilon}

\newcommand{\ba}{\bm{\alpha}}
\renewcommand{\b}{\bm}
\newcommand{\opt}{\mathop{\textsc{opt}}}

\newcommand{\feas}{\mathop{\textsc{feas}}}

\newcommand{\maxs}{10^7}
\newcommand{\ER}{Erd\H{o}s-Rothschild }
\newcommand{\sm}{\setminus}

\newcommand{\mc}[1]{\mathcal{#1}}
\newcommand{\mb}[1]{\mathbb{#1}}

\newcommand{\x}{\mathcal{X}}

\newcommand{\aA}{\alpha}
\newcommand{\bB}{\beta}
\newcommand{\bb}{\bm{\beta}}
\newcommand{\gG}{\gamma}
\newcommand{\dD}{\delta}

\newcommand{\sS}{\sigma}

\newcommand{\DD}{\Delta}

\newcommand{\ff}{f}
\newcommand{\hf}{\tilde{g}}
\newcommand{\gf}{g}
\newcommand{\frelax}{\tilde{f}}
\newcommand{\rev}{r_2}
\newcommand{\rmax}{r}
\newcommand{\Rmax}{R}
\newcommand{\Rev}{R_2}
\newcommand{\gmax}{g^{\rm max}}

\setlength\marginparwidth{2cm}

\title[A framework for the generalised Erd\H{o}s-Rothschild problem]{A framework for the generalised Erd\H{o}s-Rothschild problem and a resolution of the dichromatic triangle case}

\author[P. Gupta]{Pranshu Gupta$^*$}
\thanks{$^*$ \scriptsize Faculty of Computer Science and Mathematics, University of Passau, 94030 Passau, Germany.}
\author[Y. Pehova]{Yani Pehova$^\dagger$}
\thanks{$^\dagger$ \scriptsize Department of Mathematics, London School of Economics and Political Science, London WC2A 2AE, UK, supported by EPSRC Fellowship EP/V038168/1.}
\author[E. Powierski]{Emil Powierski$^\ddagger$}
\thanks{$^\ddagger$ \scriptsize Mathematical Institute, University of Oxford, Oxford OX2 6GG, UK}
\author[K. Staden]{Katherine Staden$^\S$}
\thanks{$^\S$ \scriptsize School of Mathematics and Statistics, The Open University, Milton Keynes MK7 6AA, UK, supported by EPSRC Fellowship EP/V025953/1.}
\thanks{\scriptsize Contact: \href{mailto:pranshu.gupta@uni-passau.de}{\texttt{pranshu.gupta@uni-passau.de}}, \href{mailto:y.pehova@lse.ac.uk}{\texttt{y.pehova@lse.ac.uk}}, \href{mailto:powierski@maths.ox.ac.uk}{\texttt{powierski@maths.ox.ac.uk}}, \href{mailto:katherine.staden@open.ac.uk}{\texttt{katherine.staden@open.ac.uk}}}

\begin{document}

\begin{abstract}
The \ER problem from 1974 asks for the maximum number of $s$-edge colourings in an $n$-vertex graph which avoid a monochromatic copy of $K_k$, given positive integers $n,s,k$. In this paper, we systematically study the generalisation of this problem to a given forbidden family of colourings of $K_k$.
This problem typically exhibits a dichotomy whereby for some values of $s$, the extremal graph is the `trivial' one, namely the Tur\'an graph on $k-1$ parts, with no copies of $K_k$;
while for others, this graph is no longer extremal and determining the extremal graph becomes much harder. 

We generalise a framework developed for the monochromatic \ER problem to the general setting and work in this framework to obtain our main results, which concern two specific forbidden families: triangles with exactly two colours, and improperly coloured cliques. We essentially solve these problems fully for all integers $s \geq 2$ and large $n$. In both cases we obtain an infinite family of structures which are extremal for some $s$, which are the first results of this kind.

A consequence of our results is that for every non-monochromatic colour pattern, every extremal graph is complete partite.
Our work extends work of Hoppen, Lefmann and Schmidt and of Benevides, Hoppen and Sampaio.
\end{abstract}

\maketitle

%\tableofcontents

\section{Introduction}

This paper concerns an extremal problem on edge colourings of graphs with certain forbidden structures: given a collection $\x$ of $s$-edge coloured cliques on $k$ vertices, which $n$-vertex graph has the maximum number of colourings which do not contain any elements of $\x$? More formally,
let $k \geq 3$ be a clique size, $s \geq 2$ a number of colours,  
and let $\x$ be a family of $s$-edge colourings of $K_k$. 
Given a graph $G$, we say that an $s$-edge colouring of $G$ is \emph{$\x$-free} (or \emph{valid} if $\x$ is clear from the context) if it avoids all coloured copies of $K_k$ which lie in $\x$, and define $F(G;\x)$ to be the number of $\x$-free $s$-edge colourings of $G$. 
The goal is to determine
$$
F(n;\x) := \max_{|V(G)|=n}F(G;\x),
$$
as well as the set of \emph{$\x$-extremal} graphs $G$, i.e.\ those satisfying $F(G;\x)=F(n;\x)$. By considering all colourings of the Tur\'an graph $T_{k-1}(n)$,
which is the $n$-vertex complete $(k-1)$-partite graph with part sizes as equal as possible,
we can see that
\begin{equation}\label{eq:trivial}
s^{t_{k-1}(n)} \leq F(n;\x) \leq s^{\binom{n}{2}},
\end{equation}
where $t_{k-1}(n)=(1-\frac{1}{k-1}+o(1))\binom{n}{2}$ is the number of edges in $T_{k-1}(n)$. We refer to $s^{t_{k-1}(n)}$ as the \emph{trivial lower bound} for $F(n;\x)$ and to $T_{k-1}(n)$ as the \emph{trivial example}.
The inequalities in \eqref{eq:trivial} give the order of magnitude of $F(n;\x)$, so an asymptotic solution to the problem above would be to determine the limit
$$
\lim_{n \to \infty}\frac{\log F(n;\x)}{\binom{n}{2}},
$$
which was shown to exist in~\cite{alon2004number} for forbidden monochromatic cliques using an entropy inequality of Shearer; the same proof applies for general families $\x$ (see~\cite[Lemma~7]{balogh2006remark}).

\subsection{The monochromatic Erd\H{o}s-Rothschild problem}
\label{sec:ERintro}
The problem of determining $F(n;\x)$ was first posed by Erd\H{o}s and Rothschild in 1974~\cite{erdos1974some}, in the case when $\x$ is the family $\x_{k,s}^{(1)}$ of the $s$ monochromatic colourings of $K_k$.
They conjectured that the trivial lower bound is tight for $(k,s)=(3,2)$.
This was verified by Yuster~\cite{yuster1996number} for all $n \geq 6$ and by Alon, Balogh, Keevash and Sudakov~\cite{alon2004number}, the latter of whom showed that in fact for both $s=2,3$ and all $k \geq 3$ (and large $n$), the trivial lower bound is tight, but for $s \geq 4$ it is very far from tight. It was shown in \cite[Theorem 1.2]{alon2004number} that for $s=4$, the complete 4-partite graph $T_4(n)$ has exponentially more colourings free of monochromatic triangles than $T_2(n)$. The construction can naturally be extended to $s\ge 4$ colours and inserted in place of three vertex classes of $T_{k-1}(n)$ to show that $F(n;\x_{k,s}^{(1)})$ exceeds $s^{t_{k-1}(n)}$ by an exponential factor. 
The authors also showed \cite[Theorem 1.3]{alon2004number} that $\lim_{n\to \infty}\log F(n;\x_{k,s}^{(1)})/\binom{n}{2} \to \left(1-\frac{1}{k-1}\right)\log(s)$ as $\max\{k,s\}\to\infty$. 

For $s \geq 4$, the only cases where $F(n;\x_{k,s}^{(1)})$ has been determined for large $n$ are $(k,s)$ equal to $(3,4)$, $(3,5)$ (asymptotically), $(3,6)$, $(3,7)$ and $(4,4)$~\cite{botler2019maximum,katherine_exact,pikhurko2012maximum}.
There is no conjecture for general $(k,s)$.
In all known exact results, there is a unique $\x_{k,s}^{(1)}$-extremal graph which is a Tur\'an graph, with more than $k-1$ parts for $s \geq 4$. For example, the extremal graph for $(4,4)$ is $T_9(n)$. However, for $(3,5)$ there is a large family of graphs which are \emph{almost} extremal, though we do not know which of these are extremal; nevertheless, these graphs are all complete multipartite.
It is not known whether for all pairs $(k,s)$ every extremal graph is complete multipartite.
 
\subsection{The generalised \ER problem}\label{sec:intro:gen_er}
In this paper we study the \emph{generalised \ER problem} of determining $F(n;\x)$ and the $\x$-extremal graphs for a general family $\x$ of forbidden $s$-edge colourings of $K_k$. We develop a general proof framework for the generalised \ER problem, which constitutes a powerful reduction to an optimisation problem, and give two applications to forbidden $2$-edge coloured triangles and to counting colourings of graphs in which every $k$-clique is properly coloured. 
Existing results on the generalised \ER problem, which we will shortly survey, are sporadic in nature and any sort of comprehensive solution has so far been out of reach.

Balogh~\cite{balogh2006remark} was the first to extend the \ER problem to forbidden non-monochromatic cliques. He considered the variant where $\x$ contains a single colouring $\chi:E(K_k)\to[2]$ 
and showed that, unless $\chi$ is monochromatic, the trivial lower bound is tight. Subsequent results in the literature consider predominantly what we call \emph{symmetric} families $\x$ which are invariant under permutations of the colours. Equivalently, $\x$ is symmetric if there is a family $\mathcal{P}$ of partitions of $E(K_k)$ such that $\x = \bigcup_{P \in \mathcal{P}}\x_P$ where
each $\x_P$ is obtained by assigning all possible disjoint sets of colours to the classes of $P$ (an example is given in \autoref{fig:dichrom}). We refer to such partitions as \emph{colour patterns}.
When $\mathcal{P}$ contains a single colour pattern $P$, we write $(P,s)$ as shorthand for $\x_P$ and say that an $s$-edge colouring of a graph $G$ is \emph{$P$-free} if it is $\x_P$-free. From now on, let $K_k^{(\ell)}$ denote the family of all patterns on $K_k$ with $\ell$ classes. With this notation, the monochromatic pattern $\x_{k,s}^{(1)}$ is $(K_k^{(1)},s)$.

The generalised \ER problem for symmetric $\x$ has been studied in \cite{alon2004number,balogh2006remark, balogh2019typical,  BASTOS2019gallai,bastos2023graphs, benevides2017edge, hoppen2015edge, hoppen2019remarks, hoppen2021extension, hoppen2017rainbow, hoppenstars,hoppen2022,katherine_exact,katherine_stability,pikhurko2012maximum}; known results and the corresponding $\x_{\mc P}$-extremal graphs are summarised in \autoref{table:known}.
Observe that if the number $s \geq 2$ of colours is less than the number of parts in $P$, then the unique $(P,s)$-extremal graph is the complete graph since every $s$-edge colouring of every graph is $P$-free. 
Benevides, Hoppen and Sampaio 
gave a simple proof~\cite[Theorem 1.1]{benevides2017edge}, using Zykov symmetrisation, to show that for every number $s \geq 2$ of colours and colour pattern $P$ with at most $s$ parts, there exists a complete partite extremal graph. It was much less clear whether there can be other extremal graphs, but the authors of~\cite{benevides2017edge} proved that for all but very few exceptional colour patterns, \emph{every} extremal graph is complete partite.

\begin{theorem}[\!\!\cite{alon2004number,balogh2006remark,benevides2017edge,botler2019maximum,katherine_exact}]\label{th:extknown}
Let $s \geq 2$ and $k \geq 3$ and let $P$ be a colour pattern of $K_k$. Suppose that none of the following hold.
\begin{enumerate}
    \item $P=K_k^{(1)}$ is the monochromatic pattern and $s \not\in\{2,3,4,6,7\}$;
    \item $k=3$, $s\ge 4$ and $P=K^{(2)}_3$ is the (unique) pattern on $K_3$ with $2$ colour classes.
\end{enumerate}
Then for sufficiently large $n$, every $n$-vertex $(P,s)$-extremal graph is complete partite. 
\end{theorem}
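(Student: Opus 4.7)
The plan is to use Zykov-style symmetrisation, which has become the standard tool for establishing complete partite structure in \ER type problems. Let $G$ be a $(P,s)$-extremal graph on $n$ vertices, and suppose for contradiction that $G$ is not complete partite; then there exist $u,v\in V(G)$ with $uv\notin E(G)$ and a vertex $w\in N(u)\setminus N(v)$.

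For each colouring $\phi$ of $G-\{u,v\}$, let $N_u(\phi)$ (resp.\ $N_v(\phi)$) denote the number of colour assignments to the edges incident to $u$ in $G-v$ (resp.\ to $v$ in $G-u$) that yield a $P$-free colouring together with $\phi$. Since $uv\notin E(G)$, the extensions at $u$ and at $v$ are independent given $\phi$, and a direct count gives
$$
F(G;\x_P) \;=\; \sum_\phi N_u(\phi)\,N_v(\phi) \;\leq\; \tfrac{1}{2}\sum_\phi\bigl(N_u(\phi)^2 + N_v(\phi)^2\bigr) \;=\; \tfrac{1}{2}\bigl(F(G_u;\x_P) + F(G_v;\x_P)\bigr),
$$
where $G_u$ (resp.\ $G_v$) is obtained from $G$ by deleting $v$ and adding a non-adjacent clone of $u$ (resp.\ the symmetric operation at $v$). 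Extremality of $G$ forces equality throughout, so $N_u(\phi)=N_v(\phi)$ for every $\phi$, and both $G_u$ and $G_v$ are also extremal.

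The core of the argument is to derive a contradiction by exhibiting a single $P$-free colouring $\phi_0$ of $G-\{u,v\}$ with $N_u(\phi_0)\neq N_v(\phi_0)$. The asymmetry comes from $w$: any coloured $K_k$ through $u$ may use the edge $uw$, while $T\cup\{v\}$ is never a $K_k$ for any set $T\ni w$, since $vw\notin E(G)$. I would therefore pick a $(k-2)$-clique $S$ in $N(u)\cap N(w)$ (which exists in abundance in any extremal $G$, as such a graph is close to $T_{k-1}(n)$ by the trivial bound \eqref{eq:trivial}), set $T=S\cup\{w\}$, and colour the edges inside $T$ and from $T$ to the rest of $G-\{u,v\}$ so that $T\cup\{u\}$ is exactly one class of $P$ away from a forbidden $K_k$. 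The extension at $u$ must then avoid the missing class when colouring the edges from $u$ to $T$, while the extension at $v$ is subject to no analogous constraint on $T$. Completing $\phi_0$ as uniformly as possible elsewhere keeps all other contributions to $N_u(\phi_0)$ and $N_v(\phi_0)$ equal, and the missing-class constraint yields $N_u(\phi_0) < N_v(\phi_0)$.

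The main obstacle is making this final step work for every pattern $P$ allowed by the hypothesis, since the precise counts depend sensitively on how colours can be permuted within and between classes of $P$ and on the value of $s$. The two excluded families are exactly those in which every candidate $\phi_0$ is forced to satisfy $N_u(\phi_0)=N_v(\phi_0)$ by a colour-relabelling symmetry of $P$: for $P=K_k^{(1)}$ this symmetry is maximal, recovering the classical monochromatic \ER problem whose complete partite structure is only known in the narrow range $s\in\{2,3,4,6,7\}$; for $P=K_3^{(2)}$ with $s\geq 4$ the two unordered classes on a triangle admit enough colour swaps to absorb the contribution of $w$. These excluded cases require either the exact combinatorial input from the cited monochromatic results or the more refined framework developed later in this paper.
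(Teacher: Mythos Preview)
The theorem is stated in the paper as a survey of known results and is not proved there; the paper attributes the non-monochromatic cases primarily to \cite{benevides2017edge} (whose argument, as the paper notes, works for all non-monochromatic patterns apart from case~(2) and one further pattern on $K_4$, the latter being covered by \cite{balogh2006remark}) and the monochromatic cases to \cite{alon2004number,botler2019maximum,katherine_exact}. So there is no ``paper's own proof'' to compare against; the question is whether your sketch stands on its own.

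Your symmetrisation setup is correct and is exactly the starting point of \cite{benevides2017edge}: the identity $F(G;\x_P)=\sum_\phi N_u(\phi)N_v(\phi)$ and the Cauchy--Schwarz/AM--GM step yielding $N_u(\phi)=N_v(\phi)$ for all $\phi$ in an extremal graph are standard and valid. The genuine gap is everything after that. First, your assertion that an extremal $G$ is ``close to $T_{k-1}(n)$ by the trivial bound~\eqref{eq:trivial}'' is simply false: \eqref{eq:trivial} gives no structural information, and for many patterns covered by the theorem the extremal graph is \emph{not} close to $T_{k-1}(n)$ (for instance $(K_3^{(2)},s)$ with $s=3$, or any rainbow pattern with large $s$). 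Second, and relatedly, the existence of a $(k-2)$-clique $S\subseteq N(u)\cap N(w)$ is not established; it is equivalent to $u$ and $w$ lying in a common $K_k$, which need not hold. Third, the construction of $\phi_0$ is only a plan: you have not shown that your partial colouring on $T$ and its neighbourhood can be completed to a $P$-free colouring of all of $G-\{u,v\}$, nor that doing so ``as uniformly as possible'' leaves the counts at $u$ and $v$ comparable except for the one intended discrepancy. Finally, your explanation of why precisely the two listed exceptions arise is a post-hoc narrative rather than an argument; in fact the proof in \cite{benevides2017edge} also fails for an additional $K_4$ pattern which is handled separately, so your heuristic does not recover the correct exception list.
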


\begin{figure}[H]
    \centering
\includegraphics[scale=0.5]{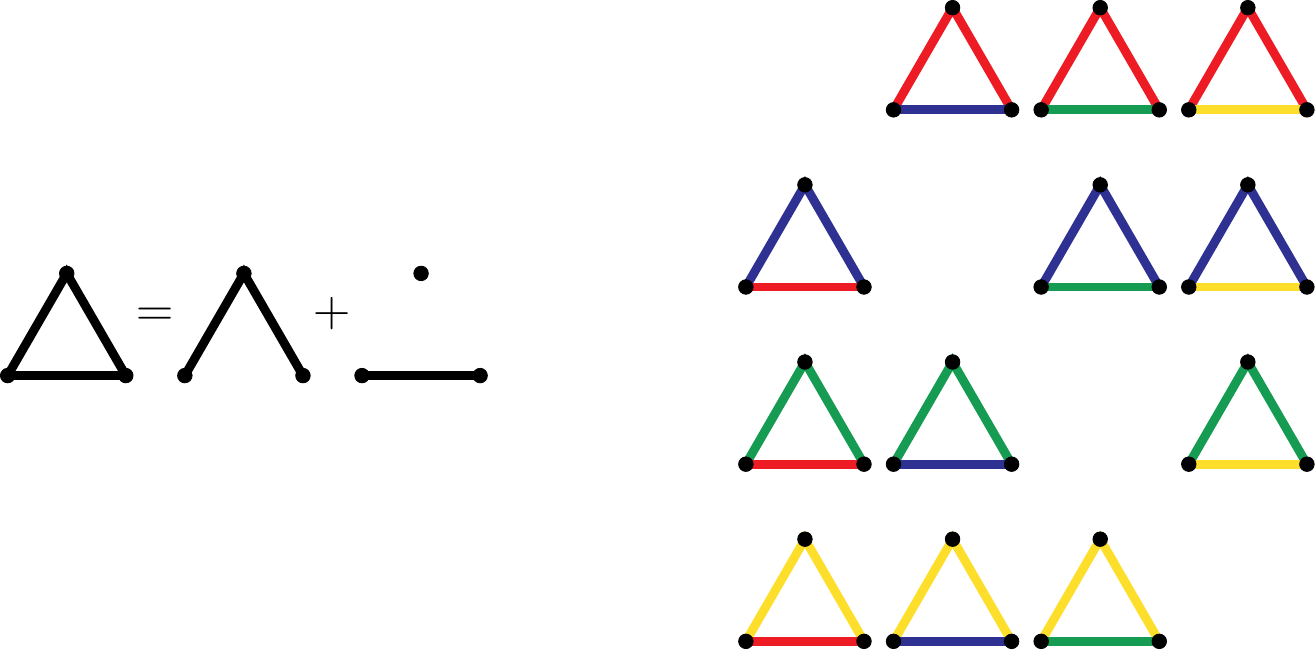}
\caption{The pattern $K^{(2)}_3$ and the family $(K^{(2)}_3,4)$.}
\label{fig:dichrom}
\end{figure}

Note that when the number of parts in $P$ is larger than $s$, the problem is degenerate since every $s$-edge colouring of any graph is valid and therefore (the complete multipartite graph) $K_n$ is the unique $(P,s)$-extremal graph on $n$ vertices. Otherwise, the proof in~\cite{benevides2017edge} works for all non-monochromatic patterns apart from case (2) and a certain pattern on $K_4$ with two colours. It was also shown in \cite{benevides2017edge} that $T_{k-1}(n)$ is the unique $(P,s)$-extremal graph when $s=3$ and $P$ is a colour pattern on $K_k$ with two colour classes such that the graph formed by one colour class has Ramsey number at most $k$. In particular, this gives $T_2(n)$ as the unique extremal graph for the pattern $K_3^{(2)}$ and $s=3$ colours. The other cases covered by \autoref{th:extknown} follow from~\cite{alon2004number,botler2019maximum,katherine_exact} (monochromatic) and~\cite{balogh2006remark} (two colours).

As mentioned in \autoref{sec:ERintro}, there are a handful of other sporadic (monochromatic) cases for which the conclusion of \autoref{th:extknown} holds.
Pikhurko and Staden~\cite{katherine_exact} proved that extremal graphs for the monochromatic case are complete partite, provided the solutions of a certain finite optimisation problem have a particular `extension property'.
As part of our proof framework we show that extremal graphs for the generalised \ER problem are complete partite and described by solutions to the corresponding optimisation problem, provided $\x$ satisfies a strong analogue of the `extension property'. The optimisation problem is formally stated in~\autoref{sec:opt} and our main results linking it to the generalised \ER problem are stated in~\autoref{sec:general}.

The generalised \ER problem for patterns on $K_3$ is the most widely studied so far. A $K^{(3)}_3$-free colouring of a graph (that is, rainbow triangles are forbidden) is also known as a \emph{Gallai colouring},
a term introduced by Gy\'arf\'as and Simonyi~\cite{gyarfas2004edge} following an earlier work of Gallai~\cite{gallai1967transitiv} on comparability graphs.
Balogh and Li~\cite{balogh2019typical} and Hoppen, Lefmann and Odermann~\cite{hoppen2017rainbow} solved the forbidden rainbow triangle problem completely for large $n$.
Their results imply that for $s=3$, the complete graph $K_n$ is the unique extremal graph,
but for $s \geq 4$, $T_2(n)$ is the unique extremal graph.  Hoppen, Lefmann and Schmidt \cite{hoppen2019remarks,hoppen2022} studied the \emph{dichromatic triangle} pattern $K_3^{(2)}$, which is the focus of our first main result. We discuss the dichromatic triangle pattern in more detail in~\autoref{sec:main:dichromatic}.

The generalised \ER problem has been studied for families of colourings that combine multiple patterns (these results are included in \autoref{table:known}). Recall that $K_k^{(\ell)}$ denotes the family of patterns on $K_k$ with exactly $\ell$ colour classes.
Analogously, let $K_k^{(\le\ell)}$ and $K_k^{(\ge\ell)}$ be the families of patterns with at most $\ell$ and at least $\ell$ classes. Hoppen, Lefmann and Nolibos~\cite{hoppen2021extension} showed that for integers $k \geq 4$ and $2 \leq \ell \leq \binom{k}{2}$ and $s > s_0(k,\ell)$, the unique $(K_k^{(\ge \ell)},s)$-extremal graph on $n$ vertices is the trivial example $T_{k-1}(n)$, for large $n$. In particular, for sufficiently large $s$ and every symmetric family containing the rainbow pattern, the trivial lower bound is tight. On the other hand, as noted in~\cite{bastos2023graphs}, a construction by Hoppen, Lefmann and Odermann~\cite[Remark 4.2]{hoppen2017rainbow} implies that for any symmetric family not containing the rainbow pattern, $T_{k-1}(n)$ is not extremal for $s \geq s_2(k)=\binom{k+1}{2}^{k^2} $ and sufficiently large $n$. For the family $K^{(\leq \binom{k}{2}-1)}_k$ of all non-rainbow colourings, their $s_2(k)$ was complemented by Bastos, Hoppen, Lefmann, Oertel and Schmidt~\cite{bastos2023graphs} showing that whenever $s \le s_1(k) \approx (k/2)^{k/2}$, the trivial example $T_{k-1}(n)$ is the unique $(K^{(\leq\binom{k}{2}-1)}_k,s)$-extremal graph for sufficiently large $n$. 

This tells us exactly for which symmetric families the trivial bound is tight for large $s$ and, in line with known cases of the generalised \ER problem, we may conjecture that for any symmetric family $\x$ there is a threshold $s^*=s^*(\x)$ that governs the tightness of the trivial bound for $\x$. That is, we suspect the following dichotomy: when $\x$ does (not) contain the rainbow pattern, the trivial bound is (not) tight if and only if $s>s^*$.
Results for the more interesting `non-trivial' range are particularly scarce. Our two main results which we present in Sections~\ref{sec:main:dichromatic} and \ref{sec:main:improper} show that this dichotomy holds for $K_3^{(2)}$ and when $\x$ is the family of all improper colourings of $K_k$. For both problems, solutions in the non-trivial range exhibit an infinite sequence of phase transitions, giving an infinite family of extremal graphs. This is the first such result in the literature.

\begin{table}
\bgroup
\def\arraystretch{1.3}
\begin{tabular}{|l|ll|l|}
\hline
\def\arraystretch{1}\begin{tabular}{l}Forbidden pattern(s)\\ on $K_k$, $k \geq 3$ \end{tabular}                                                                                                                              & \multicolumn{2}{l|}{Extremal graph on $n$ vertices, $n$ large}                                             & Reference                                                                    \\ \hline
\multirow{2}{*}{\begin{tabular}{l}$K^{(1)}_k$\end{tabular}}                           & \multicolumn{1}{l|}{$s = 2,3$}                  & $T_{k-1}(n)$                             & \multirow{2}{*}{\begin{tabular}{l}\hspace{-0.1cm}\cite{alon2004number}\end{tabular}} \\ \cline{2-3} 
                                                                                                                                                 & \multicolumn{1}{l|}{\cellcolor{Blue!15}$s \geq 4$}             & \cellcolor{Blue!15}non-trivial                          & \\ \hline
        \multirow{4}{*}{\begin{tabular}{l}$K^{(1)}_3$\end{tabular}}                     & \multicolumn{1}{l|}{\cellcolor{Blue!15}$s=4$}             & \cellcolor{Blue!15} \hspace{-0.1cm}$T_4(n)$                               & \cite{pikhurko2012maximum}                                             \\ \cline{2-4} 
                                                            & \multicolumn{1}{l|}{\cellcolor{Blue!15}$s=5$}                  & \cellcolor{Blue!15}\def\arraystretch{1}\begin{tabular}{l}\hspace{-0.1cm}asymptotically optimal family\\\hspace{-0.1cm}of complete partite graphs\\\hspace{-0.1cm}with $4$, $6$ or $8$ parts\end{tabular}     & \multirow{2}{*}{\begin{tabular}{l}\hspace{-0.1cm}\cite{botler2019maximum}\end{tabular}}                                               \\ \cline{2-3} 
                                                                                                                                                   & \multicolumn{1}{l|}{\cellcolor{Blue!15}$s=6$}                  & \cellcolor{Blue!15}$T_8(n)$                          &                                                \\ \cline{2-4} 
                                                            & \multicolumn{1}{l|}{\cellcolor{Blue!15}$s=7$}                  & \cellcolor{Blue!15}$T_8(n)$                          & \cite{katherine_exact}                                     \\ \hline
\begin{tabular}{l}$K^{(1)}_4$\end{tabular}                                          & \multicolumn{1}{l|}{\cellcolor{Blue!15}$s=4$}                  & \cellcolor{Blue!15}$T_9(n)$                          & \cite{pikhurko2012maximum}                                             \\ \hline

\multirow{2}{*}{\begin{tabular}{l}$K^{(2)}_3$\end{tabular}}                           & \multicolumn{1}{l|}{$s \leq 26$}                  & $T_2(n)$                             & \cite{hoppen2019remarks,hoppen2022} \\ \cline{2-4} 
        & \multicolumn{1}{l|}{\cellcolor{Blue!15}$s \geq 27$}             & \cellcolor{Blue!15}$T_r(n)$ for some $r \in \Rev(s)$                          & \hspace{0.1cm}\autoref{th:K32} \\ \hline
\begin{tabular}{l}any family $\subseteq K^{(2)}_k$ \end{tabular}                               & \multicolumn{1}{l|}{$s=2$}                & $T_{k-1}(n)$             & \cite{balogh2006remark}                                    \\ \hline
\begin{tabular}{l}any $P \in K^{(2)}_k$ s.t. some\\ class $J$ has $R(J,J) \leq k$\end{tabular} & \multicolumn{1}{l|}{$s=3$}                  & $T_{k-1}(n)$                      & \cite{benevides2017edge}                                    \\ \hline
\multirow{2}{*}{\begin{tabular}{l}$K^{(3)}_3$\end{tabular}}                           & \multicolumn{1}{l|}{\cellcolor{Blue!15}$s=3$}                  & \cellcolor{Blue!15}$K_n$                             & \cite{balogh2019typical,BASTOS2019gallai} \\ \cline{2-4} 
        & \multicolumn{1}{l|}{$s \geq 4$}             & $T_2(n)$                          & \cite{balogh2019typical,hoppen2017rainbow} \\ \hline
\begin{tabular}{l}any $P \in K^{(\geq 2)}_k$\end{tabular}                               & \multicolumn{1}{l|}{ \cellcolor{Blue!15} \hspace{-0.1cm}$s \geq 2$}                & \cellcolor{Blue!15} \hspace{-0.1cm}complete multipartite             & \cite{balogh2006remark,benevides2017edge}, \autoref{th:ext}                                    \\ \hline
\begin{tabular}{l}$K^{(\geq \ell)}_k$\end{tabular}                                                                                                                    & \multicolumn{1}{l|}{$s > s_0(k,\ell)$} & $T_{k-1}(n)$                      & \cite{hoppen2021extension}                                  \\ \hline
\multirow{2}{*}{\begin{tabular}{l}$K_k^{(\leq\binom{k}{2}-1)}$\end{tabular}}      & \multicolumn{1}{l|}{$s \leq s_1(k)$}        & $T_{k-1}(n)$                      & \cite{hoppen2017rainbow}                                                                            \\ \cline{2-4} 
        & \multicolumn{1}{l|}{\cellcolor{Blue!15}$s \geq s_2(k)$}        & \cellcolor{Blue!15} \hspace{-0.1cm}non-trivial &  \cite{bastos2023graphs}                          \\ \hline
\multirow{2}{*}{\begin{tabular}{l}\{all improper $P$\}\end{tabular}}      & \multicolumn{1}{l|}{$s \leq s(k)$}        & $T_{k-1}(n)$                      & \multirow{2}{*}{\begin{tabular}{l}\hspace{-0.1cm}\autoref{th:proper}\end{tabular}}                                                                              \\ \cline{2-3} 
        & \multicolumn{1}{l|}{\cellcolor{Blue!15}$s > s(k)$}        & \cellcolor{Blue!15}$T_r(n)$ for some $r \in \Rev(s)$ &                                                                    \\ \hline
\end{tabular}
\egroup

\vspace{6pt}
\caption{All results on the generalised \ER problem for symmetric families. Results where the extremal graph is non-trivial are highlighted.}
\label{table:known}
\end{table}

Perhaps surprisingly, the requirement that $n$ is large cannot be removed in general.
Indeed,~\autoref{th:extknown} does not hold for small $n$. In \cite{alon2004number} the authors showed that for $n\le s^{\frac{k-2}{2}}$ the complete graph $K_n$ has more $s$-edge colourings free of monochromatic $K_k$ than $T_{k-1}(n)$. In \cite{han2018improved} this bound on $n$ was shown to be of the correct order for $s=2,3$.

The \ER problem has been studied 
in many other contexts. The results of~\cite{katherine_stability,katherine_exact,katherine_asymptotic} about the monochromatic pattern apply in the more general setting where the size of the forbidden monochromatic cliques depends on the colour.
Patterns on graphs other than the clique $K_k$ have also been studied, e.g.~for
bipartite graphs including stars and matchings~\cite{improvedstars,HOPPEN2014linearturan,hoppen2015edge,hoppen2017rainbow,hoppenstars},
and for hypergraphs~\cite{fanoplane2021,linearhypergraphs,FanoPlaneLefmann,HypergraphEr-LefmannPersonSchacht}.
There are many variants of the problem for different discrete structures, that is, determining the maximum number of colourings of a discrete structure in which certain local substructures are forbidden.
Some examples include
sum-free sets~\cite{schurtriple,sumfree2021}, set systems~\cite{CLEMENS2018EKR, hoppen2012hypergraphs}, linear vector spaces~\cite{CLEMENS2018EKR, HOPPENvector}, and partial orders~\cite{DASchains}.

A notable variant is another question of Erd\H{o}s~\cite{erdos1974some} posed in the same paper as the \ER problem: given an oriented graph $F$, what is the maximum number of $F$-free orientations of an $n$-vertex graph $G$?
This was solved for $F$ a tournament by Alon and Yuster in~\cite{alonorientations}: this is closely related to the $2$-colour monochromatic \ER problem, and the unique extremal graph is the trivial example $T_{v(F)-1}(n)$.
Recently, Buci\'c, Janzer and Sudakov~\cite{bucic2023counting} resolved all remaining cases in an asymptotic sense and fully resolved for large $n$ the case when $F$ is an oriented odd cycle, extending the work of Ara\'ujo, Botler and Mota~\cite{araujo2020counting}.

\section{Main results}\label{sec:main}

\subsection{Forbidding dichromatic triangles}\label{sec:main:dichromatic}

There are exactly three patterns on $K_3$, namely the \emph{monochromatic pattern} $K^{(1)}_3$, the \emph{dichromatic pattern} $K^{(2)}_3$ (shown in \autoref{fig:dichrom}), and the \emph{rainbow pattern} $K^{(3)}_3$. The forbidden monochromatic triangle problem is notoriously difficult and is solved only for $2 \leq s \leq 7$ (asymptotically only for $s=5$)~\cite{botler2019maximum,katherine_exact,pikhurko2012maximum}.
We refer the reader to~\cite{katherine_exact} for more details, where it was speculated that for general $s$, based on the existing results, extremal graphs could be related to Hadamard matrices. Recall that colourings avoiding the rainbow triangle pattern $K_3^{(3)}$ are maximised by $K_n$ for $s=3$ colours and $T_2(n)$ for $s\ge 4$ colours \cite{balogh2019typical,hoppen2017rainbow}.
The first results on the dichromatic pattern were obtained by Hoppen and Lefmann~\cite{hoppen2019remarks} who showed that for $2 \leq s \leq 12$ the trivial lower bound is tight and $T_2(n)$ is the unique extremal graph.
The same authors together with Schmidt~\cite{hoppen2022} showed that the result extends to $s \leq 26$, but not beyond, and conjectured that the extremal graph for $s=27$ is $T_4(n)$.

\begin{theorem}[\!\!\cite{hoppen2019remarks,hoppen2022}]\label{th:26}
For every integer $2 \leq s \leq 26$ and $n$ sufficiently large, 
$$
F(n;(K^{(2)}_3,s)) = s^{t_2(n)}
$$
and the unique $n$-vertex graph with the maximum number of $K^{(2)}_3$-free $s$-edge colourings is $T_2(n)$.
For every integer $s \geq 27$, we have $F(n;(K^{(2)}_3,s)) > s^{t_2(n)}$.
\end{theorem}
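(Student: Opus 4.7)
The plan is to prove the two halves of the theorem separately. The lower bound for $s\ge 27$ is a direct construction, while the upper bound for $s\le 26$ requires a structural reduction to complete multipartite graphs followed by a finite optimisation.

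For the lower bound I would work with $T_4(n)$. The edge set of $K_4$ decomposes into three perfect matchings $M_1,M_2,M_3$ on the part labels $\{1,2,3,4\}$. Partition $[s]$ into three pairwise disjoint colour sets $S_1,S_2,S_3$ with sizes as balanced as possible, and assign $S_\ell$ to the pair of parts indexed by $M_\ell$. Colour each edge of $T_4(n)$ between parts $V_i,V_j$ (with $\{i,j\}\in M_\ell$) freely from $S_\ell$. Any triangle visits three distinct parts, so its three edges lie in three different matchings and pick up colours from three pairwise disjoint sets: it is rainbow, and in particular not dichromatic. This yields $\prod_\ell |S_\ell|^{e(M_\ell)}=(|S_1||S_2||S_3|)^{t_4(n)/3}$ valid colourings. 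A direct check gives $|S_1||S_2||S_3|>s^2$ for $s\ge 28$ (e.g.\ $9\cdot 9\cdot 10>28^2$), while for $s=27$ one has $9^3=27^2$, so the construction alone matches $s^{t_2(n)}$; strict inequality is recovered by additionally counting the $s=27$ monochromatic colourings of $T_4(n)$, which are valid but are disjoint from the construction because the $S_\ell$ are. Conveniently, for $s\le 26$ the same calculation gives $|S_1||S_2||S_3|<s^2$, so $T_4(n)$ is not a witness in that regime.

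For the upper bound when $s\le 26$, the first step is to invoke Theorem~\ref{th:ext} of this paper (see also \autoref{table:known}), which shows that every $(K_3^{(2)},s)$-extremal graph is complete multipartite. It then suffices to show that among $n$-vertex complete multipartite graphs, $T_2(n)$ is the unique maximiser. To this end, for $G=K(\alpha_1 n,\dots,\alpha_r n)$ I would establish an upper bound of the form $\prod_{i<j}|S_{ij}|^{\alpha_i\alpha_j n^2+o(n^2)}$, where the $S_{ij}\subseteq[s]$ are colour sets attached to each bipartite pair, subject to the triangle-compatibility constraint that for every triple $\{i,j,k\}$ no three colours chosen respectively from $S_{ij},S_{ik},S_{jk}$ form a dichromatic triangle. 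Producing this reduction from a local inequality to a global product bound is precisely the content of the framework developed earlier in the paper.

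What remains is a finite optimisation over $r$, $\alpha$, and the compatible $S_{ij}$. The pivotal comparison is between $r=2$, giving rate $\tfrac14\log s$, and $r=4$ with the matching-based construction above, giving $\tfrac38\log(s/3)$; these coincide exactly at $s=27$ and the former strictly dominates for $s\le 26$. For $r=3$, triangle compatibility between all three pairs of parts is so restrictive that the resulting rate falls strictly below $\tfrac14\log s$ throughout $s\le 26$, and an analogous check rules out $r\ge 5$ and all unbalanced partitions. Uniqueness of $T_2(n)$ then follows since for $r=2$ the rate is strictly decreasing in the imbalance of $\alpha$. The main obstacle will be the upper bound described in the previous paragraph: turning the local forbidden pattern into a tight product bound with the correct exponents $\alpha_i\alpha_j$. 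Once that is in hand, the remaining finite optimisation is elementary, and the critical threshold $s=27$ emerges naturally from the comparison $s^2 \lessgtr (s/3)^3$.
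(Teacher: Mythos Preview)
Your lower bound construction for $s\ge 28$ matches the paper's, and the rate comparison $\lfloor s/3\rfloor^{3-a}\lceil s/3\rceil^a>s^2$ is exactly the right inequality. However, your patch at $s=27$ does not work. Adding the $27$ monochromatic colourings of $T_4(n)$ contributes only a bounded additive term, whereas for $n\equiv 2\pmod 4$ one computes $3t_2(n)-2t_4(n)=1$, so $27^{t_2(n)}=3\cdot 9^{t_4(n)}$ exactly, and $9^{t_4(n)}+27<3\cdot 9^{t_4(n)}$ for large $n$. The paper obtains strict inequality differently: it counts the construction over all ordered equipartitions $(S_1,S_2,S_3)$ of $[27]$, giving at least $(1-o(1))\binom{27}{9,9,9}\,9^{t_4(n)}$ valid colourings, and the multinomial factor $\binom{27}{9,9,9}>10^{12}$ comfortably beats the constant $3$.

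For the upper bound when $s\le 26$, note first that this theorem is a cited result of Hoppen--Lefmann--Schmidt; the paper does not prove it directly but recovers it as a special case of \autoref{th:K32}. Your proposed route has a circularity: you invoke \autoref{th:ext} to reduce to complete multipartite graphs, but \autoref{th:ext} is itself a corollary of \autoref{th:K32}, whose ``moreover'' clause already contains the full $s\le 26$ statement. Independently of this paper, what is available without circularity is only the weaker fact from \cite{benevides2017edge} that \emph{some} extremal graph is complete multipartite; that suffices for determining $F(n;\cdot)$ but not for uniqueness. The finite optimisation you sketch (comparing $\tfrac14\log s$ with $\tfrac38\log(s/3)$, etc.)\ is indeed the heart of the original Hoppen--Lefmann--Schmidt argument and of the paper's Problem~$Q$; but as written your proposal borrows its key structural input from a result that already subsumes the conclusion.
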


The latter part of \autoref{th:26} can be seen by considering the following construction as described in~\cite{hoppen2022} and shown in~\autoref{fig:col_T4}. 
Let $s=27$, let $V_1,\ldots,V_4$ be the vertex classes of $T_4(n)$ and let $M_1,M_2,M_3$ be the unique decomposition of $K_4$ into three perfect matchings. Split the set of 27 colours into three sets $C_1,C_2,C_3$ of size 9. For each edge $ij$ in each matching $M_\ell$, colour the edges in $V_i\times V_j$ arbitrarily with colours from $C_\ell$. This generates $9^{t_4(n)}$ valid colourings of $T_4(n)$, which after taking all colour permutations yields at least $(1-o(1))\binom{27}{9,9,9}9^{t_4(n)}>27^{t_2(n)}$ colourings.
The authors of~\cite{hoppen2022} conjectured that $T_4(n)$ is in fact the unique extremal graph for $s=27$. 

Our first main result is an exact solution of the generalised \ER problem for the dichromatic triangle pattern for all values of $s$, which as a corollary confirms the above conjecture and resolves the non-monochromatic cases excluded from \autoref{th:extknown}. We show that the above construction on $T_4(n)$ is optimal for $27\le s\le 496$, followed by analogous constructions on $T_6(n)$, $T_8(n)$, etc. We now describe these constructions before we introduce relevant notation and state our main result for general $s$.

\begin{figure}[H]
    \centering
    \includegraphics[]{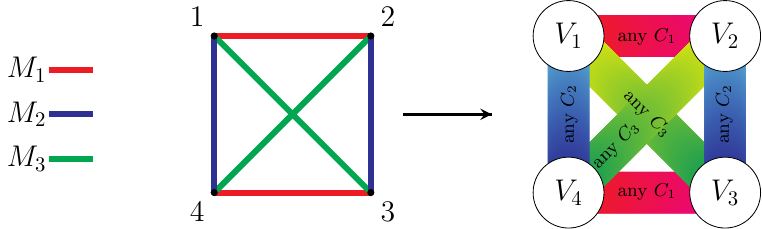}

    \caption{A lower bound on the number of $K_3^{(2)}$-free $s$-edge colourings of $T_4(n)$ is given by splitting $[s]=C_1\cup C_2\cup C_3$ and colouring as shown. 
    }
        \label{fig:col_T4}
\end{figure}

\begin{construction}\label{cons:matchings}
Given $s$ colours and an integer $r$, consider a copy of $T_r(n)$ with vertex classes $V_1,\ldots,V_r$ and the set $\mc M$ of maximum matchings of $K_r$. Note that $\mc M$ will consist of perfect matchings if $r$ is even and matchings of size $(r-1)/2$ if $r$ is odd. Partition the set of colours $[s]$ into classes $\{C_M\}_{M\in \mc M}$. 
For each $ij \in E(K_r)$,
colour the edges in $V_i \times V_j$ with colours from $\bigcup_{M \ni ij}C_M$.
\end{construction}

Every colouring of $T_r(n)$ generated in this way is free of dichromatic triangles as the only monochromatic cherries (paths on three vertices) have both end vertices in the same vertex class, and thus cannot be completed to a triangle. This yields
\begin{equation}\label{eq:intro_construction}
    \prod_{ij \in E(K_r)} \left(\sum_{M\ni ij} |C_M|\right)^{|V_i||V_j|}
\end{equation}
colourings free of dichromatic triangles, which is maximised if the sums $\sum_{M\ni ij} |C_M| $ are all as equal as possible; by double counting this means that the neighbourhood of every vertex is an equipartition of $[s]$, so \eqref{eq:intro_construction} simplifies to
$$\prod_{i\in[r]} \left[\left\lfloor\frac{s}{r-1}\right\rfloor^{r-1-a} \left\lceil\frac{s}{r-1}\right\rceil^a\right]^{\frac{1}{2}(\frac{n}{r})^2+O(n)},$$
where $a$ is the remainder of $s$ when divided by $r-1$.

We are able to show that for each $s\ge 2$, there exists a set $\Rev(s)$ consisting of at most two consecutive even integers, such that \autoref{cons:matchings} is optimal only for $r\in \Rev(s)$.
We define a few key quantities that capture $\Rev(s)$ and the number of colourings generated by \autoref{cons:matchings}.
We write $\log$ for the natural logarithm.

\begin{definition}[$\gf_s(r)$, $\gf(s)$, $\Rev(s)$, $\rev(s)$]\label{def:defs}
Let $s \geq 2$ be the number of colours, and let $2\le r <s$ be an integer.
Let $z$ and $a\in\{0,\ldots,r-2\}$ be the quotient and remainder of $s$ when divided by $(r-1)$; that is, $
 z=\left\lfloor\frac{s}{r-1}\right\rfloor$ and $a=s-(r-1)z.$
Define
$$ 
\gf_s(r) := \frac{r-1-a}{r}\log(z)+\frac{a}{r}\log(z+1),
$$
and
$$
\gf(s) := \max_{\substack{r \in 2\mb{N}\\ r < s}}\gf_s(r),\quad
\Rev(s) := \{r \in 2\mb{N}: \gf_s(r) = \gf(s)\}.
$$
If $s<e^2$ we let $\rev(s):=2$; otherwise
let $\rev(s)$ be the largest even integer $r$ such that $(r-1)e^{r} \leq s$. 
\end{definition}

With this notation, \autoref{cons:matchings} will yield $O_{r,s}(1) \cdot e^{\frac{r}{r-1}\gf_s(r)t_r(n)}$ colourings free of dichromatic triangles. 
(Note that this simplifies to $O_{r,s}(1) \cdot e^{\gf_s(r)n^2/2}$ when $r | n$; otherwise these quantities differ by an exponential factor in $n$.)
By definition of $\Rev(s)$, taking any $r\in \Rev(s)$ yields $O_{r,s}(1) \cdot e^{\frac{r}{r-1}\gf(s)t_r(n)}$ colourings, which we will prove is the maximum for $s\ge 2$, and $T_r(n)$ is the extremal graph. A priori, there is no reason why $\Rev(s)$ should be easy to describe, or finite, but we are able to show that $\Rev(s)\subseteq \{\rev(s),\rev(s)+2\}$. A list of the resulting values of $\Rev(s)$ for $s\le \maxs$ is shown in~\autoref{table:1}. We are now ready to state our first main result.

\begin{table}
\centering
\centering
\begin{tabular}{ l|l }
 $s$ & $\Rev(s)$\\
\hline 
 $[2,26]$ & $\{2\}$  \\ 
 $27$ & $\{2,4\}$  \\
$[28,496]$ & $\{4\}$ \\
$[497,5856]$ & $\{6\}$ \\
$[5857,59470]$ & $\{8\}$ \\
$[59471,559116]$ & $\{10\}$ \\
$[559117, 5015852]$ & $\{12\}$ \\
$[5015853,\ge \maxs]$ & $\{14\}$
\end{tabular}
\vspace{6pt}
\caption{$\Rev(s)$ for $s \leq \maxs$ generated by the script \texttt{optr.py}.}
\label{table:1}
\end{table}

\begin{theorem}\label{th:K32}
For every integer $s \geq 2$ we have $\Rev(s) \subseteq \{\rev(s),\rev(s)+2\}$, so in particular $\gf(s) = \max\{\gf_s(\rev(s)),\gf_s(\rev(s)+2)\}$. 
For sufficiently large $n$, 
every $n$-vertex $(K_3^{(2)},s)$-extremal graph is $T_r(n)$ for some $r\in \Rev(s)$ and
$$F(n;(K^{(2)}_3,s))=(C+o(1))\cdot e^{\frac{r}{r-1}\gf(s)t_r(n)},$$
where $C$ depends only on $s,r$ and on $n ~(\!\!\!\!\mod r)$.\\
Moreover, for all $s \in [2,\maxs] \sm \{27\}$, the unique extremal graph is $T_r(n)$ where $r$ is the unique value in $\Rev(s)$ shown in~\autoref{table:1}, and the unique extremal graph for $s=27$ is $T_4(n)$. 
\end{theorem}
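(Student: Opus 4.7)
The plan is to invoke the general framework developed earlier in the paper (\autoref{th:ext}) to conclude that every $(K_3^{(2)},s)$-extremal graph is complete multipartite for large $n$, thereby reducing the problem to: for each number of parts $r$, compute the maximum number of $K_3^{(2)}$-free colourings of an $n$-vertex complete $r$-partite graph, then optimise over $r$. A symmetrisation/convexity argument (shifting vertices to balance parts) shows that among complete $r$-partite graphs on $n$ vertices, the Tur\'an graph $T_r(n)$ is extremal, with the lower-order constant $C$ encoding $n \bmod r$.

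The essential structural input is the local constraint that any triangle of $T_r(n)$ spanning three distinct parts must be monochromatic or rainbow. The framework's reduction assigns to each edge $ij \in E(K_r)$ a palette $A_{ij} \subseteq [s]$ such that, up to $o(n^2)$ exceptional edges of $T_r(n)$, every $V_i$--$V_j$ edge is coloured from $A_{ij}$, and the number of valid colourings is approximately $\prod_{ij \in E(K_r)} |A_{ij}|^{|V_i||V_j|}$. The dichromatic forbidden pattern forces a \emph{matching structure}: for each colour $c \in [s]$, the set $\{ij \in E(K_r) : c \in A_{ij}\}$ must be a matching in $K_r$. Indeed, if $c$ lay in both $A_{ij}$ and $A_{jk}$, then a $v_j \in V_j$ incident to $\Theta(n)$ colour-$c$ edges into $V_i$ and $V_k$ would, by the monochromatic-or-rainbow closure rule, force the $\Theta(n^2)$ corresponding $V_i$--$V_k$ edges to be coloured $c$ as well, costing an exponential factor of colourings. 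I expect this matching-structure lemma to be the main technical obstacle: proving rigorously within the framework that every near-optimal palette assignment is, up to lower-order loss, of the form used in \autoref{cons:matchings}.

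Once the matching structure is in place, write $A_{ij} = \bigcup_{M \ni ij} C_M$ for some disjoint colour classes $\{C_M\}$ indexed by matchings $M$ of $K_r$. The objective $\sum_{ij} |V_i||V_j| \log\bigl(\sum_{M \ni ij} |C_M|\bigr)$ is concave in the class sizes $|C_M|$; together with the symmetry of $K_r$, this forces the $|C_M|$ to be supported on \emph{maximum} matchings of $K_r$ with nearly equal sizes $\lfloor s/(r-1) \rfloor$ or $\lceil s/(r-1) \rceil$, recovering precisely the per-edge asymptotic quantity $\gf_s(r)$ from \autoref{def:defs}. Equality analysis in these concavity steps gives that the extremal graph is exactly $T_r(n)$ for some $r \in \Rev(s)$. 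The parity restriction to even $r$ is forced because for odd $r$, maximum matchings of $K_r$ have size $(r-1)/2 < r/2$, preventing the palette size $s/(r-1)$ from being realised at every edge, so the construction (and by the framework, the overall count) is strictly less efficient than at the neighbouring even values.

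For the optimisation $\Rev(s) \subseteq \{\rev(s), \rev(s)+2\}$, treat $r$ as a continuous parameter so that $\gf_s(r) \approx \tfrac{r-1}{r}\log(s/(r-1))$; differentiation gives the stationary condition $\log(s/(r-1)) = r$, i.e.\ $s = (r-1)e^{r}$, matching the definition of $\rev(s)$. A careful analysis of the discrete differences $\gf_s(r+2) - \gf_s(r)$, handling the remainder $a$ via concavity of $\log$, shows that $\gf_s$ restricted to even $r$ is strictly increasing for $r \le \rev(s)$ and strictly decreasing for $r \ge \rev(s)+2$, pinning the maximiser to one of these two consecutive even integers. For $s \in [2, \maxs] \setminus \{27\}$ a direct numerical comparison of $\gf_s(\rev(s))$ and $\gf_s(\rev(s)+2)$ identifies a unique optimiser; the exceptional case $s=27$ is a tie between $r=2$ and $r=4$, resolved by inspecting the lower-order constant $C$, which distinguishes $T_4(n)$ from $T_2(n)$.
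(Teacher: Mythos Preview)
Your plan has a circular dependency: you invoke \autoref{th:ext} to conclude that every extremal graph is complete multipartite, but in the paper \autoref{th:ext} is a \emph{corollary} of \autoref{th:K32} (combined with the earlier \autoref{th:extknown}, which explicitly excludes the case $k=3$, $P=K_3^{(2)}$, $s\ge 4$). The correct entry point is the general exact result \autoref{th:exact1}, which requires first verifying that $(K_3^{(2)},s)$ is bounded, hermetic, and has the extension property (done in \autoref{lm:diprop}), and those checks in turn presuppose that one already knows $\opt^*(s)$ --- i.e.\ the optimisation problem must be solved \emph{before} one can say anything about extremal graphs. Moreover, the output of \autoref{th:exact1} is stronger than ``complete multipartite'': it says that all but an $e^{-\eps n}$ fraction of valid colourings follow an optimal template $\phi^*$ \emph{perfectly} (not merely up to $o(n^2)$ exceptional edges), and this perfection is precisely what makes the constant $C$ computable via the formula~\eqref{eq:FG}.

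Your treatment of the parity restriction is the second genuine gap. You write that for odd $r$, maximum matchings of $K_r$ have size $(r-1)/2$, ``preventing the palette size $s/(r-1)$ from being realised at every edge, so the construction (and by the framework, the overall count) is strictly less efficient than at the neighbouring even values.'' This is an argument about the lower-bound construction only; it does not rule out that some \emph{non-uniform} weight vector $\ba$ on an odd number of parts achieves $Q(s)$. The paper devotes Lemmas~\ref{lm:largepart}--\ref{lm:fcomp} to exactly this: first, averaging contributions shows that if $r$ is odd and optimal then the largest part has weight at least roughly $r/(r^2-1)$ (\autoref{lm:largepart}); next, the contribution of any such large part is bounded by the auxiliary function $f_s$ (\autoref{lm:contribution}); finally, an analytic argument shows $f_{s,r}(x) < \max\{g_s(r-1),g_s(r+1)\}$ in the relevant range (\autoref{lm:fcomp}). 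This contribution-shifting analysis is the technical heart of \autoref{th:K23opt}, and your sketch does not engage with it.
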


\autoref{th:K32} (together with~\autoref{lm:gapprox}) implies that
$$
\lim_{n \to \infty} \frac{\log F(n;(K^{(2)}_3,s))}{\binom{n}{2}} = g(s) \sim W(s/e)
$$
where $W$ is the Lambert $W$-function, defined to be the inverse of $f(x)=xe^x$,
which is closely related to the definition of $\rev(s)$.

Even though $\gf_{27}(2)=\gf_{27}(4)$, as described above, $T_4(n)$ has more valid colourings than $T_2(n)$, by the multiplicative constant $(1+o(1))\binom{27}{9,9,9} > 10^{12}$.
Thus~\autoref{th:K32} implies \autoref{th:26} from~\cite{hoppen2019remarks,hoppen2022} and confirms the conjecture of Hoppen, Lefmann and Schmidt \cite{hoppen2022} for the case $s=27$.
We conjecture that $\Rev(s)$ is a singleton unless $s=27$ but are unable to prove this.

Prior to our work, the only pattern which had been solved for all $s$ was the rainbow triangle pattern, for which the extremal graph is one of two, depending on $s$ (described above).
By combining~\autoref{th:K32} with further analysis of the set $\Rev(s)$, we show that there is an infinite family of graphs which are extremal for some $s$, in the following strong sense:
\begin{corollary}\label{cor:inf}
    For any $r \in 2\mb{N}$, there are integers $s^-(r) \le s^+(r)$ such that whenever $n$ is sufficiently large, the Tur\'an graph $T_r(n)$ is uniquely $(K^{(2)}_3,s)$-extremal for all $s^-(r) \leq s \le s^+(r)$. Moreover, $s^-(r)\in ((r-3)e^{r-2},(r-1)e^r)$ and 
    there is at most one value of $s$ between $s^+(r)$ and $s^-(r+2)$, for which every $n$-vertex $(K^{(2)}_3,s)$-extremal graph lies in $\{T_r(n),T_{r+2}(n)\}$.
\end{corollary}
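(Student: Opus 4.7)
The plan is to invoke \autoref{th:K32} and then carefully analyse the function $\gf_s(r)$ in both variables. By \autoref{th:K32}, every $n$-vertex $(K_3^{(2)},s)$-extremal graph is $T_{r'}(n)$ for some $r' \in \Rev(s) \subseteq \{\rev(s), \rev(s)+2\}$; strict dominance $\gf_s(r) > \gf_s(r')$ is decisive for large $n$, and in case of equality the multiplicative constant $C = C(r,s,n \bmod r)$ in \autoref{th:K32} breaks the tie. Hence the strategy is to identify, for each even $r$, the integer interval of $s$ on which $\Rev(s) = \{r\}$ (so that $T_r(n)$ is automatically uniquely extremal), and to handle the at most two boundary integers where ties in $\gf_s$ occur.

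The key preliminary step is to prove that $s \mapsto \gf_s(r+2) - \gf_s(r)$ is strictly increasing on $\mathbb{N}$. Using the closed form in \autoref{def:defs}, a direct calculation gives $\gf_{s+1}(r) - \gf_s(r) = \frac{1}{r}\log(1 + 1/\lfloor s/(r-1)\rfloor)$; this increment behaves like $\frac{r-1}{rs}$, while the analogous increment at $r+2$ is $\approx \frac{r+1}{(r+2)s}$, which is strictly larger because $\frac{r+1}{r+2} > \frac{r-1}{r}$. Consequently there is a unique threshold $\sigma_r \in \mathbb{N}$ with $\gf_s(r) > \gf_s(r+2)$ for $s < \sigma_r$, $\gf_s(r) < \gf_s(r+2)$ for $s > \sigma_r$, and $\gf_s(r) = \gf_s(r+2)$ for at most one integer $s$.

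Next, I would define $s^-(r)$ and $s^+(r)$ as the endpoints of the integer interval on which $T_r(n)$ is uniquely $(K_3^{(2)}, s)$-extremal. The lower bound $s^-(r) > (r-3)e^{r-2}$ is immediate from the definition of $\rev(s)$: for $s \leq (r-3)e^{r-2}$ one has $\rev(s) \leq r-4$, so $r \notin \Rev(s)$. For the upper bound $s^-(r) < (r-1)e^r$, I would expand $\gf_s$ to second order around the continuous optimiser $r^*(s)$ satisfying $s = (r^*-1)e^{r^*}$ (so that at $s = (r-2)e^{r-1}$ one has $r^* = r-1$, equidistant from $r-2$ and $r$), and verify that at some integer $s_0 \in (\lceil (r-2)e^{r-1}\rceil, (r-1)e^r)$ we have $\gf_{s_0}(r) > \gf_{s_0}(r-2)$, while $\gf_{s_0}(r) > \gf_{s_0}(r+2)$ comes for free since $s_0 < \sigma_r$ by the previous paragraph. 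This forces $\Rev(s_0) = \{r\}$, hence $s^-(r) \leq s_0 < (r-1)e^r$. For the ``at most one transitional $s$'' clause, if $s^+(r) < s < s^-(r+2)$, then by definition neither $T_r(n)$ nor $T_{r+2}(n)$ is uniquely extremal, while \autoref{th:K32} and the range of $s$ force $\Rev(s) \subseteq \{r, r+2\}$, so in fact $\Rev(s) = \{r, r+2\}$ and $\gf_s(r) = \gf_s(r+2)$, which holds for at most one integer by the monotonicity above.

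The main obstacle I expect is the strict monotonicity in the second paragraph: because $\gf_s$ involves a floor, the discrete increment is not exactly the continuous derivative, so strict dominance must be verified uniformly (including in the low-$s$ regime where the $\frac{r-1}{rs}$ asymptotic is loose). A secondary point is the bookkeeping of the constant $C$ at the (at most two) boundary integers where $\gf_s(r) = \gf_s(r\pm 2)$: $C$ determines whether such a boundary integer lies inside $[s^-(r), s^+(r)]$ (as happens for $s = 27$, $r = 4$) or is counted as the unique transitional $s$ between $s^+(r)$ and $s^-(r+2)$. This bookkeeping affects how the single tie value is labelled but not the stated bounds on $s^-(r)$.
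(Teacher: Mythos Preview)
Your approach is the same as the paper's: it deduces the corollary from \autoref{th:K32} together with \autoref{lm:Rsprecise}(iii), which already establishes the interval structure of $S_2(r)=\{s:r\in\Rev(s)\}$ and the bounds $(r-3)e^{r-2}<s_{r-1}\le\tilde s_r\le s_r<(r-1)e^r$ on its left endpoint. What you sketch as the ``key preliminary step'' is exactly the content of that lemma, and the paper's proof of the corollary simply quotes it, taking $s^-(r),s^+(r)$ to be (within one of) the endpoints of $S_2(r)$.

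However, your preliminary step as stated is false, and this is precisely the obstacle you flag. The map $s\mapsto\gf_s(r+2)-\gf_s(r)$ is \emph{not} strictly increasing on all of $\mathbb{N}$. Your increment formula $\gf_{s+1}(r)-\gf_s(r)=\tfrac{1}{r}\log\bigl(1+1/\lfloor s/(r-1)\rfloor\bigr)$ is correct, but at $r=4$, $s=10$ it gives $\tfrac14\log(4/3)\approx0.0719$ while at $r=6$ it gives $\tfrac16\log(3/2)\approx0.0676$, so $\gf_s(6)-\gf_s(4)$ \emph{decreases} from $s=10$ to $s=11$. The paper sidesteps this: in the proof of \autoref{lm:Rsprecise}(iii) it establishes the monotonicity only for $s\ge90000$ via \autoref{lm:gsr} (which needs $r<(s/2)^{1/4}$, automatically satisfied once $r\le\log(s)+2$), and handles all smaller $s$ by directly computing $\Rev(s)$ (ancillary file \texttt{optr.py}). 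Your second-order expansion for the upper bound $s^-(r)<(r-1)e^r$ is likewise the paper's argument (via \autoref{lm:hanalytic}(iii),(iv)), again valid only for large $s$ with a finite check below.
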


Finally, we immediately obtain the following corollary by combining~\autoref{th:K32} with~\autoref{th:extknown}.

\begin{corollary}\label{th:ext}
Let $s \geq 2$ and $k \geq 3$ be integers and let $P$ be a non-monochromatic colour pattern of $K_k$.
Then every $(P,s)$-extremal graph is complete partite.
\end{corollary}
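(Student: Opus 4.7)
My plan is to derive this corollary as a routine case analysis combining \autoref{th:K32} with \autoref{th:extknown}. The key observation is that once the monochromatic pattern is excluded, the only pair $(P,s)$ not already handled by \autoref{th:extknown} is $P = K_3^{(2)}$ with $s \geq 4$, and this is exactly the situation resolved by \autoref{th:K32}.

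The first step is to split into two cases according to whether $(P,s)$ falls under exception~(2) of \autoref{th:extknown}. In the first case, either $P \neq K_3^{(2)}$ or $s \in \{2,3\}$. Since $P$ is assumed to be non-monochromatic, exception~(1) of \autoref{th:extknown} fails, and by the case assumption exception~(2) also fails. Hence \autoref{th:extknown} applies directly and, for sufficiently large $n$, every $(P,s)$-extremal graph is complete partite.

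In the second case, $P = K_3^{(2)}$ and $s \geq 4$. Now \autoref{th:K32} applies and guarantees that for large $n$, every $n$-vertex $(P,s)$-extremal graph is a Tur\'an graph $T_r(n)$ for some $r \in \Rev(s)$. Tur\'an graphs are complete partite by definition, which gives the conclusion.

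I do not expect any substantive obstacle here: the corollary is essentially a statement of record, noting that \autoref{th:K32} closes the last remaining gap left open for non-monochromatic patterns by \autoref{th:extknown}. The only thing to verify is that the two exceptional cases of \autoref{th:extknown} are either ruled out by the non-monochromatic hypothesis (exception~(1)) or captured by the scope of \autoref{th:K32} (exception~(2)), both of which are immediate.
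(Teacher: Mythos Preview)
Your proposal is correct and matches the paper's own approach exactly: the paper states that the corollary follows immediately by combining \autoref{th:K32} with \autoref{th:extknown}, and your case analysis is precisely the explicit unpacking of that combination.
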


We discuss the proofs of~\autoref{th:K32} and~\autoref{cor:inf} in~\autoref{sec:K23:mainproof}.

\subsection{Forbidding improperly coloured cliques}\label{sec:main:improper}

Closely related to the forbidden triangle pattern problem is the following generalised \ER question on \emph{proper $k$-clique colourings}. Among all graphs $G$ on $n$ vertices, what is the maximum number of $s$-edge colourings of $G$ in which every copy of $K_k$ is properly coloured? Here, an edge colouring is \emph{proper} if every colour class is a matching. Equivalently, a proper $k$-clique colouring is one where two adjacent edges of the same colour must not be contained in any copy of $K_k$. Proper edge colourings of graphs have been studied extensively, in particular through the chromatic index $\chi'(G)$ which is the smallest number of colours necessary for a proper edge colouring of $G$. In e.g.~\cite{lazebnik1989greatest,LohPikhurkoSudakov,ma2015maximizing} the authors consider the related Linial-Wilf problem of finding graphs with a fixed number of vertices and edges having the maximum number of (globally) proper colourings.

Using the language we introduced so far, determining the graphs with the maximum number of proper $k$-clique colourings corresponds to finding $F(n;\x_{k,s}^\wedge)$ and classifying the $\x_{k,s}^{\wedge}$-extremal graphs for $\x_{k,s}^{\wedge}=\{\mbox{all improper }s\mbox{-edge colourings of }K_k\}$.
In particular, when $k=3$, this is equivalent to forbidding both the patterns $K_3^{(2)}$ and $K_3^{(1)}$, and is closely related to the forbidden dichromatic triangle problem. Note that for any fixed graph the sets of $\x_{k,s}^\wedge$-free colourings form an upward chain as we increase $k$.
As a second main result of our paper, we give the maximum number of proper $k$-clique colourings for all $k$ and $s$ and classify all extremal graphs. Note that a proper edge colouring of $K_k$ has at least $k-1$ colours so we only consider the problem in this range.

\begin{theorem}\label{th:proper}
For every $k\ge 3$ there exists an integer $s(k)$ such that for every $s\ge k-1$ and sufficiently large $n$ every $n$-vertex $\x_{k,s}^{\wedge}$-extremal graph is either
\begin{enumerate}
\item[$\bullet$] $T_{k-1}(n)$ if $s\le s(k)$, in which case $F(n;\x_{k,s}^{\wedge}) = s^{t_{k-1}(n)}$; or 
\item[$\bullet$] $T_r(n)$ for some $r\in \Rev(s)$ if $s>s(k)$, in which case 
$F(n;\x_{k,s}^{\wedge}) = (C+o(1))\cdot e^{\frac{r}{r-1}\gf(s)t_{r}(n)}$ where $C$ is a constant depending only on $s,r$ and on $n~(\!\!\!\!\mod r)$.
\end{enumerate}
\end{theorem}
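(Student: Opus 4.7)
The plan is to deduce \autoref{th:proper} from \autoref{th:K32} together with the framework developed earlier in the paper, by reducing the upper bound on $F(G;\x_{k,s}^{\wedge})$ to the dichromatic triangle problem for dense complete multipartite graphs. First I would apply the framework to the symmetric family $\x_{k,s}^{\wedge}$ to conclude that every extremal graph on sufficiently many vertices is complete multipartite; this requires verifying the framework's hypotheses (the analogue of the ``extension property'') for $\x_{k,s}^{\wedge}$, proceeding analogously to what is done for $(K_3^{(2)},s)$ in the proof of \autoref{th:K32}.

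I would then split the optimisation over complete multipartite $G$ with $r$ parts into the cases $r \le k-1$ and $r \ge k$. In the first case, $G$ is $K_k$-free, so every $s$-edge colouring is $\x_{k,s}^{\wedge}$-free and $F(G;\x_{k,s}^{\wedge}) = s^{e(G)} \le s^{t_{k-1}(n)}$, with equality iff $G = T_{k-1}(n)$. In the second case, every triangle $\{u,v,w\}\subset V(G)$ spans three distinct parts and extends to a copy of $K_k$ by adding one vertex from each of $k-3$ further parts; a non-rainbow triangle in such a $K_k$ produces two adjacent edges of the same colour, hence an improper $K_k$. Consequently any $\x_{k,s}^{\wedge}$-free colouring of $G$ is $(K_3^{(2)},s)$-free, and \autoref{th:K32} gives
$$F(G;\x_{k,s}^{\wedge}) \;\le\; F(G;(K_3^{(2)},s)) \;\le\; (C+o(1))\,e^{\frac{r'}{r'-1}\,g(s)\,t_{r'}(n)}$$
for some $r' \in \Rev(s)$, where $C$ depends only on $s,r'$ and $n \pmod{r'}$.

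For the matching lower bound, $T_{k-1}(n)$ trivially attains $s^{t_{k-1}(n)}$. For the non-trivial range, provided $\rev(s) \ge k$ (which holds once $s$ is large enough in terms of $k$, since $\rev(s)\to\infty$), applying \autoref{cons:matchings} to $T_r(n)$ with $r \in \Rev(s)$ produces $(C+o(1))e^{\frac{r}{r-1}g(s)t_r(n)}$ valid colourings: the matching partition forces the colour sets on edges from any vertex $v$ to distinct classes to be pairwise disjoint, so each $K_k$ (whose vertices necessarily lie in $k$ distinct classes of $T_r(n)$) is properly coloured. Define $s(k)$ as the largest integer $s\ge k-1$ for which $s^{t_{k-1}(n)}$ dominates this non-trivial bound for large $n$; then $T_{k-1}(n)$ is the unique extremal graph for $s \le s(k)$ (by strict monotonicity of $s^{e(\cdot)}$ and uniqueness of the Tur\'an graph as the $K_k$-free extremal graph), while for $s > s(k)$, $T_r(n)$ with $r \in \Rev(s)$ is extremal and uniqueness is inherited from \autoref{th:K32}.

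The principal obstacles are: (i)~checking the framework's hypotheses for $\x_{k,s}^{\wedge}$, which contains the monochromatic pattern $K_k^{(1)}$ and so is not directly covered by \autoref{cor:inf}; (ii)~ensuring that $s(k)$ is well-defined, i.e.\ that the two exponential rates cross exactly once as $s$ grows, which should follow from monotonicity of $\tfrac{r}{r-1}g(s)/\log s$ in $s$ for $r \in \Rev(s)$; and (iii)~handling intermediate values of $s$ for which $\rev(s) < k$, where the dichromatic-based upper bound reduces to $s^{t_2(n)}$ and a more careful direct argument via the framework is needed to recover the trivial bound $s^{t_{k-1}(n)}$.
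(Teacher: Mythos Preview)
Your core observation is exactly the one the paper uses: once a colour template (or complete multipartite graph) has at least $k$ parts, forbidding improper $K_k$'s forces every colour class to be a matching, which is the same constraint as in the dichromatic-triangle problem. The paper makes this reduction at the optimisation level rather than the graph level: \autoref{lm:impprop} shows that any feasible $\phi$ on $r\geq k$ parts has matching colour graphs, so by \autoref{rm:reallyproving} one has $q(\phi,\ba)\leq g(s)$; hence $\opt^*(\x_{k,s}^\wedge)$ equals the trivial solution $(k-1,\phi_{[s]},\bm u)$, or $\opt^*(K_3^{(2)},s)$, or their union, according to whether $\tfrac{k-2}{k-1}\log s$ is larger than, smaller than, or equal to $g(s)$. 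With $\opt^*$ in hand, \autoref{th:exact1} is applied to $\x_{k,s}^\wedge$ directly; its output is not merely ``complete multipartite'' but already specifies $r^*$ and near-uniform part sizes, after which the Tur\'an-graph calculation from the proof of \autoref{th:K32} pins the sizes down exactly. Your obstacles (ii) and (iii) are genuine; the paper handles (ii) by the monotonicity inequality~\eqref{eq:t19_mono_s}, and (iii) by noting that if some $r^*\in\Rev(s)$ has $r^*<k$ then $g(s)=g_s(r^*)\le\tfrac{r^*-1}{r^*}\log s\le\tfrac{k-2}{k-1}\log s$, so the trivial bound dominates.

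The step ``uniqueness is inherited from \autoref{th:K32}'' is where your sketch has a gap. The inequality $F(G;\x_{k,s}^\wedge)\le F(G;(K_3^{(2)},s))$ is strict for any complete $r$-partite $G$ with $r\ge k$ (a monochromatic colouring of $G$ is $(K_3^{(2)},s)$-free but contains an improper $K_k$), so from $G$ being $\x_{k,s}^\wedge$-extremal you only recover $F(G;(K_3^{(2)},s))\ge(1-o(1))F(n;(K_3^{(2)},s))$, and \autoref{th:K32} as stated says nothing about graphs that are merely almost extremal. Relatedly, you cannot ``apply the framework'' and then separately ``optimise over complete multipartite $G$ with $r$ parts'': applying \autoref{th:exact1} already requires knowing $\opt^*(\x_{k,s}^\wedge)$ (your obstacle~(i) is not a side verification but the main content, namely \autoref{lm:impprop}), and once you have it the framework hands you $r^*$ directly, making the subsequent graph-level reduction to $(K_3^{(2)},s)$ unnecessary.
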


Some precise values and estimates of $s(k)$ for small $k$ are shown in~\autoref{table:improper_patterns}. 

\begin{table}
    \centering
    \bgroup\def\arraystretch{1.3}
    \begin{tabular}{c|c|c|c|c|c|c|c}
         ~~$k$~~&3&4&5&6&7&8&9\\
         \hline
         $s(k)$&26&3124&531440&$\approx 1\cdot 10^8$&$\approx 2.6\cdot 10^{10}$&$\approx 7.5\cdot 10^{12}$&$\approx 2.5\cdot 10^{15}$
    \end{tabular}
    \egroup
    \vspace{6pt}
    \caption{Phase transitions in \autoref{th:proper} for small values of $k$ generated by the script \texttt{improper\_patterns.py} attached as an ancillary file.}
\label{table:improper_patterns}
\end{table}

\autoref{th:proper} is almost a corollary of~\autoref{th:K32}, which explains the similarity in the attained values of $F(n;\x)$. We prove \autoref{th:proper} similarly using the same reduction to an optimisation problem as promised for the proof of \autoref{th:K32}, and it turns out that both forbidden families reduce to optimising a version of \autoref{cons:matchings}. Indeed, note that in \autoref{cons:matchings} every clique is properly coloured.  The reason for this equivalence is somewhat technical and is discussed together with the proof of \autoref{th:proper} in \autoref{sec:proper}.

\subsection{Methods}\label{sec:introproofs}

In the literature, results on the generalised \ER problem are proved by considering a valid colouring of an $n$-vertex extremal graph and approximating it using a coloured version of Szemer\'edi's regularity lemma, or occasionally the hypergraph container method, e.g.~\cite{balogh2019typical,botler2019maximum,han2018improved}.
Then, it suffices to solve a corresponding
problem where edges may be given a set of colours, and every assignment from one of these sets is counted.
In the regularity lemma approach, this is a combinatorial optimisation problem whose feasible solutions are vertex-weighted edge coloured multigraphs which contain no forbidden patterns and whose size does not depend on $n$.
This problem was explicitly stated for monochromatic patterns in~\cite{katherine_asymptotic} but has implicitly appeared in all earlier and most later works.
Though this optimisation problem can be solved by brute force in time $O_{s,k}(1)$, it is very difficult to solve in practice. Most solutions have been obtained via a linear relaxation which only yields feasible solutions in a few cases, hence the sporadic results.
A different optimisation problem arises from the container method (see~\cite{botler2019maximum}); again it appears difficult to solve in general.
 
As mentioned in \autoref{sec:intro:gen_er}, we prove our main results from \autoref{sec:main:dichromatic} and \autoref{sec:main:improper} by first deriving a proof framework for the generalised \ER problem. Our framework is inspired by existing theory for monochromatic patterns developed in~\cite{katherine_asymptotic,katherine_exact, katherine_stability}. The gist of our general results is that the generalised \ER problem reduces to the optimisation problem sketched above. 
In both cases, we are able to solve it for all $s$. For this, we do not use any linear programs, but mainly use analytic tools, as well as combinatorial considerations and some computer assistance for small $s$. 
We hope the ideas in the proof will be useful for solving the generalised \ER problem for other colour patterns.

\subsection{Organisation and notation}

The first part of the paper contains some general theory about forbidden colour patterns in cliques. In \autoref{sec:opt} we introduce the optimisation problem $Q(\x)$ which underpins the paper, and then in \autoref{sec:general} state some general results which show the strong connection between the generalised \ER problem and this optimisation problem. One of these is a new `exact' result which we will use to prove the main results of this paper. 
\autoref{sec:K23} concerns the dichromatic triangle problem, and in it we use the theory developed in the first part of the paper and solve the appropriate instances of $Q(\x)$ to prove our first main result \autoref{th:K32}. \autoref{cor:inf} and~\autoref{th:ext} will follow immediately.
In \autoref{sec:proper} we apply the same methods to the problem of maximising the number of proper $k$-clique colourings and prove our second main result \autoref{th:proper}. The proof of \autoref{th:proper} is closely related to \autoref{th:K32} and will use our tools for optimising a version of \autoref{cons:matchings} when $\x=(K_3^{(2)},s)$. 
We prove the general results of \autoref{sec:general} in \autoref{sec:proofs}. These proofs are adaptations of analogous results in~\cite{katherine_asymptotic,katherine_exact,katherine_stability} so we provide sketches only.
\autoref{sec:conclude} contains some concluding remarks on the generalised \ER problem and further avenues for investigation.

\medskip
In this paper, $\log$ always denotes the natural logarithm $\log_e$ (this is in contrast to many other papers on this topic, but useful here as we make extensive use of calculus).
We also set $\log 0 := 0$ to ease notation, in particular, given a multiset $\mc{A}$ of sets $A$, we write $\sum_{A \in \mc{A}}\log(|A|)$ instead of $\sum_{A \in \mc{A}: A \neq \emptyset}\log(|A|)$. We write $x=a\pm b$ as short-hand for $x\in[a-b,a+b]$. The $L^1$-norm of a vector $\bm{x}$ is $\|\bm{x}\|_1=\sum_{i}|x_i|$. 
We write $2\mb{N}$ to denote the set of positive even integers. 
We write $0 < a \ll b \ll c < 1$ to mean that we can choose the constants $a, b, c$ from right to left.
More precisely, there exist non-decreasing functions $f : (0, 1] \to (0, 1]$ and $g : (0, 1] \to (0, 1]$ such
that for all $a \leq f (b)$ and $b \leq  g(c)$ our calculations and arguments in our proofs are correct. Larger
hierarchies are defined similarly. 
We suppress rounding where it does not affect the validity of the argument.
Most of the specific notation will be introduced in the next section.

\section{An optimisation problem}\label{sec:opt}

In~\cite{katherine_asymptotic}, Pikhurko, Staden and Yilma showed that the \ER problem is asymptotically solved by complete partite colouring constructions such as the one shown in \autoref{fig:col_T4}. That is, it is enough to maximise the number of colourings of a complete multipartite graph with parts $V_1,\ldots,V_r$ such that each edge in $V_i\times V_j$ is coloured arbitrarily from a list of colours $\phi(ij)$. This can be stated more precisely as an optimisation problem with parameters $r$, the sizes of the $V_i$, and $\phi$. They also proved a stability result which states that any almost extremal graph for the \ER problem resembles a solution to this optimisation problem. Pikhurko and Staden then showed that, under certain conditions on solutions to the optimisation problem, a stronger stability theorem holds~\cite{katherine_stability}, and also an exact result~\cite{katherine_exact}.
They solved a new case of the \ER problem by solving the corresponding optimisation problem.

An analogous optimisation problem, Problem $Q(\x)$, can be formulated for the generalised \ER problem for a family $\x$ of forbidden colourings. As we mentioned before, we consider graphs with a partite structure, and count only the $\x$-free colourings such that the colours of edges are determined entirely by the parts they lie between. Thus these colourings are defined by a function $\phi$ which maps pairs of parts to sets of colours. 
This function $\phi$ must be such that every colouring generated by $\phi$ will be $\x$-free.

So suppose we have $r \in \mb{N}$, which will be the number of parts $V_1,\ldots,V_r$, and a function $\phi : \binom{[r]}{2} \to 2^{[s]}$, which maps pairs of vertices to sets of colours.
Given a colouring $\sS:E(K_k)\to[s]$, we say that $\phi$ is \emph{$\sS$-free} if there is no injective map $\psi:[k]\to[r]$ with $\sS(ij) \in \phi(\psi(i)\psi(j))$ for all $ij\in \binom{[k]}{2}$.
We say that $\phi$ is \emph{$\x$-free} if it is $\sS$-free for every $\sS\in \x$. 

Before we state our optimisation problem, we define
$\Phi_{\x,t}(r)$ to be the set of $\x$-free colourings $\phi: \binom{[r]}{2} \to 2^{[s]}$ such that $|\phi(ij)|\geq t$ for all $i\neq j$. 
We call such $\phi$ \emph{colour templates} and, given $c \in [s]$, also write $\phi^{-1}(c) := \{ij \in \binom{[r]}{2}: c \in \phi(ij)\}$ which can be considered as (the edge-set of) a graph on the vertex set $[r]$.
We write $\phi_{ij} := |\phi(ij)|$ for brevity and often call this the \emph{multiplicity} (of $ij$), which comes from thinking of $\phi$ as the multigraph made up of colour graphs $\phi^{-1}(1),\ldots,\phi^{-1}(s)$.
We also define $\DD^r$ to be the set of non-negative vectors $\ba=(\aA_1,\ldots,\aA_r)$ such that $\sum_{i\in[r]} \alpha_i=1$.
By convention, given $\ba_j \in \DD^r$, we write $\ba_{j}=(\alpha_{j,1},\ldots,\alpha_{j,r})$.

\medskip
\noindent\fbox{%
    \parbox{\textwidth}{%
\textbf{Problem $Q_t(\x)$.}

Maximise
$$
q(\phi,\ba) := 2\sum_{ij \in \binom{[r]}{2} %: \phi(ij) \neq \emptyset
}\aA_i\aA_j\log\phi_{ij}
$$
subject to
$r \in \mb{N}$, $\phi \in \Phi_{\x,t}(r)$
and $\ba \in \DD^r$.
}}
\medskip

Denote by $\feas_t(\x)$ the set of feasible triples $(r,\phi,\ba)$ for this problem, and by $\opt_t(\x)$ the set of optimal triples. The maximum of the objective function we denote by $Q_t(\x)$. 

For ease of notation, we sometimes omit the subscript $t$ when $t=0$, for example, $Q(\x):=Q_0(\x)$ and $\feas(\x):=\feas_0(\x)$.

We now show how this optimisation problem gives lower bounds on $F(n;\x)$.
Let $(r,\phi,\ba) \in \feas_0(\x)$.
Let $G$ be an $n$-vertex graph with parts $V_1,\ldots,V_r$, where $|V_i|\in\{\lfloor\aA_i n \rfloor,\lceil\aA_i n\rceil\}$, such that $G[V_i]$ is empty for all $i$; if $\phi(ij)=\emptyset$ for some $ij\in \binom{[r]}{2}$ then $G[V_i,V_j]$ is empty, and otherwise it is a complete bipartite graph.
Consider the set of all $s$-edge colourings $\chi$ of $G$ where, for every 
$ij \in \binom{[r]}{2}$
and every $xy \in G[V_i,V_j]$, we have $\chi(xy) \in \phi(ij)$.
Every such $\chi$ is $\x$-free since $\phi$ is $\x$-free.
Counting the number of such colourings, we see that
$$
F(n;\x) \geq F(G;\x) \geq \prod_{ij \in \binom{[r]}{2}}\phi_{ij}^{|V_i||V_j|}.
$$
Thus
\begin{equation}\label{eq:lowerbd}
\log F(n;\x) \geq \sum_{ij \in \binom{[r]}{2}}|V_i||V_j|\log\phi_{ij} = q(\phi,\ba)\binom{n}{2}+O(n)
\end{equation}
and therefore we have the lower bound
\begin{equation}
F(n;\x) \geq e^{Q_0(\x)\binom{n}{2}+O(n)}.\label{eq:lb}
\end{equation}
Note that
$$
\textstyle \frac{k-2}{k-1}\log(s) \leq Q_0(\x) \leq \log(s),
$$
which should be compared to our bounds in~\autoref{eq:trivial} for graphs.
Here, the upper bound comes from $2\sum \alpha_i\alpha_j=1-\sum \alpha_i^2 \leq 1$ and $\phi_{ij} \leq s$,
while the lower bound -- the trivial bound -- comes from taking $r=k-1$, $\ba$ uniform and $\phi \equiv [s]$.

A central result of our paper, presented in \autoref{sec:general}, states that, under certain conditions, the bound in \eqref{eq:lb} is tight, thus reducing finding $\log(F(n;\x))/\binom{n}{2}$ asymptotically to finding $Q_0(\x)$.

The set $\opt_0(\x)$ of solutions is rather degenerate, because we can extend every $(r,\phi,\ba)\in \opt_0(\x)$ by adding a new part $r+1$ with $\alpha_{r+1}=0$, or by splitting any part into two. It will be helpful to consider the set of solutions in which parts cannot be merged or deleted to obtain a smaller optimal solution. 
We define the set of \emph{basic optimal solutions} $\opt^*(\x)$ to be the set of $(r^*,\phi^*,\ba^*)\in \opt_2(\x)$ such that $\aA_i^*>0$ for all $i \in [r^*]$.
We now show that the set $\opt_0(\x)$ contains a basic optimal solution. The following fact also justifies our omission of subscripts when referring to $Q(\x)$.

\begin{fact}\label{fact:opts}
Let $s \geq 2$ and $k \geq 3$ be integers and let $\x$ be a family of $s$-edge colourings of $K_k$.
If $\opt_0(\x) \neq \emptyset$, then $\opt^*(\x) \neq \emptyset$. Moreover, $Q_{2}(\x) = Q_{1}(\x) = Q_0(\x)$ and $\opt_{2}(\x)\subseteq \opt_{1}(\x)\subseteq \opt_0(\x)$.
\end{fact}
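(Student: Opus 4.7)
The plan is to show everything reduces to a simple "transfer" argument exploiting the convention $\log 0 = 0$ and the fact that $\log 1 = 0$ as well. Note first that the chain of inclusions $\feas_2(\x)\subseteq \feas_1(\x)\subseteq \feas_0(\x)$ is immediate from the definition (increasing $t$ is a stronger constraint). Hence $Q_2(\x)\le Q_1(\x)\le Q_0(\x)$, and once we have established $Q_2(\x)=Q_0(\x)$, the inclusions $\opt_2(\x)\subseteq \opt_1(\x)\subseteq \opt_0(\x)$ follow: any triple optimal at a stricter level is feasible at the weaker level and attains the same maximum value.

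Now I would start from an arbitrary $(r,\phi,\ba)\in\opt_0(\x)$ (which exists by hypothesis) and iteratively modify it. First, any part $i$ with $\aA_i=0$ can be deleted: removing an index preserves $\x$-freeness of $\phi$ (any $\sS$-copy in the restricted template would be one in the full template), and every term involving $i$ in $q(\phi,\ba)$ vanishes. So assume $\aA_i>0$ for all $i$. Next, suppose some pair $ij$ satisfies $\phi_{ij}<2$, i.e.\ $\phi_{ij}\in\{0,1\}$; by the $\log 0=0$ convention and the fact that $\log 1=0$, the term $2\aA_i\aA_j\log\phi_{ij}$ is zero. Consider two candidate modifications, each producing a triple with $r-1$ parts: in modification (a) move all weight from $j$ into $i$ (replacing $\aA_i$ by $\aA_i+\aA_j$, setting $\aA_j=0$, then deleting part $j$); in modification (b) move all weight from $i$ into $j$ symmetrically. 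Both new templates are $\x$-free (again by restriction), and writing $S_i:=\sum_{\ell\neq i,j}\aA_\ell\log\phi_{i\ell}$ and $S_j$ analogously, the new objective values are $q(\phi,\ba)+2\aA_j(S_i-S_j)$ and $q(\phi,\ba)+2\aA_i(S_j-S_i)$ respectively.

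The key identity is that the $(\aA_i,\aA_j)$-weighted average of these two quantities is exactly $(\aA_i+\aA_j)\,q(\phi,\ba)$, since the correction terms cancel. Hence at least one of modifications (a) or (b) yields a feasible triple with objective value $\ge q(\phi,\ba)=Q_0(\x)$ and strictly fewer parts. Repeating (with removal of any zero-weight parts after each step) must terminate after at most $r$ iterations, since $r$ strictly decreases. At termination, every surviving $\phi_{ij}$ is at least $2$ and every surviving $\aA_i>0$, so the resulting triple lies in $\feas_2(\x)$ with all weights positive and objective value $\ge Q_0(\x)$. This forces $Q_2(\x)\ge Q_0(\x)$, hence equality throughout, and the terminal triple witnesses $\opt^*(\x)\neq\emptyset$. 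The only mild obstacle is justifying that the transfer preserves the partition-of-unity constraint $\ba\in\DD^{r-1}$ and that $\x$-freeness survives restriction, both of which are direct from the definitions.
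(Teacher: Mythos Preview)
Your proof is correct and takes essentially the same approach as the paper: both exploit that $\log\phi_{ij}=0$ when $\phi_{ij}\le 1$ and perform a weight transfer (Zykov symmetrisation) between the two endpoints of such a pair to reduce the number of parts without decreasing $q$. The paper phrases this as choosing an optimal solution with \emph{minimum} $r$ and deriving a contradiction via a linear perturbation in $\eps$ (forcing $S_i=S_j$, then setting $\eps=\alpha_1$), whereas you iterate from an arbitrary optimal solution and use the averaging observation that one of the two extreme transfers weakly improves; these are two packagings of the same idea.
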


\begin{proof}
    Let $(r,\phi,\ba)\in \opt_0(\x)$ be an optimal solution with minimum number of parts $r$. It holds that $\alpha_i>0$ for all $i\in [r]$. Indeed, otherwise we would obtain another optimal solution with fewer parts. Let us, for the sake of contradiction, assume that $(r,\phi,\ba) \not\in \opt_{2}(\x)$, say, $\phi_{12}\leq 1$. For any $\eps$ with $-\alpha_2 \leq \eps \leq \alpha_1$ define $\tilde{\alpha_1}=\alpha_1-\varepsilon$, $\tilde{\alpha_2}=\alpha_2+\varepsilon$, and $\tilde{\alpha_i}=\alpha_i$ for all other $i$. 
    (This is a version of `Zykov symmetrisation', see~\autoref{sec:proofs}.)
    Then $q(\phi,\tilde{\ba})= Q(\x)- \eps\sum_{i=3 }^r \alpha_i (\log \phi_{1i}-\log\phi_{2i})$ since the contribution $\alpha_1\alpha_2\log \phi_{12}$ is zero. By optimality of $(r,\phi,\ba)$ we must have that $\sum_{i=3}^r \alpha_i (\log \phi_{1i}-\log\phi_{2i})=0$, for otherwise either a positive or negative $\eps$ would yield a better solution. Thus setting $\eps=\alpha_1$ and removing part $1$ (of size $\tilde\alpha_1$) yields a solution $(r-1,\phi|_{[2,r]},\tilde\ba)\in \opt_0(\x)$ with fewer parts, a contradiction to the minimality of $r$.

    For the second part, as $\feas_{2}(\x)\subseteq \feas_{1}(\x)\subseteq \feas_{0}(\x)$ we have $Q_{2}(\x) \le Q_{1}(\x) \le Q_{0}(\x)$. However, from our proof above, $q(\phi,\ba)=Q_0(\x)=Q_{2}(\x)$, so we have equality throughout. Since the optimum is the same for $t=0,1,2$, we also have the inclusion $\opt_{2}(\x)\subseteq \opt_1(\x)\subseteq \opt_0(\x)$.
\end{proof}

\begin{fact}\label{fact:max}
Let $s \geq 2$ and $k \geq 3$ be integers and let $\x$ be a family of $s$-edge colourings of $K_k$.
Then for all $(r^*,\phi^*,\ba^*) \in \opt^*(\x)$, the colour template $\phi^*$ is maximal. That is, if one obtains a new colour template $\phi$ by adding to $\phi^*$ a new colour in $[s]$ between any pair $ij \in \binom{[r^*]}{2}$, then $\phi$ is not $\x$-free.
\end{fact}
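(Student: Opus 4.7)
I will argue by contradiction: if $\phi^*$ is not maximal, then one can obtain a strictly better feasible triple than $(r^*,\phi^*,\ba^*)$ by simply enlarging one colour list, contradicting optimality. So suppose there exist a pair $ij \in \binom{[r^*]}{2}$ and a colour $c \in [s] \setminus \phi^*(ij)$ such that the template $\phi$ defined by $\phi(ij) := \phi^*(ij) \cup \{c\}$ and $\phi(i'j') := \phi^*(i'j')$ for all other pairs is still $\x$-free.

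The first step is to verify that $(r^*,\phi,\ba^*) \in \feas_2(\x)$. Since $(r^*,\phi^*,\ba^*) \in \opt^*(\x) \subseteq \opt_2(\x)$, every entry of $\phi^*$ has size at least $2$, hence the same holds for $\phi$; combined with the assumption that $\phi$ is $\x$-free, this gives $\phi \in \Phi_{\x,2}(r^*)$, so $(r^*,\phi,\ba^*)$ is indeed feasible for $Q_2(\x)$.

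Next, I would compute the change in the objective function. Since $\phi$ and $\phi^*$ differ only at the pair $ij$, we get
$$q(\phi,\ba^*) - q(\phi^*,\ba^*) = 2\alpha^*_i \alpha^*_j \bigl( \log(\phi^*_{ij}+1) - \log \phi^*_{ij} \bigr).$$
By the definition of $\opt^*(\x)$ we have $\alpha_i^*,\alpha_j^*>0$, and $\phi^*_{ij} \geq 2$ so $\log(\phi^*_{ij}+1) - \log \phi^*_{ij} > 0$. Hence $q(\phi,\ba^*) > q(\phi^*,\ba^*) = Q_2(\x)$ by Fact 3.1, contradicting the optimality of $(r^*,\phi^*,\ba^*)$ and concluding the proof.

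There is no real obstacle here: the only subtle point is making sure the enlarged template lies in $\Phi_{\x,2}$ (not just $\Phi_{\x,0}$), which is precisely why the definition of $\opt^*(\x)$ was set up using $\opt_2(\x)$ and strict positivity of $\ba^*$ — both conditions are used in a single stroke.
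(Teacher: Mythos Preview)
Your proof is correct and follows essentially the same approach as the paper's: assume $\phi$ obtained by adding a colour is still $\x$-free, observe $(r^*,\phi,\ba^*) \in \feas_2(\x)$, and compute the strictly positive increment in $q$ using $\alpha_i^*,\alpha_j^*>0$. The only cosmetic difference is that the paper records the explicit lower bound $\log\bigl(\tfrac{s}{s-1}\bigr)\alpha_i^*\alpha_j^*$ for the increment, whereas you just note it is positive; either suffices.
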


\begin{proof}
    Suppose that there is some $ij \in \binom{[r^*]}{2}$ and $c \in [s]$ such that $\phi$ as described above is $\x$-free.
    Then $(r^*,\phi,\ba^*) \in \feas_{2}(\x)$ and $q(\phi,\ba^*) - q(\phi^*,\ba^*) = \alpha^*_i\alpha^*_j(\log\phi_{ij}-\log\phi^*_{ij}) \geq \log(\frac{s}{s-1})\alpha^*_i\alpha^*_j > 0$, a contradiction to the optimality of $(r^*,\phi^*,\ba^*)$. 
\end{proof}

\subsection{Contributions}

Given $(r,\phi,\ba)\in\feas(\x)$ and $i \in [r]$, we define 
$$
q_i(\phi,\ba) := \sum_{j \in [r]\sm\{i\}}\aA_j\log\phi_{ij}
$$
and refer to $q_i$ as the \emph{contribution of part $i$}, as $q(\phi,\ba)=\sum_{i\in[r]} \alpha_i q_i(\phi,\ba)$.

The following lemma, a version of~\cite[Proposition~2.1]{katherine_stability},  is crucial in our proof of~\autoref{th:K32} as well as in the other general results of the next section.
It states that every vertex in an optimal solution contributes optimally to $q$. Indeed, if two vertices had differing contributions, we can move weight from one to the other to increase $q$, contradicting optimality.
This can be proved via the method of Lagrange multipliers. 

\begin{lemma}\label{lm:cont1}
    Let $s \geq 2$ and $k \geq 3$ be integers and let $\x$ be a family of $s$-edge colourings of $K_k$.
    Then for any $(r,\phi,\ba)\in\opt(\x)$, we have
    $$
    q_i(\phi,\ba)=Q(\x)
    \quad\text{whenever }\aA_i>0.
    $$
\end{lemma}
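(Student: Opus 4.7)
The plan is to prove the lemma by a direct variational argument --- essentially the Lagrange-multiplier condition, but implemented by hand via an explicit mass-transfer perturbation that avoids dealing with the equality constraint $\sum_i \aA_i = 1$ explicitly.

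The first (and main) step is to show that any two parts $i,j \in [r]$ with $\aA_i, \aA_j > 0$ must satisfy $q_i(\phi,\ba) = q_j(\phi,\ba)$. Suppose not; without loss of generality $q_i(\phi,\ba) > q_j(\phi,\ba)$. For $\eps \in (0, \aA_j]$, define $\ba' \in \DD^r$ by $\aA'_i = \aA_i + \eps$, $\aA'_j = \aA_j - \eps$, and $\aA'_k = \aA_k$ otherwise. Since $\phi$ is unchanged, $(r,\phi,\ba') \in \feas(\x)$. Expanding the difference $q(\phi,\ba') - q(\phi,\ba)$ and isolating the (only nonzero) terms indexed by $\{i,l\}$, $\{j,l\}$ for $l \notin\{i,j\}$ and the pair $\{i,j\}$ itself, one uses the identities
$$
\sum_{l \neq i,j}\aA_l \log\phi_{il} = q_i(\phi,\ba) - \aA_j\log\phi_{ij}, \qquad \sum_{l \neq i,j}\aA_l \log\phi_{jl} = q_j(\phi,\ba) - \aA_i\log\phi_{ij}
$$
to see that the linear-in-$\log\phi_{ij}$ contributions cancel, leaving
$$
q(\phi,\ba') - q(\phi,\ba) = 2\eps\bigl(q_i(\phi,\ba) - q_j(\phi,\ba)\bigr) - 2\eps^2\log\phi_{ij}.
$$
For sufficiently small $\eps > 0$ this is strictly positive, contradicting the optimality of $(r,\phi,\ba)$. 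Hence all parts with positive weight share a common contribution $c$.

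To finish, I would use the identity $q(\phi,\ba) = \sum_{i \in [r]}\aA_i q_i(\phi,\ba)$, which is a direct rearrangement of the definition of $q$. Since only indices with $\aA_i > 0$ contribute to the right-hand side and their weights sum to $1$, we obtain $Q(\x) = q(\phi,\ba) = c \cdot \sum_{i: \aA_i > 0}\aA_i = c$, as required.

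There is no substantive obstacle: feasibility of $\ba'$ is automatic since $\phi$ is left untouched (so $\x$-freeness is preserved) and we only decrease a strictly positive $\aA_j$ by a small amount. The convention $\log 0 = 0$ ensures the computation is valid even when $\phi_{ij} = 0$. The only point that needs a moment of care is the bookkeeping in the perturbation expansion; once the two identities above are recognised, the cancellation that produces the clean expression $2\eps(q_i - q_j) - 2\eps^2\log\phi_{ij}$ is immediate.
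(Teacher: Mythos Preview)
Your proof is correct and follows exactly the approach the paper indicates (but does not spell out): the paper remarks that ``if two vertices had differing contributions, we can move weight from one to the other to increase $q$, contradicting optimality'' and notes this can be formalised via Lagrange multipliers. Your explicit mass-transfer computation is precisely this argument carried out in detail, and the bookkeeping leading to $q(\phi,\ba')-q(\phi,\ba)=2\eps\bigl(q_i(\phi,\ba)-q_j(\phi,\ba)\bigr)-2\eps^2\log\phi_{ij}$ is correct.
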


\subsection{Properties of families and solutions}

Now we introduce some properties of $\x$ -- or, more specifically, the set $\opt^*(\x)$
of basic optimal solutions of Problem $Q_2(\x)$ -- which are required to state our general results.
Given a description of $\opt^*(\x)$, these properties all tend to be easy to check.

Given $r \in \mathbb{N}$ and $\phi \in \Phi_{\x,0}(r)$, we say that $i \in [r]$ is
\begin{itemize}
    \item a \emph{clone of $j \in [r]\setminus \{ i \}$} \emph{(under $\phi$)} if $\phi(i\ell) = \phi(j\ell)$ for all $\ell \in [r] \setminus \{ i,j\}$ and $\phi_{ij} \leq 1$.
    \item a \emph{strong clone of $j$ (under $\phi$)} if $i$ is a clone of $j$ and $\phi_{ij} = 0$.
\end{itemize}
For $(r, \phi, \ba) \in \feas(\x)$ and $\phi' \in \Phi_\x(r+1)$ such that $\phi'|_{\binom{[r]}{2}} = \phi$, we define $${\rm ext}(\phi',\ba) := q_{r+1}(\phi',(\alpha_1,\ldots,\alpha_{r},0)) = \sum_{i \in [r]}\alpha_i\log|\phi'(\{ i,r+1 \})|.$$

\begin{definition}[Bounded, (strong) extension property, stable inside, hermetic]\label{extprop}
Let $s \geq 2$ and $k \geq 3$ be integers and let $\x$ be a family of $s$-colourings of $K_k$. We say that $\x$
\begin{itemize}
\item is \emph{bounded} if $\opt^*(\x)$ is non-empty and there is some $R>0$ such that $r^* \leq R$ for all $(r^*,\phi^*,\ba^*)\in\opt^*(\x)$,
    \item
has the \emph{(strong) extension property} if for any $(r^*,\phi^*,\ba^*) \in \opt^*(\x)$ and $\phi \in \Phi_\x(r^*+1)$ with $\phi|_{\binom{[r^*]}{2}} = \phi^*$ and
${\rm ext}(\phi,\ba^*) = Q(\x)$, there exists $j \in [r^*]$ such that $r^*+1$ is a (strong) clone of $j$ under $\phi$, 
\item is \emph{stable inside} if for every $(r^*,\phi^*,\ba^*) \in \opt^*(\x)$ the following holds: if we make a non-strong clone of a vertex $x \in [r^*]$ 
(i.e.~define $\phi'$ on $r^*+1$ parts by setting $\phi'(\{r^*+1,i\})=\phi(xi)$ for all $i \in [r^*]\setminus\{x\}$ and $\phi'(\{r^*+1,x\})=\{c\}$ for some $c \in [s]$), then there is a forbidden configuration (i.e.\ $\phi' \notin \Phi_\x(r^*+1)$),
\item is \emph{hermetic} if, 
for all $(r^*,\phi^*,\ba^*) \in \opt^*(\x)$, there is no $\phi\in \Phi_{\x,1}(r^*+1)$ such that $\phi|_{\binom{[r^*]}{2}}=\phi^*$.
\end{itemize}
\end{definition}

Observe that (i) if $\x$ is hermetic, then it is stable inside, since adding a non-strong clone to a basic optimal solution is a specific extension of that basic optimal solution where the new vertex sends at least one colour to all existing vertices;
and (ii) if $\x$ has the extension property and is hermetic, then it has the strong extension property, since being hermetic implies that a non-strong clone cannot have optimal contribution.

\subsection{Examples and specific cases}

In this section we derive properties of $\opt^*(\x)$ for some important patterns $P$ and resulting symmetric families $(P,s)$ of $s$-edge colourings of $K_k$.

\subsubsection{Forbidden monochromatic cliques: $P=K^{(1)}_k$}\label{sec:optk1}
For every $(r,\phi,\ba) \in \feas_1(K^{(1)}_k,s)$, we have $r<R_s(k)$, the $s$-colour Ramsey number of $K_k$.
Thus $\feas_1(K^{(1)}_k,s)$ is compact, and, since $q$ is continuous, we have that $\opt_1(K^{(1)}_k,s) \neq \emptyset$.
In particular, by~\autoref{fact:opts}, $(K^{(1)}_k,s)$ is bounded.
\autoref{fact:max} implies that for $(r^*,\phi^*,\ba^*) \in \opt^*(K^{(1)}_k,s)$, for every colour $c \in [s]$, the graph ${\phi^*}^{-1}(c)$ is maximally $K_k$-free.
Every known exact result for the (monochromatic) \ER problem can be deduced via auxiliary results from the corresponding optimisation problem; in all cases, every ${\phi^*}^{-1}(c)$ is a $(k-1)$-partite Tur\'an graph and $\bm{a}^*$ is uniform.

\subsubsection{Forbidden dichromatic triangles: $P=K^{(2)}_3$}\label{sec:opt32}

\begin{lemma}\label{lm:match}
Let $s\ge 2$ be an integer. Then the following hold.
\begin{enumerate}[label=\emph{(\roman*)}]
    \item For every $(r,\phi,\ba) \in \feas_{2}(K^{(2)}_3,s)$, we have that $\phi^{-1}(c)$ is a matching for all $c \in [s]$.
    In particular, $\sum_{j \in [r]\sm\{i\}}\phi_{ij} \leq s$ for all $i \in [r]$.
    \item For every $(r^*,\phi^*,\ba^*) \in \opt^*(K^{(2)}_3,s)$, we have that ${\phi^*}^{-1}(c)$ is a maximal matching for all $c \in [s]$.
    \item $(K^{(2)}_3,s)$ is bounded.
\end{enumerate}
\end{lemma}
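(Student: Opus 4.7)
The plan is to prove the three parts in order, hinging on a single observation: in the setting of forbidden dichromatic triangles, a colour template $\phi$ is $\x$-free if and only if each colour class $\phi^{-1}(c)$ is a matching. The forward direction will yield (i), the reverse direction combined with Fact~\ref{fact:max} will yield (ii), and an elementary counting argument will yield (iii).

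For (i), I would argue by contradiction. Suppose $\phi^{-1}(c)$ contains two incident edges $ij$ and $ik$. Since $(r,\phi,\ba) \in \feas_{2}(K^{(2)}_3,s)$, we have $\phi_{jk} \geq 2$, so there exists $c' \in \phi(jk) \setminus \{c\}$. The dichromatic colouring of $K_3$ sending two edges to $c$ and the third to $c'$ is then realised by $\phi$ via the injection onto $\{i,j,k\}$, contradicting $\x$-freeness. The second assertion of (i) follows since each colour $c$ contributes at most $1$ to $\sum_{j \neq i} \phi_{ij}$ by the matching property. Conversely, if every $\phi^{-1}(c)$ is a matching, then in any triangle $\{i,j,k\}$ any two incident edges must receive distinct colours from $\phi$, so every realised triangle is rainbow, making $\phi$ automatically $\x$-free.

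For (ii), suppose towards a contradiction that ${\phi^*}^{-1}(c)$ is not a maximal matching, i.e.\ there is some $ij \in \binom{[r^*]}{2}$ with ${\phi^*}^{-1}(c) \cup \{ij\}$ still a matching. Then adding $c$ to $\phi^*(ij)$ yields a template whose colour classes are all matchings, which by the converse direction above is still $\x$-free; this contradicts Fact~\ref{fact:max}. For (iii), the counting bound on $r^*$ comes from summing multiplicities in two ways:
$$2\binom{r^*}{2} \leq \sum_{ij \in \binom{[r^*]}{2}} \phi^*_{ij} = \sum_{c \in [s]} |{\phi^*}^{-1}(c)| \leq s \cdot \frac{r^*}{2},$$
yielding $r^* \leq s/2 + 1$. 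The same inequality applied to an arbitrary $(r,\phi,\ba) \in \feas_2(K^{(2)}_3,s)$ forces $r \leq s/2+1$, so $\feas_2(K^{(2)}_3,s)$ is a finite union of compact sets (one per choice of $r$ and colour template $\phi$), on which the continuous function $q$ attains its maximum; this gives $\opt_2(K^{(2)}_3,s) \neq \emptyset$, and Fact~\ref{fact:opts} then provides a basic optimal solution. I do not foresee a serious obstacle: the only subtlety is recognising that being a matching in each colour is both necessary and sufficient for $\x$-freeness, which reduces (ii) to a direct consequence of Fact~\ref{fact:max}.
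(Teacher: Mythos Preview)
Your proof is correct and follows essentially the same approach as the paper. The paper's argument for each part is the same contradiction for (i), the same appeal to Fact~\ref{fact:max} for (ii), and the same double-counting bound $r \leq s/2+1$ together with a compactness argument for (iii); you merely make explicit the converse direction (matchings in every colour imply $\x$-freeness) and the compactness step, which the paper leaves implicit.
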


\begin{proof}
We first prove~(i). Let $(r,\phi,\ba) \in \feas_{2}(K^{(2)}_3,s)$. If $\phi^{-1}(c)$ is not a matching for some $c\in [s]$, then there exist two adjacent edges, say $hi$ and $hj$, such that $c\in \phi(hi)$ and $c\in \phi(hj)$. Since all edges receive at least two colours, the edge $ij$ receives at least one colour different from $c$. The multicoloured graph induced on the vertices $h,i,$ and $j$ contradicts the $K^{(2)}_3$-free property of the function $\phi$.
The last part follows from the identity $\sum_{j \in [r]\sm\{i\}}\phi_{ij} = \sum_{c \in [s]}d_{\phi^{-1}(c)}(i)$.

Part~(ii) follows from part~(i) and \autoref{fact:max}.

For~(iii), let $(r,\phi,\ba) \in \feas_{2}(K^{(2)}_3,s)$.
Similarly to~\autoref{sec:optk1}, it suffices to show that there is some $R$ such that $r<R$ for every $(r,\phi,\ba) \in \feas_{2}(K^{(2)}_3,s)$.
    As noted, $\phi^{-1}(c)$ is a matching for each $c \in [s]$, therefore it contains at most $r/2$ edges. Since $2\binom{r}{2}/\left(\frac r2 \right)= 2(r-1)$ and together the matchings are required to cover the edges of $K_{r}$ at least twice over, we have that $s \geq 2(r-1)$. In other words, $r \leq s/2+1$.
\end{proof}

\subsubsection{Forbidden $2$-edge colourings}

In the case of $s=2$, the fact that we may restrict to $\phi$ with $\phi_{ij}\ge 2$ for all $ij$ (see \autoref{fact:opts}) means we can find $\opt^*(\x)$ for all $\mc X$.

\begin{lemma}
Let $k \geq 3$ be an integer and let $\x$ be a family of $2$-edge colourings of $K_k$. Then $(r^*,\phi^*,\ba^*) \in \opt^*(\x)$ if and only if $r^*=k-1$, $\phi^*(ij)=[2]$ for all $ij \in \binom{[r^*]}{2}$, and $\ba^*$ is uniform.
\end{lemma}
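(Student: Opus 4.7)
The plan is to exploit that with only $s=2$ colours, the condition $\phi_{ij} \geq 2$ in the definition of $\opt^*(\x)$ essentially trivialises the colour template and reduces Problem $Q_2(\x)$ to a simple concave optimisation over $\ba$ and a discrete optimisation over $r$.

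First I would observe the following. Since any $(r^*,\phi^*,\ba^*) \in \opt^*(\x) \subseteq \feas_2(\x)$ satisfies $\phi^*_{ij} \geq 2$ and since $s=2$, we must have $\phi^*(ij) = [2]$ for every $ij \in \binom{[r^*]}{2}$. Hence the objective function collapses to
\[
q(\phi^*,\ba^*) \;=\; 2(\log 2)\sum_{ij \in \binom{[r^*]}{2}} \alpha^*_i \alpha^*_j \;=\; (\log 2)\Bigl(1 - \sum_{i \in [r^*]} (\alpha^*_i)^2\Bigr).
\]

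Next I would determine which values of $r^*$ are feasible. Because $\phi^* \equiv [2]$, the membership condition $\sS(ij) \in \phi^*(\psi(i)\psi(j))$ holds trivially for every colouring $\sS \colon E(K_k) \to [2]$. Therefore $\phi^*$ is $\x$-free if and only if no injection $\psi \colon [k] \to [r^*]$ exists (assuming $\x \neq \emptyset$, which we may take as implicit for the problem to be well-posed), equivalently $r^* \leq k-1$.

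Then, on the feasible range $r^* \leq k-1$, I would optimise. By the power-mean inequality (or Cauchy--Schwarz), $\sum_i (\alpha^*_i)^2 \geq 1/r^*$ with equality iff $\ba^*$ is uniform, giving
\[
q(\phi^*,\ba^*) \;\leq\; (\log 2)\Bigl(1 - \tfrac{1}{r^*}\Bigr) \;\leq\; (\log 2)\cdot\tfrac{k-2}{k-1},
\]
with both inequalities tight precisely when $r^* = k-1$ and $\ba^*$ is uniform.

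Finally I would verify that the candidate triple is indeed basic optimal. Take $r^* = k-1$, $\phi^*(ij) = [2]$ for every $ij$, and $\alpha^*_i = 1/(k-1)$. Then $\phi^* \in \Phi_{\x,2}(k-1)$ (no injection $[k] \hookrightarrow [k-1]$ exists, so $\phi^*$ is $\x$-free), $\ba^* \in \DD^{k-1}$, and all $\alpha^*_i > 0$, so $(k-1,\phi^*,\ba^*) \in \opt^*(\x)$, realising $Q(\x) = (\log 2)\cdot(k-2)/(k-1)$. Conversely, the chain of inequalities above shows any other basic optimal solution must coincide with this one up to the labelling of parts.

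There is no real obstacle in this argument; the only point requiring a sentence of care is the reduction to $\phi^* \equiv [2]$, which relies crucially on the definition of $\opt^*(\x)$ through $\feas_2$ (c.f.\ \autoref{fact:opts}), and the observation that with $\phi^* \equiv [2]$ the colour template imposes no nontrivial constraint beyond $r^* \leq k-1$.
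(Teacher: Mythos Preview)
Your proof is correct and follows essentially the same approach as the paper: reduce $\phi^* \equiv [2]$ from $s=2$ and the $\feas_2$ condition, deduce $r^* \leq k-1$ from $\x$-freeness, and then optimise the concave quadratic in $\ba$. The only cosmetic difference is that you invoke the power-mean inequality for the equality case $\ba^*$ uniform, whereas the paper uses a direct perturbation argument; both yield the same conclusion.
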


\begin{proof}
    Take some $(r^*,\phi^*,\ba^*) \in \opt^*(\x)$. Elements of $\opt^*(\x)$ see at least two colours everywhere so $\phi^*(ij)=[2]$ for all $ij$. This contains any pattern on $r^*$ vertices so we must have $r^*\le k-1$. Then $q(\phi^*,\ba^*)=2\log(2)\sum_{ij}\alpha^*_i\alpha^*_j$ which for fixed $r^*$ is maximised by uniform $\ba^*$. Indeed, if some $\alpha^*_i>\alpha^*_j$, perturbing $\alpha^*_i$ down by a sufficiently small $\eps$ and $\alpha^*_j$ up by $\eps$ yields an increase proportional to $\eps(\alpha^*_i-\alpha^*_j-\eps)>0$ in $q$, contradicting optimality. Therefore $\ba^*$ is uniform and $q(\phi^*,\ba^*)= \log(2)\frac{r^*-1}{r^*}$. This function is increasing in $r^*$ so it is maximised by $r^*=k-1$.
\end{proof}

\subsubsection{Forbidden rainbow cliques: $P=K^{(\binom{k}{2})}_k$}\label{sec:rainbowclique}
Let $k \geq 3$ and $s \geq \binom{k}{2}$.
Now, for $(r,\phi,\ba) \in \feas_1(\x)$, the number $r$ of parts can be arbitrarily large since any solution with $\phi$ using less than $\binom{k}{2}$ colours is feasible.
Note that this does not imply $(K^{(\binom{k}{2})}_k,s)$ is not bounded.

\begin{lemma}\label{lm:rainbowclique}
    Let $s \geq 3$ be an integer. 
    Let $\x = (K^{(3)}_3,s)$.
    If $s=3$, then $\opt^*(\x)=\emptyset$, so in particular, $\x$ is not bounded. If $s \geq 4$, then $\opt^*(\x)$ contains the unique element $(2,\phi^*,(\frac{1}{2},\frac{1}{2}))$ where $\phi^* \equiv [s]$, so in particular, $\x$ is bounded.
\end{lemma}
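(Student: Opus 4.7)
The strategy is to characterise the feasible templates in $\Phi_{\x,2}(r)$ for $\x=(K_3^{(3)},s)$ and then optimise $q$ over them. For any $(r,\phi,\ba)\in\feas_2(\x)$ and any triple of distinct vertices $i,j,k\in[r]$, I claim the three sets $\phi(ij),\phi(ik),\phi(jk)$ jointly use at most two colours. Indeed, $\x$-freeness on the triple $\{i,j,k\}$ means there is no rainbow copy of $K_3$, equivalently no system of distinct representatives (SDR) for these three sets. By Hall's marriage theorem, failure of an SDR means some subcollection $T$ of them satisfies $\bigl|\bigcup_{e\in T}\phi(e)\bigr|<|T|$; since each $\phi_e\geq 2$, subcollections of size $1$ and $2$ automatically satisfy Hall's condition, so only $|T|=3$ can witness failure. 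Combined with $\phi_e\geq 2$, this forces $\phi_e=2$ for each of the three edges, with all three equal to the same $2$-subset $S\subseteq[s]$.

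Next I propagate this constraint for $r\geq 3$: any two edges of $K_r$ either share a vertex (and so lie in a common triangle) or, if $r\geq 4$, can be joined through a third edge that shares a vertex with each; applying the triangle claim once or twice yields $\phi\equiv S$ for a single $2$-subset $S\subseteq[s]$. Hence for $r\geq 3$,
\begin{equation*}
q(\phi,\ba) \;=\; 2\log 2\sum_{i<j}\alpha_i\alpha_j \;=\; \log 2 \cdot \Bigl(1-\textstyle\sum_i\alpha_i^2\Bigr) \;\leq\; \log 2 \cdot \bigl(1-\tfrac{1}{r}\bigr) \;<\; \log 2.
\end{equation*}
For $r=2$ there is no triangle constraint, so by \autoref{fact:max} any element of $\opt^*(\x)$ satisfies $\phi^*(12)=[s]$, which gives $q=2\alpha_1\alpha_2\log s\leq\tfrac{1}{2}\log s$ with equality iff $\ba=(\tfrac12,\tfrac12)$.

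Finally I compare the two regimes. If $s=3$, then $\tfrac{1}{2}\log 3<\log 2$, so the $r=2$ value is strictly beaten by $r\geq 3$ configurations (constant $\phi\equiv\{c_1,c_2\}$, uniform $\ba$), whose values $\log 2\,(1-1/r)$ approach but never attain the supremum $\log 2$; hence $\opt_2(\x)=\emptyset$, so $\opt^*(\x)=\emptyset$ and $\x$ is not bounded. If $s\geq 4$, then $\tfrac{1}{2}\log s\geq\log 2$ (with equality iff $s=4$), while every $r\geq 3$ configuration gives $q<\log 2\leq\tfrac{1}{2}\log s$. Thus the unique maximiser is $(2,\phi^*,(\tfrac12,\tfrac12))$ with $\phi^*\equiv[s]$, and $\x$ is bounded with $R=2$. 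The main obstacle is the structural step showing that for $r\geq 3$ all edges must share a common $2$-subset; once this is in hand, the remainder is routine optimisation.
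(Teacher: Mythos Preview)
Your proof is correct and takes a somewhat different route from the paper's. The paper argues by contradiction: assuming $\opt^*(\x)\neq\emptyset$, it observes that at least three colours must appear in an optimal $\phi^*$ (else $q<\log 2$), and then splits into two cases---either no single pair sees three colours (which forces a contradiction via an explicit rainbow), or some pair does (which forces $r^*=2$ via another explicit rainbow). Your approach instead characterises \emph{all} of $\Phi_{\x,2}(r)$ directly: the Hall/SDR argument shows that for $r\ge 3$ every feasible $\phi$ is constant equal to a single $2$-subset, giving the clean bound $q(\phi,\ba)\le(1-\tfrac1r)\log 2<\log 2$ without case analysis. The comparison with the $r=2$ value $\tfrac12\log s$ then separates $s=3$ from $s\ge 4$ transparently.

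What you gain is a complete structural description of the feasible region for each $r$, which makes the non-attainment of the supremum at $s=3$ and the uniqueness at $s\ge 4$ immediate. The paper's argument is marginally shorter but more ad hoc; your Hall-based step is the more reusable observation. One minor note: your invocation of \autoref{fact:max} for the $r=2$ case is fine but slightly indirect---you can just observe that for $r=2$ there is no constraint, so $q=2\alpha_1\alpha_2\log\phi_{12}\le\tfrac12\log s$ with equality iff $\phi_{12}=[s]$ and $\ba=(\tfrac12,\tfrac12)$, which avoids appealing to optimality before it is established.
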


\begin{proof}
Consider the feasible solution $(r,\phi,\ba)$ such that there is $A \subseteq [s]$ of size two for which $\phi(ij)=A$ for all pairs $ij$ in $[r]$.
By convexity, $\ba$ is uniform (otherwise $q(\phi,\ba)$ can be increased), and we have $q(\phi,\ba) = (1-\frac{1}{r})\log(2)$. However, we can find a better solution simply by adding one to $r$, so $(r,\phi,\ba)$ is not optimal, and we obtain a sequence of feasible solutions whose $q$ value tends to $\log(2)$ from below.
Thus $Q^*(\x) \geq \log(2)$.

Suppose that $\opt^*(\x) \neq \emptyset$.
Let $(r^*,\phi^*,\ba^*) \in \opt^*(\x)$. By the above, at least three colours appear in $\phi^*$.

Suppose first that they never appear on a single pair. Then there are distinct $h,i,j \in [r^*]$ such that, without loss of generality, $\phi^*(hi)=\{1,2\}$ and $\phi^*(hj)=\{1,3\}$.
But then $\phi^*(ij)=\emptyset$, a contradiction.

Thus there is some $ij$ with $|\phi^*(ij)| \geq 3$. If there is any other $h \in [r^*] \sm \{i,j\}$, since there are at least two colours on $hi$ and $hj$, we see there is a rainbow triangle on $h,i,j$, a contradiction. Thus $r=2$ and, optimising, $\phi^*(12)=[s]$ and $\ba^*$ is uniform, with $Q^*(\x) = q(\phi^*,\ba^*)=\frac{1}{2}\log(s)$.
So $\frac{1}{2}\log(s) \geq \log(2)$.
This implies that $s \geq 4$.
\end{proof}

This example suggests that we should expand our set of feasible solutions to allow colours inside parts, that is, expand the domain of $\phi$ to both pairs and singletons. Then the above argument would show that every optimal solution for the rainbow triangle problem with $2$ or $3$ colours corresponds to a $2$-edge coloured clique, that is, a one-part solution where the part receives two colours only. This aligns with the results in \cite{balogh2019typical}, which show that the maximum number of $3$-colour Gallai colourings is attained by $K_n$ and is equal to $(3+o(1))2^{\binom{n}{2}}$. Many of our general results can be formulated in this setting but we chose not to do this, as in our two applications, giving colours to singletons is not necessary. We do however discuss this extension in \autoref{sec:conclude}.

\section{Results for general colour patterns}\label{sec:general}

This section contains some general results for bounded families $\x$ of forbidden edge colourings. 
Their culmination is~\autoref{th:exact1}, an `exact' result for hermetic families with the extension property. We prove our main result,~\autoref{th:K32}, by combining this with the solution to the optimisation problem for dichromatic triangles, which we obtain in~\autoref{sec:K23}.

\subsection{Asymptotic upper bound}
We saw in \autoref{sec:opt} that optimal solutions of $Q(\x)$ give a lower bound for $F(n;\x)$ (see~(\ref{eq:lb})). Using Szemer\'edi's regularity lemma, one can prove that there is a matching upper bound.

\begin{theorem}\label{th:asymptotic}
Let $s \geq 2$  and $k \geq 3$ be integers and let $\x$ be a family of $s$-edge colourings of $K_k$
for which $\opt(\x) \neq \emptyset$. Then
$$
F(n;\x) = e^{Q(\x)\binom{n}{2}+o(n^2)}.
$$
\end{theorem}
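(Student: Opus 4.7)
The lower bound $F(n;\x) \geq e^{Q(\x)\binom{n}{2} + O(n)}$ is already in hand from~\eqref{eq:lb}, so the plan is to prove the matching upper bound $F(n;\x) \leq e^{Q(\x)\binom{n}{2} + o(n^2)}$ via a multicolour version of Szemer\'edi's regularity lemma. Fix an $n$-vertex graph $G$ and an $\x$-free colouring $\chi: E(G) \to [s]$, together with a hierarchy of constants $1/n \ll \eps \ll d \ll 1/M$, where $M$ is also large in terms of $s$ and $k$. Treating the non-edges of $G$ as an $(s+1)$-st colour and applying the multicolour regularity lemma to $\chi$, obtain an equipartition $V_1,\ldots,V_r$ with $r \geq M$ such that all but at most $\eps\binom{r}{2}$ pairs $(V_i,V_j)$ are $\eps$-regular in every colour. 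Set $\alpha_i := |V_i|/n$, so $\ba \in \DD^r$.

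Define a colour template $\phi:\binom{[r]}{2}\to 2^{[s]}$ by declaring $c \in \phi(ij)$ if and only if $(V_i,V_j)$ is $\eps$-regular and the density of colour $c$ between $V_i$ and $V_j$ is at least $d$. The key claim is that $\phi$ is $\x$-free: if some $\Sigma \in \x$ embedded into $\phi$ via an injection $\psi:[k]\to[r]$ with $\Sigma(ij)\in\phi(\psi(i)\psi(j))$ for every $ij \in \binom{[k]}{2}$, then the multicolour counting lemma applied to the $\binom{k}{2}$ regular pairs, each of density at least $d$ in the prescribed colour, would yield at least $(d/2)^{\binom{k}{2}}\prod_{i\in[k]}|V_{\psi(i)}|$ coloured copies of $\Sigma$ inside $\chi$, contradicting its $\x$-freeness. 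Hence $(r,\phi,\ba)\in\feas(\x)$ and $q(\phi,\ba)\leq Q(\x)$.

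It then remains to count the $\chi$ giving rise to a fixed such $(r,\phi,\ba)$. Edges lying inside some $V_i$ or across an irregular pair number at most $(\eps+1/M)n^2$ and contribute at most $s^{(\eps+1/M)n^2}=e^{o(n^2)}$ ways. For each $\eps$-regular pair $(V_i,V_j)$, at most $sd\,|V_i||V_j|$ edges receive a low-density colour; choosing their positions and colours costs at most $\binom{|V_i||V_j|}{sd|V_i||V_j|}\cdot s^{sd|V_i||V_j|}$ factors, while the remaining edges use only colours from $\phi(ij)$ and contribute at most $\phi_{ij}^{|V_i||V_j|}$. Multiplying over pairs and using $|V_i||V_j|=\alpha_i\alpha_j n^2$, the number of $\chi$'s compatible with $(r,\phi,\ba)$ is at most $\exp\bigl(q(\phi,\ba)\binom{n}{2}+o(n^2)\bigr) \leq \exp\bigl(Q(\x)\binom{n}{2}+o(n^2)\bigr)$. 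Summing over the at most $r^n\cdot 2^{s\binom{r}{2}}=e^{o(n^2)}$ admissible pairs (partition, template) completes the bound.

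The one delicate point is the $\x$-freeness of $\phi$, which is precisely the multicolour counting lemma and forces the hierarchy $\eps \ll d$; everything else is routine bookkeeping once $d$ is chosen so that $sd\log(s/d)$ is negligible and $M$ is large enough that $(1/M)\log s$ is negligible too.
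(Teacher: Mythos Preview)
Your proposal is correct and follows essentially the same approach as the paper: apply the multicolour regularity lemma, define the template $\phi$ from dense regular colour classes, verify $\phi\in\feas(\x)$ via the embedding/counting lemma, and then bound the number of colourings compatible with each (partition, template). The only cosmetic difference is that the paper, en route to proving~\autoref{th:stability_graphs}, uses a ``popular pairs'' device (see~\eqref{eq:popular}) rather than summing directly over the $e^{o(n^2)}$ many pairs (partition, template), but both bookkeeping choices yield the same upper bound.
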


This result and its proof has appeared for specific colour patterns many times in the literature, starting with~\cite{alon2004number} which considered the monochromatic pattern $K^{(1)}_k$ and $s \in \{2,3\}$ colours.
A general monochromatic version appeared in~\cite{katherine_asymptotic}.
The corresponding result for general colour patterns is no harder, so we don't prove it separately. A proof is obtained en route to proving \autoref{th:stability_graphs} (see \autoref{sec:proofs:stability}).

\subsection{Stability for optimal solutions to \texorpdfstring{$Q(\x)$}{}}

The next key theorem is a generalisation of the main result of~\cite{katherine_stability} which is for monochromatic patterns.
It is a `stability' theorem which states that every almost optimal feasible solution to Problem $Q(\x)$ has a similar structure to a basic optimal solution.

\begin{theorem}\label{lm:stability_opt}
Let $s \geq 2$ and $k \geq 3$ be integers and let $\x$ be a bounded
    family of $s$-edge colourings of $K_k$ which  has the extension property.
Let $\nu>0$. Then there exists $\eps>0$ such that for every $(r,\phi,\ba) \in \feas(\x)$ with
$$
q(\phi,\ba)>Q(\x)-\eps,
$$
there exist $(r^*,\phi^*,\ba^*) \in \opt^*(\x)$ and a partition $[r]=Y_0 \cup \ldots \cup Y_{r^*}$ such that, defining $\beta_i=\sum_{i' \in Y_i}\alpha_{i'}$ for all $i \in [r^*]$, the following hold.
\begin{enumerate}[label={\rm (SO\arabic*)}]
\item\label{SOone} $\|\bm{\beta}-\ba^*\|_1 < \nu$. (In particular, $\sum_{i' \in Y_0}\alpha_{i'}<\nu$.)
\item\label{SOtwo} For all $ij \in \binom{[r^*]}{2}$, $i' \in Y_i$ and $j' \in Y_j$, we have that $\phi(i'j') \subseteq \phi^*(ij)$.
\item\label{SOthree} For all $i \in [r^*]$ there is a colour $c_i \in [s]$ such that for every $i'j' \in \binom{Y_i}{2}$, we have $\phi(i'j') \subseteq \{c_i\}$. If the pattern is stable inside, then for all $i \in [r^*]$ and every $i'j' \in \binom{Y_i}{2}$, we have $\phi(i'j')= \emptyset$.
\end{enumerate}
\end{theorem}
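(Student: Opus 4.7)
The plan is to argue by compactness and contradiction, in the spirit of the monochromatic analogue in~\cite{katherine_stability}. Suppose the conclusion fails for some fixed $\nu>0$: there is a sequence $(r_n,\phi_n,\ba_n)\in\feas(\x)$ with $q(\phi_n,\ba_n)\to Q(\x)$ such that no $(r^*,\phi^*,\ba^*)\in\opt^*(\x)$ admits a partition of $[r_n]$ satisfying \ref{SOone}--\ref{SOthree} with tolerance $\nu$. I would preprocess each such solution by deleting parts of weight $0$ and then iteratively merging any pair of parts that are clones of each other under $\phi_n$. The clone condition $\phi_{ij}\le 1$, together with the convention $\log 0=0$, makes the cross-term $2\alpha_i\alpha_j\log\phi_{ij}$ vanish, so merging preserves both $\x$-freeness and the value of $q$. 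After finitely many steps, every part has positive weight and no two parts are clones.

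The crux is to bound $r_n$ uniformly in $n$. The key input is~\autoref{lm:cont1}, which says that in an exact optimum every positive-weight part contributes exactly $Q(\x)$ to $q$; a quantitative version (in the spirit of the Lagrangian argument in~\autoref{fact:opts}) shows that in an $\eps$-near optimum, every part of weight at least $\sqrt{\eps}$ contributes $Q(\x)\pm O(\sqrt{\eps})$, while parts of smaller weight contribute only $o_\eps(1)$ to $q$ in total. Since weights sum to $1$, there are at most $R:=\max\{r^*\colon(r^*,\phi^*,\ba^*)\in\opt^*(\x)\}$ high-weight parts once $\eps$ is small. Any additional low-weight part (necessarily of near-optimal contribution, otherwise it could be deleted) acts as a zero-weight extension of the high-weight sub-solution in the limit; the extension property then forces it to be a clone of a high-weight part. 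As $\phi_n$ takes values in the finite set $2^{[s]}$, convergence of profiles implies equality for $n$ large, contradicting preprocessing. Hence $r_n\le R'$ for some constant $R'=R'(\x)$.

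With $r_n$ bounded, the number of colour templates on $\le R'$ parts is finite and each $\ba_n$ lies in the compact simplex, so passing to a subsequence I may assume $r_n=r$ and $\phi_n=\phi$ are constant and $\ba_n\to\ba$. Continuity of $q$ gives $(r,\phi,\ba)\in\opt_0(\x)$, and deleting its remaining zero-weight parts produces a basic optimal $(r^*,\phi^*,\ba^*)\in\opt^*(\x)$ by~\autoref{fact:opts}. The partition $[r_n]=Y_0\cup Y_1\cup\cdots\cup Y_{r^*}$ is read off from the merging history and the convergence $\ba_n\to\ba$: a part of the preprocessed solution that was merged into $i\in[r^*]$ is placed in $Y_i$, and parts whose weight tends to $0$ in the limit go into $Y_0$. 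Items~\ref{SOone} and~\ref{SOtwo} follow from these convergences and from the fact that merging and deletion never enlarge a colour set. For the first half of~\ref{SOthree}, two parts $i',j'\in Y_i$ were merged (in the limit) into the same parent $i$, hence they are clones of $i$, so $\phi_n(i'j')\subseteq\{c_i\}$ for some $c_i$ eventually. Under the stable-inside hypothesis, a non-strong clone would already contain a forbidden pattern in $\phi^*$, so in fact $\phi_n(i'j')=\emptyset$; this contradicts the assumed failure of the conclusion.

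The main obstacle is the uniform bound $r_n\le R'$. Even after exact clone-merging, an almost-optimal solution could a priori accumulate many parts that are only approximate clones and therefore survive preprocessing; converting the boundedness of $\opt^*(\x)$ together with the extension property into a genuine quantitative bound valid for all sufficiently high-quality feasible solutions is the heart of the stability argument, while the remaining compactness and readout steps are largely routine once this bound is in place.
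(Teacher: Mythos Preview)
Your compactness outline differs from the paper's approach, and the gap you flag is genuine and not filled by your sketch. The step ``there are at most $R$ high-weight parts once $\eps$ is small'' is circular: boundedness of $\opt^*(\x)$ constrains only exact basic optima, and promoting a near-optimal feasible solution to one with boundedly many effective parts is essentially the content of the theorem. Your clone-merging preprocessing is too weak to force this --- nothing rules out, say, $R+5$ parts of comparable weight that are pairwise non-clones yet approximate a basic optimum only after nontrivial grouping; and the extension property cannot then be invoked on the low-weight parts, since it applies only to attachments to a \emph{basic optimal} solution, which you have not yet produced. The paper does not attempt to bound $r$. Instead it refines all parts into equal-weight atoms and runs \emph{Zykov symmetrisation} (whenever two parts have multiplicity $\le1$ between them, overwrite one's attachment by the other's --- strictly stronger than merging only exact clones, and still $q$-nondecreasing), terminating in some $\psi_f\in\Phi_{\x,2}(r_f)$. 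Then~\autoref{lm:near} perturbs the weights to an exact optimum whose positive support lies in $\opt^*(\x)$ and hence has size at most $R$; the number $r_f$ may still be large, but the weight is concentrated on at most $R$ parts. A \emph{backwards} pass through the symmetrisation history, together with an inductive red/blue/green edge-classification and~\autoref{lm:ext}, assigns each original atom to one of these groups or to $Y_0$, giving~\ref{SOone} and~\ref{SOtwo}.

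Your argument for~\ref{SOthree} has a separate gap: the clone relation is not transitive on colours. If $i,i',j'$ are successively merged into one group, the singleton colours on $\phi(ii'),\phi(ij'),\phi(i'j')$ need not coincide (e.g.\ $\phi(ii')=\{a\}$, $\phi(ij')=\phi(i'j')=\{b\}$ with $a\ne b$ is consistent with $i'$ being a clone of $i$ and then $j'$ a clone of the merged part). The paper proves~\ref{SOthree} by a different mechanism: if two distinct colours $c,c'$ appear inside a group $Y_i$, one can split part $i$ of the basic optimum $(r^*,\phi^*,\ba^*)$ into halves with $\{c,c'\}$ between them, strictly raising $q$ above $Q(\x)$; as in~\autoref{fact:max} this forces a forbidden configuration in the split template, which is then embedded back into $\phi$ via the green edges established earlier, contradicting $\phi\in\Phi_{\x,0}(r)$.
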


The proof follows~\cite{katherine_stability} closely, so we provide a sketch in~\autoref{sec:proofs:stability_opt}.

\subsection{Stability for graphs}\label{sec:stabgraphs}

The next general result states that almost optimal graphs have a very similar structure to the blow-up of an optimal solution (and almost all valid colourings follow an optimal colour template).
Again, a monochromatic version appears in~\cite{katherine_stability} and the proof is very similar, so we sketch it in~\autoref{sec:proofs:stability}.

Given a graph $G$, disjoint $A,B \subseteq V(G)$ 
and $0 \leq d \leq 1$, we say that $G[A,B]$ is \emph{$(\delta,d)$-regular} if $d_G(A,B) := e_G(A,B)|A|^{-1}|B|^{-1} \in (d-\delta,d+\delta)$, and $|d_G(X,Y)-d_G(A,B)|<\delta$ for all $X\subseteq A$, $Y \subseteq B$ with $|X|/|A|, |Y|/|B| \geq \delta$.
We suppress the subscript $G$ when it is clear from the context.

\begin{theorem}\label{th:stability_graphs}
Let $s \geq 2$ and $k \geq 3$ be integers and let $\x$ be a bounded
    family of $s$-edge colourings of $K_k$ which has the extension property.
Then for all $\delta>0$ there exist $n_0 \in \mathbb{N}$ and $\eps>0$ such that the following holds.
If $G$ is a graph on $n \geq n_0$ vertices such that
$$
\frac{\log F(G;\x)}{\binom{n}{2}} \geq Q(\x)-\eps,
$$
then for at least $(1-e^{-\eps n^2})F(G;\x)$ valid colourings $\chi: E(G) \to [s]$ of $G$ there exist $(r^*,\phi^*,\ba^*) \in \opt^*(\x)$ and a partition $V_1 \cup \ldots \cup V_{r^*}$ of $V(G)$ such that:
\begin{enumerate}[label={\rm(SG\arabic*)}]
    \item\label{SGone} For all $i \in [r^*]$ we have $||V_i|-\alpha^*_i n| < 1$.
    \item\label{SGtwo} For all $ij \in \binom{[r^*]}{2}$ and $c \in \phi^*(ij)$, we have that $\chi^{-1}(c)[V_i,V_j]$ is $(\delta,|\phi^*(ij)|^{-1})$-regular. 
    %In particular, $e_G(V_i,V_j) \geq (1-s\delta)|V_i||V_j|$.
    \item\label{SGthree} For each $i \in [r^*]$ there is a colour $c_i \in [s]$ such that $\sum_{i\in[r^*]} |E(G[V_i])\backslash \chi^{-1}(c_i)|\le \delta n^2$. Moreover, if $\x$ is stable inside, then $\sum_{i\in[r^*]}e(G[V_i]) \leq \delta n^2$.
\end{enumerate}
\end{theorem}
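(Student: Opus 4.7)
The plan is to deduce the result by applying a multicoloured Szemer\'edi regularity lemma to a typical valid colouring, reading off a template that is a near-optimal feasible solution to Problem $Q(\x)$, and then invoking the stability result for the optimisation problem, namely \autoref{lm:stability_opt}. Fix constants $1/n_0 \ll \eps \ll \eps' \ll \nu \ll \delta$. Given any valid $s$-edge colouring $\chi$ of $G$, the multicoloured regularity lemma produces a partition $V(G) = W_0 \cup W_1 \cup \cdots \cup W_m$ with $m = O_\delta(1)$, $|W_0| \leq \delta n$, and all but $\delta\binom{m}{2}$ pairs $\{i,j\}$ being $\eps'$-regular in each colour $c \in [s]$. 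Define a reduced template $\phi_\chi \in \Phi_\x(m)$ by letting $\phi_\chi(ij)$ be the set of colours $c$ with $d_{\chi^{-1}(c)}(W_i,W_j) \geq \nu$ for regular pairs, and $\emptyset$ otherwise, and record the weights $\ba_\chi=(|W_1|/n,\dots,|W_m|/n)$. The counting lemma implies $\phi_\chi$ is $\x$-free: any forbidden $\Sigma \in \x$ embeddable into $\phi_\chi$ would force $\Omega(n^k)$ copies of $\Sigma$ in $\chi$, contradicting validity.

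Next, I would count the number of valid colourings compatible with a fixed reduced structure $(\phi_\chi,\ba_\chi)$. Across regular pairs, each edge has at most $|\phi_\chi(ij)|$ allowed colours (up to a $\nu n^2$ term from low-density classes), while irregular pairs and edges inside parts contribute at most $s^{O(\delta n^2)}$, giving an upper bound of $e^{q(\phi_\chi,\ba_\chi)\binom{n}{2}+O(\delta\log s \cdot n^2)}$. Since the number of distinct templates and discretised weight vectors is bounded by a function of $\delta$ alone, and the total count of valid colourings is at least $e^{(Q(\x)-\eps)\binom{n}{2}}$, pigeonhole forces all but an $e^{-\eps n^2}$-fraction of valid colourings $\chi$ to satisfy $q(\phi_\chi,\ba_\chi) \geq Q(\x)-\eps'$. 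As an aside, taking $\eps \to 0$ along this chain also yields the matching upper bound in \autoref{th:asymptotic}.

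Now applying \autoref{lm:stability_opt} to each such $(\phi_\chi,\ba_\chi)$ with parameter $\nu$ yields a basic optimal solution $(r^*,\phi^*,\ba^*) \in \opt^*(\x)$ and a coarsening $[m] = Y_0 \cup \cdots \cup Y_{r^*}$ satisfying \ref{SOone}--\ref{SOthree}. Set $V_i := \bigcup_{i' \in Y_i}W_{i'}$ and redistribute $W_0$ together with vertices from $\bigcup_{i' \in Y_0}W_{i'}$ among the $V_i$ to achieve $||V_i| - \alpha_i^* n| < 1$ (using $\nu \ll \delta$ and \ref{SOone}); this gives \ref{SGone}. For \ref{SGtwo}, \ref{SOtwo} ensures that in each regular pair $(W_{i'},W_{j'})$ with $i' \in Y_i$, $j' \in Y_j$, the colours appearing with density $\geq \nu$ lie in $\phi^*(ij)$; combining this with a concentration argument (the counts must be nearly tight per colour class for $q(\phi_\chi,\ba_\chi)$ to be this large) forces each colour in $\phi^*(ij)$ to have density $|\phi^*(ij)|^{-1} \pm \delta$ throughout $V_i \times V_j$. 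For \ref{SGthree}, \ref{SOthree} tells us that edges inside each $Y_i$ receive only colour $c_i$ in $\phi_\chi$, and if $\x$ is stable inside, then $\phi_\chi$ is empty inside each $Y_i$; in both cases transferring back to $G$ and absorbing the $O(\delta n^2)$ errors from irregular/low-density edges yields the stated bounds.

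The main obstacle is the concentration step in \ref{SGtwo}: extracting the precise density $|\phi^*(ij)|^{-1}$ per colour requires showing that within the almost-extremal family of valid colourings, the colour assignments behave essentially uniformly across $\phi^*(ij)$. This mirrors the argument of~\cite{katherine_stability}: an entropy/counting lemma gives that any deviation from uniformity costs an exponential factor in the count, which cannot be afforded given the lower bound on $F(G;\x)$. Handling the stable-inside case rigorously requires showing that any sub-$\delta$ density of a colour on some $W_{i'} \times W_{i''}$ with $i',i'' \in Y_i$ would yield, via the embedding lemma, a forbidden configuration in the reduced template (because a non-strong clone triggers a forbidden pattern), contradicting $\x$-freeness of $\phi_\chi$.
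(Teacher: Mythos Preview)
Your approach matches the paper's: multicolour regularity yields a near-optimal template for most colourings, \autoref{lm:stability_opt} supplies the coarsening into $r^*$ groups, and part sizes are then adjusted.

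The one step to sharpen is \ref{SGtwo}. Near-optimality of $q(\phi_\chi,\ba_\chi)$ constrains only the multiplicities $|\phi_\chi(i'j')|$, not the per-colour densities of $\chi$, and certainly not $\delta$-regularity of each $\chi^{-1}(c)[V_i,V_j]$; so your parenthetical ``the counts must be nearly tight per colour class for $q(\phi_\chi,\ba_\chi)$ to be this large'' does not do the work. The paper (following \cite{katherine_stability}) instead argues conditionally on a fixed popular template $(\mathcal U,\phi)$: among colourings in its preimage $\mathrm{RL}^{-1}(\mathcal U,\phi)$, those for which some $\chi^{-1}(c)[W_i,W_j]$ fails to be $(\gamma,\ge|\phi^*(ij)|^{-1})$-regular form an $e^{-\Omega(n^2)}$-fraction, via a direct large-deviation count over the witness sets for irregularity. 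Your final paragraph is closer to this, but note that the comparison is against $|\mathrm{RL}^{-1}(\mathcal U,\phi)|$, not the global lower bound on $F(G;\x)$. (Also, in your last sentence the quantifier is inverted: it is a colour of density \emph{above} threshold inside some $Y_i$ that would, via the embedding lemma, produce a forbidden configuration; sub-$\delta$ densities are exactly what you want.)
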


Note that~\ref{SGtwo} implies that all but at most an $s\delta$ proportion of pairs in $(V_i,V_j)$ are edges in $G$, and given a colour in $\phi^*(ij)$ by $\chi$.

\subsection{An exact result}

Our final general result is `exact', in the sense that it guarantees that every extremal graph is a complete partite graph with part sizes very close to some optimal solution,
and moreover, almost all valid colourings follow an optimal colour template perfectly.
One of the main results of~\cite{katherine_exact} is a monochromatic version that assumes the strong extension property.
Here we assume both the extension property and that $\x$ is hermetic.
As observed, these together imply the strong extension property.
Being hermetic is a very strong property that, we will show, is satisfied by the families of interest in this paper, 
but we do not think it holds for any of the other families where the optimisation problem has been solved.
Assuming that the family is hermetic leads to a much simpler proof.
We discuss what might arise from dropping this assumption in \autoref{sec:conclude}.

\begin{theorem}\label{th:exact1}
 Let $s \geq 2$ and $k \geq 3$ be integers and let $\x$ be a bounded, hermetic
    family of $s$-edge colourings of $K_k$ which  has the extension property.
    Then, for all $\dD>0$, there exists $\eps$ with $0<\eps<\dD$ such that whenever $n$ is sufficiently large, the following hold for every $\x$-extremal graph $G$ on $n$ vertices: There is an integer $r^*$ and $\ba^* \in \DD^{r^*}$ such that 
        \begin{enumerate}[label={\rm (SE\arabic*)}]
        \item\label{seone} $G$ is  a complete $r^*$-partite graph whose $i$-th part $W_i$ has size $(\aA^*_i \pm \dD)n$ for all $i \in [r^*]$, and 
        \item\label{setwo} for at least $(1-e^{-\eps n})F(G;\x)$ valid colourings $\chi$ of $G$ there is $\phi^*$ such that
\begin{enumerate}[label={\rm (SE2.\arabic*)}]
        \item\label{se0} $(r^*,\phi^*,\ba^*) \in \opt^*(\x)$.
        \item\label{se1} $\chi^{-1}(c)[W_i,W_j]$ is $(\dD,|\phi^*(ij)|^{-1})$-regular for all $ij \in \binom{[r^*]}{2}$ and $c \in \phi^*(ij)$.
        \item\label{se2} $\chi$ is \emph{perfect}: that is, for all $ij \in \binom{[r^*]}{2}$ and $x \in W_i$ and $y \in W_j$, we have $\chi(xy) \in \phi^*(ij)$.
\end{enumerate}\end{enumerate}
\end{theorem}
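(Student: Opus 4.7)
The proof adapts the exact-monochromatic scheme of~\cite{katherine_exact}, streamlined by the hermetic hypothesis: by the observations following \autoref{extprop}, hermetic plus extension yields the strong extension property, and hermetic alone implies $\x$ is stable inside. Fix $\dD>0$ and a hierarchy $1/n \ll \eps \ll \delta' \ll \dD$, and let $G$ be an $\x$-extremal graph on $n$ vertices; by \eqref{eq:lb} and extremality, $\log F(G;\x) \geq Q(\x)\binom{n}{2} + O(n)$. Apply \autoref{th:stability_graphs} with parameter $\delta'$ to obtain $\eps_0>0$ and a class $\mathcal{C}_0$ of at least $(1-e^{-\eps_0 n^2})F(G;\x)$ valid colourings $\chi$, each carrying some $(r^*_\chi,\phi^*_\chi,\ba^*_\chi)\in\opt^*(\x)$ and a partition $V_1^\chi\cup\cdots\cup V_{r^*_\chi}^\chi=V(G)$ satisfying \ref{SGone}--\ref{SGthree}. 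Since $\x$ is bounded, $|\opt^*(\x)|<\infty$ and the number of ordered partitions of $[n]$ into at most $R$ classes is at most $R^n$; a double pigeonhole yields a fixed $(r^*,\phi^*,\ba^*)\in\opt^*(\x)$ and partition $\mathcal{V}=(V_1,\ldots,V_{r^*})$ associated to a subclass $\mathcal{C}\subseteq\mathcal{C}_0$ of size at least $F(G;\x)\cdot e^{-O(n)}$. By \ref{SGone} we have $\bigl||V_i|-\aA_i^* n\bigr|<1$; by \ref{SGthree} and stable inside, $\sum_i e(G[V_i]) \leq \delta' n^2$.

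\textbf{Step 1: $G$ is complete $r^*$-partite on $\mathcal{V}$.} Since $(r^*,\phi^*,\ba^*)\in\opt_2(\x)$, $|\phi^*(ij)|\geq 2$ for all $ij$. If some $u\in V_i$, $v\in V_j$ with $i\ne j$ satisfies $uv\notin E(G)$, let $G':=G+uv$: a $(1-o(1))$-fraction of $\mathcal{C}$ is perfect on inter-part edges (obtained by cleaning the $o(n^2)$ irregular edges allowed by \ref{SGtwo}), and each such $\chi$ extends to $|\phi^*(ij)|\geq 2$ distinct $\x$-free colourings of $G'$ via $\chi(uv)\in\phi^*(ij)$. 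This yields $F(G';\x) \geq (1-o(1))\cdot|\phi^*(ij)|\cdot|\mathcal{C}| > F(G;\x)$, contradicting extremality. Thus every inter-part pair is an edge of $G$. For intra-part edges, suppose $uv\in G[V_i]$ and compare $F(G;\x)$ to $F(G-uv;\x)$: any putative colour on $uv$ would add a non-strong clone of part $i$ to the template $\phi^*$, which stable inside forbids, so the resulting forbidden configuration forces pointwise incompatibility constraints between $\chi(uw)$ and $\chi(vw)$ for most $w \in V(G)\setminus V_i$. These accumulated constraints reduce the number of $\x$-free extensions over the edges at $u$ and $v$ by an exponential-in-$n$ factor compared to the bound with $uv$ absent, dominating the trivial factor of $s$ lost by deleting $uv$, and so $F(G-uv;\x)>F(G;\x)$, again contradicting extremality. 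Hence $G[V_i]=\emptyset$ for all $i$.

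\textbf{Step 2: Part sizes and perfect colourings.} Now $G$ is the complete $r^*$-partite graph with parts $W_i:=V_i$, so its perfect $\x$-free colourings number exactly $\prod_{ij}|\phi^*(ij)|^{|W_i||W_j|} = \exp\!\bigl(q(\phi^*,\bm{\beta})\binom{n}{2}\bigr)$, with $\bm{\beta}:=(|W_i|/n)_{i\in[r^*]}\in\DD^{r^*}$. Comparing with the reference complete $r^*$-partite graph $H$ of part sizes $\lfloor\aA_i^*n\rfloor$ or $\lceil \aA_i^*n\rceil$ gives $F(G;\x)\geq F(H;\x)\geq \exp(Q(\x)\binom{n}{2}+O(n))$, so $q(\phi^*,\bm{\beta})\geq Q(\x)-o(1)$. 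By compactness of $\DD^{r^*}$, continuity of $q(\phi^*,\cdot)$, and the Lagrangian structure of \autoref{lm:cont1}, any such near-optimal $\bm{\beta}$ lies within $L^\infty$-distance $\dD$ of some basic optimal $\ba^*$ with $(r^*,\phi^*,\ba^*)\in\opt^*(\x)$, establishing \ref{seone}. For \ref{setwo}, \ref{SGtwo} (applied to each valid $\chi$ rather than only the subclass $\mathcal{C}$) delivers \ref{se1}, while perfectness \ref{se2} follows from a direct counting argument using that $G$ is now exactly complete partite: each inter-part edge $xy$ assigned a colour outside $\phi^*(ij)$ costs a multiplicative factor in the $\x$-free count relative to the perfect baseline, and summing over any $\Omega(n)$ such deviant edges gives a sub-exponential contribution in $n$, so at most $e^{-\eps n}F(G;\x)$ colourings fail perfectness.

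\textbf{Main obstacle.} The main technical hurdle is the intra-part case of Step~1: missing inter-part edges yield a direct factor-$|\phi^*(ij)|\geq 2$ gain on extension, but ruling out intra-part edges requires analysing how stable inside propagates to pointwise incompatibility constraints on the inter-part palette. Without the hermetic hypothesis this local surgery breaks down and one would have to perform the substantially more involved extension-property analysis of~\cite{katherine_exact}; our framework imposes hermetic precisely in order to bypass this.
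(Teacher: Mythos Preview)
Your Step~1 local-surgery approach breaks down in both cases. For missing inter-part edges you assert that ``a $(1-o(1))$-fraction of $\mathcal{C}$ is perfect on inter-part edges'', but \ref{SGtwo} only says each individual $\chi \in \mathcal{C}_0$ has at most $O(\delta' n^2)$ deviant inter-part edges; no $\chi$ is known to be completely perfect at this stage, and perfectness is precisely the conclusion \ref{se2} you are aiming for. Without it you cannot guarantee that setting $\chi(uv) \in \phi^*(ij)$ yields an $\x$-free colouring of $G+uv$, since a forbidden $K_k$ could use $uv$ together with one of the deviant edges of $\chi$. For intra-part edges, the claim that colouring $uv$ ``adds a non-strong clone of part $i$ to the template'' is a category error: stable inside is a statement about extending the template $\phi^*$ by a new \emph{part}, not about a single edge in one colouring, and there is no mechanism by which one coloured edge $uv$ forces the ``pointwise incompatibility constraints'' on $\chi(uw),\chi(vw)$ that you invoke.

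The paper obtains \ref{seone} without any edge surgery, via Zykov symmetrisation (Claim~\ref{cl:partite}, i.e.\ \cite[Lemma~2.14]{katherine_exact}): from any non-complete-partite extremal $G$ one passes to a complete-partite extremal $G'$ with a singleton part. Stability then aligns the partition $W_1,\ldots,W_r$ of $G'$ with the $V_i$, leaving only small leftover parts. The hermetic hypothesis enters \emph{here}, and only here: for a vertex $x$ in a leftover part, the majority colours from $x$ to each large $W_i$ define an extension $\phi \in \Phi_{\x,1}(r^*+1)$ of $\phi^*$ (Claim~\ref{cl:valid}), which hermetic forbids; hence $r=r^*$, no singleton part exists, and $G=G'$. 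Your ``main obstacle'' paragraph thus misidentifies where hermetic does the work. For \ref{se2}, a deviant edge does not simply ``cost a factor'': the paper shows (Claim~\ref{cl:perf_align}) that in any \emph{locally good} $\chi$, a colour $c \notin \phi^*(ij)$ on $xy$ would, via maximality of $\phi^*$ (\autoref{fact:max}) and the common-neighbourhood condition (LG2), embed a forbidden pattern in $\chi$ through $x,y$. Proving that most $\chi$ are locally good in turn requires the large-contribution bound~\eqref{eq:xyattach}, which relies on $G$ already being complete partite so that every vertex has many twins --- another reason your order of operations cannot be made to work. Finally, fixing a common $\ba^*$ independent of $\chi$ needs \autoref{lem:uni_ba}, which you do not invoke.
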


Part~\ref{se2} is crucial in determining the extremal graph(s).
This is because, to do so, we need to compare the number of valid colourings in very similar graphs, which all have $r^*$ parts of roughly the same sizes. For this, we need to understand very accurately what most colourings look like. 

The proof of~\autoref{th:exact1} is given in~\autoref{sec:proofs:exact}.

\section{Forbidding dichromatic triangles}\label{sec:K23}

In this section, we apply our general results connecting the generalised \ER problem to the optimisation problem to study a specific forbidden pattern, the dichromatic triangle; that is, the triangle coloured by exactly two colours.
Throughout this section we set $k =3$, the forbidden pattern $P=K^{(2)}_3$ is the unique partition of $K_3$ into two non-empty classes, and
$\x = (P,s)$.
Only $s$ will vary.
Thus we write $Q(s) := Q(\x)$, $\opt(s) := \opt(\x)$, $F(n;s) := F(n;\x)$ and so on, and call an $\x$-extremal graph \emph{$s$-extremal}.
We will prove 
 \autoref{th:K32}, which determines, for $s \geq 2$ and large $n$, the value of $F(n;s)$ up to a multiplicative error of $1+o(1)$, as well as the extremal graphs, which are indeed complete partite, as required to prove~\autoref{th:ext}.
In fact, we show that there are at most two extremal graphs for each $s \geq 2$ and large~$n$.

\subsection{A lower bound construction}\label{sec:lower}

We first recall \autoref{def:defs}.
Let $s \geq 2$ be the number of colours, and let $r\ge 2$ be an integer.
Let $z$ and $a\in\{0,\ldots,r-2\}$ be the quotient and remainder of $s$ when divided by $(r-1)$; that is, $
 z=\left\lfloor\frac{s}{r-1}\right\rfloor$ and $a=s-(r-1)z.$
Define
$$ 
\gf_s(r) := \left(\frac{r-1-a}{r}\right)\log(z)+\left(\frac{a}{r}\right)\log(z+1),\quad
\gf(s) := \max_{r \in 2\mb{N}}\gf_s(r),
\quad\text{and}\quad
\gmax(s) := \max_{r \in \mb{N}}\gf_s(r)
$$
as well as 
$$
\Rev(s) := \{r \in 2\mb{N}: \gf_s(r) = \gf(s)\}
\quad\text{and}\quad
\Rmax(s) := \{r \in \mb{N}: \gf_s(r) = \gmax(s)\}.
$$
Note that $\gf_s(r)=0$ when $r-1\geq s$, and so $\gf(s)$ and $\gmax(s)$ exist (and hence $\Rmax(s)$ and $\Rev(s)$ are non-empty), and this definition agrees with~\autoref{def:defs} where we restricted to $r < s$. 
Recall that we listed $\Rev(s)$ for $s \leq 10^7$ in~\autoref{table:1}.~\autoref{table:2} shows values of $\Rmax(s)$ for $s \leq 90000$.

\begin{table}
\centering
\begin{tabular}{ l|l } 
 $s$ & $\Rmax(s)$\\
\hline 
 $[2,16]$ & $\{2\}$ \\ 
 $[17,76]$ & $\{3\}$   \\
$[77,299]$ & $\{4\}$ \\
$[300,1058]$ & $\{5\}$\\
$[1059,3544]$ & $\{6\}$\\
$[3545,11443]$ & $\{7\}$ \\
$[11444,36023]$ & $\{8\}$ \\
$[36024,\ge 90000]$ & $\{9\}$ 
\end{tabular}
\vspace{6pt}
\caption{$R(s)$ for $s \leq 90000$ generated by the script \texttt{optr.py}.}
\label{table:2}
\end{table}

Let also
$$
\hf_s(x) := \frac{x-1}{x}\log\left(\frac{s}{x-1}\right),\quad\text{ for } x > 1\text{ and }s\in \mathbb{R}^+,
\quad\text{and let}\quad
\hf(s) := \sup_{x >1}\hf_s(x).
$$
The function $\hf_s$ is a fractional approximation of $\gf_s$ obtained by removing the rounding from $s/(r-1)$. So when $s$ is divisible by $r-1$, we have $\hf_s(r)=\gf_s(r)$.
Finally, let
$$
e_s(r) := \hf_s(r)-\gf_s(r).
$$

We will show in \autoref{lm:esr} that $e_s(r)$ is very small and non-negative for near-optimal $r$,
so $\hf_s$ is a good approximation to $\gf_s$.

\begin{figure}
\centering
\begin{subfigure}{.5\textwidth}
  \centering
  \includegraphics[width=1\linewidth]{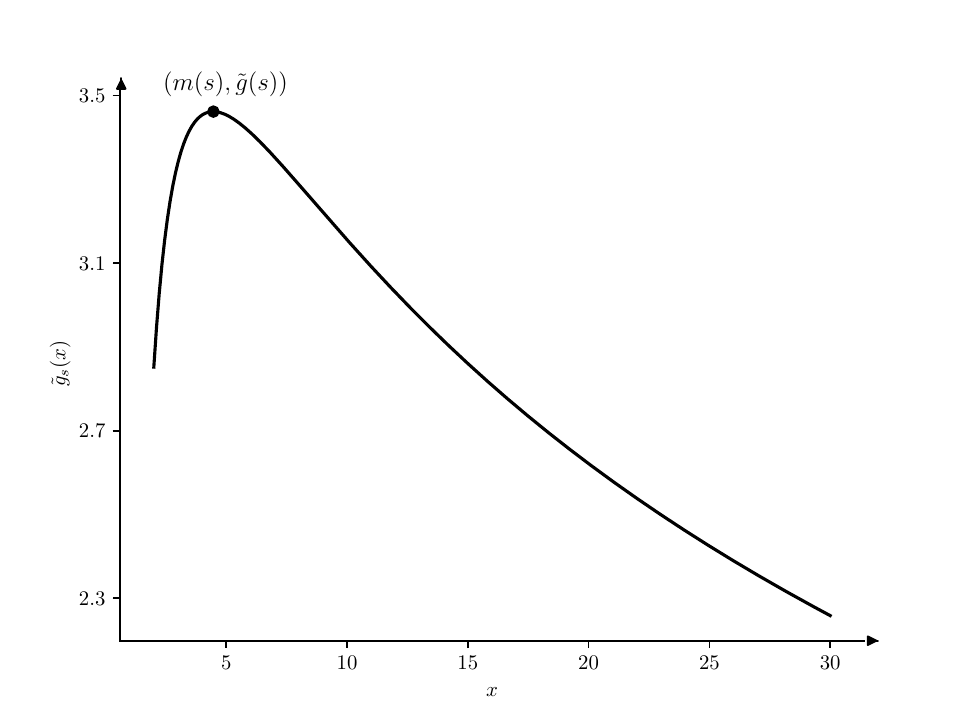}
  \caption{}
  %\label{fig:sub1}
\end{subfigure}%
\begin{subfigure}{.5\textwidth}
  \centering
  \includegraphics[width=1\linewidth]{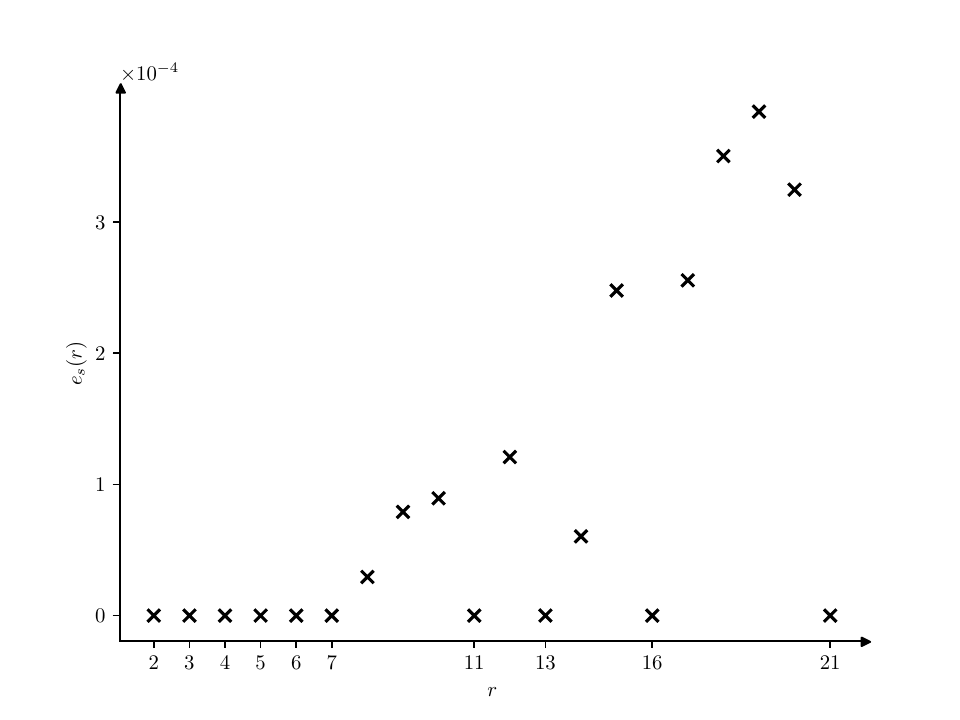}
  \caption{}
  %\label{fig:sub2}
\end{subfigure}
\caption{Plot (A) shows $\hf_s(x)$ for $s=300$ with maximum attained at $m(s)=W(s/e)+1$. Plot (B) displays the difference $e_s(r)=\hf_s(r)-\gf_s(r)$ for $s=300$. Note that the difference is $0$ whenever $s$ is divisible by $r-1$.} 
\label{fig:gngtilde}
\end{figure}

Next we state two constructions of feasible triples with $r$ parts for $r \in 2\mb{N}$,
which we will show achieve a $q$-value of $\gf_s(r)$.
The colourings generated by \autoref{cons:general} are equivalent to our earlier, less formal, description in \autoref{cons:matchings}.

\begin{construction}\label{cons:general}
Let $r=2t$ be even and let $M_1,\ldots,M_T$ be a list of all the perfect matchings of $K_r$, where $T=r!/(t!2^t)$.
Let $\mc{A} = \{A_1,\ldots,A_T\}$ be a partition of $[s]$ (where some parts could be empty)
with the property that
$|b_e-b_{e'}| \leq 1$ for all $e,e' \in E(K_r)$,
where
$\phi_{\mc{A}}(e) := \bigcup_{\ell \in [T]: e \in M_\ell}A_{\ell}$
and
$b_e := |\phi_{\mc{A}}(e)|$.
Let $\bm{u} \in \DD^r$ be uniform.
This defines $(r,\phi_{\mc{A}},\bm{u})$.
\end{construction}
As we will show shortly, this implies every $e \in E(K_r)$ satisfies
\begin{equation}\label{eq:consmult}
|\phi_{\mc{A}}(e)| \in \left\lbrace \left\lfloor\frac{s}{r^*-1}\right\rfloor, 
\left\lceil\frac{s}{r^*-1}\right\rceil \right\rbrace.
\end{equation}

The second construction is a special case of the first and shows that the set of partitions $\mc A$ satisfying the property above is non-empty.

\begin{construction}\label{cons:dec}
Let $r=2t$ be even and let $E(K_r)=M_1 \cup \ldots \cup M_{r-1}$ be a decomposition of the $r$-clique into perfect matchings and write $\mc{M}:=\{M_1,\ldots,M_{r-1}\}$.
Let $[s] = A_1 \cup \ldots \cup A_{r-1}$ be an equipartition.\footnote{An \emph{equipartition} of a set has part sizes whose pairwise differences are at most 1.}
For each $ij \in \binom{[r]}{2}$, let $\phi_{\rm dec}(ij):=A_\ell$ where $ij \in M_\ell$.
Let $\bm{u} \in \DD^r$ be uniform.
This defines $(r,\phi_{\rm dec},\bm{u})$.
\end{construction}

In fact, these constructions are identical when $r\in\{2,4\}$, since all pairs of perfect matchings of $K_r$ are edge-disjoint. For all larger even $r$,~\autoref{cons:general} yields strictly more $\phi$.

We claim that the triples defined in both constructions are feasible. As the second is a special case of the first, it suffices to see why $(r,\phi_{\mc{A}},\bm{u})$ in \autoref{cons:general} is feasible.
For this, we just need to check that $\phi_{\mc{A}}$ is $K^{(2)}_3$-free.
It suffices to show that there are no distinct $h,i,j \in [r]$ and $c \in [s]$ with 
$c \in \phi_{\mc{A}}(hi) \cap \phi_{\mc{A}}(hj)$. Let $\ell \in [T]$ be such that $c \in A_\ell$.
Then $\phi_{\mc{A}}^{-1}(c) = M_\ell$, a matching, so we are done.

Let us calculate $q(\phi_{\mc{A}},\bm{u})$.
With $a$ and $z$ as defined earlier in this section, for each $i\in [r]$, since $|b_{ij}-b_{ij'}|\le 1$ for any $j,j'\in [r]\setminus\{i\}$ and  $\sum_{j\in[r]\setminus\{i\}} b_{ij}=s$, we have that $a$ of the sets $\{\phi_{\mc A}(ij)\}_{j\in[r]\setminus\{i\}}$ have size $z+1$, while the remaining $r-1-a$ have size $z$
(and, in particular,~\eqref{eq:consmult} holds). Thus
\begin{align*}
q(\phi_{\mc{A}},\bm{u}) &= 2\left(\frac{1}{r}\right)^2 \frac{r}{2}
\left((r-1-a)\log(z) + a\log(z+1)
\right)
= \gf_s(r).
\end{align*}
Thus also $q(\phi_{\rm dec},\bm{u})=\gf_s(r)$ for any $\phi_{\rm dec}$
as in \autoref{cons:dec}.
We have proved the following lemma.

\begin{lemma}\label{lm:glower}
For all integers $s \geq 2$, we have $Q(s) \geq \gf(s)$. \hfill\qed
\end{lemma}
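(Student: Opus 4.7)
The plan is to exhibit, for every even integer $r \geq 2$, a feasible triple for Problem $Q(\x)$ whose objective value equals $\gf_s(r)$; then $Q(s) \geq \max_{r \in 2\mb{N}} \gf_s(r) = \gf(s)$ follows by taking the maximum. This is exactly the argument outlined in the preceding discussion, so the proof is essentially already in hand; I only need to assemble the pieces into one clean statement.

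First, I would fix $r = 2t \in 2\mb{N}$ and apply \autoref{cons:dec}. Since $r$ is even, $K_r$ admits a $1$-factorisation into perfect matchings $M_1, \ldots, M_{r-1}$. I would then partition $[s] = A_1 \cup \ldots \cup A_{r-1}$ into classes whose sizes differ by at most $1$, so exactly $a$ of the $A_\ell$ have size $z+1$ and the remaining $r-1-a$ have size $z$, where $z = \lfloor s/(r-1) \rfloor$ and $a = s - (r-1)z$. Defining $\phi_{\rm dec}(ij) := A_\ell$ whenever $ij \in M_\ell$ and taking $\bm u \in \DD^r$ uniform produces a candidate triple $(r, \phi_{\rm dec}, \bm u)$.

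Second, I would verify feasibility by checking that $\phi_{\rm dec}$ is $K^{(2)}_3$-free. The key observation is that $\phi_{\rm dec}^{-1}(c) = M_\ell$ whenever $c \in A_\ell$, so each colour class is a matching. Hence no colour $c$ can appear on two adjacent edges $hi, hj$, which rules out every possible realisation of a $K^{(2)}_3$ pattern (such a realisation would require some colour to repeat on two adjacent edges of the triangle).

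Third, I would compute the objective. Each vertex $i \in [r]$ is incident to exactly $a$ edges $ij$ with $|\phi_{\rm dec}(ij)| = z+1$ and exactly $r-1-a$ edges with $|\phi_{\rm dec}(ij)| = z$, as confirmed by summing multiplicities: $a(z+1) + (r-1-a)z = s$. With uniform weights $1/r$, a direct calculation yields
\[
q(\phi_{\rm dec}, \bm u) = \frac{2}{r^2} \sum_{ij \in \binom{[r]}{2}} \log|\phi_{\rm dec}(ij)| = \frac{1}{r} \bigl((r-1-a)\log z + a \log(z+1)\bigr) = \gf_s(r).
\]
Taking the maximum over $r \in 2\mb{N}$ gives $Q(s) \geq \gf(s)$.

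There is no real obstacle here: the argument is fully constructive and the only mild subtlety is the arithmetic ensuring that a balanced colour partition yields exactly $a$ classes of size $z+1$. The existence of a $1$-factorisation of $K_r$ for even $r$ is classical, and the feasibility check is immediate from the matching structure of each colour class. More substantive work (matching upper bounds and verifying $\Rev(s) \subseteq \{\rev(s), \rev(s)+2\}$) will be needed later to prove \autoref{th:K32}, but the present lemma is purely a lower-bound assertion that follows directly from the construction.
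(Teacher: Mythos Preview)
Your proposal is correct and takes essentially the same approach as the paper: the lemma is stated with a \qed\ because it follows immediately from the preceding discussion, which exhibits \autoref{cons:dec} (or more generally \autoref{cons:general}), verifies $K^{(2)}_3$-freeness via the matching structure of each colour class, and computes $q(\phi_{\rm dec},\bm u) = \gf_s(r)$. The only cosmetic difference is that the paper phrases the computation via \autoref{cons:general} and then notes \autoref{cons:dec} is a special case, whereas you work directly with \autoref{cons:dec}; both are equally valid for the lower bound.
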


We end this section by introducing some terminology for $s$-colour templates $\phi$ on $r$ parts that behave as in \autoref{cons:general}. We say that $\phi$ is \emph{uniform at $i$} for some $i\in [r]$ if the set $\{\phi(ij)\}_{j\in [r]\sm \{i\}}$ is an equipartition of $[s]$. We further say that $\phi$ is \emph{uniform} if it is uniform at $i$ for all $i\in [r]$.

\begin{rem}\label{rem:uniform}
    A quick check shows that $\phi \in \Phi_{P,s}(r)$ is uniform if and only if $r$ is even and $\phi=\phi_{\mc A}$ for some $\mc A$ as in \autoref{cons:general}. 
\end{rem}

\subsection{Main optimisation result and proof sketch}\label{sec:disketch}

The machinery we have developed for general colour patterns 
means that our main task here is to determine $\opt^*(s)$
for all integers $s \geq 2$.
We will show that, for all $s \geq 2$, every element of $\opt^*(s)$
is of the form
$(r,\phi_{\mc{A}},(\frac{1}{r},\ldots,\frac{1}{r}))$,
where $r \in \Rev(s)$ and $\phi_{\mc{A}}$ is as in \autoref{cons:general}.
This will show the following theorem, which we will then combine with~\autoref{th:exact1} to prove~\autoref{th:K32}.

\begin{theorem}\label{th:K23opt}
    Let $s \geq 2$ be an integer. Then $Q(s) = \gf(s)$,
    and $(r^*,\phi^*,\ba^*) \in \opt^*(s)$
    if and only if $r^* \in \Rev(s)$, $\ba^*=(\frac{1}{r^*},\ldots,\frac{1}{r^*})$ is uniform, and $\phi^*=\phi_{\mc{A}}$
    for some $\mc{A}$ as in \autoref{cons:general}.
    
\end{theorem}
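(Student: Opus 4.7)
The lower bound $Q(s)\ge \gf(s)$ is already given by \autoref{lm:glower}, so the plan is to prove $Q(s)\le \gf(s)$ and show that equality forces the stated structure. I would fix $(r,\phi,\ba)\in\opt^*(s)$, observe via \autoref{lm:cont1} that $q_i(\phi,\ba)=Q(s)$ for every $i\in[r]$ (since $\alpha_i>0$ in $\opt^*$), and invoke \autoref{lm:match}(i) to conclude that each $\phi^{-1}(c)$ is a matching and $\sum_{j\ne i}\phi_{ij}\le s$.

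\textbf{Master inequality.} I would then sum the equal contributions and swap the order of summation to obtain
\[
r\,Q(s)\;=\;\sum_{i\in[r]}q_i\;=\;\sum_{ij\in\binom{[r]}{2}}(\alpha_i+\alpha_j)\log\phi_{ij},
\]
and double-count over colours:
\[
\sum_{ij}(\alpha_i+\alpha_j)\phi_{ij}\;=\;\sum_{c=1}^{s}\sum_{v\in V(\phi^{-1}(c))}\alpha_v\;\le\; s,
\]
with equality only when every matching $\phi^{-1}(c)$ is perfect, which forces $r$ to be even. Writing $w_{ij}:=\alpha_i+\alpha_j$ (so $\sum_{ij}w_{ij}=r-1$) and applying Jensen's inequality to $\log$,
\[
r\,Q(s)\;\le\;(r-1)\log\!\bigl(\tfrac{s}{r-1}\bigr)\;=\;r\,\hf_s(r),
\]
hence $Q(s)\le\hf_s(r)$. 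For odd $r$, the double-counting inequality is strict (no perfect matching exists in $K_r$), and a short comparison between $\hf_s(r)$ and $\hf_s(r+1)$ rules odd $r$ out of $\opt^*(s)$.

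\textbf{Integer refinement and equality.} Next I would upgrade $\hf_s(r)$ to $\gf_s(r)$ by exploiting $\phi_{ij}\in\mathbb Z_{\ge 2}$ (\autoref{fact:opts}): a discrete-Jensen argument shows the maximum of $\sum w_{ij}\log\phi_{ij}$ under the integer and matching constraints is attained only when each $\phi_{ij}\in\{z,z+1\}$ with $z=\lfloor s/(r-1)\rfloor$ and exactly $a=s-(r-1)z$ of the values equal to $z+1$ per vertex. In the equality regime each colour $c$ is a perfect matching, so it appears on exactly one edge incident to each vertex $i$; hence $\{\phi(ij)\}_{j\ne i}$ is a partition of $[s]$ for every $i$, and integer-Jensen equality makes the sizes of the parts an equipartition. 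Thus $\phi$ is uniform in the sense of \autoref{rem:uniform}, which yields $\phi=\phi_{\mc A}$ for some partition $\mc A$ of $[s]$ as in \autoref{cons:general}. Feeding this structural form back into $q_i(\phi,\ba)=Q(s)$ for every $i$ together with $\sum_i\alpha_i=1$ yields $\ba=(1/r,\dots,1/r)$; to confirm this I would use a Zykov-style perturbation (swapping two unequal $\alpha_i$'s while exploiting the symmetry of $\phi$ inside a colour class) rather than appealing to strict concavity, which can fail for some $(z,z+1)$-patterns. Since $Q(s)=\gf_s(r)$ with $r$ even, the definition of $\gf(s)$ forces $r\in\Rev(s)$, and the reverse inclusion is the direct calculation following \autoref{cons:general}.

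\textbf{Main obstacle.} The subtlest step will be the joint integer-matching-Jensen tightening when $(r-1)\nmid s$: here $\hf_s(r)$ strictly exceeds $\gf_s(r)$ by $e_s(r)>0$, and I must squeeze exactly this gap out of the discreteness and matching constraints without losing equality elsewhere, which is what effectively forces the $\{z,z+1\}$-pattern and in turn the partition $\mc A$. A secondary challenge is deducing uniformity of $\ba^*$ in the absence of strict concavity of $q(\phi^*,\cdot)$ on the simplex, so the symmetrisation/perturbation route will likely be more robust than an eigenvalue-based argument.
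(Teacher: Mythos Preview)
Your master inequality is, after regrouping $\sum_{ij}(\alpha_i+\alpha_j)\log\phi_{ij}=\sum_j\alpha_j\sum_{i\ne j}\log\phi_{ij}$, exactly the paper's averaging trick (\autoref{lm:g_bd}), and for even $r$ the per-vertex integer maximisation (\autoref{lm:maxoflog}) then yields $Q(s)\le g_s(r)\le g(s)$ just as you outline. The equality analysis also runs along the paper's lines, though note that the per-vertex regrouping is essential: applying integer-Jensen directly to the weights $w_{ij}=\alpha_i+\alpha_j$ does not force near-equal $\phi_{ij}$ unless you already know $\ba$ is uniform, which is the circularity you flagged.

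The genuine gap is the odd-$r$ case. Your claim that ``a short comparison between $\hf_s(r)$ and $\hf_s(r+1)$ rules odd $r$ out'' is false: for many $s$ the maximiser of $g_s$ (and of $\hf_s$) over \emph{all} integers is odd---for instance $\Rmax(s)=\{3\}$ for every $17\le s\le 76$ (\autoref{table:2})---so $g_s(r)>g(s)$ and there is nothing to compare to. Your strict inequality for odd $r$ only says $Q(s)<g_s(r)$; since $g_s(r)$ may exceed $g(s)$, this is no contradiction. Quantitatively, the slack in your double-count is $\sum_c\alpha_{u_c}$ where $u_c$ is the vertex missed by colour $c$, and if one part has tiny weight and every colour misses it, the deficit is negligible. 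The paper therefore devotes substantial work to this case: it shows that in any odd optimal solution the \emph{largest} part must be noticeably above $1/r$ (\autoref{lm:largepart}), bounds the contribution of that largest part by an auxiliary function $f_s$ (\autoref{lm:contribution}, \autoref{lm:fprops}), and then proves via a delicate analytic and computer-assisted comparison (\autoref{lm:fcomp}) that this bound is beaten by $\max\{g_s(r-1),g_s(r+1)\}\le g(s)$, combined with \autoref{lm:Rsprecise}(ii) to exclude two odd near-maximisers. None of this is avoidable by the Jensen route you propose.
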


\begin{rem}\label{rm:reallyproving}
    We are actually proving the following, 
    where instead of maximising $q$ over $\phi$ without dichromatic triangles, we maximise $q$ over $\phi$ in which each colour class is a matching.
    Together with \autoref{lm:match}(ii), this implies \autoref{th:K23opt}.

    Let $s \geq 2$ be an integer. Let $(r,\phi,\ba)$ be such that $r$ is an integer, $\phi:\binom{[r]}{2} \to 2^{[s]}$ and $\phi^{-1}(c)$ is a matching for all $c \in [s]$, and $\ba \in \DD^r$. Then $q(\phi,\ba) \leq \gf(s)$ with equality if and only if $r \in \Rev(s)$, $\ba$ is uniform and $\phi=\phi_{\mc{A}}$ for some $\mc{A}$ as in \autoref{cons:general}.
\end{rem}

We now sketch the proof of \autoref{th:K23opt}. Let $(r,\phi,\ba)$ be a basic optimal solution. We recall from~\autoref{lm:cont1} that
every part has optimal contribution, that is, $q_i(\phi,\ba)=\sum_{j \in [r]\sm\{i\}}\aA_j \log \phi_{ij} = Q(s)$. We will make use of this fact by analysing contributions in a number of ways.
The first way is the following averaging trick for the contributions: for an optimal solution we have
\begin{equation}\nonumber
q(\phi,\ba) = \frac{1}{r}\sum_{i \in [r]}q_i(\phi,\ba)= \sum_i\frac{1}{r}\sum_{j\neq i}\alpha_j\log\phi_{ij}
=\frac{1}{r}\sum_j\alpha_j\sum_{i\neq j}\log\phi_{ij} \leq \sum_j\alpha_j \gf_s(r)=\gf_s(r),\end{equation}
 where the last inequality uses a simple fact about maximising the sum of logarithms (\autoref{lm:maxoflog}).
This shows that solutions on an even number of parts have $q(\phi,\ba) \leq g(s)$ and some further analysis shows that this bound is only attained by the $(r,\phi,\ba)$ described in~\autoref{th:K23opt}.

It then only remains to rule out the case that $r$ is odd for which we needed to work much harder.
By~\autoref{lm:match}(ii), every $\phi^{-1}(c)$ is a maximal matching, and hence we might intuitively expect $r$ to be even so that the matchings can span the vertex set. When $1$ is a largest part, again averaging over contributions, we see that
$$
q(\phi,\ba)=\frac{1}{r}\sum_{i \in [r]}q_i(\phi,\ba) \leq \frac{2\alpha_1}{r}\sum_{ij \in \binom{[r]}{2}}\log \phi_{ij}.
$$
But if $r$ is odd, then every matching $\phi^{-1}(c)$ contains $\frac{r-1}{2}$ edges, and hence the sum of all $\phi_{ij}$ is at most $s\cdot\frac{r-1}{2}$
(if $r$ is even, this sum is $s\cdot\frac{r}{2}$). If then $\alpha_1$ is not significantly larger than $\frac{1}{r}$, this yields a contradiction to $(r,\phi,\ba)$ being optimal
(see~\autoref{lm:largepart} for details).

In \autoref{lm:contribution}, we maximise $q(\phi,\ba)$ under the condition that $1$ is a largest part of some fixed size $x$. A weight shifting process shows that the contribution $q_1(\phi,\ba)$ is maximised when we have as many parts of size $x$ as possible and possibly one smaller part. It turns out that when $x$ is significantly larger than $\frac{1}{r}$, $q_1(\phi,\ba)$ is not only smaller than $\gf_s(r)$, but actually smaller than either $\gf_s(r-1)$ or $\gf_s(r+1)$ (see \autoref{lm:fcomp}).  Thus the $r$-part solution is not optimal (and we do better by taking an even solution with one fewer part or one more part).

We give the full proof of \autoref{th:K23opt} in \autoref{sec:K23:proof}. Before we can do this, in the following subsection we will investigate which $r$ maximise $g_s(r)$, which will allow us to assume that $r\leq \log(s)+2$ later.

\subsection{Analytic estimates}\label{sec:analytic}

In this section, we prove some estimates concerning the functions $\gf_s$, $\hf_s$, $e_s$, $\gf$, $\Rmax$ and $\Rev$. 
These are mainly proved via calculus and hence the more standard proofs are deferred to the appendix.

\begin{lemma}\label{lm:esr}
For all integers $s,r$ satisfying $2 \leq r \leq s-1$, we have
$$
0 \leq e_s(r) \leq \frac{1}{4}\left\lfloor \frac{s}{r-1}\right\rfloor^{-2}.
$$
\end{lemma}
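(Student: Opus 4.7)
The plan is to rewrite $e_s(r)$ in a transparent form that exposes it as the error term of a linear interpolation for $\log$. First observe that the range $2\le r\le s-1$ is vacuous when $s=2$, so I may assume $s\ge 3$, which forces $z=\lfloor s/(r-1)\rfloor\ge 1$. Setting $\lambda:=a/(r-1)\in[0,1)$, the identity $s/(r-1)=z+\lambda$ gives
$$\hf_s(r)=\frac{r-1}{r}\log(z+\lambda),\qquad \gf_s(r)=\frac{r-1}{r}\bigl[(1-\lambda)\log z+\lambda\log(z+1)\bigr],$$
so that $e_s(r)=\frac{r-1}{r}h(\lambda)$, where
$$h(\lambda):=\log(z+\lambda)-(1-\lambda)\log z-\lambda\log(z+1).$$

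For the lower bound, since $z+\lambda=(1-\lambda)z+\lambda(z+1)$ is a convex combination of $z$ and $z+1$, concavity of $\log$ (Jensen's inequality) yields $h(\lambda)\ge 0$, whence $e_s(r)\ge 0$.

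For the upper bound, I would recognise $h(\lambda)$ as precisely the discrepancy at the point $z+\lambda$ between $f(t):=\log t$ and its linear interpolant $p(t)$ through $(z,\log z)$ and $(z+1,\log(z+1))$. Applying the standard error formula (via Rolle's theorem applied twice to an auxiliary function that vanishes at $z$, $z+1$ and $z+\lambda$), there exists $\xi\in(z,z+1)$ such that
$$h(\lambda)=\frac{f''(\xi)}{2}\cdot\lambda(\lambda-1)=\frac{\lambda(1-\lambda)}{2\xi^2},$$
since $f''(\xi)=-1/\xi^2$. Because $\xi>z$ and $\lambda(1-\lambda)\le 1/4$ on $[0,1]$, this gives $h(\lambda)\le 1/(8z^2)$, and hence
$$e_s(r)=\frac{r-1}{r}h(\lambda)\le\frac{1}{8z^2}\le\frac{1}{4z^2},$$
as desired (with a factor of two to spare).

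There is essentially no obstacle here: the only substantive step is invoking the interpolation remainder, which is a standard calculus fact. Everything else is bookkeeping with the definitions of $z$, $a$ and $\lambda$.
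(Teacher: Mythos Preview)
Your proof is correct. The lower bound via Jensen is exactly what the paper does. For the upper bound you take a different route: the paper writes out $r\cdot e_s(r)$ explicitly, applies $\log(1+x)\le x$ to each summand to obtain
\[
e_s(r)\le \frac{a(r-1-a)}{r(r-1)}\cdot\frac{1}{z(z+1)}\le \frac{1}{4z(z+1)},
\]
whereas you recognise $h(\lambda)$ as the linear-interpolation error for $\log$ on $[z,z+1]$ and invoke the standard remainder formula to get $h(\lambda)=\lambda(1-\lambda)/(2\xi^2)\le 1/(8z^2)$. Your argument is slightly more conceptual and yields a marginally sharper constant (you end with $\tfrac{r-1}{r}\cdot\tfrac{1}{8z^2}$ rather than $\tfrac{1}{4z(z+1)}$), at the cost of quoting the interpolation remainder theorem; the paper's approach is a touch more self-contained since it only uses $\log(1+x)\le x$. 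Both are perfectly adequate here.
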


\begin{proof}
The fact that $e_s(r) \geq 0$ is a consequence of the concavity of $\log(x)$ and Jensen's inequality. Indeed, if we denote $s=(r-1)z+a$ for $0\le a\le r-2$, we have
\begin{align*}\gf_s(r)&=\frac{r-1}{r}\left(\frac{r-1-a}{r-1}\log (z)+\frac{a}{r-1}\log (z+1)\right)\\
&\le \frac{r-1}{r}\log\left(\frac{r-1-a}{r-1}z+\frac{a}{r-1}(z+1)\right)=\hf_s(r).\end{align*}
For the upper bound, it is also easy to see that $e_s(r)=0$ when $r-1$ divides $s$, while otherwise we have
\begin{align*}
&r \cdot e_s(r) = a\log\left(\frac{s/(r-1)}{z+1}\right)+(r-1-a)\log\left(\frac{s/(r-1)}{z}\right)\\
&=a\log\left(1+\frac{a-(r-1)}{(r-1)(z+1)}\right)+(r-1-a)\log\left(1+\frac{a}{(r-1)z}\right)
\\
&\leq 
\frac{a (r-1-a)}{r-1}\left(\frac{1}{z}-\frac{1}{z+1}\right),
\end{align*}
where for the inequality we used that $\log(1+x)\leq x$ for $x>-1$.
Since $a(r-1-a)\leq \frac{1}{4}(r-1)^2$, we get
$$
e_s(r) \leq \frac{1}{4} \left(\frac{1}{z}-\frac{1}{z+1}\right)=\frac{1}{4z(z+1)} \leq \frac{1}{4}\left\lfloor \frac{s}{r-1}\right\rfloor^{-2},
$$
as required.
\end{proof}

\begin{lemma}\label{lm:gsr}
For all integers 
$r,s$ with $2 \leq r < (s/2)^{1/4}$ we have
 $$\gf_{s+1}(r+1)-\gf_{s+1}(r) > \gf_s(r+1)-\gf_s(r).$$
\end{lemma}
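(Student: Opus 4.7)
The plan is to first reformulate $\gf_s(r)$ as $\gf_s(r) = \frac{1}{r}\log N_s(r)$, where, writing $s = (r-1)z + a$ with $0 \le a \le r-2$, we set $N_s(r) := z^{r-1-a}(z+1)^a$. A short case analysis (splitting on $a < r-2$ versus $a = r-2$, the latter being the case where incrementing $s$ rolls the quotient from $z$ to $z+1$) gives the clean formula
$$\frac{N_{s+1}(r)}{N_s(r)} = \frac{z+1}{z}, \qquad z=\left\lfloor\frac{s}{r-1}\right\rfloor.$$
Applying this identity for both $r$ and $r+1$, the desired inequality $\gf_{s+1}(r+1)-\gf_{s+1}(r) > \gf_s(r+1)-\gf_s(r)$ is equivalent to
$$\frac{1}{r+1}\log\!\left(1+\tfrac{1}{z'}\right) > \frac{1}{r}\log\!\left(1+\tfrac{1}{z}\right),\qquad\text{where } z = \lfloor s/(r-1)\rfloor,\ z'=\lfloor s/r\rfloor.$$

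The key analytic input will be the classical fact that $f(x) := (x+1)\log(1+1/x)$ is strictly decreasing on $(0,\infty)$, which follows from a one-line derivative check giving $f'(x) = \log(1+1/x) - 1/x < 0$. Once we establish $z' < z$ (which we do in the final step), monotonicity of $f$ yields $(z'+1)\log(1+1/z') > (z+1)\log(1+1/z)$, and hence
$$r\log\!\left(1+\tfrac{1}{z'}\right) > \frac{r(z+1)}{z'+1}\log\!\left(1+\tfrac{1}{z}\right).$$
So it suffices to verify the purely arithmetic inequality $r(z+1) \geq (r+1)(z'+1)$, i.e.\ $rz-(r+1)z' \ge 1$.

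The last step uses the hypothesis $r < (s/2)^{1/4}$, i.e.\ $s > 2r^4$. From the standard floor bounds $z \ge s/(r-1) - 1$ and $z' \le s/r$, one computes
$$rz - (r+1)z' \ge \frac{rs}{r-1} - r - \frac{(r+1)s}{r} = \frac{s}{r(r-1)} - r,$$
and since $r(r-1)(r+1) = r^3 - r < r^4 \le 2r^4 < s$, we get $s/(r(r-1)) > r+1$, so $rz - (r+1)z' > 1$; in particular $z > z'$ strictly, justifying the earlier use of monotonicity. The main obstacle is spotting the right reduction: once $\gf$ is written as $\frac{1}{r}\log N_s(r)$, the ratio $N_{s+1}(r)/N_s(r)$ telescopes to the single factor $(z+1)/z$ and the classical monotone quantity $(1+1/x)^{x+1}$ becomes the natural bridge between the easy integer comparison $z' < z$ and the logarithmic refinement we actually need; everything after this reduction is elementary.
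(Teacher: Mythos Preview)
Your proof is correct and takes a genuinely different route from the paper's argument.

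The paper decomposes $\gf_s(r)=\hf_s(r)-e_s(r)$ into the smooth approximation $\hf_s(r)=\frac{r-1}{r}\log(s/(r-1))$ and an error term, then bounds the sum of four error terms crudely via \autoref{lm:esr} by $-r^2/s^2$, computes the $\hf$-contribution exactly as $\frac{1}{r(r+1)}\log(1+1/s)\geq 1/(2r^2s)$, and combines the two using $2r^4<s$. Your argument instead works with the exact integer structure: the identity $N_{s+1}(r)/N_s(r)=(z+1)/z$ eliminates any approximation step and reduces the lemma to the single analytic fact that $x\mapsto(1+1/x)^{x+1}$ is strictly decreasing, together with the arithmetic check $rz-(r+1)z'>1$, which follows from $s>r(r-1)(r+1)$ and hence certainly from $s>2r^4$. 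Your proof is more self-contained (it does not rely on \autoref{lm:esr}) and arguably cleaner; the paper's approach has the advantage of reusing machinery that is needed elsewhere in the analytic estimates section, so no new ideas are required once that machinery is in place.
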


\begin{proof}
\autoref{lm:esr} implies that
$$
    -e_{s+1}(r+1)+e_s(r+1)-e_s(r)+e_{s+1}(r) \geq
-e_{s+1}(r+1)-e_s(r) \geq - \frac{2(r-1)^2}{4(s-r+1)^2}\geq - \frac{r^2}{s^2},
$$
since $r\leq s/4$ which follows from $2 \leq r < (s/2)^{1/4}$. Using that $\log(1+x) \geq x/(1+x)$ for $x>0$, we have
$$
\hf_{s+1}(r+1)-\hf_s(r+1)+\hf_s(r)-\hf_{s+1}(r) = \frac{1}{r(r+1)}\log\left(1+\frac{1}{s}\right) \geq \frac{1}{r(r+1)(s+1)} \geq \frac{1}{2r^2s}.
$$
Combining the two previous bounds, we get
$$
\gf_{s+1}(r+1)-\gf_{s}(r+1)+\gf_s(r)-\gf_{s+1}(r) \geq \frac{1}{2r^2s} - \frac{ r^2}{s^2} > 0,
$$
since $2r^4 < s$,
completing the proof.
\end{proof}

The \emph{Lambert $W$-function} is the inverse of $f(x) = xe^x$.
That is, $y=xe^x$ for $y \geq 0$ if and only if $x=W(y)$.

By applying $\log$ to $y=W(y)e^{W(y)}$ and using standard bounds on $\log$ we have
\begin{equation}\label{eq:W}
\tfrac{1}{2}\log y <\log y-\log\log y < W(y) < \log y  \quad\text{for }y > e.
\end{equation}
For $s<e^2$, let $\rmax(s) = \rev(s) := 2$, and for $s\geq e^2$, let
$$
\rmax(s) :=
\max\{r \in \mb{N}: (r-1)e^r \leq s\}
\quad\text{and}\quad
\rev(s) := 
\max\{r \in 2\mb{N}: (r-1)e^r \leq s\}.
$$ 
(We already defined $\rev(s)$ in Definition~\ref{def:defs}.)

Then for $s \geq e^2$, we have $\rev(s) = 2\lfloor (W(s/e)+1)/2\rfloor$
and~(\ref{eq:W}) implies that $\rmax(s) = \lfloor W(s/e)+1\rfloor \leq \log(s)$. Thus, for all $s \geq 2$,
\begin{equation}\label{eq:rs}
 \rev(s) \leq \rmax(s) \leq \max\{2,\log(s)\}
\end{equation} and, for $s \geq 92$,
\begin{equation}\label{eq:rs_lower}
\rmax(s)\geq \log s -1-\log(\log(s)-1) \geq \log(s)/2.
\end{equation}
Moreover, we note that $W'(y)=\frac{W(y)}{y(1+W(y))}$ for all $y > 0$ and hence 
\begin{equation} \label{eq:W'}
    W'(y)=|W'(y)|\leq 1/y.
\end{equation}
The next lemma concerns the analytic properties of the function $\hf_s$.
Since its proof follows by elementary calculus, we defer it to the \hyperref[appendix]{Appendix}.

\begin{lemma}\label{lm:hanalytic}
The following holds for any integer $s \geq 2$.
\begin{enumerate}[label=\emph{(\roman*)}]
    \item $\hf_s$ has a unique maximum at $m(s)=W(s/e)+1$
    (so for $s \geq e^2$ we have $\rmax(s) = \lfloor m(s) \rfloor$).
    Moreover, $\hf_s$ is strictly increasing on $(1,m(s))$ and strictly decreasing on $(m(s),\infty)$, and
    $$
    \hf'_s(x) = \frac{\log\left(\frac{s}{x-1}\right)-x}{x^2} \quad\text{for }x >1,
    $$
    \item $\hf(s)=W(s/e)$,
    \item  $\hf_s(m(s)+a)-\hf_s(m(s)+b) \geq \frac{1}{16}(\log(s)+5/2)^{-2}$ whenever $s \geq e^2$; $m(s)+a,m(s)+b>0$; $ab \geq 0$; $|a|<2$ and $|a|+1/2 \leq |b|$, 
    \item $\hf_s(m(s))-\hf_s(m(s)+b) \leq 8|b|^2(\log(s)-4)^{-2}$ whenever $|b|\leq \min\{2,m(s)-2\}$ and $s\ge 55$.
\end{enumerate}    
\end{lemma}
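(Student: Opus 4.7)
The plan is to prove each part by direct calculus on $\hf_s(x)=(1-1/x)\log(s/(x-1))$; throughout, write $L(x)=\log(s/(x-1))$ so that $L'(x)=-1/(x-1)$. For (i), the product rule on $\hf_s=(1-1/x)L$ gives the claimed formula $\hf'_s(x)=(L(x)-x)/x^2$. The numerator $N(x):=L(x)-x$ has $N'(x)=-1/(x-1)-1<0$, so $N$ is strictly decreasing from $+\infty$ to $-\infty$ on $(1,\infty)$ and vanishes at a unique point. Substituting $y=x-1$, the equation $L(x)=x$ rearranges to $ye^y=s/e$, so $y=W(s/e)$ and $x=m(s)$. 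The sign of $\hf'_s$ matches that of $N$, yielding the stated monotonicity. For $s\ge e^2$ we have $m(s)\ge 2$, and $\rmax(s)=\lfloor m(s)\rfloor$ follows since $(r-1)e^r\le s$ is equivalent to $r\le m(s)$. Part (ii) is then immediate: since $L(m(s))=m(s)$ by the critical-point equation, $\hf_s(m(s))=(1-1/m(s))m(s)=m(s)-1=W(s/e)$.

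For (iii), I treat $b>0$; the case $b<0$ is symmetric. The hypotheses give $a\ge 0$ and $b\ge a+1/2$, so by (i),
\[\hf_s(m(s)+a)-\hf_s(m(s)+b)\ge \hf_s(m(s)+a)-\hf_s(m(s)+a+1/2).\]
The auxiliary $h(x):=x-L(x)$ satisfies $h(m(s))=0$ and $h'(x)=1+1/(x-1)\ge 1$, giving $h(m(s)+t)\ge t$ for $t\ge 0$, hence $-\hf'_s(m(s)+t)\ge t/(m(s)+t)^2$. Integrating over $[a,a+1/2]$ and using $a\ge 0$ and $|a|<2$,
\[\hf_s(m(s)+a)-\hf_s(m(s)+a+1/2)\ge \frac{a+1/4}{2(m(s)+a+1/2)^2}\ge \frac{1}{8(m(s)+5/2)^2}.\]
The inequality $m(s)\le \log s$ (from $W(y)\le \log y$, see~\eqref{eq:W}) then yields $1/(8(m(s)+5/2)^2)\ge 1/(16(\log s+5/2)^2)$, as required.

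For (iv), I bound $|\hf'_s|$ directly on the interval $|t|\le \min\{2,m(s)-2\}$ and integrate. The formula $\hf'_s(m(s)+t)=(L(m(s)+t)-(m(s)+t))/(m(s)+t)^2$, together with the estimate $|L(m(s)+t)-(m(s)+t)|=|t|+|\log(1+t/(m(s)-1))|\le 2|t|$ (since $|t|\le m(s)-2$ implies $1/(m(s)-1-|t|)\le 1$) and the bound $m(s)+t\ge m(s)/2$ in the same range, yield $|\hf'_s(m(s)+t)|\le 8|t|/m(s)^2$. Integrating,
\[\hf_s(m(s))-\hf_s(m(s)+b)\le \int_0^{|b|}\frac{8t}{m(s)^2}\,dt=\frac{4|b|^2}{m(s)^2}.\]
Finally, the identity $\log s=\log(m(s)-1)+m(s)$ (from $s=(m(s)-1)e^{m(s)}$), combined with the elementary inequality $\log(m-1)\le (\sqrt 2-1)m+4$ valid for all $m\ge 2$, gives $m(s)\ge (\log s-4)/\sqrt 2$ for $s\ge e^2$; hence $4/m(s)^2\le 8/(\log s-4)^2$, completing the proof.

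The main obstacle is balancing constants in (iv): one must make the bound on $|\hf'_s|$ on the relevant interval tight enough, and combine it with a suitable lower bound for $m(s)$ in terms of $\log s$, to land on the precise prefactor $8$ and denominator $(\log s-4)^2$; by contrast, parts (i)--(iii) follow from routine calculus and a single well-chosen auxiliary function.
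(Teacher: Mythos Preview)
Your proof is correct and follows essentially the same route as the paper's: parts (i)--(ii) are identical, and for (iii)--(iv) both arguments exploit the critical-point identity $L(m(s))=m(s)$ to bound $|\hf'_s(m(s)+t)|$ linearly in $|t|$, then apply the mean value theorem (the paper) or integrate (you) and finish with $m(s)\approx\log s$. The only cosmetic differences are in the bookkeeping of constants --- e.g.\ in (iv) the paper uses $x\ge m(s)-2$ together with $m(s)-2\ge(\log s-4)/2$, whereas you use $m(s)+t\ge m(s)/2$ together with your sharper $m(s)\ge(\log s-4)/\sqrt{2}$ --- but the ideas are the same.
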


We are now ready to give a precise description of $\Rmax(s)$ and $\Rev(s)$. While parts (i) and (ii) are used in later subsections to prove~\autoref{th:K32}, part (iii) 
is used in the proof of~\autoref{cor:inf} as well as providing additional structural information that tells us for fixed $r$ what the sets $S(r) := \{s:~ r \in \Rmax(s)\}$ and $S_2(r):=\{s:~ r \in \Rev(s)\}$ look like.

\begin{lemma}\label{lm:Rsprecise}
Let $s \geq 2$ be an integer.
Then
\begin{enumerate}[label=\emph{(\roman*)}]
\item $\Rmax(s) \subseteq \{\rmax(s),\rmax(s)+1\}$ and
$\Rev(s) \subseteq \{\rev(s),\rev(s)+2\}$.
In particular, $\max \Rmax(s), \max \Rev(s) \leq \log(s)+2$.
\item
$g(s) > \min\{g_s(p),g_s(q)\}$ for any two distinct odd integers $p,q$.
\item 
There are increasing sequences $(s_2,s_3,s_4,\ldots)$
and $(\tilde s_2,\tilde s_4,\tilde s_6,\ldots)$ with
$$
2=s_2 < e^2 < s_3 < 2e^3 <
\ldots < (r-2)e^{r-1} < s_r < (r-1)e^r < s_{r+1} < re^{r+1} < \ldots
$$
and $s_{r-1} \leq \tilde s_r \leq s_r$
%with $s_{r+2}-s_r \geq 2$ 
such that the set $S(r):=\{s:~ r \in \Rmax(s)\}$ is either the interval $[s_r,s_{r+1}-1]$ or $[s_r,s_{r+1}]$ for all $r \in \mb{N}$;
and the set $S_2(r):=\{s:~ r \in \Rev(s)\}$ is either the interval $[\tilde s_r,\tilde s_{r+2}-1]$ or $[\tilde s_r,\tilde s_{r+2}]$ for all $r \in 2\mb{N}$.
Moreover, for all $r \in 2\mb{N}$ there exists $s \in \mb{N}$ with $\Rev(s)=\{r\}$.
\item The values in~\autoref{table:1} and~\autoref{table:2} hold.
\end{enumerate}
\end{lemma}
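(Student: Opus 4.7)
The plan combines three existing tools: the strict unimodality and quantitative separation of $\hf_s$ (Lemma 4.4), the rounding estimate $0 \le e_s(r) \le \tfrac{1}{4}\lfloor s/(r-1)\rfloor^{-2}$ (Lemma 4.3), and the monotonicity $\gf_{s+1}(r+1) - \gf_{s+1}(r) > \gf_s(r+1) - \gf_s(r)$ (Lemma 4.7). Throughout I use that $\rmax(s) = \lfloor m(s)\rfloor$ for $s \ge e^2$, placing $\rmax(s)$ within distance $1$ of the continuous maximiser $m(s) = W(s/e)+1$, together with $\rmax(s) \le \log s$ from~(4.4).

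For part~(i), I take any $r \notin \{\rmax(s), \rmax(s)+1\}$. If $r \le \rmax(s)-1$, apply Lemma 4.4(iii) with $a = \rmax(s) - m(s) \in (-1,0]$ and $b = r - m(s)$: the two have the same sign and $|b|-|a| = \rmax(s) - r \ge 1$, producing the separation $\hf_s(\rmax(s)) - \hf_s(r) \ge \tfrac{1}{16}(\log s + 5/2)^{-2}$. The case $r \ge \rmax(s)+2$ is symmetric using $\hf_s(\rmax(s)+1)$ with $a = \rmax(s)+1 - m(s) \in (0,1]$. For $r$ of order at most $\log s$, Lemma 4.3 gives $e_s(r) = O((\log s)^2/s^2)$, which is dwarfed by the analytic gap for $s$ past an explicit threshold; for larger $r$, the trivial bound $\gf_s(r) \le \log(\lfloor s/(r-1)\rfloor + 1)$ is dominated by $\gf_s(\rmax(s)) = \Theta(\log s)$. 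Small $s$ are settled by a finite computation. Restricting to even $r$ yields the corresponding statement for $\Rev(s)$ word for word.

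For part~(ii), given distinct odd $p < q$ with $g_s(p) \le g_s(q)$ WLOG, I locate an even integer beating $g_s(p)$ by splitting on the position of $m(s)$: if $m(s) \notin [p-1, q+1]$, strict monotonicity of $\hf_s$ on $[p,q]$ plus Lemma 4.4(iii) produces a quantitative gap at an even neighbour of $p$ or $q$; if $m(s) \in (p-1, q+1)$, an even integer within distance $1$ of $m(s)$ dominates $\min\{\hf_s(p), \hf_s(q)\}$ via Lemma 4.4(iv). In each case the $e_s$ correction is negligible against the $(\log s)^{-2}$-sized analytic gap.

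For part~(iii), Lemma 4.7 implies $s \mapsto \gf_s(r+1) - \gf_s(r)$ is strictly increasing in $s \in \mb{N}$, so $\{s : \gf_s(r+1) \ge \gf_s(r)\}$ is an upper integer interval. Combined with the unimodality of $\gf_s(\cdot)$ in $r$ from part~(i), the set $S(r)$ is exactly $[s_r, s_{r+1}]$ or $[s_r, s_{r+1}-1]$, with $s_r$ the threshold at which $\gf_s(r)$ first catches $\gf_s(r-1)$; the bounds $(r-2)e^{r-1} < s_r < (r-1)e^r$ follow from part~(i) pinning $\rmax(s) \in \{r-1, r\}$ on $S(r)$ together with the defining inequality of $\rmax$. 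The $S_2(r)$ argument is identical for even $r$, and existence of $s$ with $\Rev(s) = \{r\}$ follows since the interval lengths $|S_2(r)|$ grow (at least) like $e^r$. Part~(iv) is a direct computational verification via the explicit formula for $\gf_s(r)$. The main obstacle throughout is uniformly reconciling the $O((\log s)^{-2})$ analytic gap with the $e_s$ correction; above an explicit $s_0$ the asymptotics suffice, and below $s_0$ the computer check closes the argument.
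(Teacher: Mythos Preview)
Your overall strategy matches the paper's: combine the quantitative separation from Lemma~\ref{lm:hanalytic}(iii), the rounding estimate Lemma~\ref{lm:esr}, the monotonicity Lemma~\ref{lm:gsr}, and a finite computation for small $s$. Part~(i) is essentially the paper's argument. Two places, however, need repair.

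In part~(ii), you cannot impose both ``$p<q$'' and ``$g_s(p)\le g_s(q)$'' without loss of generality, and Lemma~\ref{lm:hanalytic}(iv) is an \emph{upper} bound on $\hf_s(m(s))-\hf_s(m(s)+b)$, so it does not by itself show that an even $r$ near $m(s)$ dominates the smaller odd value. The paper avoids your case split entirely: since $p,q$ are distinct odd integers, at least one of them is not the unique odd integer in $(m(s)-1,m(s)+1)$, so there is an even $r$ with $|r-m(s)|<2$ lying between $m(s)$ and that one; the claim already proved for~(i) then gives $g(s)\ge g_s(r)>g_s(p)\ge\min\{g_s(p),g_s(q)\}$ directly. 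In part~(iii), the step ``$s_r<(r-1)e^r$ follows from part~(i)'' is a genuine gap. Part~(i) only yields that $s\in S(r)$ forces $\rmax(s)\in\{r-1,r\}$, hence $S(r)\subseteq[(r-2)e^{r-1},re^{r+1})$; it does not show that $S(r)$ meets $[1,(r-1)e^r)$. The paper produces a witness by proving $\Rmax(\lfloor (r-1)e^r\rfloor)=\{r\}$, using Lemma~\ref{lm:hanalytic}(iii),(iv) together with the mean value theorem for $W$ (via $|W'(y)|\le 1/y$). That same calculation is what underpins ``for all $r\in 2\mb{N}$ there exists $s$ with $\Rev(s)=\{r\}$'', so your interval-length argument is circular without it. Finally, note that Lemma~\ref{lm:gsr} requires $r<(s/2)^{1/4}$, so the monotonicity of $s\mapsto g_s(r+1)-g_s(r)$ also relies on the finite check for small $s$.
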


Before proving the lemma, we make some remarks on~(ii).
If $\gf_s(r)$ is maximised by an even $r$,
then part~(ii) follows from~(i) since then $\Rmax(s)$ contains an even $r$ and at most one of $p,q$.
However, if $\gf_s(r)$ is only maximised by an odd $r$, then we could have $\Rmax(s)=\{p\}$ and $\gf_s(r)<\gf_s(p)$ where $r \in \Rev(s)$,
and must show $\gf_s(r)>\gf_s(q)$ for every odd $q \neq p$.

\begin{proof}
We will prove these assertions analytically for large $s$ and computationally for small $s$.
First we prove parts~(i) and (ii).
Suppose first that $s < 196$. We compute all values $\gf_s(r)$ for $r < s$ which determines $\Rmax(s), \Rev(s)$ and verifies parts~(i) and~(ii).
A script for this calculation is in the ancillary file {\tt optr.py}.

Thus we may suppose that $s \geq 196$.
We claim that for distinct integers $r_a=m(s)+a, r_b=m(s)+b \geq 2$ with $ab\geq 0$, $|a|<2$ and $|a|<|b|$, we have $g_s(r_a)>g_s(r_b)$. To see this, note that \autoref{lm:hanalytic}(iii) gives $\hf_s(r_a)-\hf_s(r_b) \geq 1/(16(\log(s)+5/2)^2).$  Equation~\eqref{eq:rs} shows that we have $r_a\leq \log(s)+2$. Thus \autoref{lm:esr} gives that
\begin{align}\label{eq:gh}
e_s(r_a) \leq \frac{1}{4\lfloor s/(r_a-1)\rfloor^2} \leq \frac{(r_a-1)^2}{4(s-r_a+1)^2} \leq \frac{(\log(s)+1)^2}{s^2}.
\end{align}
Putting the above observations together, we get
\begin{align*}
    g_s(r_a)-g_s(r_b)&\geq \hf_s(r_a)-\hf_s(r_b) - e_s(r_a) \geq \frac{1}{16(\log(s)+5/2)^2}- \frac{(\log(s)+1)^2}{s^2}>0,
\end{align*}
where the last inequality is true for $s \geq 196$. This proves the claim.

Parts (i) and (ii) follow easily from the claim. Indeed, to prove that $\Rmax(s) \subseteq \{\rmax(s),\rmax(s)+1\}$, it suffices to show that
for all integers $r,r' \geq 2$ with $r<\rmax(s)<\rmax(s)+1<r'$, we have $\gf_s(\rmax(s)) > \gf_s(r)$ and $\gf_s(\rmax(s)+1)>\gf_s(r')$.
Given such $r$, let $r_a=\rmax(s)$, $r_b=r$. Since $\rmax(s)\leq m(s)$, $a$ and $b$ are both non-positive and the other conditions are also easy to verify.
Given such $r'$, the second inequality follows similarly since $\rmax(s)+1 \geq m(s)$.

For the statement regarding $\Rev(s)$, the claim implies that $ g_s(r(s)+2)>g_s(r')$ when $r'>r(s)+2$. 

For part (ii), 
suppose that $p,q$ are both odd integers and note that there is an even integer $r$ that satisfies $|r-m(s)|<2$ and lies between $m(s)$ and, without loss of generality, $p$. By our claim applied with $r=r_a$ and $p=r_b$ we have that $g(s) \geq g_s(r)>g_s(p)\ge \min\{g_s(p), g_s(q)\}$ as desired.

Next, we prove part (iii).
First we will consider $S^0(r) := S(r) \cap [2,90000]$
and $S^0_2(r) := S_2(r) \cap [2,90000]$.
Suppose that $s<90000$.
By part~(i) we can find $\Rmax(s)$ and $\Rev(s)$ by computing $\rmax(s)$ and $r_2(s)$ as approximations of $W(s/e)$ and comparing the two possible values of $\gf_s$. Then we can manually check that 
$S^0(2)=[2,16],\ldots,S^0(8)=[11444,36023],
S^0(9)=[36024,90000]$
and 
$S^0_2(2)=[2,27],\ldots,S^0_2(8)=[5857,59470],S^0_2(10)=[59471,90000]$
are as shown in~\autoref{table:1} and~\autoref{table:2}. The two tables contain the optimal $r$ for $s\le \maxs$ (for $\Rev(s)$) and $s\le 90000$ (for $R(s)$) for illustrative purposes.
A script for this calculation is provided in the ancillary file {\tt optr.py}.

Suppose now that $s \geq 90000$.
We first show that
$\min\{\Rmax(s+1)\} \geq \max\{\Rmax(s)\}$ and $\min\{\Rev(s+1)\} \geq \max\{\Rev(s)\}$. Let $r^*=\max\{\Rmax(s)\}$ and $r_2^*=\max\{\Rev(s)\}$. 
By~\eqref{eq:rs} and part (i) of this lemma, we have $r^*,r_2^*\leq r(s)+2\le \log(s)+2<(s/2)^{1/4}+1$, so \autoref{lm:gsr} implies that for $1 \leq p< \max\{r^*,r_2^*\}$ we have $\gf_{s+1}(p+1)-\gf_{s+1}(p)>\gf_{s}(p+1)-\gf_s(p)$.
For any even $r$ with $r<r_2^*$, summing these equations over $r\le p \leq r_2^*-1$ yields
$$
\gf_{s+1}(r_2^*)-\gf_{s+1}(r)>\gf_{s}(r_2^*)-\gf_s(r) \geq 0.
$$
That is, for all even $r<r_2^*(s)$ we have $r \notin \Rev(s+1)$. 
So $\min\{\Rev(s+1)\} \geq \max\{\Rev(s)\}$. Summing the equations over $r\le p \le r^*$, it can be proved analogously that $\min\{\Rmax(s+1)\} \geq \max\{\Rmax(s)\}$.

Combined with the $S^0(r)$ and $S^0_2(r)$ obtained above, this yields the claimed interval structure of $S(r)$ and $S_2(r)$ respectively and, defining $s_r$ and $\tilde s_r$ as the minima of $S(r)$ and $S_2(r)$ respectively, it is easy to see that $s_{r-1} \leq \tilde s_r \leq s_r$. It now remains to show that for each integer $r \geq 2$, we have $s_r<s^*<s_{r+1}$ where $s^*\coloneqq (r-1)e^r \geq 90000$ and for all $r \in \mb{N}$, there is $s \in \mb{N}$ with $\Rmax(s) = \{r\}$ 
(we can see that this holds for $r \in [2,8]$, for which $s^*<90000$). For this, it suffices to prove that $R(\lfloor s^*\rfloor) = \{r\}$.
By definition, $r=W(s^*/e)+1$. Let $s$ satisfy $|s-s^*| \leq 1$.
Due to~\eqref{eq:W'} we have $|W'(x)|\leq 1/x$, so the mean value theorem gives $|m(s)-r|=|W(s/e)-W(s^*/e)|<1/(s^*-1)$. Then, by~\autoref{lm:hanalytic}(iv) applied with $b=r-m(s)$,
$$
\hf_s(m(s))-\hf_s(r) \leq \frac{8}{(\log(s) -4)^2(s^*-1)^2} \leq \frac{1}{s}.
$$
On the other hand, for any integer $r' \neq r$ we have $$|r'-m(s)|\geq |r'-r|- |r-m(s)| \geq 1-1/(s^*-1) \geq 1/2$$ and thus, by~\autoref{lm:hanalytic}(iii), $\hf_s(m(s))-\hf_s(r')\geq \frac{1}{16}(\log(s)+5/2)^{-2}$. 
Using~\eqref{eq:gh}  with $r$ in place of $r_a$ (which holds since $|m(s)-r|<2$) and putting everything together, we get
\begin{align*}
g_s(r') &\leq \hf_s(r') \leq  \hf_s(m(s)) - \frac{1}{16(\log(s)+5/2)^2} \leq \hf_s(r)+ \frac{1}{s}- \frac{1}{16(\log(s) +5/2)^2} \\
&\leq g_s(r) + \frac{(\log(s) +1)^2}{s^2} + \frac{1}{s^*-1}- \frac{1}{16(\log(s) +5/2)^2} < g_s(r),
\end{align*} as desired.
(This argument also shows that $R(\lfloor s^*\rfloor+1) = \{r\}$.)

Finally, for~(iv), we compute $\gf_s(r)$ for $r=\rmax(s),\rmax(s)+1,\rev(s),\rev(s)+2$ which, by~(i), determines $\Rmax(s)$ and $\Rev(s)$. See the ancillary file {\tt optr.py}.
\end{proof}

The final result of this section provides asymptotics for $\gf(s)$. Its proof is similar to the previous one and is deferred to the \hyperref[appendix]{Appendix}.

\begin{lemma}\label{lm:gapprox}
For all integers $s \geq 200$ we have
$$
0\le W(s/e)-\gf(s)\le \frac{600}{(\log(s))^2}
,\quad\text{and}\quad
\frac{e^{\gf(s)}}{(s/e)/W(s/e)} \to 1 \text{ as }s \to \infty.
$$
\end{lemma}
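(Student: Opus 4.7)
The plan is to combine the estimates already established in \autoref{sec:analytic} with the identity $\tilde g(s)=W(s/e)$ from \autoref{lm:hanalytic}(ii), which reduces the first inequality to $0\le \tilde g(s)-g(s)\le 600/(\log s)^2$.

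For the lower bound $g(s)\le W(s/e)$, I would simply observe that \autoref{lm:esr} gives $e_s(r)\ge 0$ for every $r$, so $g_s(r)\le \tilde g_s(r)\le \sup_{x>1}\tilde g_s(x)=W(s/e)$. Taking the maximum over even $r$ yields $g(s)\le W(s/e)=\tilde g(s)$.

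For the upper bound, fix $r\in \Rev(s)$ so that $g(s)=g_s(r)$, and write
\[
\tilde g(s)-g(s)=\bigl(\tilde g_s(m(s))-\tilde g_s(r)\bigr)+e_s(r).
\]
By \autoref{lm:Rsprecise}(i), $r\in\{\rev(s),\rev(s)+2\}$, and since $\rev(s)\in\{r(s),r(s)-1\}$ with $r(s)=\lfloor m(s)\rfloor$ for $s\ge e^2$, we have $|r-m(s)|\le 2$. Provided $m(s)\ge 4$ (which holds for $s$ large enough; the statement assumes $s\ge 200$), \autoref{lm:hanalytic}(iv) bounds the first bracket by $8\cdot 2^2/(\log s-4)^2=32/(\log s-4)^2$. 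For the second term, \autoref{lm:esr} together with $r\le \log s+2$ from \autoref{lm:Rsprecise}(i) yields $e_s(r)\le (r-1)^2/(4(s-r+1)^2)\le (\log s+1)^2/s^2$. Summing these two contributions and checking the arithmetic at $s=200$ (where $32/(\log s-4)^2$ is the dominant term) gives $\tilde g(s)-g(s)\le 600/(\log s)^2$ as claimed; the constant $600$ is comfortable because $(\log 200)^2/(\log 200-4)^2\approx 17$ and the lower-order term is negligible.

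For the asymptotic statement, the defining identity $W(s/e)e^{W(s/e)}=s/e$ gives $(s/e)/W(s/e)=e^{W(s/e)}$, so
\[
\frac{e^{g(s)}}{(s/e)/W(s/e)}=e^{g(s)-W(s/e)}.
\]
The first part of the lemma yields $|g(s)-W(s/e)|\le 600/(\log s)^2\to 0$, so the ratio tends to $1$. The main obstacle in this proof is purely bookkeeping: ensuring the constant $600$ works uniformly for all $s\ge 200$ by checking that the hypothesis $|r-m(s)|\le 2$ of \autoref{lm:hanalytic}(iv) is indeed met for $r\in\Rev(s)$, and that the contribution of $e_s(r)$ is of much smaller order than the main $1/(\log s)^2$ term.
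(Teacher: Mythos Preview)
Your proof is correct and follows essentially the same approach as the paper: both decompose $W(s/e)-g(s)=(\tilde g_s(m(s))-\tilde g_s(r))+e_s(r)$ for $r\in\Rev(s)$, bound the first term via \autoref{lm:hanalytic}(iv) using $|r-m(s)|\le 2$ from \autoref{lm:Rsprecise}(i), bound the second via \autoref{lm:esr}, and derive the limit from $e^{W(s/e)}=(s/e)/W(s/e)$. The paper's write-up is slightly terser but the argument is identical.
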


\subsection{Proof of \autoref{th:K23opt}}\label{sec:K23:proof}

In this section, we prove \autoref{th:K23opt}. We first need a simple lemma about maximising sums of logarithms, whose short proof is included in the \hyperref[appendix]{Appendix}.

\begin{lemma}\label{lm:maxoflog}
Suppose that $a_1,\ldots,a_n$ are positive with $a_1+\ldots+a_n=a$.
Then, for any $x>0$ and positive $x_1,\ldots,x_n$ 
with $x_1+\ldots+x_n \leq x$, we have
$$
\sum_{i=1}^n a_i \log(x_i) \leq 
\sum_{i=1}^n a_i\log\left(\frac{xa_i}{a}\right).
$$
    Moreover, if $a_1=a_2=\ldots=a_n$, and the $x_i$ are constrained to be integers, then the maximum is attained whenever $|x_i-x_j| \in \{0,1\}$ for all $i,j\in [n]$.
\end{lemma}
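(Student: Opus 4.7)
For the first inequality, since $\log$ is strictly increasing and each $a_i>0$, we may assume that $x_1+\cdots+x_n=x$ (if the sum is strictly less than $x$, scaling the $x_i$ up increases the objective). The plan is to reduce to a single application of Jensen's inequality for the concave function $\log$. Define the normalised variables
$$
y_i := \frac{a\, x_i}{x\, a_i} > 0,
$$
so that the constraint $\sum x_i \leq x$ becomes $\sum \frac{a_i}{a}\, y_i \leq 1$. Then
$$
\sum_{i=1}^n a_i \log(x_i) \;=\; \sum_{i=1}^n a_i \log\!\left(\frac{x a_i}{a}\right) \;+\; a \sum_{i=1}^n \frac{a_i}{a}\log(y_i),
$$
and by Jensen's inequality applied to $\log$ with weights $a_i/a$ summing to $1$, we have
$$
\sum_{i=1}^n \frac{a_i}{a}\log(y_i) \;\leq\; \log\!\left(\sum_{i=1}^n \frac{a_i}{a}\, y_i\right) \;\leq\; \log(1) \;=\; 0.
$$
Multiplying by $a>0$ and rearranging yields the claimed inequality.

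For the integer statement with $a_1=\cdots=a_n$, the plan is a standard discrete swap (smoothing) argument. Among integer tuples $(x_1,\ldots,x_n)$ satisfying the constraint, the objective $\sum \log(x_i)$ attains its maximum (since we may assume $1 \leq x_i \leq x$, so the feasible set is finite). Suppose at a maximiser some pair satisfies $x_i \geq x_j + 2$. Replace $(x_i,x_j)$ by $(x_i-1, x_j+1)$: the sum $\sum_\ell x_\ell$ is unchanged, so the new tuple is still feasible, and the change in the objective is
$$
\log(x_i-1) + \log(x_j+1) - \log(x_i) - \log(x_j) \;=\; \log\!\left(\frac{(x_i-1)(x_j+1)}{x_i x_j}\right) \;>\; 0,
$$
because $(x_i-1)(x_j+1) - x_i x_j = x_i - x_j - 1 \geq 1$. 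This contradicts maximality, so at the maximum we have $|x_i-x_j| \leq 1$ for all $i,j$, and conversely, any feasible tuple with this property (and the largest admissible common sum) achieves the same value since the multiset $\{x_i\}$ is then determined up to permutation.

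Neither step presents any real obstacle; the only mild subtlety is the reduction $y_i = ax_i/(xa_i)$ that converts the weighted constraint into a probability constraint so that Jensen's inequality applies cleanly. The discrete swap argument is a textbook application of the concavity of $\log$ on the integers.
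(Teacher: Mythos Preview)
Your proof is correct and follows essentially the same approach as the paper: the paper uses the weighted AM--GM inequality (which is equivalent to your Jensen argument after the same change of variables $y_i = ax_i/(xa_i)$), and for the integer part it uses the identical swap $(x_i,x_j)\mapsto(x_i-1,x_j+1)$ together with uniqueness of the balanced multiset. The only cosmetic difference is that the paper phrases the first step multiplicatively via AM--GM before taking logs, whereas you go directly through Jensen.
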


The following lemma bounds $Q(s)$ by $\gf_s(r)$. This essentially proves~\autoref{th:K23opt} when all optimal solutions have an even number of parts (which we will eventually show is always the case), since for even $r$ we have $\gf_s(r) \leq \gf(s).$
\begin{lemma} \label{lm:g_bd}
    Let $s\ge 2$. For any $(r,\phi,\ba) \in \opt^*(s)$ we have
    \begin{align}\label{eq:g_bd}
        q(\phi,\ba) \leq \gf_s(r).
    \end{align} Moreover,
    \begin{enumerate}[label=\emph{(\roman*)}]
        \item when $r$ is odd, the inequality is strict;
        \item when $r\in \Rev(s)$  we have equality
        if and only if $\ba$ and $\phi$ are uniform.
    \end{enumerate}
\end{lemma}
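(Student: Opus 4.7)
The plan is to combine the optimality of the contributions with a bound on a sum of logarithms. Since $(r,\phi,\ba)\in\opt^*(s)$, \autoref{fact:opts} gives $\alpha_i>0$ and $\phi_{ij}\ge 2$ for all $i,j$, and then \autoref{lm:cont1} gives $q_i(\phi,\ba) = Q(s) = q(\phi,\ba)$ for every $i\in[r]$. Averaging these identities and swapping summation,
\[
q = \frac{1}{r}\sum_{i\in[r]} q_i = \frac{1}{r}\sum_{j\in[r]}\alpha_j\, S_j, \qquad S_j := \sum_{i\in[r]\sm\{j\}}\log\phi_{ij}.
\]
By \autoref{lm:match}(i) each colour class $\phi^{-1}(c)$ is a matching, so $\sum_{i\ne j}\phi_{ij}\le s$. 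Applying \autoref{lm:maxoflog} (with all coefficients equal to $1$ and with the integer constraint) gives $S_j \le rg_s(r)$, with equality iff $(\phi_{ij})_{i\ne j}$ is the integer equipartition of $s$. Since $\sum_j\alpha_j = 1$, this yields $q \le g_s(r)$, i.e.,~\eqref{eq:g_bd}.

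For part (i), when $r$ is odd every matching in $K_r$ has at most $(r-1)/2$ edges, so $2\sum_{ij\in\binom{[r]}{2}}\phi_{ij} \le s(r-1) < sr$; hence some $j_0\in[r]$ satisfies $\sum_{i\ne j_0}\phi_{ij_0}\le s-1$. A second application of \autoref{lm:maxoflog} at $j_0$ strengthens the bound to $S_{j_0}\le r g_{s-1}(r)$. Since basic optimality gives $z:=\lfloor s/(r-1)\rfloor\ge 2$ via \autoref{lm:match}(iii), a direct computation shows $g_s(r) > g_{s-1}(r)$, and since $\alpha_{j_0}>0$ this propagates to $q < g_s(r)$.

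For part (ii), suppose $r\in\Rev(s)$ and $q = g_s(r)$. Since all $\alpha_j > 0$, equality in the averaging forces $S_j = rg_s(r)$ for every $j$; by the moreover clause of \autoref{lm:maxoflog}, $(\phi_{ij})_{i\ne j}$ is the integer equipartition of $s$ at each $j$, so $\phi$ is uniform and equals some $\phi_{\mc{A}}$ as in \autoref{cons:general} by \autoref{rem:uniform}. To prove that $\ba$ is uniform, write $A$ for the adjacency matrix of the $a$-regular graph $T := \{ij : \phi_{ij} = z+1\}$, so that
\[
q(\phi,\ba) = \log z\,(1 - \|\ba\|_2^2) + \log(1+1/z)\,\ba^{T} A \ba.
\]
The Hessian of $\ba\mapsto q(\phi,\ba)$ restricted to the tangent space $\mathbf{1}^\perp$ of the simplex equals $2(\log(1+1/z)\, A - \log z\cdot I)$, which is strictly negative definite iff $C_2 := \log z/\log(1+1/z) > \lambda_2(A)$. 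As $A$ is $a$-regular one has $\lambda_2(A)\le a$, so it suffices to prove $C_2 > a$. By \autoref{lm:Rsprecise}(i), $r\in\{\rev(s),\rev(s)+2\}$, and the definition of $\rev(s)$ forces $z \ge e^{r-2}/3$ in either case (the case $r = 2$ is trivial since then $a = 0$ and $A = 0$). A short calculation yields $z\log z > a$ for $r\ge 4$, and combined with $\log(1+1/z)\le 1/z$ this gives $C_2 \ge z\log z > a$. Strict concavity then forces $\ba = \mathbf{1}/r$ as the unique critical point in the interior of $\DD^r$. The converse, that uniform $(\phi,\ba)$ achieves $q = g_s(r)$, is the direct computation of \autoref{sec:lower}.

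The main obstacle is the spectral step in (ii): while \autoref{lm:maxoflog} hands us uniformity of $\phi$ immediately from the equality case, proving that $\ba$ is uniform requires a Hessian computation together with the quantitative bound $z\ge e^{r-2}/3$, which in turn uses the tight description of $\Rev(s)$ from \autoref{lm:Rsprecise}.
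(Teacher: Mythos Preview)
Your argument for the main inequality~\eqref{eq:g_bd}, for part~(i), and for the ``$\phi$ is uniform'' half of part~(ii) is essentially identical to the paper's: average the equal contributions $q_i$, swap the order of summation, and apply \autoref{lm:maxoflog} at each vertex.

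Where you genuinely diverge from the paper is in proving that $\ba$ is uniform in part~(ii). The paper argues directly: with $\alpha_1=\max_i\alpha_i$ it bounds $q_1(\phi,\ba)\le p(\alpha_1):=a\alpha_1\log\tfrac{z+1}{z}+(1-\alpha_1)\log z$ and shows $p$ is strictly decreasing, forcing $\alpha_1=1/r$. Your route---computing the Hessian of $q(\phi,\cdot)$ and showing it is negative definite on $\mathbf{1}^\perp$ via the spectral bound $\lambda_2(A)\le a$ and the inequality $z\log z>a$---is a clean conceptual alternative that yields strict concavity (hence uniqueness of the maximiser) rather than just a one-dimensional monotonicity statement. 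Both arguments ultimately rest on the same quantitative fact that $z$ is large compared to $r$ when $r\in\Rev(s)$.

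There is, however, a small gap in your derivation of $z\ge e^{r-2}/3$. You claim this follows from $r\in\{\rev(s),\rev(s)+2\}$ and the definition of $\rev(s)$ alone, but for $r=\rev(s)+2=4$ the definition only gives $s\ge e^2$, hence $z=\lfloor s/3\rfloor$ could be as small as $2$, and then $z\log z<2=r-2$, so $C_2>a$ would fail. What saves you is the stronger hypothesis $r\in\Rev(s)$: by \autoref{lm:Rsprecise}(iv) (Table~\ref{table:1}), $4\in\Rev(s)$ forces $s\ge 27$ and hence $z\ge 9$. (The paper's monotonicity argument invokes the same fact at the analogous point.) With this patch your spectral argument goes through.
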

\begin{proof}
Suppose that $(r,\phi,\ba) \in \opt^*(s)$.
Then by~\autoref{lm:cont1}, for all $i\in[r]$ we have that $q_i(\phi,\ba)=q(\phi,\ba)=Q(s)$.
By~\autoref{lm:match}(i), each $\phi^{-1}(c)$ is a matching and $\sum_{j\neq i}\phi_{ij}\le s$. By \autoref{lm:maxoflog} applied with $x_j=\phi_{ij}$, $a_j=1$ and $x=s$ we have that $\sum_{j\neq i} \log \phi_{ij}\le r\gf_s(r)$ with equality if and only if $\phi$ is uniform at $i$ and $i$ sees every colour. So, averaging over contributions, we have
\begin{equation}\label{eq:averaging_trick}q(\phi,\ba) = \frac{1}{r}\sum_{i \in [r]}q_i(\phi,\ba)= \sum_i\frac{1}{r}\sum_{j\neq i}\alpha_j\log\phi_{ij}
=\frac{1}{r}\sum_j\alpha_j\sum_{i\neq j}\log\phi_{ij} \leq \sum_j\alpha_j \gf_s(r)=\gf_s(r),\end{equation}
proving the first part of the lemma. For part (ii), note that if $r$ is odd, each $\phi^{-1}(c)$ is a matching missing at least one vertex, so there is some $i \in [r]$ with $\sum_{j \neq i}\phi_{ij} < s$. The average contribution is therefore strictly less than $\gf_s(r)$. On the other hand, if $r$ is even and $\ba$ and $\phi$ are uniform, we have $q(\phi,\ba)=\gf_s(r)$, so it remains to prove the `only if' direction of part (ii) for $r \in \Rev(s)$.

Suppose that we have equality throughout \eqref{eq:averaging_trick} above. Then $\phi$ must be uniform by our application of \autoref{lm:maxoflog}. Moreover, if $r=2$, we have $\phi_{12}=[s]$ and optimising over $\ba$ yields that $\ba$ must be uniform. So from now on, we may assume that $r\geq 4$ and that $\phi$ is uniform.

Suppose, without loss of generality, that $\alpha_1=\max_i \alpha_i$ and let $a$ be such that
$0 \leq a < r-1$ with $a \equiv s \bmod r-1$.
The fact that $\phi$ is uniform at $1$
means that $\phi_{1i}=\lfloor s/(r-1)\rfloor =: z$ for $r-1-a$ values of $i$ and $\phi_{1i} = z+1$ for the remaining $a$ values of $i$.
Thus, without loss of generality,
\begin{align*}
    q(\phi,\ba)&=q_1(\phi,\ba)
    = (\alpha_2+\ldots+\alpha_{a+1})\log(z+1) + (\alpha_{a+2}+\ldots+\alpha_r)\log(z)\\
    &=(\alpha_2+\ldots+\alpha_{a+1})(\log(z+1)-\log(z))+(1-\alpha_1)\log(z)\\
    &\leq a \alpha_1 (\log(z+1)-\log(z)) +(1-\alpha_1) \log(z) =:p(\alpha_1).
\end{align*}
Note that $p(1/r)=\gf_s(r)$. We now show that $p$ is strictly monotone decreasing. Using $a \leq r-1$ and $z \geq (s-r+1)/(r-1)$, we obtain
\begin{align*}
p'(\alpha_1)&= a \log\left(1+\frac{1}{z}\right)-\log(z) 
\leq a \log \left(1+\frac{r-1}{s-r+1}\right) - \log\left(\frac{s-r+1}{r-1}\right)\\
&\leq \frac{(r-1)^2}{s-r+1}-\log\left(\frac{s-r+1}{r-1}\right).
\end{align*}
Since $r\geq 4$ we have $s\geq 27$ (recall~\autoref{lm:Rsprecise}(ii) and refer to~\autoref{table:1}) and, using~\autoref{lm:Rsprecise}(i), this yields $r-1\leq \log(s) +1\leq \sqrt{s}<s/2$. We can thus bound $$p'(\alpha_1)\leq \frac{s}{s-r+1}-\log\left(\frac{s-r+1}{\sqrt{s}}\right) \leq 1+\frac{2}{\sqrt{s}}-\log(\sqrt{s}-1)<0$$ for $s \geq 27$,
so we have shown that $p$ is indeed strictly monotone decreasing. 

To finish the proof, note that since $p$ is strictly monotone decreasing and $\alpha_1=\max_i \alpha_i\ge 1/r$, we have $q(\phi,\ba)\le p(\alpha_1)\le p(1/r)= \gf_s(r)$. We have equality throughout only if $\alpha_1=1/r$ which implies $\alpha_i=1/r$ for all $i$, as required.
\end{proof}

The following lemmas help us to show that optimal solutions cannot have an odd number of parts. 
The first lemma shows that, if there were a basic optimal solution on an odd number of parts,
the parts cannot be close to uniform, since the largest part must be significantly larger than $\frac{1}{r}$.

\begin{lemma}\label{lm:largepart}
Let $s \geq 2$ be an integer.
Let $(r,\phi,\ba) \in \opt^*(s)$ and suppose that $r$ is odd and $\alpha_1=\max_i\alpha_i$.
Then
$$
\alpha_1 \geq \frac{r}{r^2-1}-\frac{e_s(r+1)}{(r-1)\log (s/r)}.
$$
\end{lemma}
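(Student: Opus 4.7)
The plan is to combine the contribution identity (every part contributes $Q(s)$ to the objective) with the parity hypothesis on $r$. Because $r$ is odd, each colour-class matching $\phi^{-1}(c)$ covers at most $r-1$ vertices, so the total multiplicity sum $\sum_{ij}\phi_{ij}$ is controlled by $s(r-1)/2$ rather than the naive $sr/2$, and this is exactly what will produce the factor $\log(s/r)$ in the denominator of the stated inequality.

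First, I would use that $(r,\phi,\ba) \in \opt^*(s) \subseteq \opt_2(s)$, so \autoref{lm:cont1} yields $q_j(\phi,\ba) = Q(s)$ for every $j \in [r]$, and each $\phi_{ij} \geq 2$, making every logarithm below positive. Summing the identity over $j$ and swapping the order of summation,
\[
r Q(s) = \sum_{j \in [r]} q_j(\phi,\ba) = \sum_{i \in [r]} \alpha_i L_i, \qquad L_i := \sum_{j \in [r] \setminus \{i\}} \log \phi_{ij}.
\]
Using $\alpha_i \leq \alpha_1$ for all $i$ then gives
\[
r Q(s) \leq \alpha_1 \sum_{i \in [r]} L_i = 2\alpha_1 \sum_{ij \in \binom{[r]}{2}} \log \phi_{ij}.
\]

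Next, I would invoke \autoref{lm:match}(i): each $\phi^{-1}(c)$ is a matching in $K_r$, and since $r$ is odd every matching in $K_r$ has at most $(r-1)/2$ edges. Hence $\sum_{ij}\phi_{ij} = \sum_{c\in[s]} |\phi^{-1}(c)| \leq s(r-1)/2$, so Jensen's inequality applied to the concave function $\log$ gives
\[
\sum_{ij \in \binom{[r]}{2}} \log \phi_{ij} \leq \binom{r}{2} \log\!\left(\frac{s(r-1)/2}{\binom{r}{2}}\right) = \binom{r}{2} \log(s/r).
\]
Combining these and dividing by $r$ yields $Q(s) \leq \alpha_1 (r-1)\log(s/r)$, i.e.\ $\alpha_1 \geq Q(s)/[(r-1)\log(s/r)]$.

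To conclude, since $r+1$ is even, \autoref{lm:glower} gives $Q(s) \geq \gf(s) \geq \gf_s(r+1) = \frac{r}{r+1}\log(s/r) - e_s(r+1)$ by the definition of $e_s$. Substituting in yields
\[
\alpha_1 \geq \frac{\frac{r}{r+1}\log(s/r) - e_s(r+1)}{(r-1)\log(s/r)} = \frac{r}{r^2-1} - \frac{e_s(r+1)}{(r-1)\log(s/r)},
\]
as required. The only substantive step is the improvement $\sum_{ij}\phi_{ij} \leq s(r-1)/2$ over the trivial $sr/2$ using the parity of $r$; without it the denominator would be $\log(s/(r-1))$ and the target main term $r/(r^2-1)$ could not be attained exactly.
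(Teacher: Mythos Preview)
Your proof is correct and follows essentially the same route as the paper's: average the contribution identity $q_j(\phi,\ba)=Q(s)$ over $j$, bound by $\alpha_1$ times the total log-multiplicity, use that odd $r$ forces each matching $\phi^{-1}(c)$ to have at most $(r-1)/2$ edges so that Jensen gives $Q(s)\le \alpha_1(r-1)\log(s/r)$, and finish with $Q(s)\ge \gf_s(r+1)=\hf_s(r+1)-e_s(r+1)$. The only cosmetic difference is that the paper invokes \autoref{lm:match}(ii) to get $\sum_{ij}\phi_{ij}=s(r-1)/2$ with equality, whereas you use part~(i) for the inequality $\le s(r-1)/2$; either suffices here.
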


\begin{proof}
    Since $(r,\phi,\ba)$ is optimal with no zero parts, we have that $Q(s) = q_i(\phi,\ba)$ for all $i \in [r]$ by \autoref{lm:cont1}, so, averaging over contributions, we have
    $$
    Q(s) = \frac{1}{r}\sum_{i \in [r]}q_i(\phi,\ba) 
    = \frac{1}{r}\sum_{i \in [r]}\sum_{j \neq i}\alpha_j\log\phi_{ij}
    \leq \frac{2\alpha_1}{r}\sum_{ij \in \binom{[r]}{2}}\log \phi_{ij}.
    $$
    Now, $\sum_{ij \in \binom{[r]}{2}}\phi_{ij} = \sum_{c \in [s]}|\phi^{-1}(c)| = s\cdot \frac{r-1}{2}$ as by \autoref{lm:match}(ii) each colour graph $\phi^{-1}(c)$ is a maximal matching.
    Thus, by concavity of the $\log$ function,
    $$
    Q(s) \leq \frac{2\alpha_1}{r} \cdot \binom{r}{2}\log\left(\frac{\sum_{ij}\phi_{ij}}{\binom{r}{2}}\right) = \alpha_1(r-1)\log\left(\frac{s}{r}\right).
    $$
    Since $r$ is odd, and by \autoref{lm:glower}, we have that $Q(s) \geq \gf(s) \geq \gf_s(r+1) = \hf_s(r+1)-e_s(r+1) = \frac{r}{r+1}\log\left(\frac{s}{r}\right) - e_s(r+1)$
    and after rearrangement we obtain the required inequality.
\end{proof}

The next main lemma bounds from above the contribution of a largest part in any feasible solution in terms of the function $\ff_s$ as defined below.
We recall that, by \autoref{lm:cont1}, in a basic optimal solution, every part has contribution equal to $Q(s)$.

For integers $s, r\ge 2$ and $\frac{1}{r} \leq x < \frac{1}{r-1}$, let
$$
\ff_{s,r}(x) = \max_{0 \leq t \leq s}\ff_{s,r,t}(x)
\quad\text{where}\quad
\ff_{s,r,t}(x) := (r-1)x \gf_{s-t}(r-1)+(1-(r-1)x)\log t
$$
and
$$
\frelax_{s,r}(x) := (r-2)x\log\left(\frac{xs}{1-x}\right) + (1-(r-1)x)\log\left(\frac{(1-(r-1)x)s}{1-x} \right).
$$
Let also $\frelax_s$ be defined by setting $\frelax_s(x) := \frelax_{s,r}(x)$ for $\frac{1}{r} \leq x < \frac{1}{r-1}$ and similarly define $\ff_s$.
See~\autoref{fig:ftilde} for a plot of $\frelax_s$.

The function $\ff_{s,r,t}(x)$ is the contribution of a part of size $x$, say part 1, in a solution with $r-1$ parts of size $x$ and one part, say part $r$, of size at most $x$ and where $|\phi(1r)|=t$ while the other multiplicities $|\phi(1j)|$ are as equal as possible. The function $\ff_{s,r}(x)$ maximises this contribution in a solution of this form. 
The function $\frelax_{s,r}(x)$ measures the same contribution, but, like $\hf_s(r)$, disregards the fact that the multiplicities $|\phi(1j)|$ have to be integral, which simplifies the optimisation of $t$ (the optimal $t$ is $\frac{(1-(r-1)x)s}{1-x}$).
We start by giving some properties of $\ff_s$ and $\frelax_s$ that help us compare them with $\gf_s$. Ultimately, these comparisons will come together with the subsequent \autoref{lm:contribution} to show that the objective function of a basic optimal solution on an odd number of parts can be bounded in terms of $\gf(s)$.

\begin{lemma}\label{lm:fprops}
Let $s \geq 2$ be an integer.
    \begin{enumerate}[label=\emph{(\roman*)}]
        \item For all integers $r \geq 2$, on the domain $[\frac{1}{r},\frac{1}{r-1}]$, $\frelax_s$ and $\ff_s$ are convex.
        \item $\ff_{s-\ell}(x) < \ff_s(x) \leq \frelax_s(x)$ for all $x \in (0,1)$ and integers $0<\ell \leq s-2$.
        \item $\ff_s(x) \leq \max\{\gf_s(r),\gf_s(r-1)\}$ for all $x \in (0,1)$, where $r$ is the integer with $\frac{1}{r} \leq x < \frac{1}{r-1}$.
    \end{enumerate}
\end{lemma}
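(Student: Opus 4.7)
The plan is to prove the three parts in order, using convexity from (i) crucially in (iii).

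For (i), $\ff_{s,r}$ is a pointwise supremum of the affine-in-$x$ functions $\ff_{s,r,t}$ and hence convex. For $\frelax_{s,r}$, I would expand the two logarithms and simplify using $(r-2)x + (1-(r-1)x) = 1-x$ to obtain
\[
\frelax_{s,r}(x) = (1-x)\log s + (r-2)x\log x + (1-(r-1)x)\log(1-(r-1)x) - (1-x)\log(1-x),
\]
then differentiate twice to get
\[
\frelax_{s,r}''(x) = \frac{r-2}{x} + \frac{(r-1)^2}{1-(r-1)x} - \frac{1}{1-x}.
\]
On $[1/r, 1/(r-1)]$ one has $0 \leq 1-(r-1)x \leq 1-x$, so $(r-1)^2/(1-(r-1)x) \geq 1/(1-x)$ and thus $\frelax_{s,r}'' \geq 0$. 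The case $r=2$ is trivial since $\frelax_{s,2}(x) = (1-x)\log s$ is linear.

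For the bound $\ff_s(x) \leq \frelax_s(x)$ in (ii), the key input is $\gf_{s-t}(r-1) \leq \hf_{s-t}(r-1) = \tfrac{r-2}{r-1}\log\tfrac{s-t}{r-2}$, equivalent to $e_{s-t}(r-1) \geq 0$ from~\autoref{lm:esr}. Substituting into $\ff_{s,r,t}$ yields
\[
\ff_{s,r,t}(x) \leq (r-2)x\log\tfrac{s-t}{r-2} + (1-(r-1)x)\log t,
\]
and routine calculus over real $t \in [0,s]$ finds the critical point $t = s(1-(r-1)x)/(1-x)$, at which the upper bound equals $\frelax_{s,r}(x)$. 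For the strict inequality $\ff_{s-\ell}(x) < \ff_s(x)$, I would let $t^*$ optimise $\ff_{s-\ell,r}(x)$ and split cases. If $t^* \geq 1$, taking $t = t^*+\ell \leq s$ in $\ff_{s,r}$ yields
\[
\ff_{s,r,t^*+\ell}(x) - \ff_{s-\ell,r,t^*}(x) = (1-(r-1)x)\log(1+\ell/t^*) > 0,
\]
using $x < 1/(r-1)$. If $t^* = 0$, then $\gf_{s-\ell}(r-1) > 0$ (otherwise $t = s-\ell \geq 2$ strictly beats $t=0$ in $\ff_{s-\ell,r}$), hence $s-\ell \geq r-1$; strict monotonicity of $\gf_{s'}(r-1)$ in $s' \geq r-2$ (direct from the piecewise formula: each unit increment in $s'$ contributes $\tfrac{1}{r-1}\log(1+1/z) > 0$) then gives $\ff_{s,r,0}(x) - \ff_{s-\ell,r,0}(x) = (r-1)x[\gf_s(r-1)-\gf_{s-\ell}(r-1)] > 0$.

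Part (iii) follows from (i): convexity of $\ff_{s,r}$ on $[1/r, 1/(r-1)]$ implies $\ff_s(x) = \ff_{s,r}(x) \leq \max\{\ff_{s,r}(1/r), \ff_{s,r}(1/(r-1))\}$. At $x = 1/r$, unfolding $\gf_{s-t}(r-1)$ shows $\ff_{s,r,t}(1/r) = \tfrac{1}{r}\sum_{j\neq 1}\log\phi_{1j}$ for the balanced multiplicity assignment with $\phi_{1r} = t$ and $\sum_{j\notin\{1,r\}}\phi_{1j} = s-t$; by~\autoref{lm:maxoflog} with equal weights and total $s$, the supremum over $t$ equals $\gf_s(r)$. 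At $x = 1/(r-1)$, $\ff_{s,r,t}(1/(r-1)) = \gf_{s-t}(r-1)$ is maximised at $t=0$, giving $\gf_s(r-1)$. The main obstacle I anticipate is the case analysis for strict monotonicity in (ii), especially the $t^*=0$ subcase where one must extract the structural constraint $s-\ell \geq r-1$; the convexity computation and endpoint identifications are otherwise routine.
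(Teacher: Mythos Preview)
Your proof is correct and follows essentially the same route as the paper: pointwise max of affine functions for convexity of $\ff_{s,r}$, a direct second-derivative check for $\frelax_{s,r}$ (the paper simplifies your expression to $\frelax_{s,r}''(x)=\tfrac{r-2}{(x-1)x((r-1)x-1)}$), relaxation to reals for $\ff_s\le\frelax_s$, and convexity plus endpoint evaluation for (iii).

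The one place where the paper is simpler is the strict inequality $\ff_{s-\ell}(x)<\ff_s(x)$ in (ii). Rather than optimising $t^*$ for $\ff_{s-\ell,r}$ and splitting into the cases $t^*\ge 1$ and $t^*=0$, the paper compares at the \emph{same} $t$: since $\gf_{s'}(r-1)$ is increasing in $s'$, one has $\ff_{s-\ell,r,t}(x)\le\ff_{s,r,t}(x)$ for every $0\le t\le s-\ell$, and the inequality is strict at the maximiser (where the relevant $\gf$-value is positive). This bypasses your $t^*=0$ subcase and the structural deduction $s-\ell\ge r-1$ entirely. Your argument is sound, but the paper's uniform comparison is the cleaner way to package the same monotonicity input.
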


\begin{proof} 
For~(i),
we have $\frelax_{s,r}''(x)=\frac{r-2}{(x-1)x((r-1)x-1)}$ which is positive for $0<x<1/(r-1)$, so $\frelax_{s,r}$ is convex.
Observe that $\ff_{s,r,t}(x)$ is a linear function of $x$, so $\ff_{s,r}$ is a maximum of convex functions and hence is convex.

For the first inequality in~(ii), it suffices to show that $\gf_{s_1}(r-1) < \gf_{s_2}(r-1)$ for all $s_1<s_2$ and $r \geq 2$.
Indeed, if so, we have $\ff_{s-\ell,r,t}(x) < \ff_{s,r,t}(x)$ for all $r$, all $0\le t\le s-\ell$ and all $\frac{1}{r} \leq x < \frac{1}{r-1}$, and hence $\ff_{s-\ell}(x) < \ff_s(x)$ for all $x \in (0,1)$.
Now, $\gf_{s_1}(r-1)$ is the average of a multiset of $r-2$ numbers $\{a_1,\ldots,a_{r-2}\}$ while $\gf_{s_2}(r-1)$ is the average of a multiset of $r-2$ numbers $\{b_1,\ldots,b_{r-2}\}$ where $a_i\leq b_i$ for all $i \in [r]$ and at least one of these is strict. This follows since each multiset contains at most two different values and the sum $s_1$ of the former is strictly less than $s_2$, the sum of the latter.
Thus $\gf_{s_1}(r-1) < \gf_{s_2}(r-1)$.

The second part of~(ii) follows from the concavity of $\log$.
Indeed, let $\bm{\rho}=(\rho_1,\ldots,\rho_{r-1}) \in \mb{R}_{\geq 0}^{r-1}$ with $\rho_1+\ldots+\rho_{r-1} \leq s$,
and let
$$
q_{s,r,x}(\bm{\rho}) := x(\log\rho_1+\ldots+\log\rho_{r-2})+(1-(r-1)x)\log\rho_{r-1}.
$$
Then, subject to every $\rho_i \in \mb{N}$ and $\rho_{r-1}=t \leq s$, $q_{s,r,x}(\bm{\rho})$
is maximised by taking $\rho_1,\ldots,\rho_{r-2}$ as equal as possible, by the second part of~\autoref{lm:maxoflog}.
This maximum is precisely $\ff_{s,r,t}(x)$.
Thus the maximum of $q_{s,r,x}(\bm{\rho})$ subject to all $\rho_i \in \mb{N}$ is $\ff_{s,r}(x)$.
By the first part of~\autoref{lm:maxoflog},
the maximum of $q_{s,r,x}(\bm{\rho})$ with no additional constraints is $\frelax_{s,r}(x)$.
Thus $\ff_s \leq \frelax_s$.

Finally, to prove~(iii),
since $\ff_{s,r}$ is convex by part~(i),
any maxima lie at the extreme points of its domain, so $\ff_{s,r}(x) \leq \max\{\ff_{s,r}(\frac{1}{r}),\ff_{s,r}(\frac{1}{r-1})\}$,
and $\ff_{s,r}(\frac{1}{r})=\gf_s(r)$ and $\ff_{s,r}(\frac{1}{r-1})=\gf_s(r-1)$.
\end{proof}

\begin{lemma}\label{lm:contribution}
Let $s \geq 2$ be an integer and let $(r,\phi,\ba) \in \feas_2(s)$ with $\alpha_1=\max_i \alpha_i$. 
Then
$q_1(\phi,\ba) \leq \ff_{s}(\alpha_1)$
with equality only if $\frac{1}{r} \leq \alpha_1 < \frac{1}{r-1}$.
\end{lemma}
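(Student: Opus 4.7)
The plan is to show that any feasible triple $(r,\phi,\ba)$ is dominated in terms of $q_1$ by a canonical configuration on $r^*$ positive-mass parts, where $r^*$ is the unique integer satisfying $\frac{1}{r^*} \leq \alpha_1 < \frac{1}{r^*-1}$. In the canonical configuration, $r^*-1$ parts (other than part $1$) have mass $\alpha_1$ or less, and multiplicities to part $1$ are distributed as equally as possible — exactly the setup whose contribution is recorded by $\ff_{s,r^*,t}(\alpha_1)$. Strict inequality when $r>r^*$ will then arise because the extra parts consume multiplicity budget without contributing to $q_1$.

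First note that by \autoref{lm:match}(i), each colour class $\phi^{-1}(c)$ is a matching, so $\sum_{j\neq 1}\phi_{1j} \leq s$, and by feasibility in $\feas_{2}$ we have $\phi_{1j} \geq 2$. Since $\alpha_1$ is the maximum and $\sum_i \alpha_i = 1$, we have $r \geq r^*$. After relabelling, assume $\phi_{1,2} \geq \phi_{1,3} \geq \dots \geq \phi_{1,r}$. The contribution $q_1 = \sum_{j\neq 1}\alpha_j \log \phi_{1j}$ is linear in $\bm{\alpha}$, so its maximum over the bounded polytope $\{\bm{\alpha} : 0 \leq \alpha_j \leq \alpha_1, \sum_{j\neq 1}\alpha_j = 1-\alpha_1\}$ is attained at a vertex. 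A brief analysis of extreme points, combined with the rearrangement inequality (aligning the largest $\alpha_j$ with the largest $\phi_{1j}$), shows that this vertex is $\alpha_j = \alpha_1$ for $j = 2,\dots,r^*-1$, $\alpha_{r^*} = 1 - (r^*-1)\alpha_1 \in (0,\alpha_1]$, and $\alpha_j = 0$ for $j > r^*$.

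Setting $t := \phi_{1,r^*}$, the bound becomes $q_1 \leq \alpha_1 \sum_{j=2}^{r^*-1}\log\phi_{1,j} + (1-(r^*-1)\alpha_1)\log t$. Since $\phi_{1,j} \geq 2$ for $j > r^*$, these multiplicities consume at least $2(r-r^*)$ of the budget $s$ without contributing to $q_1$. Writing $s' := s - 2(r-r^*)$, we obtain $\sum_{j=2}^{r^*-1}\phi_{1,j} \leq s' - t$. By the second part of \autoref{lm:maxoflog}, the integer-valued sum $\sum_{j=2}^{r^*-1}\log\phi_{1,j}$ is maximised by distributing $s'-t$ as equally as possible among the $r^*-2$ multiplicities, which yields exactly $(r^*-1)\gf_{s'-t}(r^*-1)$. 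Maximising over $t$ then yields
$$q_1 \leq \alpha_1(r^*-1)\gf_{s'-t}(r^*-1) + (1-(r^*-1)\alpha_1)\log t \leq \ff_{s',r^*}(\alpha_1) = \ff_{s'}(\alpha_1).$$

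If $r = r^*$, then $s' = s$ and $q_1 \leq \ff_s(\alpha_1)$, with equality achievable. If $r > r^*$, then $s' \leq s - 2 < s$, and \autoref{lm:fprops}(ii) gives $\ff_{s'}(\alpha_1) < \ff_s(\alpha_1)$, so the inequality is strict, matching the claim. The main delicate point is the extreme-point/rearrangement step for the $\bm{\alpha}$ polytope, which requires a careful treatment of the edge cases $\alpha_1 = 1/r^*$ (where the ``fractional'' part coincides with $\alpha_1$, so all $r^*$ positive masses are equal) and $r^* = 2$ (where the inner sum is vacuous and $\gf_{s-t}(1)$ need not be interpreted); the remainder is a direct application of \autoref{lm:maxoflog} and \autoref{lm:fprops}(ii).
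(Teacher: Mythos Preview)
Your proof is correct and follows essentially the same approach as the paper's: sort the multiplicities, shift all the weight in $\ba$ onto the first $r^*$ parts (the paper does this by direct weight-moving rather than invoking extreme points and rearrangement, but the content is identical), then apply \autoref{lm:maxoflog} to optimise the multiplicities and \autoref{lm:fprops}(ii) for the strict inequality. The only cosmetic difference is that the paper tracks the exact wasted budget $\ell=\sum_{j>r^*}\phi_{1j}$ rather than your lower bound $2(r-r^*)$, which gives a slightly tighter intermediate inequality but the same conclusion.
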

\begin{proof}
Let $(r,\phi,\ba) \in \feas_2(s)$ with $x=\alpha_1=\max_i \alpha_i$ be given. Without loss of generality we may further assume that $\phi_{12}\geq \ldots \geq \phi_{1r}$. Let $r'\ge 2$ be the unique integer with  $\frac{1}{r'} \leq x < \frac{1}{r'-1}$, so $r' \geq r$, and let $y:=1-(r'-1)x$. Setting $$
\bb:=(\underbrace{x,\ldots,x}_{r'-1},y,0,\ldots,0) \in \DD^r,
$$
we get $q_1(\phi,\ba)\leq q_1(\phi,\bb)$.
To see this, note that due to the assumed monotonicity of the $\phi_{1i}$, moving weight from $\alpha_j$ to $\alpha_i$ for $2 \leq i <j\leq r$ does not decrease $q_1(\phi, \ba)$. Since all parts have size at most $\alpha_1=x$ at the start, we can move weight in this way until we reach $\bb$.

\autoref{lm:match}(i) implies that
$\sum_{i \in [2,r]}\phi_{1i} \leq s$.
Fixing $\bb$, $t:= \phi_{1r'}$ and $\ell:= \sum_{i \in [r'+1,r]}\phi_{1i}$, \autoref{lm:maxoflog} implies that $\sum_{i \in [2,r'-1]}\bB_i\log(\phi_{1i})$ is maximised when the $\phi_{12},\ldots,\phi_{1,r'-1}$ are as equal as possible, with sum $s-t-\ell$.
This yields that
\begin{align*}
q_1(\phi, \bb) & 
= y\log t + x\sum_{i \in [2,r'-1]}\log\left(\phi_{1i}\right)
%\\
\le y\log t + (r'-1)x\gf_{s-t-\ell}(r'-1)\\
&= \ff_{s-\ell,r',t}(x) \leq \ff_{s-\ell,r'}(x) \leq  \ff_{s,r'}(x) = \ff_s(x).
\end{align*}
where we used \autoref{lm:fprops}(ii) for the last inequality. 

When $\alpha_1 \geq 1/(r-1)$, we have $r'<r$, so $\ell \geq \phi_{1r} \geq 2$ (and, e.g.~$s-\ell \geq \phi_{12} \geq 2$), so \autoref{lm:fprops}(ii) gives the strict inequality $\ff_{s-\ell,r'}(x) < \ff_{s,r'}(x)$ in the above calculation, as required.
\end{proof}

\begin{figure}
\centering
\includegraphics{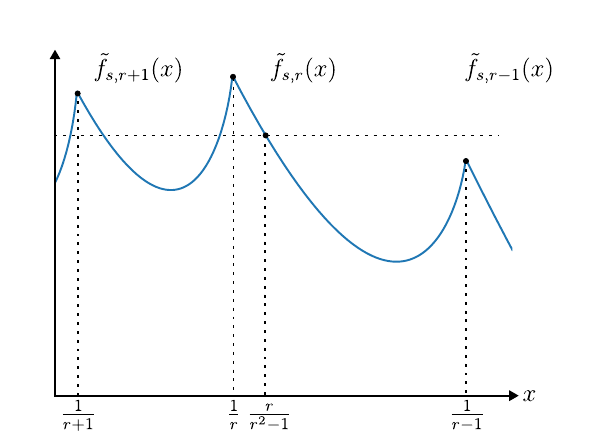}
\caption{Plot of $\frelax_{s}(x)$ illustrating the hypothesis of~\autoref{lm:fcomp} for values $s=800$ and $r=5$ (noting that $\frelax_{s}(x)$ and $f_{s}(x)$ are very close to each other).}
\label{fig:ftilde}
\end{figure}

The final main lemma in this section shows, roughly speaking, in a feasible solution on $r$ parts, that if there is a part at least almost as large as $\frac{r}{r^2-1}$, this solution cannot be optimal since it is beaten by one on $r-1$ or $r+1$ parts. As an illustration, for a fixed set of parameters, we refer the reader to~\autoref{fig:ftilde}. Combined with \autoref{lm:largepart} which says that there is always such an inflated part in an optimal solution on $r$ parts when $r$ is odd, this will imply that there cannot be an optimal solution with an odd number of parts.

\begin{lemma}\label{lm:fcomp}
Let $s \geq 2$ be an integer.
Then,
for all $r \in \Rmax(s)$ with $r \geq 3$, we have
$$
\ff_{s,r}(x) < \max\{\gf_s(r-1),\gf_s(r+1)\}
\quad\text{for all }\quad
\frac{r}{r^2-1}-\frac{e_s(r+1)}{(r-1)\log (s/r)} \leq x < \frac{1}{r-1}.
$$
\end{lemma}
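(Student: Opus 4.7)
The plan is to reduce the claim, via convexity of $\ff_{s,r}$ on $[1/r, 1/(r-1)]$, to a single-point upper bound on $\ff_{s,r}(x_0)$, where $x_0 := \frac{r}{r^2-1} - \frac{e_s(r+1)}{(r-1)\log(s/r)}$ is the left endpoint of the given range, and then to estimate $\ff_{s,r}(x_0)$ through the relaxation $\ff_{s,r} \le \frelax_{s,r}$ given by \autoref{lm:fprops}(ii). For the reduction: since $\ff_{s,r}(1/(r-1)) = \gf_s(r-1)$, any $x \in [x_0, 1/(r-1))$ can be written as $(1-t)x_0 + t/(r-1)$ with $t \in [0,1)$, so convexity (\autoref{lm:fprops}(i)) yields $\ff_{s,r}(x) \le (1-t)\ff_{s,r}(x_0) + t\gf_s(r-1)$. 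The exclusion of the right endpoint promotes this to a strict inequality as soon as $\ff_{s,r}(x_0) < \max\{\gf_s(r-1),\gf_s(r+1)\}$, which becomes our target.

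The pivotal observation is the identity $\frac{r}{r^2-1} = \tfrac12\bigl(\tfrac{1}{r+1}+\tfrac{1}{r-1}\bigr)$: substituting into the definition of $\frelax_{s,r}$ and using $(r-1)\cdot\frac{r}{r^2-1} = \frac{r}{r+1}$ and $1-(r-1)\cdot\frac{r}{r^2-1} = \frac{1}{r+1}$, algebraic simplification yields
\[
\frelax_{s,r}\!\left(\tfrac{r}{r^2-1}\right) \;=\; \hf_s(r+1) \;-\; \frac{\log s - C(r)}{(r+1)(r-1)},
\]
where $C(r) := (2r^2-3r)\log r + (r-1)\log(r-1) - (r^2-r-1)\log(r^2-r-1)$ satisfies $C(r) = r + \log r - \tfrac12 + O(1/r)$ as $r\to\infty$. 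Differentiation gives $\frelax'_{s,r}(1/r) = \log((r-1)/s)$ and $\frelax'_{s,r}(\frac{r}{r^2-1}) = (r-2)\log r - (r-1)\log(r-1) + \log(r^2-r-1) - \log s$; for the relevant $r \in \Rmax(s) \subseteq \{\rmax(s),\rmax(s)+1\}$ and $s$ large (using $\rmax(s) \le W(s/e)+1$ from \autoref{lm:Rsprecise}(i)), both are negative, and by convexity of $\frelax_{s,r}$ the derivative is non-decreasing, so on $[x_0, \frac{r}{r^2-1}]$ the magnitude of $\frelax'_{s,r}$ is at most $|\frelax'_{s,r}(1/r)| = \log(s/(r-1))$. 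Integrating across the shift $\epsilon := \frac{e_s(r+1)}{(r-1)\log(s/r)}$ then yields
\[
\frelax_{s,r}(x_0) \;\le\; \hf_s(r+1) \;-\; \frac{\log s - C(r)}{(r+1)(r-1)} \;+\; \frac{e_s(r+1)\,\log(s/(r-1))}{(r-1)\log(s/r)}.
\]

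The proof concludes with a case split according to the sign of $\log s - C(r)$. In the main case where $\log s - C(r)$ is positive and of at least constant order (which, by the asymptotics $C(r) = r + O(\log r)$ together with the bound $r \le \rmax(s)+1 \approx \log s - \log\log s + 1$ from \autoref{lm:Rsprecise}(i), holds for all $s$ beyond a fixed threshold), the slack $\frac{\log s - C(r)}{(r+1)(r-1)}$ dominates both the Taylor term and the additional $e_s(r+1)$ needed to pass from $\hf_s(r+1)$ to $\gf_s(r+1) = \hf_s(r+1) - e_s(r+1)$; here \autoref{lm:esr} provides the tiny bound $e_s(r+1) \le r^2/(4s^2)$, and we obtain $\frelax_{s,r}(x_0) < \gf_s(r+1)$. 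In the complementary regime the target is replaced by $\gf_s(r-1)$: in this case $r$ sits at the upper end of $\Rmax(s)$ so that $\hf_s(r-1) > \hf_s(r+1)$ by a definite margin (\autoref{lm:hanalytic}(i)), and we combine convexity of $\frelax_{s,r}$ with the endpoint value $\frelax_{s,r}(1/(r-1)) = \hf_s(r-1)$. The finitely many borderline pairs $(r,s)$ can be verified by direct computation, in the spirit of the ancillary scripts referenced by \autoref{lm:Rsprecise}. The main obstacle is precisely this tight balance between the slack $(\log s - C(r))/((r+1)(r-1))$ and the compound correction of order $r\cdot e_s(r+1)$; the definition of $\epsilon$ is calibrated exactly for this to succeed.
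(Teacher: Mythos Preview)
Your overall strategy matches the paper's proof closely: reduce via convexity of $\ff_{s,r}$ on $[1/r,1/(r-1)]$ to the left endpoint, pass to the relaxation $\frelax_{s,r}$, evaluate at the special point $\tfrac{r}{r^2-1}$, control the shift by $\eps$, and handle small $s$ by direct computation. Your derivative computations and the identity expressing $\frelax_{s,r}(\tfrac{r}{r^2-1})$ in terms of $\hf_s(r+1)$ are correct and are one half of what the paper does.

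The genuine gap is the ``complementary regime''. Your asymptotic $C(r)=r+\log r-\tfrac12+O(1/r)$, combined with $r\in\{\rmax(s),\rmax(s)+1\}$ and $m(s)=W(s/e)+1$, gives $\log s - C(r)\approx \tfrac12-\theta$ where $\theta=r-m(s)\in(-1,1)$. So for $r=\rmax(s)+1$ the slack $\log s - C(r)$ is at best $O(1)$ and can be negative; this case is not an edge case but roughly half of all $(r,s)$ pairs. Your proposed treatment, ``combine convexity of $\frelax_{s,r}$ with the endpoint value $\frelax_{s,r}(1/(r-1))=\hf_s(r-1)$'', does not yield the needed bound: convexity only gives $\frelax_{s,r}(x_0)\le\max\{\hf_s(r),\hf_s(r-1)\}$, which is the wrong direction (you need to beat $\gf_s(r-1)=\hf_s(r-1)-e_s(r-1)$ from below), and on $[1/r,\tfrac{r}{r^2-1}]$ you have already shown $\frelax'_{s,r}<0$, so moving left from $\tfrac{r}{r^2-1}$ increases $\frelax_{s,r}$ rather than decreasing it.

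The paper sidesteps this entirely by writing the \emph{same} quantity $\frelax_{s,r}(\tfrac{r}{r^2-1})$ in terms of \emph{both} $\hf_s(r-1)$ and $\hf_s(r+1)$ simultaneously:
\[
\frelax_{s,r}\!\left(\tfrac{r}{r^2-1}\right)=\hf_s(r-1)+\tfrac{1}{r^2-1}\bigl(\log\tfrac{s}{r-2}-u_{r-1}\bigr)=\hf_s(r+1)-\tfrac{1}{r^2-1}\bigl(\log\tfrac{s}{r}-u_{r+1}\bigr),
\]
and then argues by contradiction: if $\frelax_{s,r}(\tfrac{r}{r^2-1})>\max\{\hf_s(r\pm1)\}-\tfrac{1}{5r^2}$, the two representations force bounds on $\log\tfrac{s}{r-2}$ and $\log\tfrac{s}{r}$ that subtract to a contradiction \emph{purely in $r$} (namely $\log\tfrac{r}{r-2}+\tfrac{2(r^2-1)}{5r^2}\ge u_{r-1}-u_{r+1}\ge 0.9$, false for $r\ge 6$). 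This eliminates the dependence on $s$ altogether and avoids any case split. To repair your argument you would need to carry out the analogous computation in terms of $\hf_s(r-1)$, at which point you recover the paper's proof.

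Two smaller points: you should explicitly verify $x_0\ge 1/r$ (the paper does this in~\eqref{eq:x0}), since otherwise $\ff_{s,r}(x_0)$ is not defined and your derivative bound on $[x_0,\tfrac{r}{r^2-1}]$ is not justified; and your convexity reduction at the start needs the strict inequality $\ff_{s,r}(x_0)<\max\{\gf_s(r\pm1)\}$ rather than merely $\le$, which you do state but which then rules out deferring to ``finitely many borderline pairs'' without specifying the threshold.
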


\begin{proof}
    See \hyperref[appendix]{Appendix}. 
\end{proof}

We can now put everything together to prove \autoref{th:K23opt} on solutions to the optimisation problem for dichromatic triangles.

\begin{proof}[Proof of \autoref{th:K23opt}]
    Let $s \geq 2$ be an integer and let $(r^*,\phi^*,\ba^*) \in \opt^*(s)$. Now,
    \autoref{lm:glower} and~\autoref{lm:g_bd} imply that
    \begin{equation}\label{eq:base}\gf(s) \stackrel{\ref{lm:glower}}{\leq} Q(s) = q(\phi^*,\ba^*) \stackrel{\ref{lm:g_bd}}{\leq} \gf_s(r^*).
    \end{equation}
    
    \underline{If $r^*$ is even}, we have
    $\gf_s(r^*) \leq \gf(s)$,
    so we must have equality throughout. Having $\gf_s(r^*)=\gf(s)$ implies by definition that $r^*\in \Rev(s)$
    and~\autoref{lm:g_bd}(ii) implies that $\phi^*$ and $\ba^*$ are uniform. Recall that by \autoref{rem:uniform}, $\phi^*$ being uniform implies that $\phi^*=\phi_{\mc A}$ for some $\mc A$ as in \autoref{cons:general}.
    This completes the proof of the even case.

    Now suppose that \underline{$r^*$ is odd}. We claim that this never happens, so we will obtain a contradiction.
    From~\autoref{lm:g_bd}(i) we have the strict inequality
    \begin{equation}\label{eq:oddr*}
    q(\phi^*,\ba^*)< \gf_s(r^*).
    \end{equation}
    Suppose, without loss of generality, that $\alpha_1^*\ge \alpha_i^*$ for all $2\le i\le r^*$, and
    let $r'$ be the unique integer with $\frac{1}{r'} \leq \aA_1^* < \frac{1}{r'-1}$.
    Since $\alpha_1^*\ge \frac1{r^*}$, we have $r' \leq r^*$.
    By Lemmas \ref{lm:cont1}, \ref{lm:contribution} and~\ref{lm:fprops}(iii),
    we have
    \begin{equation}\label{eq:largest_part}
        g(s)\le Q(s) \stackrel{\ref{lm:cont1}}{=} q_1(\phi^*,\ba^*) \stackrel{\ref{lm:contribution}}{\leq} \ff_s(\aA^*_1)  \stackrel{\ref{lm:fprops}}{\leq} \max\{\gf_s(r'),\gf_s(r'-1)\},
    \end{equation}
    where, by definition, $\ff_s(\aA^*_1) = \ff_{s,r'}(\aA^*_1)$.

    We now claim that $r'$ must be equal to $r^*$. Indeed, suppose that $r'<r^*$, and let $r_{\text{even}},r_{\text{odd}}$ be the even and odd integer in $\{r',r'-1\}$, respectively. In this case $\{r_{\text{odd}},r^*\}$ are two distinct odd integers
    so by \autoref{lm:Rsprecise}(iii) we have $\min\{\gf_s(r^*),\gf_s(r_{\text{odd}})\}<\gf(s)$. From \eqref{eq:base} and \eqref{eq:oddr*} we have that $\gf(s)<\gf_s(r^*)$ so we must have $\gf_s(r_{\text{odd}})<\gf(s)$. Thus the right-hand side of \eqref{eq:largest_part} is at most $\gf(s)$ and again we have equality throughout.
    \autoref{lm:contribution} implies that $\aA^*_1 < 1/(r^*-1)$, which contradicts $r'<r^*$. Thus from now on we may assume that $r'=r^*$.
    
    We now proceed to compare $\gf_s(r')$ and $\gf_s(r'-1)$ which we now know are equal to $\gf_s(r^*)$ and $\gf_s(r^*-1)$, respectively. 
    From \eqref{eq:base} and \eqref{eq:oddr*} it follows that $\gf_s(r^*)>g(s)$ while $\gf_s(r^*-1)\le \gf(s)$ since $r^*-1$ is even, so $\gf_s(r^*-1) < \gf_s(r^*)$ and the maximum in \eqref{eq:largest_part} is strictly attained by $r^*$. Note that $\gf_s(r^*)>g(s)$ implies that $\Rmax(s)$ can consist solely of odd numbers.
    
    Further to the above, we must also have $r^* \in \Rmax(s)$, for otherwise there is some (odd) $r \in \Rmax(s)$ distinct from $r^*$ with both $\gf_s(r^*)$ and $\gf_s(r)$ exceeding $\gf(s)$, contradicting~\autoref{lm:Rsprecise}(iii).
    
    Finally, by~\autoref{lm:largepart} %and the above facts about the relative sizes of $s$ and $r^*$, 
    we have
    $$
    \aA_1^* \geq \frac{r^*}{(r^*)^2-1}-\frac{e_s(r^*+1)}{(r^*-1)\log(s/r^*)}. 
    %\geq \frac{r^*}{(r^*)^2-1} - \frac{(r^*+1)^2}{s^2}.
    $$
    Then~\autoref{lm:fcomp} implies that 
    $\ff_{s,r^*}(\aA_1^*) < \max\{\gf_s(r^*-1),\gf_s(r^*+1)\} \leq \gf(s)$, where the last inequality follows from the definition of $\gf(s)$.
    Together with~\eqref{eq:largest_part}, this contradicts our initial assumption that $r^*$ is odd and finishes the proof.
\end{proof}

\subsection{Proof of~\autoref{th:K32}}\label{sec:K23:mainproof}

Now that we have determined the set of basic optimal solutions for the dichromatic triangle problem (\autoref{th:K23opt}),
we need to show that it has various properties
in order to apply our general exact result, \autoref{th:exact1}.
Most of these are easy to see; that the extension property holds is slightly more involved.

\begin{lemma}\label{lm:diprop}
    Let $s \geq 2$ be an integer. The dichromatic triangle family $\x = (K^{(2)}_3,s)$
    \begin{enumerate}[label=\emph{(\roman*)}]
        \item is bounded, 
        \item is hermetic (and hence stable inside),
        \item has the extension property,
        \item has the property that for every basic optimal solution $(r^*,\phi^*,\ba^*)$, there is some $z \geq r^*$ such that each multiplicity $\phi^*_{ij}$ equals $z$ or $z+1$. %, and the graph induced by each multiplicity is regular.
    \end{enumerate}
\end{lemma}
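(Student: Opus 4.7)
The plan is to verify the four claims in order, leaning on the characterisation of $\opt^*(s)$ from Theorem~\ref{th:K23opt}. Part~(i) is immediate from Lemma~\ref{lm:match}(iii). Part~(iv) follows from Theorem~\ref{th:K23opt} together with~\eqref{eq:consmult}: every basic optimal solution has $\phi^*=\phi_{\mc A}$ with multiplicities in $\{z,z+1\}$ where $z=\lfloor s/(r^*-1)\rfloor$; the bound $z\ge r^*$ is then checked by hand against Table~\ref{table:1} for $r^*\in\{2,4\}$ and follows for $r^*\ge 6$ from $s\ge\tilde s_{r^*}>(r^*-3)e^{r^*-2}\ge r^*(r^*-1)$ via Lemma~\ref{lm:Rsprecise}(iii).

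The observation driving both (ii) and (iii) is that, by Theorem~\ref{th:K23opt}, $\phi^*$ is uniform on $r^*\in 2\mb N$ parts, so each colour class $(\phi^*)^{-1}(c)$ is a perfect matching of $[r^*]$ and for every $i\in[r^*]$ and every $c\in[s]$ there is a unique partner $j=p_c(i)\ne i$ with $c\in\phi^*(ij)$. For the hermetic property~(ii), I would suppose for contradiction that some $\phi\in\Phi_{\x,1}(r^*+1)$ extends $\phi^*$, pick any $i\in[r^*]$ and $c\in\phi(i,r^*+1)$, set $j=p_c(i)$, and examine the triangle $\{i,j,r^*+1\}$. If $\phi(j,r^*+1)\not\subseteq\{c\}$, any $c''\in\phi(j,r^*+1)\setminus\{c\}$ yields the dichromatic colouring $(c,c'',c)$ on $(ij,j(r^*+1),i(r^*+1))$; otherwise $\phi(j,r^*+1)=\{c\}$, and part~(iv) supplies $|\phi^*(ij)|\ge z\ge r^*\ge 2$, so any $c'\in\phi^*(ij)\setminus\{c\}$ yields the dichromatic colouring $(c',c,c)$. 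Either way $\phi$ is not $K_3^{(2)}$-free, a contradiction. Being stable inside then follows from being hermetic, as observed just after Definition~\ref{extprop}.

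For the extension property~(iii), I would let $\phi\in\Phi_\x(r^*+1)$ extend $\phi^*$ with $\ext(\phi,\ba^*)=Q(s)=\gf(s)$ and set $I=\{i:\phi(i,r^*+1)\neq\emptyset\}$ and $J=[r^*]\setminus I$; part~(ii) forces $J\neq\emptyset$. The same triangle argument (applied once to two distinct $i,i'\in I$ sharing a colour, and once to $i\in I$ with $p_c(i)\in I$, using $|\phi^*(ii')|\ge 2$ throughout) yields two structural facts: the sets $\{\phi(i,r^*+1)\}_{i\in I}$ are pairwise disjoint subsets of $[s]$, and for every $i\in I$ and $c\in\phi(i,r^*+1)$ the partner $p_c(i)$ lies in $J$, so $\phi(i,r^*+1)\subseteq\bigcup_{j\in J}\phi^*(ij)$.

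The main obstacle is showing that $|J|=1$. If $J=\{j_0\}$, the subset relation gives $\phi(i,r^*+1)\subseteq\phi^*(ij_0)$ and uniformity of $\phi^*$ at $j_0$ partitions $[s]$ into the $\phi^*(ij_0)$, so Lemma~\ref{lm:cont1} gives
\[
\sum_{i\in I}\log|\phi(i,r^*+1)|\le\sum_{i\ne j_0}\log\phi^*_{ij_0}=r^*q_{j_0}(\phi^*,\ba^*)=r^*\gf(s),
\]
and the equality demanded by optimality forces $\phi(i,r^*+1)=\phi^*(ij_0)$ for every $i\neq j_0$, i.e.\ $r^*+1$ is a (strong) clone of $j_0$. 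If $|J|=k\ge 2$, pairwise disjointness gives $\sum_{i\in I}|\phi(i,r^*+1)|\le s$, and then concavity of $\log$ yields $\sum_{i\in I}\log|\phi(i,r^*+1)|\le(r^*-k)\log(s/(r^*-k))$; the remaining task is to prove this is strictly less than $r^*\gf(s)$. This will follow from analytic estimates in the spirit of Section~\ref{sec:analytic}: the function $y\mapsto y\log(s/y)$ is increasing on $(0,s/e)$ and both $r^*-k$ and $r^*-1$ lie comfortably inside this range, so comparison with the value $(r^*-1)\log(s/(r^*-1))=r^*\hf_s(r^*)$ combined with control of the correction $r^*e_s(r^*)\le r^*/(4z^2)$ via Lemma~\ref{lm:esr} and the lower bound $z\ge r^*$ from part~(iv) delivers the required strict inequality, contradicting optimality. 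Hence $|J|=1$ and the extension property is verified.
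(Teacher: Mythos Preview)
Your proof is correct and follows essentially the same architecture as the paper's: parts~(i) and~(ii) are identical, and part~(iii) has the same skeleton (show $|J|=1$ via an inequality of the form $(r^*-k)\log(s/(r^*-k))<r^*\gf(s)$, then deduce the clone property from equality of contributions). Two small differences are worth noting. For~(iv), the paper bounds $z\ge r^*$ directly via $\lfloor s/(\log s+1)\rfloor\ge \log s+2$ for $s\ge 27$, whereas you route through the lower bound $\tilde s_{r^*}>(r^*-3)e^{r^*-2}$ from Lemma~\ref{lm:Rsprecise}(iii); both work. For the inequality in~(iii), the paper carries out an explicit mean-value-theorem computation with ad hoc numerical bounds~\eqref{eq:ext3}, while you observe monotonicity of $y\mapsto y\log(s/y)$ on $(0,s/e)$ and absorb the correction $r^*e_s(r^*)\le 1/(4r^*)$ using the bound $z\ge r^*$ from~(iv); your route is cleaner and makes the dependence between~(iii) and~(iv) explicit. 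One small point to make precise in a full write-up: the case $r^*=2$ (where $|J|\ge 2$ forces $|I|=0$ and $\ext=0$) and the verification that $r^*-1<s/e$ for $r^*\ge 4$ should be stated, but both are immediate.
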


\begin{proof}
\autoref{th:K23opt} implies that for any $(r^*,\phi^*,\bm{u}_{r^*}) \in \opt^*(\x)$, we have that $r^*$ is even, $\bm{u}_{r^*}$ is uniformly $1/r^*$, all colour classes of $\phi^*$ are perfect matchings, 
the values $\phi^*_{ij}$ differ by at most one, 
and $r^* \in \Rev(s)$, which implies $r^* \leq \log(s) +2$ due to \autoref{lm:Rsprecise}(i). 
We calculated $\Rev(s)$ for small values of $s$ in~\autoref{table:1} (we showed these values hold in \autoref{lm:Rsprecise}(iv);
in particular, $r^*=2$ for $2 \leq s \leq 26$ and $r^* \leq 4$ for $s \leq 496$.

Part~(i) was proved in \autoref{lm:match}(iii).

        Next we prove~(ii). Let $(r^*,\phi^*,\bm{u}_{r^*}) \in \opt^*(\x)$ and suppose that $\phi\in \Phi_{\x,1}(r^*+1)$ extends $\phi^*$, i.e., $\phi|_{\binom{[r^*]}{2}}=\phi^*$.
        Let $c\in \phi(1,r^*+1)$. Then there is some $i\in [r^*]$ such that $c \in \phi(1i)$ since the edges of colour $c$ form a perfect matching in $\phi^*$. To avoid $\mc X$ we must have $\phi(i,r^*+1)=\{c\}$. However, since $\phi^*_{1i}\ge2$, there is some $c'\in \phi_{1i}$, giving a dichromatic triangle on $1,i, r^*+1$.
        Thus $\x$ is hermetic.

For~(iii), let $(r^*,\phi^*,\bm{u}_{r^*}) \in \opt^*(\x)$ and $\phi \in \Phi_\x(r^*+1)$ such that $\phi|_{\binom{[r^*]}{2}} = \phi^*$ and
     \begin{equation}\label{eq:ext}
     {\rm ext}(\phi, \bm{u}_{r^*})= \frac{1}{r^*}\sum_{i \in [r^*]}\log\phi_{i,r^*+1} = Q(s).
     \end{equation}
     We first note that $r^*+1$ can see each colour at most once: if colour $c$ was contained in $\phi(i, r^*+1)$ and $\phi(j, r^*+1)$, there would be a dichromatic triangle in $\phi$ on parts $i,j,r^*+1$ since $\phi^*_{ij} \geq 2$. In fact $r^*+1$ sees each colour exactly once, for otherwise $\ext(\phi,\bm{u}_{r^*})$ cannot equal $Q(s)$.
     Let $\ell:=|\{i \in [r^*]: \phi_{i,r^*+1} \neq \emptyset\}|$. Note that $\mc X$ is hermetic so $\ell\le r^*-1$.

We claim that $\ell=r^*-1$.
     This is immediate when $r^*=2$, so we may assume $s \geq 27$. 
     Equation~\eqref{eq:ext} and \autoref{lm:maxoflog} imply that
     \begin{equation}\label{eq:ext2}
     %{\rm ext}(\phi, \ba^*) 
     Q(s) %= \frac{1}{r^*}\sum_i \log \phi_{i,r^*+1}
     \le (\ell/r^*) \log (s/\ell).
    \end{equation}
     Suppose for a contradiction that $\ell \leq r^*-2$. 
     We have that
     \begin{equation}\label{eq:ext3}
     \frac{s}{r^*-1} \geq 6
     \quad\text{and}\quad
     \frac{(r^*-1)^2}{s-r^*+1} \leq 1/2;
     \end{equation}
     these assertions are true for $27 \leq s \leq 496$ by~\autoref{table:1} where $r^*\leq 4$ and for larger $s$ they follow from $r^* \leq \log(s) +2$ (recall \eqref{eq:rs}). 
     Our assumption implies that $(s-r^*+1)/(r^*-1) < s/\ell$.
     The mean value theorem implies that there is $\frac{s-r^*+1}{r^*-1} < u < \frac{s}{\ell}$ such that
     $$
     \ell\left(\log\left(\frac{s-r^*+1}{r^*-1}\right)-\log\left(\frac{s}{\ell}\right)\right)
     =\frac{\ell}{u}\left(\frac{s-r^*+1}{r^*-1}-\frac{s}{\ell}\right)
     >\ell-\frac{s(r^*-1)}{s-r^*+1}.
     $$
     By Lemma~\ref{lm:cont1}, we have $Q(s) = q_1(\phi^*,\bm{u}_{r^*}) = \frac{r^*-1}{r^*}\sum_{2 \leq i \leq r^*}\log\phi^*_{1i}$.
    By~\eqref{eq:consmult}, for all $ij \in \binom{[r^*]}{2}$, we have $\phi^*_{ij} \in \{z,z+1\}$ where $z := \lfloor s/(r^*-1) \rfloor$.
     Thus
     \begin{eqnarray*}
         r^*\cdot Q(s)-\ell \log\left(\frac{s}{\ell}\right) &\geq& (r^*-1) \log\left(\frac{s}{r^*-1}-1\right)-\ell \log\left(\frac{s}{\ell}\right)
         \\
         &=& (r^*-1-\ell) \log\left(\frac{s-r^*+1}{r^*-1}\right) + \ell\left(\log\left(\frac{s-r^*+1}{r^*-1}\right)-\log\left(\frac{s}{\ell}\right)\right)
         \\ 
         &\geq&  (r^*-1-\ell) \log\left(\frac{s-r^*+1}{r^*-1}\right) -\frac{s(r^*-1)}{s-r^*+1}+ \ell
         \\
         &=& (r^*-1-\ell) \left(\log\left(\frac{s}{r^*-1}-1\right)-1\right)-\frac{(r^*-1)^2}{s-r^*+1}
         \\
         &\stackrel{\eqref{eq:ext3}}{\geq}& \log(5)-1-1/2 >0,
     \end{eqnarray*}     
contradicting~\eqref{eq:ext2} and hence proving the claim. 

Let $i\in[r^*]$ be the unique index such that $\phi_{i,r^*+1}=\emptyset$. We claim that $r^*+1$ must be a clone of $i$. To see this, note that for any $j\in[r^*]\sm \{i\}$ and any $c\in \phi(j,r^*+1)$, we must have $c\in \phi^*(ij)$, for otherwise the perfect matching on $[r^*]$ in colour $c$ will contain the edge $jj'$ for some $j'\neq i$ and $r^*+1,j,j'$ will form a dichromatic triangle. Conversely, if $c\in \phi^*(ij)$, $r^*+1$ necessarily sees $c$ but we cannot have $c\in \phi(j',r^*+1)$ for any $j'\neq j$. This is because according to $\phi^*$ colour $c$ forms a perfect matching on $[r^*]$, one of whose edges must be $j'j''$ for some $j''\neq i,j$, and this would yield a dichromatic triangle on $j',j'',r^*+1$. Therefore, $r^*+1$ is a clone of $i$, as claimed.

Finally, for~(iv), recall that each multiplicity $\phi^*_{ij}$ equals $z$ or $z+1$ by~\eqref{eq:consmult}.
Since $r^* \leq \log(s) +2$, each multiplicity is therefore at least
$\lfloor s/(\log(s)+1)\rfloor$. This is at least $\log(s)+2 \geq r^*$ when $s \geq 27$.
For $2 \leq s \leq 26$, we have $r^*=2$ by~\autoref{table:1}.
In this case, $z=s \geq 2=r^*$, as required.
\end{proof}

The properties of the dichromatic triangle family guaranteed by the previous lemma now allow us to apply our general exact result, \autoref{th:exact1}.
This result immediately tells us that every extremal graph is a complete partite graph with the right number of parts with roughly the correct sizes, so it remains to prove that the sizes are in fact as equal as possible.

\begin{proof}[Proof of \autoref{th:K32}]
The statement about $\Rev(s)$ is~\autoref{lm:Rsprecise}(i). Recall that by~\autoref{th:K23opt}, $\opt^*(s)$ consists of triples $(r^*,\phi^*,\bm{u}_{r^*})$ with $r^*\in \Rev(s) \subseteq 2\mb{N}$, uniform $\bm{u}_{r^*}$ 
and $\phi^*$ where every ${\phi^*}^{-1}(c)$ is a perfect matching, and the multiplicities $\phi^*_{ij}$ over pairs $ij$ are as equal as possible.
By~\autoref{lm:Rsprecise}(i) and~\eqref{eq:rs}, we have $r^* \leq \log(s)+2$. Given $r^* \in \Rev(s)$,
let $Y_{r^*}$ consist of all $\phi^*$ such that $(r^*,\phi^*,\bm{u}_{r^*})\in \opt^*(s)$.
In other words, $Y_{r^*}$ is the set of $r^*$-partite colour templates obeying~\autoref{cons:general}. 

It remains to prove the statements about $n$-vertex extremal graphs for sufficiently large $n$. Suppose that $1/n\ll\eps\ll 1/k,1/s$ and let $G$ be an $n$-vertex extremal graph. By~\autoref{th:exact1} (which is applicable by~\autoref{lm:diprop}(i)--(iii)) there is $(r^*,\phi^*,\bm{u}_{r^*})\in \opt^*(s)$ so that $G$ is a complete $r^*$-partite graph whose parts $W_1, \dots, W_{r^*}$ all have size between $n/(2r^*)$ and $2n/r^*$, and moreover, for at least a $(1-e^{-\eps n})$ proportion of the valid colourings $\chi\in F(G;(K_3^{(2)},s))$ there exists $\phi^* \in Y_{r^*}$ such that $\chi$ follows $\phi^*$ perfectly. For a complete $r^*$-partite graph $H$ and $\phi^* \in Y_{r^*}$, denote the set of such perfect colourings of $H$ by $X_H(\phi^*)$.

Before determining the exact part sizes of $G$, we quickly show that $F(G;(K_3^{(2)},s))$ is approximately equal to $\sum_{\phi^*\in Y_{r^*}}|X_G(\phi^*)|$. Intuitively, this is because very few perfect colourings can be assigned to more than one $\phi^*\in Y_{r^*}$.
Indeed, if $\chi\in X_G(\phi^*)\cap X_G(\phi)$ for some $\phi^*,\phi\in Y_{r^*}$,
then there is $ij \in \binom{[r^*]}{2}$ where $\phi^*(ij) \neq \phi(ij)$ and $\chi(xy) \in \phi^*(ij) \cap \phi(ij)$ for all $x\in W_i,y\in W_j$,
so the number of such colourings $\chi$ for a fixed $\phi^*$ is at most $$\phi^*_{ij}\left(\frac{\phi^*_{ij}-1}{\phi^*_{ij}}\right)^{(n/(2r^*))^2}|X_G(\phi^*)| \leq s\left(1-\frac{1}{s}\right)^{(n/(2r^*))^2}|X_G(\phi^*)| \leq e^{-n}|X_G(\phi^*)|,$$
say.
Thus the number of colourings of $G$ which follow $\phi^*$ perfectly and do not follow another $\phi$ is at least
$(1-e^{-n})|X_G(\phi^*)|$,
so
\begin{equation}\label{eq:FG}
F(G;(K_3^{(2)},s)) = (1\pm e^{-\eps n})\sum_{\phi^* \in Y_{r^*}}|X_G(\phi^*)|.
\end{equation}
We now show that $|X_H(\phi^*)|$ is maximised when $H$ is the Tur\'an graph.

\begin{secclaim}
Let $H$ be a sufficiently large complete $r^*$-partite graph with parts $V_1,\ldots,V_{r^*}$. Then for every $\phi^* \in Y_{r^*}$, we have $|X_T(\phi^*)|/|X_H(\phi^*)| \geq 2$ whenever $H$ is not isomorphic to the Tur\'an graph $T := T_{r^*}(n)$.
    \end{secclaim}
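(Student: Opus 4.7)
The plan is to directly compute
\[
\log\frac{|X_T(\phi^*)|}{|X_H(\phi^*)|} = \sum_{ij \in \binom{[r^*]}{2}} (t_i t_j - n_i n_j) \log \phi^*_{ij}
\]
and show that it is at least $\log 2$, where $t_i = |U_i|$ and $n_i = |V_i|$. The key structural input is the uniform shape of $\phi^*$ provided by~\autoref{rem:uniform} and~\autoref{lm:diprop}(iv): setting $z = \lfloor s/(r^*-1) \rfloor \geq r^*$ and $a = s - (r^*-1)z$, every multiplicity $\phi^*_{ij}$ lies in $\{z, z+1\}$, and each vertex is incident to exactly $a$ pairs of multiplicity $z+1$. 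I would then let $B$ be the $a$-regular $\{0,1\}$-adjacency matrix on $[r^*]$ recording the pairs $ij$ with $\phi^*_{ij} = z+1$, and let $A := (\log \phi^*_{ij})$ (off-diagonal, zero on the diagonal), so that $A = \log z\,(J-I) + \log(1+1/z)\,B$ with $J$ and $I$ the all-ones and identity matrices.

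I would first focus on the case $r^* \mid n$, where $\mathbf{t} = (n/r^*)\mathbf{1}$ and $\mathbf{d} := \mathbf{n}-\mathbf{t}$ is an integer vector with $\sum_i d_i = 0$. The constancy of the row sums of $A$ forced by uniformity of $\phi^*$ makes all linear-in-$\mathbf{d}$ terms vanish, reducing the log ratio to the negative quadratic form $-\tfrac{1}{2}\mathbf{d}^T A \mathbf{d}$. Using $\mathbf{d} \perp \mathbf{1}$ and the bound $\mathbf{d}^T B \mathbf{d} \leq a\,\|\mathbf{d}\|_2^2$ (since the eigenvalues of an $a$-regular graph on $\mathbf{1}^\perp$ are at most $a$), I would obtain
\[
\log\frac{|X_T|}{|X_H|} \;\geq\; \tfrac{1}{2}\bigl(\log z - a\log(1+1/z)\bigr)\|\mathbf{d}\|_2^2.
\]
Since $z \geq r^*$ and $a \leq r^*-2$, the coefficient satisfies $\log z - a\log(1+1/z) \geq \log r^* - a/z \geq \log r^* - 1 + 2/r^* \geq \log 2$ for all $r^* \geq 2$ (tight at $r^*=2$; for $r^*>2$ the right-hand side is strictly increasing, as the derivative $(r^*-2)/r^{*2}$ in $r^*$ is positive). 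Finally, the hypothesis $H \not\cong T$ forces the part-size multisets of $H$ and $T$ to differ, so $\mathbf{d}$ is a nonzero integer vector summing to $0$ with at least two nonzero entries, giving $\|\mathbf{d}\|_2^2 \geq 2$. Combining, $\log|X_T|/|X_H| \geq \log 2$, as desired.

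The main obstacle will be the case $r^* \nmid n$, where the Turán vector $\mathbf{t}$ has entries differing by one, so that a Taylor expansion around $\mathbf{t}$ picks up an additional linear correction $-(A\bm{\epsilon})\cdot \mathbf{d}$ with $\bm{\epsilon} := \mathbf{t} - (n/r^*)\mathbf{1}$ of $\ell^\infty$-norm strictly below one, and care is needed to choose the labelling of $T$ consistently with $\phi^*$. I would handle this by aligning the labelling of $T$ with that of $H$ (equivalently, by exploiting that $Y_{r^*}$ is closed under permutations) and bounding the correction via the operator norm of $A$ on $\mathbf{1}^\perp$, which is at most $\log z + a\log(1+1/z) = O(\log z)$. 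The small-$\|\mathbf{d}\|_2^2$ cases can then be dealt with by hand using the slack $\log r^* - 1 + 2/r^* - \log 2$ from the quadratic bound (or, for $r^*=2$, by an explicit verification), while the large-$\|\mathbf{d}\|_2^2$ case follows since the quadratic term then dominates the $O(\log z)$ linear correction.
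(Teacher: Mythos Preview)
Your approach is genuinely different from the paper's. In the divisible case $r^*\mid n$ your quadratic-form argument is clean and correct: using that $A$ has constant row sums (by uniformity of $\phi^*$) to kill the cross term, and then bounding $\mathbf d^T B\mathbf d\le a\|\mathbf d\|_2^2$ together with $z\ge r^*$ from \autoref{lm:diprop}(iv), gives exactly the required $\log 2$. The paper instead argues by one-step symmetrisation: moving a single vertex from a largest part $V_2$ to a smallest part $V_1$ (with $|V_2|-|V_1|\ge 2$) multiplies $|X_H(\phi^*)|$ by a factor $D=\ell^{|V_2|-|V_1|-1}\prod_{y\neq 1,2}(\phi^*_{1y}/\phi^*_{2y})^{|V_y|}$, and a direct analysis using the regularity of the multiplicity-$z$ and multiplicity-$(z{+}1)$ graphs (so $|A|=|B|$ in their notation) shows $D\ge 2$; telescoping then reaches a Tur\'an partition and handles $r^*\nmid n$ for free. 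Your global spectral bound is more elegant when it applies; the paper's local move is cruder but uniform in the divisibility of $n$.

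Your $r^*\nmid n$ sketch, however, has a real gap. The linear correction $-\bm\epsilon^T A\mathbf d$ is not $O(1)$: Cauchy--Schwarz only gives $|\bm\epsilon^T A\mathbf d|\le \|\bm\epsilon\|_2\cdot\|A|_{\mathbf 1^\perp}\|_{\rm op}\cdot\|\mathbf d\|_2\lesssim \sqrt{r^*}\,(\log z)\,\|\mathbf d\|_2$, so the quadratic term $\tfrac12(\log z - a/z)\|\mathbf d\|_2^2$ dominates only once $\|\mathbf d\|_2\gtrsim\sqrt{r^*}$; the ``small cases by hand'' would then have to cover $\|\mathbf d\|_2^2\le C r^*$ for some constant $C$, which is not a finite list, and your slack $\log r^* - 1 + 2/r^* - \log 2$ is an order of magnitude too small to absorb this. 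The ``alignment'' idea is also not well-posed as stated: the labelling of $T$ must be fixed once for all $(\phi^*,H)$, and permutation-closure of $Y_{r^*}$ does not let you realign $T$ to each $H$ separately. A cleaner way to rescue your framework is to complete the square: writing $\mathbf n_c:=\mathbf n-(n/r^*)\mathbf 1$ and $\bm\epsilon:=\mathbf t-(n/r^*)\mathbf 1$, one has
\[
\log\frac{|X_T(\phi^*)|}{|X_H(\phi^*)|}=\tfrac12\bigl(\mathbf n_c^T(-A)\mathbf n_c-\bm\epsilon^T(-A)\bm\epsilon\bigr),
\]
and since $-A|_{\mathbf 1^\perp}$ is positive definite with eigenvalues in $[\log z-a\log(1{+}1/z),\ \log z+a\log(1{+}1/z)]$ and any non-Tur\'an integer vector $\mathbf n$ satisfies $\|\mathbf n_c\|_2^2\ge\|\bm\epsilon\|_2^2+2$, one gets a lower bound of $\log z - a\log(1{+}1/z)(1+\tfrac{r^*}{4})$, which is $\ge\log 2$ once one inputs the (much stronger than $z\ge r^*$) bounds on $z$ that come from $r^*\in\Rev(s)$. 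This works but needs some case-checking for small $r^*$; the paper's telescoping avoids all of this.
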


    \begin{proof}
Suppose not, and, without loss of generality, we may suppose that $|V_2|-|V_1| \geq 2$ is the largest among all class size differences. Consider the complete multipartite graph $\tilde{H}$ obtained by removing a vertex from $V_2$ and adding a vertex to $V_1$. Fix $\phi^*\in Y_{r^*}$ and for each integer $m$, let $J_m := \{ij \in \binom{[r^*]}{2}: \phi^*_{ij}=m\}$.
Since $\phi^*$ is optimal,~\eqref{eq:consmult} implies that $E(K_{r^*}) = J_z \cup J_{z+1}$
where $z := \lfloor s/(r^*-1)\rfloor$.
Note that each $J_m$ is regular since the multigraph which is the union of the ${\phi^*}^{-1}(c)$ is regular.

Let $\ell := \phi^*_{12}$.
Passing from $H$ to $\tilde{H}$, $|X(\phi^*)|$ increases by a factor of
$$
D := \frac{|X_{\tilde{H}}(\phi^*)|}{|X_H(\phi^*)|}
=\ell^{|V_2|-|V_1|-1} \prod_{y\in[r^*]\sm \{ 1,2\}}\left(\frac{\phi^*_{1y}}{\phi^*_{2y}}\right)^{|V_y|}.
$$
It suffices to show that $D \geq 2$, then the claim follows by a telescoping product.

To prove this, 
suppose first that $r^*=2$. Then $D=s^{|V_2|-|V_1|-1} \geq s \geq 2$, as required.
Thus we may assume $r^* \geq 4$, and so $s \geq 27$,
and $z \geq 9$.
Note that the factor $\phi^*_{1y}/\phi^*_{2y}$ equals $\frac{z}{z+1}$ if $y \in A := N_{J_z}(1) \cap N_{J_{z+1}}(2)$, equals $\frac{z+1}{z}$ if $y \in B := N_{J_{z+1}}(1) \cap N_{J_z}(2)$, and equals $1$ otherwise.
Since $J_z$ and $J_{z+1}$ are regular graphs partitioning the edge set of $K_{r^*}$, we have $|A|=|B|$. 
Thus
$$
D \geq \left(\frac{z}{z+1}\right)^{\sum_{y \in B}|V_y|-\sum_{y \in A}|V_y|}\ell^{|V_2|-|V_1|-1} \geq \left(\frac{z}{z+1}\right)^{|A|(|V_2|-|V_1|)}\ell^{|V_2|-|V_1|-1},
$$
as $|V_2|-|V_1|$ is the maximum difference between the size of two parts.
For the next step, we note that since $\frac{r^*-2}{2z} \leq \frac{(r^*)^2} {s}\leq 1$ we have, using $\log(1+x) \geq \frac{x}{x+1}$ for $x > -1$,
$$
\ell \left(\frac{z}{z+1}\right)^{(r^*-2)/2} \geq \exp\left(\log \ell-\frac{r^*-2}{2z}\right)\geq \exp\left(\log z-\frac{r^*-2}{2z}\right) \geq \exp(\log z-1)> 1.
$$
Using $|V_2|-|V_1| \geq 2$ and $|A| \leq (r^*-2)/2$, this allows us to bound
\begin{align*}
D &\geq  \ell^{-1}\left(\ell \left(\frac{z}{z+1}\right)^{(r^*-2)/2}\right)^{|V_2|-|V_1|}\geq \ell^{-1}\left(\ell^{\frac{2}{r^*-2}}\frac{z}{z+1}\right)^{r^*-2}\\
&= \begin{cases}
    z^{r^*-1}/(z+1)^{r^*-2} & \mbox{ if } \ell=z\\
    z^{r^*-2}/(z+1)^{r^*-3} & \mbox{ if }\ell=z+1.
\end{cases}
\end{align*}

Suppose that $r^*=4$. Then $s \geq 27$ so $z \geq 9$ and hence $z^3/(z+1)^2>7$,
and $z^2/(z+1)>8$, as required.
Thus we may assume that $r^* \geq 6$ and hence $s \geq 497$.
Since $\frac{r^*-2}{z} \leq \frac{2(r^*)^2}{s} \leq 1$, we have for $i\in\{2,3\}$ that
$$
D \geq z\left(\frac{z}{z+1}\right)^{r^*-i} \geq \exp\left(\log(z)-\frac{r^*-2}{z}\right) \geq \exp(\log(z)-1) > 2,
$$
completing the claim.
\end{proof}

The maximality of $F(G;(K_3^{(2)},s))$ and the claim now imply that $G\cong T_{r^*}(n)$, that is, the parts $W_i$ form an equipartition of $V(G)$.

It remains to accurately estimate $|X_{T_{r^*}(n)}(\phi^*)|$
for every $\phi^*\in Y_{r^*}$ to obtain the formula for $F(G;(K_3^{(2)},s))$.
For this, let $m,f$ be the unique positive integers with $0 \leq f < r^*$ such that $n = r^*m + f$.
We claim that
\begin{equation}\label{eq:t_r}
    e(T_{r^*}(n)) =: t_{r^*}(n)=\binom{r^*}{2}m^2+(r^*-1)mf+\binom{f}{2}.
\end{equation}
To see this, note that $f$ parts $W_1,\ldots,W_f$ have size $m+1$ and $r^*-f$ parts $W_{f+1},\ldots,W_{r^*}$ have size $m$. Choose an arbitrary labelling $W_i=U_i \cup \{v_i\}$ for each large part $W_i$ with $i \in [f]$ and $W_i=U_i$ for each small part with $i>f$.
The terms in the required expression are, in order, the number of edges between the $U_i$, between each $v_i$ and $U_j$ for $i \neq j$, and between the $v_i$.

Fix an optimal $\phi^*\in Y_{r^*}$ and again let $z=\lfloor \frac{s}{r^*-1} \rfloor$ and $a=s-z(r^*-1)$. 
The number of colourings $\chi$ of $K_{r^*}(U_1,\ldots,U_{r^*})$ which follow $\phi^*$ is exactly
$$
z^{m^2\left(\binom{r^*}{2}-ar^*/2\right)}(z+1)^{m^2ar^*/2}.
$$
For each $v_i$ with $i \in [f]$, the number of colourings following $\phi^*$ of edges $v_ix$ with $x \in \bigcup_{j \neq i} U_j$ is exactly $z^{m(r^*-1-a)}(z+1)^{ma}$.
The number of colourings of $\{v_iv_j:ij\in\binom{[f]}{2}\}$ is a function $C_\phi$
which is at least $z^{\binom{f}{2}}$ and at most $(z+1)^{\binom{f}{2}}$, and where, due to the symmetric form of $Y_{r^*}$, $\sum_{\phi \in Y_{r^*}}C_{\phi}$ depends only on $s, r^*$ and the remainder $f$ of $n$ modulo $r^*$.
Using~\eqref{eq:t_r}, we see that the number of colourings of $T_{r^*}(n)$ following $\phi^*$ is
\begin{align*}
|X_{T_{r^*}(n)}(\phi^*)|&=(z+1)^{m^2ar^*/2}z^{m^2\left(\binom{r^*}{2}-ar^*/2\right)} \cdot \left((z+1)^{ma}z^{m(r^*-1-a)}\right)^f \cdot C_\phi\\
&= (z+1)^{\left(t_{r^*}(n)-\binom{f}{2}\right)a/(r^*-1)}z^{\left(t_{r^*}(n)-\binom{f}{2}\right)(r^*-1-a)/(r^*-1)}\cdot C_{\phi}
\\
&= e^{\frac{r^*}{r^*-1}\gf_s(r^*)\left(t_{r^*}(n)-\binom{f}{2}\right)}\cdot C_{\phi}
= C'_{\phi}e^{\frac{r^*}{r^*-1}\gf(s)t_{r^*}(n)}
\end{align*}
where $C'_{\phi}=C_{\phi}e^{-\frac{r^*}{r^*-1}\gf(s)\binom{f}{2}}$, so that, again, $C=\sum_{\phi \in Y_{r^*}} C'_\phi$ is a function of only $s,r^*,f$.
By~(\ref{eq:FG}) we have
\begin{equation}\label{eq:F_formula}
F(T_{r^*}(n);(K^{(2)}_3,s))=(1+o(1))\sum_{\phi \in Y_{r^*}}C'_{\phi}e^{\frac{r^*}{r^*-1}\gf(s)t_{r^*}(n)}=(C+o(1))e^{\frac{r^*}{r^*-1}\gf(s)t_{r^*}(n)},
\end{equation}
as desired.
The number $|Y_{r^*}|$ of optimal $\phi^*$ is a function of $r^*,s$.
In fact,
$$
|Y_{r^*}| \geq \binom{s}{\underbrace{z,\ldots,z}_{r^*-1-a}\underbrace{z+1,\ldots,z+1}_{a}}
$$
which is a lower bound on the number of $\phi$ obeying~\autoref{cons:general}.
We note for future reference that the constant $C$ in the statement of the theorem therefore satisfies
\begin{equation}\label{eq:C}
    C \geq \frac{s!}{(z+1)!^{r^*-1}}z^{\binom{f}{2}}e^{-\frac{r^*}{r^*-1}\gf(s)\binom{f}{2}}.
\end{equation}

It remains to prove the last sentence of the statement. The set $\Rev(s)$ contains a single value for all $s \in [2,\maxs] \sm \{27\}$ from~\autoref{table:1} (proved in~\autoref{lm:Rsprecise}(iv)), while $\Rev(27)=\{2,4\}$.
Thus, to complete the proof, it remains to prove the assertion about $s=27$.
Clearly $F(T_2(n);(K^{(2)}_3,27))=27^{t_2(n)}$ since every colouring is valid.

Suppose $s=27$ and $r^*=4$. Then $z=9$ and since $3$ divides $27$ we have
$C_{\phi}=9^{\binom{f}{2}}$ and $e^{\frac{3}{4}\gf(27)} = 9$ so $C'_{\phi} = 1$.
Appealing to the remark after~\autoref{cons:general}, $|Y_4|$ is the number of triples $(A_1,A_2,A_3)$ where $A_1 \cup A_2 \cup A_3 = [27]$ is an equipartition. Thus $|Y_4|=\binom{27}{9,9,9}$ and therefore $F(T_4(n);(K^{(2)}_3,27))= (1+o(1))\binom{27}{9,9,9} 9^{t_4(n)}$, which is many times greater than $F(T_2(n);(K^{(2)}_3,27))=27^{t_2(n)}$.
So $T_4(n)$ is the unique optimal graph for $s=27$
and $F(n;(K^{(2)}_3,27))=(1+o(1))\binom{27}{9,9,9} 9^{t_4(n)}$.
\end{proof}

\begin{proof}[Proof of~\autoref{cor:inf}] \autoref{th:K32} implies that for each $s$, the set of $(K_3^{(2)},s)$-extremal graphs is a subset of $\{T_r(n):r\in R_2(s)\}$, so it suffices to show that each set $S_2(r)=\{s:r\in R_2(s)\}$ for $r\in 2\mathbb{N}$ contains an interval $[s^-(r),s^+(r)]$ that is disjoint from all other $S_2(r')$, $r'\neq r$. 
\autoref{lm:Rsprecise}(iii) implies that for all $r \in 2\mb{N}$, there is $s$ such that $\Rev(s)=\{r\}$ is a singleton. We also have that the interval $S_2(r)=[\tilde s_r,\tilde s_{r+2}-1]$ or $[\tilde s_r,\tilde s_{r+2}]$ overlaps with each of $S_2(r-2)$ and $S_2(r+2)$ in at most one element, so we can take $s^-(r)=\tilde s_r$ or $\tilde s_r+1$ and $s^+(r)=\tilde s_{r+2}$ or $\tilde s_{r+2}-1$. The bounds on $s_r$ in \autoref{lm:Rsprecise}(iii) yield 
$$(r-3)e^{r-2}<s_{r-1}\le \tilde s_r\le s_r<(r-1)e^r,$$
as required.
\end{proof}

\section{Forbidding improperly coloured cliques}\label{sec:proper}

In this section, we prove~\autoref{th:proper} on the improper pattern, which follows with some extra work from the proof of~\autoref{th:K32}.
We first determine the set of basic optimal solutions and show that the pattern satisfies the hypotheses of~\autoref{th:exact1}.

\begin{lemma}\label{lm:impprop}
    Let $s\geq 2$ and $k \geq 3$ be integers and let $\x = \x_{k,s}^{\wedge}$ be the family of all improper $s$-edge colourings of $K_k$. Then $\x$
    \begin{enumerate}[label=\emph{(\roman*)}]
        \item has $Q(\x) = \max\{\frac{k-2}{k-1}\log(s), \gf(s)\}$ and basic optimal solutions $$
\textstyle\opt^*(\x) =
\begin{cases}
\{(k-1,\phi_{[s]},\bm{u})\} &\mbox{ if }  \frac{k-2}{k-1}\log(s) > \gf(s)\\
\{(k-1,\phi_{[s]},\bm{u})\} \cup \opt^*(K^{(2)}_3,s) &\mbox{ if } \frac{k-2}{k-1}\log(s) = \gf(s)\\
\opt^*(K^{(2)}_3,s) &\mbox{ if } \frac{k-2}{k-1}\log(s) < \gf(s)
\end{cases}
$$
where $\phi_{[s]} \equiv [s]$ and $\bm{u}$ is uniform;
and in particular $\x$ is bounded, 
        \item is hermetic (and hence stable inside),
        \item has the extension property.
    \end{enumerate}
\end{lemma}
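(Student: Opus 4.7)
The plan is to determine the $\x$-free basic templates by splitting on whether the number of parts $r^*$ is at least $k$. For any basic optimum $(r^*,\phi^*,\ba^*)$ we have $G_{\phi^*} = K_{r^*}$; hence when $r^* \geq k$ every pair of adjacent edges in $K_{r^*}$ lies in some $k$-subset, and $\x$-freeness forces these pairs to carry disjoint colour lists, i.e.\ every colour class of $\phi^*$ is a matching. When $r^* \leq k-1$ no $K_k$ appears in $G_{\phi^*}$, so $\phi^*$ is automatically $\x$-free.

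For part (i) I obtain $Q(\x)$ from these two regimes. If $r^* \leq k-1$ then \autoref{fact:max} forces $\phi^* \equiv [s]$ and a concavity argument in $\ba^*$ gives uniform weights, yielding $q(\phi^*,\ba^*) = \tfrac{r^*-1}{r^*}\log s \leq \tfrac{k-2}{k-1}\log s$ with equality iff $r^* = k-1$. If $r^* \geq k$ then $(r^*,\phi^*,\ba^*) \in \feas_2(K^{(2)}_3,s)$, so \autoref{th:K23opt} gives $q(\phi^*,\ba^*) \leq \gf(s)$ with equality characterised by $\opt^*(K^{(2)}_3,s)$. Combining these yields $Q(\x) = \max\{\tfrac{k-2}{k-1}\log s,\, \gf(s)\}$ and the stated piecewise description of $\opt^*(\x)$, where I use the bound $\gf_s(r^*) \leq \hf_s(r^*) = \tfrac{r^*-1}{r^*}\log(s/(r^*-1)) \leq \tfrac{k-2}{k-1}\log s$ for $r^* \leq k-1$ to guarantee that whenever $\gf(s)$ dominates, every $r^* \in \Rev(s)$ already satisfies $r^* \geq k$. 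Boundedness follows from \autoref{lm:match}(iii) together with the trivial solution having $k-1$ parts.

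For part (ii) I contradict any extension $\phi \in \Phi_{\x,1}(r^*+1)$ of $\phi^*$ by exhibiting an improperly coloured $K_k$. In the trivial case, for any colour $c \in \phi(i,k)$ and any $j \in [k-1]\setminus\{i\}$, we have $c \in [s] = \phi^*(ij)$, so selecting $c$ on both $(i,k)$ and $ij$ in the unique $K_k$ is improper. In the dichromatic case with $r^* \geq k$, take $c \in \phi(i,r^*+1)$ and let $j$ be the vertex matched to $i$ by the perfect matching $\phi^{*-1}(c)$; since $r^* \geq k$ we may choose a $k$-subset $S$ containing $\{i,j,r^*+1\}$, and selecting $c$ on both $ij$ and $(i,r^*+1)$ is improper at $i$. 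For part (iii), hermeticity reduces to extensions $\phi$ with some $\phi(i^*,r^*+1) = \emptyset$. In the trivial case, \autoref{lm:maxoflog} applied to $\ext(\phi,\ba^*)$ over $|J| \leq k-2$ non-empty lists forces $|J| = k-2$ with all $\phi(i,k) = [s]$, exhibiting $r^*+1$ as a strong clone of the unique $i^*$. In the dichromatic case, the matching property yields $\sum_{i \in J}|\phi(i,r^*+1)| \leq s$; since $x \mapsto x\log(s/x)$ is strictly increasing on $[1,s/e]$ and $r^*-1 \leq \log s + 1$ lies well below $s/e$, the optimality condition $\ext(\phi,\ba^*) = Q(\x)$ forces $|J| = r^*-1$ with the $|\phi(j,r^*+1)|$ matching the multiplicities of $\phi^*$ at $i^*$. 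The improper-configuration argument of (ii) then shows that any $c \in \phi(j,r^*+1)$ has its partner under $\phi^{*-1}(c)$ equal to $i^*$, giving $\phi(j,r^*+1) \subseteq \phi^*(i^*j)$; equality of total sizes yields $\phi(j,r^*+1) = \phi^*(i^*j)$, so $r^*+1$ is a strong clone of $i^*$.

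The main obstacle will be the structural dichotomy in part (i): reducing the improper $k$-clique constraint to the dichromatic-triangle constraint when $r^* \geq k$ is clean, but aligning the resulting two-case optimisation with the three-case description of $\opt^*(\x)$ in the statement requires the precise comparison of $\gf(s)$ with $\tfrac{k-2}{k-1}\log s$ through the bound on $\hf_s(r^*)$ at $r^* = k-1$. The other parts become essentially routine once the matching structure of dichromatic optima from \autoref{th:K23opt} is invoked.
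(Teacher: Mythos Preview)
Your approach to parts (i) and (ii) is correct and essentially matches the paper's: split on $r^* \le k-1$ versus $r^* \ge k$, reduce the latter to the dichromatic-triangle optimisation via the observation that every colour class of $\phi^*$ must be a matching, and invoke \autoref{th:K23opt}. Your hermeticity argument for the dichromatic case is in fact more explicit than the paper's (which simply appeals to \autoref{lm:diprop}).

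There is, however, a genuine gap in your treatment of part (iii) for the dichromatic optima. You assert that ``the matching property yields $\sum_{i\in J}|\phi(i,r^*+1)|\le s$'', meaning that $r^*+1$ sees each colour at most once. But $\phi$ is only assumed to lie in $\Phi_{\x,0}(r^*+1)$, not in $\Phi_{K^{(2)}_3,0}(r^*+1)$; the two are \emph{not} comparable in general. Concretely, if $|J|\le k-2$ then no $k$-subset of $[r^*+1]$ containing $r^*+1$ has all edges non-empty, so the $\x$-free constraint places \emph{no} restriction whatsoever on the sets $\phi(i,r^*+1)$: one may take $\phi(i,r^*+1)=[s]$ for every $i\in J$ and still have $\phi$ be $\x$-free, with $\sum_{i\in J}|\phi(i,r^*+1)|=|J|s$. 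The same issue undermines your final step, where you embed an improper $K_k$ using the match of $j$ under ${\phi^*}^{-1}(c)$: this embedding needs $k-3$ further vertices of $J$ to supply non-empty edges to $r^*+1$.

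The fix is short but essential: first prove $|J|\ge k-1$. If $|J|\le k-2$ then $\ext(\phi,\ba^*)\le \tfrac{|J|}{r^*}\log s\le \tfrac{k-2}{r^*}\log s<\tfrac{k-2}{k-1}\log s\le Q(\x)$, using $r^*\ge k$. With $|J|\ge k-1$ in hand, any colour $c\in\phi(i,r^*+1)\cap\phi(j,r^*+1)$ gives an improper $K_k$ on $\{i,j,r^*+1\}\cup T$ for $T\subseteq J\setminus\{i,j\}$ of size $k-3$, establishing the matching property; and likewise your clone argument goes through. (The paper's own proof is terse at exactly this point, invoking \autoref{lm:diprop} without spelling out why the $K^{(2)}_3$ extension property transfers; the same bridging step is needed there.)

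A secondary issue: your deduction that $|J|=r^*-1$ from strict monotonicity of $x\mapsto x\log(s/x)$ is not quite complete, since you only get $h(|J|)\ge r^*\gf_s(r^*)$ and $r^*\gf_s(r^*)\le h(r^*-1)$ (with a gap of $r^*e_s(r^*)$). You need either to check $h(r^*-2)<r^*\gf_s(r^*)$ directly (as the paper does in \autoref{lm:diprop}(iii) via a mean-value argument), or to work instead with the integer maximum $L(\ell)=\max\{\sum_{i=1}^\ell\log x_i:x_i\in\mathbb{N},\sum x_i=s\}$, which satisfies $L(r^*-1)=r^*\gf_s(r^*)$ exactly and is strictly increasing on $[1,r^*-1]$ since the optimal parts exceed $4$.
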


\begin{proof}
Let $(r,\phi,\ba) \in \feas_{\x,2}(s)$.
Suppose first that $r \leq k-1$. Then 
$q(\phi,\ba)=2\sum_{ij}\aA_i\aA_j\log\phi_{ij} \leq 2\log(s)\sum_{ij}\aA_i\aA_j \leq (1-\frac{1}{r})\log(s) \leq (\frac{k-2}{k-1})\log(s)$. This is uniquely attained by the solution $(k-1,\phi_{[s]},\bm{u})$ where $\phi_{[s]} \equiv [s]$ and $\bm{u}$ is uniform.
Suppose instead that $r \geq k$.
Then $\phi^{-1}(c)$ is a matching for all $c \in [s]$, otherwise there would be an improperly coloured clique (with two adjacent edges of colour $c$).
By \autoref{rm:reallyproving}, we have $q(\phi,\ba) \leq \gf(s)$ with equality if and only if $r \in \Rev(s)$, $\ba$ is uniform and $\phi=\phi_{\mc{A}}$ for some $\mc{A}$ as in \autoref{cons:general}.
Thus $Q(\x)$ and the set of basic optimal solutions are as stated.
    Clearly $\x$ is bounded since the dichromatic triangle pattern is bounded.
    
    For parts~(ii) and~(iii), since the dichromatic triangle family $(K^{(2)}_3,s)$ is hermetic and has the extension property, it suffices to check the conditions for the solution $(k-1,\phi_{[s]},\bm{u})$ assuming that $Q(\x)=\frac{k-2}{k-1}\log(s)$.
    Let $\phi \in \Phi_{\x,1}(k)$ be an extension of $\phi_{[s]}$. 
    Let $c \in \phi(1k)$. Then $c \in \phi(12)=[s]$ so if $|\phi(ik)| \geq 1$ for all $i \in [k-1]$, there is a $k$-clique with adjacent edges of colour $c$, a contradiction. Thus $\x$ is hermetic.
    We have also shown that any extension $\phi$ must have $\phi(ik)=\emptyset$ for some $1\leq i \leq k-1$. Without loss of generality, suppose $\phi(1k)=\emptyset$.  Then the contribution of $k$ is
    $
    \sum_{2 \leq i \leq k-1}\frac{1}{k-1}\log|\phi(ik)| \leq \frac{k-2}{k-1}\log(s)=Q(\x)
    $
    with equality if and only if $\phi(ik)=[s]$ for all $2 \leq i \leq k$.
    In other words, if and only if $k$ is a clone of $1$ under $\phi_{[s]}$. Thus $\x$ has the extension property.
\end{proof}

Given our proof of~\autoref{th:K32}, it is now an easy task to prove our second main result.

\begin{proof}[Proof of \autoref{th:proper}]
Let $\x=\x_{k,s}^\wedge$ be the family of all improper $s$-edge colourings of $K_k$.
In~\autoref{lm:impprop} we determined $Q(\x)$ and $\opt^*(\x)$ and showed that the hypotheses of~\autoref{th:exact1} hold.

Let $0 < 1/n \ll \eps \ll \dD \ll 1/s,1/k$ and let $G$ be an $\x$-extremal graph on $n$ vertices.
~\autoref{th:exact1} implies that there is $(r^*,\phi^*,\ba^*) \in \opt^*(\x)$ such that $G$ is a complete $r^*$-partite graph whose $i$-th part $W_i$ has size $(\alpha_i^*\pm\dD)n$ for all $i \in [r^*]$, and~\ref{setwo} holds. 

If $(r^*,\phi^*,\ba^*)=(k-1,\phi_{[s]},\bm{u})$, then $G$ is $K_k$-free so every colouring of $G$ is valid, so $F(n;\x) = s^{\sum_{ij\in\binom{[k-1]}{2}}|W_i||W_j|}$ which is uniquely maximised when $||W_i|-|W_j|| \leq 1$ for all $i,j$;
that is, we must have $G \cong T_{k-1}(n)$ which has exactly $s^{t_{k-1}(n)}$ valid colourings.
On the other hand, if $(r^*,\phi^*,\ba^*) \in \opt^*(K^{(2)}_3,s)\sm \{(k-1,\phi_{[s]},\bm{u})\} $, \autoref{th:K32} implies that $G \cong T_{r^*}(n)$ for some $r^*\in \Rev(s)$ and $F(G;(K^{(2)}_3,s))=(C+o(1))e^{\frac{r^*}{r^*-1}\gf(s)t_{r^*}(n)}$ where
$C$ satisfies~\eqref{eq:C}
and $\gf(s)=\gf_s(r^*)$.
Recall that by \autoref{lm:Rsprecise}(i) and \eqref{eq:rs} we have
\begin{equation}\label{eq:rstar}
    r^* \leq \log(s)+2.
\end{equation}
We now first show that 
\begin{equation}\label{eq:t19_mono_s}\mbox{if }r^*\ge k\mbox{ and } \gf_s(r^*) \geq \frac{k-2}{k-1}\log(s)\mbox{, then }\gf_{s+1}(r^*)>\frac{k-2}{k-1}\log(s+1).\end{equation}

We verify \eqref{eq:t19_mono_s} manually for all $k\ge 3$ and $k-1\le s\le 1231$ and $r^*\in \Rev(s)\sm \{k-1\}$ by a computer search given in the ancillary file \texttt{improper\_patterns.py}.
Suppose now that $s\ge 1232$. Then
\begin{align*}
g_{s+1}(r^*)-g_s(r^*)-\frac{k-2}{k-1}\log(s+1)+\frac{k-2}{k-1}\log(s)
&\geq \left(\frac{r^*-1}{r^*}-\frac{k-2}{k-1}\right)\log\left(\frac{s+1}{s}\right)-e_{s+1}(r^*)\\
&>\left(\frac{1}{k-1}-\frac{1}{r^*}\right)\frac{1}{s+1}- \frac{1}{4}\left\lfloor \frac{s+1}{r^*-1}\right\rfloor^{-2}>0
\end{align*}
using $k \leq r^* \leq \log s + 2$ and $s\ge 1232$ in the final inequality. So \eqref{eq:t19_mono_s} holds for all $s$.
Define
$$
\overline{s}(k) := \min\left\{s \in \{2,3,\ldots\}: g(s)\ge \frac{k-2}{k-1}\log(s)
\text{ and }\min\Rev(s) \geq k\right\}.
$$
We will show that $s(k)=\overline{s}(k)-1$ satisfies the theorem.
Note that if there is $r^* \in \Rev(s)$ with $r^* < k$, then $\gf(s) \leq \frac{k-2}{k-1}\log(s)$.
Thus, by comparing the leading terms in the exponent of the formulas we gave above for $F(T_{k-1}(n);\mc X)$ and $F(T_{r^*}(n);\mc X)$, we obtain from~\eqref{eq:t19_mono_s} that 
\begin{itemize}
    \item if $s<\overline{s}(k)$, we have $g(s)\le \frac{k-2}{k-1}\log(s)$ and $G\cong T_{k-1}(n)$;
    \item if $s>\overline{s}(k)$, we have $g(s)> \frac{k-2}{k-1}\log(s)$ and thus $G\cong T_{r^*}(n)$.
\end{itemize}
It remains to show that for $s=\overline{s}(k)$, we have $G\cong T_{r^*}(n)$.
Suppose $g(s) > \frac{k-2}{k-1}\log(s)$. Then we can deduce as before that $G\cong T_{r^*}(n)$ for some $r^* \in \Rev(s)$.

So suppose $g(s) = \frac{k-2}{k-1}\log(s)$ and $\min\Rev(s) \geq k$.
First note that by our definition of $\overline{s}(k)$ we have $\overline{s}(k)\ge 27$ for all $k$ since $\Rev(s)=\{2\}$ for $s\le 26$. Thus for the calculations that follow we may assume that $s\ge 27$. By making the appropriate substitutions we can deduce from \eqref{eq:t_r} that for an integer $r$, the number of edges in the Tur\'an graph $T_r(n)$ is 
$$t_r(n)=\frac{r-1}{r}\frac{n^2}{2} - \frac{i}{2}\left(1-\frac{i}{r}\right),$$
where $0 \leq i \leq r-1$ and $i\equiv n\mod r$.
Thus, letting $f,b$ be such that $0 \leq f \leq r^*-1$ and $f\equiv n \mod r^*$ and $0 \leq b \leq k-2$ and $b \equiv n\mod (k-1)$, and setting $z:=\left\lfloor\frac{s}{r^*-1}\right\rfloor$,
we have, using $e^{\gf(s)}=s^{\frac{k-2}{k-1}}$,
\begin{eqnarray*}
\frac{F(T_{r^*}(n);\x)}{F(T_{k-1}(n);\x)}&\stackrel{\eqref{eq:C}}{\geq}&\frac{(1-o(1))\frac{s!}{((z+1)!)^{r^*-1}}z^{\binom{f}{2}}\exp\left(-\frac{r^*}{r^*-1}g(s)\binom{f}{2}\right)\exp\left(-\frac{r^*}{r^*-1}g(s)\frac{f}{2}\left(1-\frac{f}{r^*}\right)\right)}{\exp\left(-\frac{b}{2}\left(1-\frac{b}{k-1}\right)\log(s)\right)}\\
&\ge&\frac1{2}\frac{s!}{((z+1)!)^{r^*-1}}z^{\binom{f}{2}}e^{-\log(s)f^2/2}.
\end{eqnarray*}
The second inequality follows by expanding the numerator, substituting $\gf(s)\le \hf(s)\le \log(s)$ (which holds due to \autoref{lm:esr}, \autoref{lm:hanalytic}(ii) and \eqref{eq:W}), and noting that the exponent in the denominator is non-positive. Also,
using $s \geq 27$ and $z \geq \frac{s}{\log(s)+1}-1 \geq \frac{s}{2\log(s)}$ and $f\le \log(s)+1$ (which follow from~\eqref{eq:rstar}), 

$$\binom{f}{2}\log(z)-\frac{f^2}{2}\log(s)\stackrel{\eqref{eq:rstar}}{\ge}\binom{f}{2}\log\left(\frac{s}{2\log(s)}\right)-\frac{f^2}{2}\log(s) \ge -2\log^2(s)\log\log(s).$$
%using $s\ge 27$. 
Putting this together with the Stirling bounds $e(\ell/e)^\ell \le \ell!\le e\ell(\ell/e)^\ell$ applied to $\ell=s,z$, and~\eqref{eq:rstar}, we obtain
$$\frac{F(T_{r^*}(n);\x)}{F(T_{k-1}(n);\x)}>1,$$
as required.
\end{proof}

\section{Proofs of general results}\label{sec:proofs}

This section contains the proofs of~\autoref{th:asymptotic}--\autoref{th:exact1}.
These proofs are mainly adaptations of proofs in~\cite{katherine_exact,katherine_stability}, which are versions of our results for monochromatic colour patterns. 
The arguments transfer almost directly from the monochromatic setting -- the key point being that in both settings, the family of forbidden colourings are on \emph{cliques}. This means that, informally, colours at two non-adjacent vertices are `independent' since they cannot be in a forbidden clique together, and thus it can be shown that the following holds:
\begin{enumerate}[label=(S),ref=(S)]
    \item\label{it:S} for any pair $u,v$ of non-adjacent vertices, either replacing $u$ by a twin of $v$ or $v$ by a twin of $u$ (the operation of \emph{symmetrisation}) gives a graph with at least as many valid colourings. 
\end{enumerate}
This is stated in e.g.~\cite[Lemma 2.7]{benevides2017edge}.
Symmetrisation was originally introduced by Zykov in the 1950s~\cite{zykov, zykov_en} to give a new proof of Tur\'an's theorem.

The original proofs in~\cite{katherine_exact,katherine_stability} are rather long. We therefore give only a sketch for those parts which are not new, and refer the reader to the relevant reference.

\subsection{Tools}\label{sec:tools}
We start by collecting some tools concerning the optimisation problem. Versions of most of these for monochromatic patterns appear in~\cite{katherine_asymptotic,katherine_exact,katherine_stability},
but usually the same proof goes through verbatim.
We limit ourselves to brief remarks where this is not quite true.

The first lemma, a generalisation of~\cite[Lemma~2.8]{katherine_stability}, states that sizes of parts of basic optimal solutions are bounded below.
It is proved via a compactness argument, which requires that there is a finite number of possible $\phi^*$. This holds when $\x$ is bounded (we recall from \autoref{sec:optk1} that the monochromatic pattern is bounded).

\begin{lemma}\label{lm:mu}
Let $s \geq 2$ and $k \geq 3$ be integers and let $\x$ be a bounded family of $s$-edge colourings of $K_k$ that has the extension property.
Then there exists $\mu>0$ such that for all $(r^*,\phi^*,\ba^*) \in \opt^*(\x)$, we have $\aA_i^*>\mu$ for all $i \in [r^*]$.
\end{lemma}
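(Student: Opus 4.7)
The plan is to prove this by contradiction using a compactness argument together with the extension property. Assume there is no such $\mu$, and pick a sequence $(r_n^*,\phi_n^*,\ba_n^*) \in \opt^*(\x)$ with $\min_i \alpha_{n,i}^* \to 0$. The goal is to pass to a convergent subsequence, identify the limit as a lower-dimensional element of $\opt^*(\x)$ after deleting the vanishing coordinates, and then use the extension property to derive a contradiction from the fact that a \emph{strict} part (with $\phi_{ij}^* \geq 2$) is being squeezed out.

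For the compactness step, since $\x$ is bounded we have $r_n^* \leq R$ for some absolute constant $R$, and for each such $r^*$ the set of colour templates $\phi^*:\binom{[r^*]}{2}\to 2^{[s]}$ is finite. Hence after passing to a subsequence, both $r_n^* = r^*$ and $\phi_n^* = \phi^*$ stabilise, and then the compactness of $\DD^{r^*}$ lets us assume $\ba_n^* \to \ba^*$ for some limit $\ba^* \in \DD^{r^*}$. Continuity of $q$ forces $q(\phi^*,\ba^*) = Q(\x)$, and the hypothesis $\min_i \alpha_{n,i}^* \to 0$ gives a non-empty index set $T := \{i: \alpha_i^* = 0\}$. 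Writing $S := [r^*]\setminus T$, the restricted triple $(|S|, \phi^*|_{\binom{S}{2}}, \ba^*|_S)$ has positive coordinates summing to $1$ and the same objective value $Q(\x)$, hence lies in $\opt^*(\x)$.

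Pick any $i \in T$. Since $\alpha_{n,i}^* > 0$, Lemma~\ref{lm:cont1} gives $q_i(\phi^*,\ba_n^*) = Q(\x)$ for every $n$; taking the limit and using that $\alpha_j^* = 0$ for $j \in T\setminus\{i\}$ yields
\[
\ext(\phi^*|_{S\cup\{i\}},\ba^*|_S) \;=\; \sum_{j \in S}\alpha_j^*\log\phi^*_{ij} \;=\; Q(\x).
\]
The extension property (applied to the basic optimal solution on $S$ with the new vertex $i$ in the role of $r^*+1$) then forces $i$ to be a clone of some $j^* \in S$ under $\phi^*|_{S\cup\{i\}}$. But the clone definition requires $\phi^*_{ij^*} \leq 1$, which contradicts $\phi^* \in \Phi_{\x,2}(r^*)$ where every multiplicity is at least $2$. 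This contradiction completes the proof.

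The main obstacle I would expect is simply making sure the compactness argument is set up correctly: one needs that $(r^*,\phi^*)$ takes finitely many values (crucially using boundedness of $\x$), and one needs to verify that the limiting restricted triple really does belong to $\opt^*(\x)$ rather than merely to $\opt_0(\x)$; this is where requiring $\phi^* \in \Phi_{\x,2}(r^*)$ from the outset becomes essential, as it is precisely this multiplicity-$\geq 2$ condition that clashes with the clone conclusion delivered by the extension property.
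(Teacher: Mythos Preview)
Your argument is correct and is exactly the compactness argument the paper has in mind: the paper does not spell out a proof but remarks that it ``is proved via a compactness argument, which requires that there is a finite number of possible $\phi^*$'', citing \cite[Lemma~2.8]{katherine_stability}. Your write-up fills in precisely these details---stabilising $(r^*,\phi^*)$ by finiteness, passing to a limit in $\DD^{r^*}$, restricting to the support $S$ to obtain an element of $\opt^*(\x)$, and then using \autoref{lm:cont1} and the extension property to force a vanished part to be a clone, contradicting $\phi^*\in\Phi_{\x,2}(r^*)$.
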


The next lemma states that $q$ is continuous in its second argument and follows from simple calculus.

\begin{lemma}[Proposition~11,\cite{katherine_asymptotic}]\label{lm:lipschitz}
Let $s \geq 2$ and $k \geq 3$ be integers and let $\x$ be a family of $s$-edge colourings of $K_k$.
Then for any $r \in \mathbb{N}$, $\phi\in \Phi_{\x,0}(r)$ and $\ba,\bm{\beta}\in \Delta^r$ we have
$$
|q(\phi,\ba)-q(\phi,\bm{\beta})|<2\log(s)\|\ba-\bm{\beta}\|_1.
$$
\end{lemma}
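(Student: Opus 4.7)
The plan is to unpack the definition of $q$ and exploit the bilinearity of $q(\phi,\cdot)$. Writing the objective using ordered pairs,
$$
q(\phi,\ba)-q(\phi,\bm{\beta})=\sum_{i\neq j}(\aA_i\aA_j-\bB_i\bB_j)\log\phi_{ij},
$$
and using the standard identity $\aA_i\aA_j-\bB_i\bB_j=\aA_i(\aA_j-\bB_j)+\bB_j(\aA_i-\bB_i)$, I would split the difference into two sums; after renaming $i\leftrightarrow j$ in one of them and using $\phi_{ij}=\phi_{ji}$, the whole expression collapses to
$$
q(\phi,\ba)-q(\phi,\bm{\beta})=\sum_{i\neq j}(\aA_i+\bB_i)(\aA_j-\bB_j)\log\phi_{ij}.
$$

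Next I would take absolute values, bound $\log\phi_{ij}\leq\log s$ (recalling the convention $\log 0:=0$), and factor the double sum. Since $\sum_i(\aA_i+\bB_i)=2$, dropping the diagonal contribution gives
$$
\sum_{i\neq j}(\aA_i+\bB_i)|\aA_j-\bB_j|
=2\|\ba-\bm{\beta}\|_1-\sum_i(\aA_i+\bB_i)|\aA_i-\bB_i|.
$$
Combined with the previous display this yields $|q(\phi,\ba)-q(\phi,\bm{\beta})|\leq 2\log(s)\|\ba-\bm{\beta}\|_1$.

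The only subtlety is the strict inequality. If $\ba\neq\bm{\beta}$, pick $i$ with $\aA_i\neq\bB_i$; then $\aA_i+\bB_i>0$, so the subtracted diagonal term $\sum_i(\aA_i+\bB_i)|\aA_i-\bB_i|$ is strictly positive, giving the strict inequality. If $\ba=\bm{\beta}$ then both sides are $0$ and the claim is vacuous (or should be read as $\leq$). I expect no real obstacle here beyond being careful with the ordered vs.\ unordered index conventions in the definition of $q$.
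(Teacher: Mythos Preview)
Your argument is correct. The paper does not give its own proof of this lemma; it merely remarks that it ``states that $q$ is continuous in its second argument and follows from simple calculus'' and cites the reference. Your bilinear decomposition and the diagonal-subtraction trick are exactly the kind of elementary estimate intended. Your observation about the degenerate case $\ba=\bm{\beta}$ is also fair: the strict inequality as stated is only meaningful when $\ba\neq\bm{\beta}$, and in every application in the paper the lemma is used only as a non-strict Lipschitz bound.
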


Next, we have that almost optimal solutions have the property that the vertex weighting can be perturbed slightly to produce an optimal solution. This is another compactness argument and generalises~\cite[Claim~15]{katherine_asymptotic}.

\begin{lemma}\label{lm:near}
Let $s \geq 2$ and $k \geq 3$ be integers and let $\x$ be a bounded family of $s$-edge colourings of $K_k$.
For all $\dD>0$ there exists $\eps>0$ such that the following holds. Let $(r,\phi,\ba) \in \feas_{1}(\x)$ be such that $q(\phi,\ba) \geq Q(\x)-\eps$. Then there exists $\ba^* \in \DD^r$ such that $\|\ba-\ba^*\|_1 < \dD$ and $(r,\phi,\ba^*) \in \opt_1(\x)$.
\end{lemma}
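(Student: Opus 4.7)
The plan is to argue by contradiction, using compactness together with the hypothesis that $\x$ is bounded. Suppose the lemma fails for some $\dD > 0$; then there exist sequences $r_n, \phi_n, \ba_n$ with $(r_n, \phi_n, \ba_n) \in \feas_1(\x)$ and $q(\phi_n, \ba_n) \to Q(\x)$ such that, for each $n$, every $\ba^* \in \DD^{r_n}$ with $(r_n, \phi_n, \ba^*) \in \opt_1(\x)$ satisfies $\|\ba_n - \ba^*\|_1 \geq \dD$ (including the vacuous case in which no such $\ba^*$ exists).

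First I would reduce to the case where $r_n$ is uniformly bounded, at the cost of changing $\ba_n$ by $o(1)$ in $L^1$. Sort the entries of $\ba_n$ so that $\alpha_{n,1} \geq \dots \geq \alpha_{n,r_n}$, and fix a small threshold $\mu = \mu(\dD) > 0$; at most $1/\mu$ coordinates can exceed $\mu$. Let $H_n \subseteq [r_n]$ be the set of ``heavy'' indices $i$ with $\alpha_{n,i} \geq \mu$, of size $R_n \leq 1/\mu$. A Zykov-style compression argument — combining \autoref{lm:mu} (which guarantees that the weights in basic optimal solutions are bounded below) with the symmetrisation idea behind \autoref{fact:opts} — shows that the tail weight $T_n := \sum_{i \notin H_n} \alpha_{n,i}$ vanishes along the near-optimal sequence. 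Let $\tilde \phi_n := \phi_n|_{\binom{H_n}{2}}$, which is $\x$-free, and let $\tilde \ba_n \in \DD^{R_n}$ be the restriction of $\ba_n$ to $H_n$ rescaled to sum to $1$. Writing $\iota_n : \DD^{R_n} \hookrightarrow \DD^{r_n}$ for the injection placing mass at the heavy coordinates and zero elsewhere, one has $\|\ba_n - \iota_n(\tilde \ba_n)\|_1 = O(T_n) = o(1)$, and a direct computation yields $q(\tilde \phi_n, \tilde \ba_n) \to Q(\x)$.

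Next, with $R_n \leq 1/\mu$ uniformly bounded and only finitely many templates on at most $1/\mu$ parts (up to relabelling), pigeonhole gives a subsequence along which $(R_n, \tilde \phi_n) = (R, \phi)$ is constant. Since $\tilde \ba_n \in \DD^R$ lies in a compact simplex, passing to a further subsequence yields $\tilde \ba_n \to \ba^*$ for some $\ba^* \in \DD^R$. By \autoref{lm:lipschitz}, $q(\phi, \ba^*) = Q(\x)$, so $(R, \phi, \ba^*) \in \opt_1(\x)$. Setting $\ba^{**}_n := \iota_n(\ba^*) \in \DD^{r_n}$, one has $q(\phi_n, \ba^{**}_n) = q(\phi, \ba^*) = Q(\x)$, so $(r_n, \phi_n, \ba^{**}_n) \in \opt_1(\x)$. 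Finally, $\|\ba_n - \ba^{**}_n\|_1 \leq \|\ba_n - \iota_n(\tilde \ba_n)\|_1 + \|\iota_n(\tilde \ba_n) - \iota_n(\ba^*)\|_1 \to 0$, contradicting $\|\ba_n - \ba^{**}_n\|_1 \geq \dD$.

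The main obstacle is the claim that the tail weight $T_n$ vanishes along any near-optimal sequence. A naive bound via $\log \phi_{ij} \leq \log s$ on light-light and light-heavy cross terms is insufficient when $Q(\x) \leq \log s$, so one must argue more structurally: using \autoref{lm:mu} and the symmetrisation underlying \autoref{fact:opts}, one shows that substantial mass spread thinly over many small parts yields a $q$-value bounded strictly below $Q(\x)$, so near-optimality forces $T_n \to 0$.
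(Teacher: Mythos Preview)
Your contradiction-plus-compactness plan is right in spirit, but the step you flag as ``the main obstacle'' --- showing the tail weight $T_n \to 0$ --- is a genuine gap that cannot be closed under the stated hypotheses. First, you invoke \autoref{lm:mu}, which assumes the extension property; \autoref{lm:near} does not. Second, and more seriously, the claim that near-optimality forces light mass to vanish is false for bounded families in general. Take $\x = (K^{(3)}_3, 4)$, which is bounded by \autoref{lm:rainbowclique}, and $\phi \equiv \{1,2\}$ on $r$ parts with uniform weights: then $q = (1 - 1/r)\log 2 \to \log 2 = Q(\x)$, yet for any fixed threshold $\mu$ and $r > 1/\mu$ every part is light, so $H_n = \emptyset$ and $T_n = 1$. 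Your reduction collapses entirely. (This example in fact shows the lemma as literally stated may need an extra hypothesis, since here the maximum of $q(\phi,\cdot)$ over $\DD^r$ is strictly below $Q(\x)$ and hence no $\ba^*$ with $(r,\phi,\ba^*) \in \opt_1(\x)$ exists at all --- but that is a separate issue.)

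The argument the paper has in mind is simpler and sidesteps all of this. In the families where the result is actually used --- monochromatic (by Ramsey, see \autoref{sec:optk1}) and dichromatic triangle (by \autoref{lm:match}(iii)) --- there is a uniform bound $r \leq R$ on $\feas_1(\x)$ or $\feas_2(\x)$, and inside the proof of \autoref{lm:stability_opt} the lemma is only invoked on $(r_f, \psi_f, \bb_i)$ with $\psi_f \in \Phi_{\x,2}(r_f)$. Once $r$ is bounded the compactness is one line: pass to a subsequence with $r_n \equiv r$ and then $\phi_n \equiv \phi$ (finitely many choices), take $\ba_n \to \ba^*$ in the compact simplex $\DD^r$, and continuity (\autoref{lm:lipschitz}) gives $q(\phi, \ba^*) = Q(\x)$ with $\|\ba_n - \ba^*\|_1 \to 0 < \dD$. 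No tail-weight analysis is needed.
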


Our next tool is a version of~\cite[Lemma 2.10]{katherine_stability} which states that when a bounded family has the extension property, any vertex attached to a basic optimal solution with almost optimal contribution must be a clone of an existing vertex.
(Recall that the extension property (\autoref{extprop}) guarantees this to be true when the contribution is optimal rather than almost optimal.)

\begin{lemma}\label{lm:ext}
Let $s \geq 2$ and $k \geq 3$ be integers and let $\x$ be a bounded family of $s$-edge colourings of $K_k$ that has the (strong) extension property.
Then there exists $\eta>0$ such that the following holds.
    Let $(r^*,\phi^*,\ba^*) \in \opt^*(\x)$ and $\phi \in \Phi_{\x,0}(r^*+1)$ be such that $\phi|_{\binom{[r^*]}{2}} = \phi^*$ and
    ${\rm ext}(\phi,\ba^*) > Q(\x)-\eta$.
    Then $r^*+1$ is a (strong) clone of some $i \in [r^*]$ under $\phi$.
\end{lemma}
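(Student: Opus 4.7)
The plan is to prove this by a compactness argument together with the (strong) extension property of $\x$. Suppose for contradiction that no such $\eta>0$ exists; then for each $n \in \mathbb{N}$ there is a counterexample, i.e.\ $(r_n^*,\phi_n^*,\ba_n^*) \in \opt^*(\x)$ and an extension $\phi_n \in \Phi_{\x,0}(r_n^*+1)$ with $\phi_n|_{\binom{[r_n^*]}{2}} = \phi_n^*$ and $\ext(\phi_n,\ba_n^*) > Q(\x)-1/n$, but such that $r_n^*+1$ is not a (strong) clone of any $i \in [r_n^*]$ under $\phi_n$.

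Since $\x$ is bounded, there is $R$ with $r_n^* \le R$ for all $n$, so by pigeonhole we may pass to a subsequence in which $r_n^* = r^*$ is constant. The number of functions $\phi: \binom{[r^*+1]}{2} \to 2^{[s]}$ is at most $2^{s\binom{r^*+1}{2}}$, hence finite; passing to a further subsequence we may assume $\phi_n = \phi$ (and thus $\phi_n^* = \phi|_{\binom{[r^*]}{2}} = \phi^*$) is constant. The vectors $\ba_n^*$ lie in the compact simplex $\DD^{r^*}$, so after a final subsequence we may assume $\ba_n^* \to \ba^* \in \DD^{r^*}$.

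The function $q(\phi^*,\cdot)$ is continuous (it is quadratic in $\ba$, as also witnessed by \autoref{lm:lipschitz}), so $q(\phi^*,\ba^*) = \lim_n q(\phi^*,\ba_n^*) = Q(\x)$. By \autoref{lm:mu}, each coordinate of each $\ba_n^*$ is bounded below by some $\mu > 0$, so $\aA_i^* \ge \mu > 0$ for every $i \in [r^*]$; since $\phi^* \in \Phi_{\x,2}(r^*)$ (because $(r^*,\phi^*,\ba_n^*) \in \opt^*(\x)$ for every $n$), we conclude that $(r^*,\phi^*,\ba^*) \in \opt^*(\x)$. The function $\ext(\phi,\cdot) = \sum_{i \in [r^*]} \alpha_i \log|\phi(\{i,r^*+1\})|$ is linear in $\ba$ for fixed $\phi$, so $\ext(\phi,\ba^*) = \lim_n \ext(\phi,\ba_n^*) = Q(\x)$.

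By the (strong) extension property applied to $(r^*,\phi^*,\ba^*)$ and $\phi$, we deduce that $r^*+1$ is a (strong) clone of some $j \in [r^*]$ under $\phi$. But being a (strong) clone under $\phi$ depends only on $\phi$ and not on the weighting $\ba$, and $\phi_n = \phi$ for every $n$ in our subsequence; this contradicts the fact that $r_n^*+1$ is not a (strong) clone of any vertex under $\phi_n$. The only real subtlety is ensuring that the limiting solution still lies in $\opt^*(\x)$ rather than merely in $\opt_0(\x)$, which is exactly what the lower bound from \autoref{lm:mu} provides; everything else reduces to discretising the data (via boundedness of $r^*$ and finiteness of templates) and exploiting continuity.
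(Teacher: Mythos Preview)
Your proof is correct and follows the natural compactness approach. The paper does not give its own proof of this lemma but cites it as a version of \cite[Lemma~2.10]{katherine_stability}; your argument is essentially the intended one. One small point worth making explicit: to conclude $\ext(\phi,\ba^*) = Q(\x)$ rather than merely $\geq Q(\x)$, you need the standard upper bound $\ext(\phi,\ba_n^*) \leq Q(\x)$ (valid for any extension of a basic optimal solution, since otherwise shifting a small amount of weight onto the new vertex would increase $q$ above $Q(\x)$); the paper records this as \cite[Proposition~2.6]{katherine_stability}.
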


The final result is a new lemma which allows us to find a common optimal vertex weighting given several optimal solutions with similar vertex weightings. It will be used in the proof of~\autoref{th:exact1}.

\begin{lemma}\label{lem:uni_ba}
Let $s \geq 2$ and $k \geq 3$ be integers and let $\x$ be a family of $s$-edge colourings of $K_k$. Let $r \geq 2$ be an integer and suppose that $\Phi \subseteq \Phi_{\x,0}(r)$, and that for each $\phi \in \Phi$, we have $\ba_\phi \in \DD^r$ such that $(r,\phi,\ba_\phi)\in\opt(\x)$. Then for any $\eps>0$ there is $\dD>0$ such that the following holds. If for all $\phi,\psi \in \Phi$ we have $\|\ba_\phi-\ba_{\psi}\|_1<\dD$, then there is $\ba^*\in \DD^r$ such that for all $\phi \in \Phi$ we have $(r,\phi,\ba^*)\in\opt(\x)$ and $\|\ba_\phi-\ba^*\|_1<\eps.$
\end{lemma}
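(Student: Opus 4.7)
The plan is to argue by compactness. Three preliminary observations are relevant. First, $\Phi \subseteq \Phi_{\x,0}(r)$ is finite, since $\Phi_{\x,0}(r)$ is a subset of the (finite) set of maps $\binom{[r]}{2} \to 2^{[s]}$. Second, $\DD^r$ is a compact subset of $\mb{R}^r$. Third, for each $\phi \in \Phi$ the function $\ba \mapsto q(\phi,\ba)$ is a polynomial, and in particular continuous, so the set of optimal weightings
$A_\phi := \{\ba \in \DD^r : q(\phi,\ba) = Q(\x)\} = \{\ba \in \DD^r : (r,\phi,\ba) \in \opt(\x)\}$
is a non-empty closed subset of $\DD^r$, the hypothesis on $\ba_\phi$ guaranteeing non-emptiness.

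With these in hand, I would suppose for contradiction that the conclusion fails for some $\eps_0 > 0$. Then for every $n \in \mb{N}$ there exists a tuple $(\ba^{(n)}_\phi)_{\phi \in \Phi}$ such that each $(r,\phi,\ba^{(n)}_\phi) \in \opt(\x)$ and $\|\ba^{(n)}_\phi - \ba^{(n)}_\psi\|_1 < 1/n$ for all $\phi,\psi \in \Phi$, but no $\ba^* \in \DD^r$ is simultaneously optimal for every $\phi$ and within $L^1$-distance $\eps_0$ of every $\ba^{(n)}_\phi$. Using that $\Phi$ is finite and $\DD^r$ is compact, I would pass to a subsequence along which $\ba^{(n)}_\phi \to \ba^\infty_\phi$ for each $\phi \in \Phi$. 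The pairwise-closeness hypothesis, via the triangle inequality, forces $\ba^\infty_\phi = \ba^\infty_\psi$ for all $\phi,\psi \in \Phi$, so there is a single $\ba^* \in \DD^r$ with $\ba^{(n)}_\phi \to \ba^*$ for every $\phi \in \Phi$. Closedness of each $A_\phi$ gives $(r,\phi,\ba^*) \in \opt(\x)$ for every $\phi$, and convergence gives $\|\ba^{(n)}_\phi - \ba^*\|_1 < \eps_0$ for every $\phi$ once $n$ is large, which is the desired contradiction.

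I do not foresee any substantive obstacle: this is a standard compactness exercise that runs in parallel with the proofs of \autoref{lm:mu} and \autoref{lm:near} indicated in the excerpt. The only step that requires a moment of care is the collapse of the per-$\phi$ subsequential limits $\ba^\infty_\phi$ into a single vector $\ba^*$; this is precisely what the pairwise-closeness hypothesis is designed to deliver, and it is the reason a single $\ba^*$ can be made to work for every $\phi \in \Phi$ simultaneously rather than merely for each $\phi$ separately.
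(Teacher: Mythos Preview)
Your proposal is correct and follows essentially the same compactness argument as the paper: negate, extract a convergent subsequence in the compact simplex, use that pairwise distances tend to zero to collapse the limits into a single $\ba^*$, and invoke continuity of $q$ to get optimality. The only minor difference is that the paper also lets $\Phi$ vary along the contradicting sequence and first passes to a subsequence on which $\Phi_m$ is constant (using finiteness of $\Phi_{\x,0}(r)$), thereby making $\delta$ uniform over all choices of $\Phi$; your version with $\Phi$ fixed is trivially upgraded to this by taking the minimum $\delta$ over the finitely many subsets of $\Phi_{\x,0}(r)$.
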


\begin{proof}
Suppose that the lemma does not hold for some $\eps>0$. Then for each integer $m \geq 1$, there is $\Phi_m \subseteq \Phi_{\x,0}(r)$ and a family $(r, \phi,\ba^{(m)}_\phi)_{\phi \in\Phi_m}$ with $\|\ba_\phi-\ba_\psi\|_1<\frac{1}{m}$ for all $\phi,\psi \in \Phi_m$ and such that, if there is $\ba^* \in \Delta^r$ so that $(r,\phi,\ba^*) \in \opt^*(\x)$ for all $\phi \in \Phi_m$, then $\|\ba_\phi^{(m)}-\ba^*\|_1 > \eps$ for some $\phi \in \Phi_m$.

We can restrict to a subsequence  where some $\phi$ either never appears or appears infinitely often. Since $\Phi_{\x,0}(r)$ is finite, we may iteratively do this for all $\phi$ to obtain a subsequence ${(m_\ell)}_{\ell \in \mathbb{N}}$ such that for all $\ell \geq 1$ the set $\Phi_{m_\ell}$ is equal to some fixed set $\Phi$.
Owing to the compactness of $\DD^r$, by restricting the sequence ${(m_\ell)}_{\ell \in \mathbb{N}}$ further, we may assume that for all $\phi \in \Phi$, $(\ba_\phi^{(m_\ell)})_\ell$ converges. Since for any $\phi,\phi' \in \Phi$ we have $\|\ba_\phi^{(m_\ell)}-\ba_{\phi'}^{(m_\ell)}\|_1 \to 0$  as  $\ell \to \infty$, we get the same limit $\ba^*$ of the sequences $(\ba_\phi^{(m_\ell)})_\ell$ for $\phi \in \Phi$. 
Because $(r^*,\phi,\ba_\phi^{(m_\ell)})$ is optimal for all $\phi \in \Phi$ and  $\ell \geq 1$, the continuity of $q$ implies that $(r^*,\phi,\ba^*) \in \opt(\x)$ for all $\phi \in \Phi$. This yields a contradiction, since we can choose $L$ so that $\|\ba_\phi - \ba^*\|_1 < \eps$ for all $\phi \in \Phi = \Phi_{m_L}$.
\end{proof}

\subsection{Proof of \autoref{lm:stability_opt}}\label{sec:proofs:stability_opt}
The proof of \autoref{lm:stability_opt} is largely the same as that of~\cite[Lemma~3.1]{katherine_stability} to which the reader is referred for details. 
The idea is to repeatedly `symmetrise' an almost optimal solution, and then show that actually not much changed during this process.

\begin{proof}[Sketch proof of \autoref{lm:stability_opt}]
Let $\x$ be a bounded family of $s$-edge colourings of $K_k$ which has the extension property.
Write $Q := Q(\x)$.
Let $\nu>0$ be given.
Let $\mu$ be the constant obtained from~\autoref{lm:mu}, let $0 < \gG\ll \delta \ll \mu$ and 
obtain $\eps^{1/4}$ by applying~\autoref{lm:near} with $\gG^2$.
Without loss of generality, we have $0<\eps \ll \gamma \ll \delta\ll  \mu \ll \nu,1$.
Suppose that $(r,\phi,\ba) \in \feas(\x)$ has $q(\phi,\ba) > Q-\eps$.
By splitting up parts we can assume that every part $\alpha_i$ has equal size $1/r \ll \eps$.
The first main step in the proof is the \emph{forwards symmetrisation} procedure which is the same as in \cite{katherine_stability}. The only property required of solutions $(r,\phi,\ba)$ is a version~\ref{it:S'} of~\ref{it:S} stating that symmetrising produces a feasible solution and $q$ does not decrease
(we already used this to prove~\autoref{fact:opts}).
\begin{enumerate}[label=(S$'$),ref=(S$'$)]
    \item\label{it:S'} In a feasible solution $(r,\phi,\ba)$, for any $hj \in \binom{[r]}{2}$ with $\phi_{hj} \leq 1$, 
    such that, after possibly swapping the labels $h,j$, then
    defining $\ba' \in \DD^r$ with $\aA_h':=\aA_h+\aA_j$ and $\aA_j' := 0$ and $\aA_\ell' := \aA_\ell$ for all $\ell \in [r]\sm\{h,j\}$ (the operation of \emph{symmetrisation}), we have that $(r,\phi,\ba')$ is a feasible solution and $q(\phi,\ba') \geq q(\phi,\ba)$. 
\end{enumerate}
This procedure yields~\autoref{cl:forwards}, which we will state after introducing notation.
Let $\mathcal{V}_0=\{\{1\},\ldots,\{r\}\}$ and $\phi_0=\phi$.
Given $r'\leq r$ and colour templates $\psi' \in \Phi_{\x,0}(r'),\psi \in \Phi_{\x,0}(r)$ and a partition $\mathcal{V}=\{V_1, \dots, V_{r'}\}$ of $[r]$ into $r'$ parts, we say that $\psi=_{\mathcal{V}} \psi' $ if $\psi(i'j')=\psi'(ij)$ for all $i' \in V_i$, $j' \in V_j$ and $ij\in\binom{[r']}{2}$, and $\psi(i'i'')=\emptyset$ whenever $i',i'' \in V_i$ for all $i \in [r']$.

\begin{secclaim}[See Claim~3.1.1 in~\cite{katherine_stability}]\label{cl:forwards}
There is $f \in \mathbb{N}$ 
and $2 \leq r_f \leq \ldots \leq r_0=r$
such that, after relabelling $[r]$, for all $i=0,\ldots,f$, there is a partition $\mathcal{V}_i$ of $[r]$ with $|\mc{V}_i|=r_i$
and a colour template $\phi_i \in \Phi_{\x,0}(r)$ such that the following hold.
\begin{itemize}
\item There is a single $x_i \in [r]$ such that $\mathcal{V}_i$ consists of the same elements as $\mathcal{V}_{i-1}$, except that $x_i$ has been moved from one part to another and any empty parts are deleted;
\item $\phi_i =_{\mathcal{V}_i}\psi_i$ where $\psi_i:=\phi|_{\binom{[r_i]}{2}}$
and $\psi_f \in \Phi_{\x,2}(r_f)$;
\item $q(\phi_i,\ba)-q(\phi_{i-1},\ba) \geq 0$.
\end{itemize}
\end{secclaim}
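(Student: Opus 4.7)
The plan is to iteratively apply a single-vertex version of Zykov symmetrisation. I would initialise $\phi_0 := \phi$, $\mc{V}_0 := \{\{1\}, \ldots, \{r\}\}$, so $r_0 = r$ and $\psi_0 = \phi$. At each step $i$, stop with $f := i$ if $\psi_i \in \Phi_{\x,2}(r_i)$; otherwise there is some pair $hj \in \binom{[r_i]}{2}$ with $(\psi_i)_{hj} \leq 1$. Let $\bm{\beta}_i \in \DD^{r_i}$ be the weight vector $\beta_\ell := |V_\ell|/r$, valid since every $\alpha_y = 1/r$. After swapping the labels $h$ and $j$ if needed, I would assume $q_h(\psi_i, \bm{\beta}_i) \geq q_j(\psi_i, \bm{\beta}_i)$. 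Then, picking any $x \in V_j$, move $x$ from $V_j$ into $V_h$ to form $\mc{V}_{i+1}$ (deleting $V_j$ if it becomes empty), and define $\phi_{i+1}$ by redefining the edges incident to $x$: set $\phi_{i+1}(xy) := \psi_i(hm)$ if $y \in V_m$ with $m \neq h, j$; $\phi_{i+1}(xy) := \psi_i(hj)$ if $y \in V_j \setminus \{x\}$; and $\phi_{i+1}(xy) := \emptyset$ if $y \in V_h$.

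Two properties must be verified at each step. First, $\phi_{i+1} \in \Phi_{\x,0}(r)$: any forbidden coloured $K_k$ in $\phi_{i+1}$ would have to involve $x$ (otherwise it would already be present in $\phi_i$), and its other $k-1$ vertices would lie in distinct classes, none equal to $V_h$, since within-class edges of $\phi_{i+1}$ are empty. Picking any $z \in V_h$ distinct from these $k-1$ vertices (possible since $V_h$ is non-empty in $\mc{V}_i$ and disjoint from them), the clique in $\phi_i$ with $x$ replaced by $z$ has the identical colour pattern, contradicting $\phi_i \in \Phi_{\x,0}(r)$. This is the clique version of property~\ref{it:S} recalled at the start of this section. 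Second, $q(\phi_{i+1}, \ba) \geq q(\phi_i, \ba)$: a direct computation using $\alpha_x = 1/r$ yields
\begin{equation*}
q(\phi_{i+1}, \ba) - q(\phi_i, \ba) = \tfrac{2}{r}\bigl(q_h(\psi_i, \bm{\beta}_i) - q_j(\psi_i, \bm{\beta}_i)\bigr) - \tfrac{2}{r^2}\log (\psi_i)_{hj},
\end{equation*}
and both terms are non-negative: the first by our direction choice, the second since $(\psi_i)_{hj} \leq 1$ forces $\log (\psi_i)_{hj} \leq 0$. A similar computation shows that $q_h(\psi_i, \bm{\beta}_i) - q_j(\psi_i, \bm{\beta}_i)$ is non-decreasing as successive vertices are moved from $V_j$ to $V_h$, so the direction choice remains valid throughout a complete merge.

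Termination is then clear: commit to each merge by iterating the above move with the same $h, j$ until $V_j$ empties, at which point $r_i$ strictly decreases by $1$; thus there can be at most $r - 1$ such merges. Finally $r_f \geq 2$, since $r_f = 1$ would force $q(\phi_f, \ba) = 0$, contradicting $q(\phi_f, \ba) \geq q(\phi_0, \ba) = q(\phi, \ba) > Q(\x) - \eps > 0$ (we take $\eps < Q(\x)$, using the trivial lower bound $Q(\x) \geq \frac{k-2}{k-1}\log s$). The main obstacle will be the bookkeeping required for the relabelling so that $\psi_i = \phi|_{\binom{[r_i]}{2}}$ holds literally throughout: the cleanest fix is to fix one representative per initial class (say, the smallest element), to always move the representative of a merging class last so that surviving representatives are preserved, and to relabel $[r]$ once at the end so that the surviving representatives at step $i$ are precisely $\{1, \ldots, r_i\}$.
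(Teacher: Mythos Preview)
Your proposal is correct and follows essentially the same approach as the paper's sketch: iteratively apply single-vertex Zykov symmetrisation (the paper's property~\ref{it:S'}), moving one vertex at a time from a low-multiplicity class into a neighbouring class, and verify non-decrease of $q$ directly. Your explicit formula for $q(\phi_{i+1},\ba)-q(\phi_i,\ba)$, your observation that the direction choice persists through a full merge, and your handling of termination and the $r_f\ge 2$ bound all supply details that the paper leaves implicit; the only additional care needed is the relabelling bookkeeping you flag at the end, which the paper also handles by a single relabelling of $[r]$.
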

The claim is obtained by repeatedly applying \ref{it:S'}. We start with $\phi_0, \psi_0=\phi_0$ and $\ba_0=\ba$ and $\mc V_0$. While $\psi_i\not \in \Phi_{\x,2}(r_i)$, let $hj\in\binom{[r_i]}{2}$ be such that $|\psi_i(hj)|\le 1$ as in \ref{it:S'} and take $x_{i+1}$ in the $j$-th class of $\mc V_i$. If $x_{i+1}$ is a singleton, set $r_{i+1}:=r_i-1$, otherwise $r_{i+1}:=r_i$. Let $\ba_{i+1}$ be obtained by moving $\alpha_{x_{i+1}}$ from $\alpha_{i,j}$ to $\alpha_{i,h}$ as in \ref{it:S'}.  Take $\phi_{i+1}$ to be $\phi_i$ with the edges incident to $x_{i+1}$ replaced with the edges incident to any vertex in the $h$-th class of $\mc V_i$ (which are all clones of each other). It can be shown that $q(\phi_{i+1},\ba)=q(\psi_{i+1},\ba_{i+1})$, which together with \ref{it:S'} yields $q(\phi_{i+1},\ba)\ge q(\phi_{i},\ba)$.

We next describe \emph{backwards symmetrisation}, which goes backwards through the steps of forwards symmetrisation, removing parts with small contribution and assigning parts with large contribution into new groups depending on where they ended up in the forwards process.
For $(r,\phi,\ba) \in \Phi_{\x,0}(r)$ and $P \subseteq [r]$, write
$q_x(P,\phi) := \frac{1}{r}\sum_{%\substack{
y \in P\setminus\{x\} %:\\ \phi(xy) \neq \emptyset}
}\log\phi_{xy}$
(recalling each $\alpha_y=\frac{1}{r}$).
Let $\mathcal{V}_i:=\{V_{i,j}: j \in [r_i]\}$.
We do the following (backwards symmetrisation).
Let us define $\mathcal{U}_f:=\{U^0_f,\ldots,U^{r_f}_f\}$ by setting $U^j_f:=V_{f,j}$
for all $j \in [r_f]$ and $U^0_f:=\emptyset$,
and let $U_f := [r]$.
For each $i=f-1,\ldots,0$ define $U_i$ and $\mathcal{U}_i=\{U^0_i,\ldots,U^{r_f}_i\}$ inductively as follows.
Initialise with $U_i=[r]$ and $U^0_i=\emptyset$.
\begin{enumerate}
    \item \emph{Add vertices with small contribution to the exceptional set}:
    \begin{enumerate}
        \item If $\frac{r}{|U_i|}q_{x_i}(U_i,\phi_i)<Q-\sqrt{\eps}$, move $x_i$ from $U_i$ into $U_i^0$.
        \item Repeat the following until no longer possible, updating $U_i$ each time:
    If there is $y \in U_i$ such that $\frac{r}{|U_i|}q_y(U_i,\phi_i)<Q-\sqrt{\eps}$, move $y$ into $U^0_i$.
    (Note that $y$ could be $x_i$ if its contribution becomes too small after moving other vertices into $U^0_i$).
    \end{enumerate}
    \item \emph{Group $x_i$ and newly non-exceptional vertices via comparison to $\psi_f$}:
    For each $j \in [r_f]$, let $U^j_i$ be the restriction of $U^j_{i+1}$ to $U_i\setminus\{x_i\}$. For each $z$ in $$
    B_i:=(U_{i+1}^0 \cup \{x_i\}) \cap U_i,
    $$
    add $z$ to the part $U^j_i$ such that $z$ looks most like a $\phi_f$-clone of $j$ under $\phi_i|_{U_i}$. That is, choose the $j$ such that
\begin{align}\label{eq:close}
|\{y \in U^j_i: |\phi_i(zy)| \geq 2\}|+ \sum_{j' \in [r_f]\setminus\{j\}}|\{y \in U^{j'}_i: \phi_i(zy) \neq \phi_f(j'j)\}|
\end{align}
is minimal.
\end{enumerate}

We note that after moving vertices we still have $U_i = [r] \sm U^0_i$. The next claim is also unchanged.
Its proof follows from the fact that any vertex moved into $U_i^0$ in (1) has much smaller than average contribution since the average contribution of a vertex is at least $Q-\eps$. (It is closely related to the familiar procedure in graph theory of repeatedly removing a vertex of small degree, which cannot remove too many vertices in a graph with many edges.)
\begin{secclaim}[See Claim~3.1.2,~\cite{katherine_stability}]
For all $i=f,\ldots,0$, we have $|U^0_{i}| \leq 2\sqrt{\eps}r$.
\end{secclaim}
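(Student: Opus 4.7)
My plan is to derive matching lower and upper bounds on the pair-sum $\sum_{\{a,b\}\in\binom{[r]}{2}}\log\phi_i(ab) = \tfrac{r^2}{2}\,q(\phi_i,\ba)$. The lower bound is $\tfrac{r^2}{2}(Q(\x)-\eps)$: the relation $\phi_i=_{\mc{V}_i}\psi_i$ yields $q(\phi_i,\ba)=q(\psi_i,\ba_i)$, and forward symmetrisation is non-decreasing, so $q(\phi_i,\ba)\geq q(\phi,\ba)>Q(\x)-\eps$. The matching upper bound will come from splitting the sum according to whether an endpoint lies in $T:=U_i^0$, and bounding the two parts separately.

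For pairs meeting $T$, I would enumerate $T=\{y_1,\ldots,y_m\}$ in removal order and let $S_t:=[r]\setminus\{y_1,\ldots,y_{t-1}\}$ be the state of $U_i$ immediately before $y_t$ is removed. The removal trigger in (1a)--(1b) rewrites as $\sum_{z\in S_t\setminus\{y_t\}}\log\phi_i(y_tz)<(Q(\x)-\sqrt{\eps})(r-t+1)$. A direct combinatorial identity --- each pair $\{a,b\}$ with $\{a,b\}\cap T\neq\emptyset$ is counted on the left exactly once, namely at the step when its earlier-removed endpoint leaves --- gives, upon summing over $t=1,\ldots,m$,
\begin{equation*}
\sum_{\{a,b\}\cap T\neq\emptyset}\log\phi_i(ab) \;<\; (Q(\x)-\sqrt{\eps})\cdot\tfrac{1}{2}m(2r-m+1).
\end{equation*}
For pairs inside $U_i$, the restriction $(|U_i|,\phi_i|_{U_i},\bm{u})$ with uniform $\bm{u}$ is itself a feasible solution to Problem $Q(\x)$ (any sub-template of an $\x$-free $\phi_i$ is $\x$-free), so $q(\phi_i|_{U_i},\bm{u})\leq Q(\x)$, translating to $\sum_{\{a,b\}\in\binom{U_i}{2}}\log\phi_i(ab)\leq \tfrac{(r-m)^2}{2}Q(\x)$.

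Adding the two upper bounds and comparing with the lower bound, the leading $Q(\x)$-terms cancel cleanly and the inequality reduces to $m\bigl[\sqrt{\eps}(2r-m+1)-Q(\x)\bigr]<r^2\eps$. Since the parts are refined at the outset so that $1/r\ll\eps$ and $Q(\x)\leq\log s$ is a fixed constant, the $-Q(\x)$ correction is dominated by $\sqrt{\eps}\cdot r$, and simple rearrangement yields $m\leq 2\sqrt{\eps}\,r$, as required. I expect the only delicate point to be the telescoping combinatorial identity above; the special role of $x_i$ in step (1a) is absorbed automatically, as its trigger has exactly the same form as in (1b) with $|S_1|=r$ and so enters the indexed sum with no separate treatment.
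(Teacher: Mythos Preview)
Your proof is correct and follows essentially the same approach as the paper, which only sketches the argument by noting that removed vertices have contribution below $Q-\sqrt{\eps}$ while the average contribution is at least $Q-\eps$, and invoking the standard low-degree vertex-removal argument. Your telescoping identity is precisely the clean way to formalise that sketch, and your final inequality $m[(2r-m+1)\sqrt{\eps}-Q]<r^2\eps$ together with $1/r\ll\eps$ (so $r\sqrt{\eps}\gg Q$) indeed gives $m<2\sqrt{\eps}\,r$ after observing $(2r-m+1)\sqrt{\eps}-Q\geq r\sqrt{\eps}/2$.
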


It suffices to show that the parts $U_0^1,\ldots,U_0^{r_f}$ are essentially the ones required by the theorem (so in particular, almost all parts are subparts of one of these).
For this, we argue inductively for $i=f,\ldots,0$.
Let $G_i$ be the complete graph with vertex set $U_i$.
For each $x \in U_i$, let $j_x \in [r_f]$ be such that $x \in U_i^{j_x}$.
Then we colour each $xy \in E(G_i)$ as follows:
\begin{itemize}
\item $xy$ is \emph{red} if $j_x \neq j_y$ and $\phi_i(xy) \subsetneq \psi_f(j_xj_y)$, so there are missing colours,
\item $xy$ is \emph{blue} if either
$j_x \neq j_y$ and $\phi_i(xy) \setminus \psi_f(j_xj_y) \neq \emptyset$, or 
$j_x=j_y$ and $|\phi_i(xy)|>1$, so there are extra colours,
\item $xy$ is \emph{green} otherwise. 
\end{itemize}

The next claim is the heart of the proof. 

\begin{secclaim}[See Claim~3.1.3,~\cite{katherine_stability}]
$G_i$ has no blue edges. \label{cl:blue_edges}
\end{secclaim}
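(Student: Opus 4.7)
The plan is to prove the claim by reverse induction on $i$, from $i=f$ down to $i=0$, that $G_i$ has no blue edges, following the outline of Claim~3.1.3 in \cite{katherine_stability} but invoking the general-pattern tools from \autoref{sec:tools}.

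The base case $i=f$ is immediate from $\phi_f=_{\mathcal V_f}\psi_f$: any edge $xy \in E(G_f)$ with $j_x=j_y$ has $\phi_f(xy)=\emptyset$, and any with $j_x\ne j_y$ has $\phi_f(xy)=\psi_f(j_xj_y)$, so no extra colour can arise and every edge of $G_f$ is green or red.

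For the inductive step, assume $G_{i+1}$ has no blue edges and suppose for a contradiction that $xy\in E(G_i)$ is blue, witnessed by an ``extra'' colour $c\in\phi_i(xy)$, i.e.\ either $j_x\ne j_y$ with $c\in\phi_i(xy)\setminus\psi_f(j_xj_y)$, or $j_x=j_y$ with $c\in\phi_i(xy)$. The key idea is to extend $xy$ to a copy of $K_k$ inside $G_i$ whose colouring under $\phi_i$ is forbidden, contradicting the $\x$-freeness of $\phi_i$ (recall $\phi_i\in\Phi_{\x,0}(r)$). To locate a forbidden pattern, define an enlarged template $\psi_f'$ from $\psi_f$ by inserting $c$ on the pair $j_xj_y$ when $j_x\ne j_y$, or (in the case $j_x=j_y$) by appending a new part $r_f+1$ whose template edges to $[r_f]$ copy those of $j_x$ and which receives colour $\{c\}$ to $j_x$. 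Since $\psi_f\in\opt^*(\x)$, \autoref{fact:max} (in the first case) and \autoref{lm:ext} together with the extension property (in the second) imply that $\psi_f'$ is not $\x$-free: there exist $\sigma\in\x$ and an injection $\eta$ into the parts of $\psi_f'$ that realises $\sigma$ with $\eta(1)=j_x$, $\eta(2)=j_y$ (or $r_f+1$) and $\sigma(12)=c$.

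It remains to realise this pattern inside $G_i$: set $z_1=x$, $z_2=y$, and for $3\le \ell\le k$ pick $z_\ell\in U_i^{\eta(\ell)}$ so that $\sigma(\ell m)\in\phi_i(z_\ell z_m)$ for all $m<\ell$. Each vertex of $U_i$ has contribution at least $Q(\x)-\sqrt{\eps}$ by step~(1) of the backwards procedure, and $|U_i^0|\le 2\sqrt{\eps}r$ by the previous claim; moreover the inductive hypothesis applied to $G_{i+1}$ says that almost every edge with both endpoints in $U_{i+1}$ behaves exactly as $\psi_f$ prescribes. A standard counting argument using the hierarchy $\eps\ll\gamma\ll\delta\ll\mu$ (where $\mu$ is the part-size lower bound from \autoref{lm:mu}) then shows that an overwhelming majority of tuples $(z_3,\dots,z_k)\in U_i^{\eta(3)}\times\cdots\times U_i^{\eta(k)}$ satisfy the required edge-colour constraints, so at least one such tuple exists, yielding the desired forbidden $K_k$.

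The main obstacle is the case $j_x=j_y$, where the ``extra'' colour forces us to synthesise a new part in $\psi_f$ and invoke the extension property rather than the simpler maximality of \autoref{fact:max}; this requires careful tracking of the reassignments performed in step~(2) of backwards symmetrisation so that the sets $U_i^{\eta(\ell)}$ are genuinely populated by near-clones of their intended parts, and that vertices moved in from $B_i$ do not spuriously violate the $\phi_i$--$\psi_f$ comparison. Verifying this bookkeeping, exactly as in the monochromatic proof of \cite{katherine_stability}, completes the induction.
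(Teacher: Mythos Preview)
The outline is right (reverse induction; embed a forbidden $K_k$ through a blue edge), but there are two genuine problems, and the ``standard counting'' you defer to is in fact the heart of the argument.

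First, the $j_x=j_y$ case is handled incorrectly. By definition a blue edge inside a part means $|\phi_i(xy)|\geq 2$, not that a single colour $c$ is present. If you adjoin a new part $r_f+1$ that copies $j_x$ and place only $\{c\}$ on the new edge, then $r_f+1$ \emph{is} a (non-strong) clone of $j_x$, so the extension property is satisfied and you get no contradiction. The paper does not use the extension property here at all: it splits the weight of $j_x$ evenly between two clone parts and puts the full set $\phi_i(xy)$ (of size $\geq 2$) on the edge between them, which strictly increases $q$; since $(\tilde r_i,\psi_f,\tilde{\ba}_i)$ was optimal, the new template must contain a forbidden configuration.

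Second, and more seriously, the extension property is used elsewhere and is the mechanism behind what you call ``bookkeeping''. Before one can find $z_3,\ldots,z_k$, one must know that (a) every vertex of $U_i$ has green degree at least $(1-\delta)n_i$ and (b) $|U_i^j|\geq \mu n_i/2$ for every $j$ in the support $[\tilde r_i]$. Neither follows from the induction hypothesis and the contribution bound alone: red edges are controlled by contributions, but blue edges incident to $B_i$ are not yet, and some parts $U_i^j$ could a priori be tiny. The paper first shows $q(\psi_f,\bb_i)\geq Q-\eps^{1/3}$, passes via \autoref{lm:near} to an optimal $(\tilde r_i,\psi_f,\tilde{\ba}_i)$, and then uses \autoref{lm:ext} (the robust form of the extension property) to prove that every $z\in B_i$ is $\tfrac{\delta}{2}$-close to a $\psi_f$-clone. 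It is this step that yields high green degree for $B_i$ vertices, and a separate argument (again using the extension property to rule out extra non-empty parts) gives the part-size lower bound. Only after (a) and (b) are in place does the embedding argument go through. As written, your counting step could fail because $U_i^{\eta(\ell)}$ might be empty.
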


\begin{proof}[Sketch proof of claim.]
As promised, the proof is by backwards induction on $i$.
After sketching the first part of the proof, which is almost identical to~\cite{katherine_stability}, we give more details at the end in showing that any blue edge leads to a contradiction, where the argument diverges slightly.

First, the claim is true for $i=f$ as every edge of $G_f$ is green.
Fix $i < f$ and let $n_i:=|U_i|=r-|U_i^0|\geq (1-2 \sqrt{\eps})r$.
Since $G_{i+1}$ and $G_i$ only differ at $B_i$, the induction hypothesis implies that every blue edge of $G_i$ is incident with $B_i$.
Let $\bb_i := (|U^1_i|/n_i,\ldots,|U^{r_f}_i|/n_i) \in \DD^{r_f}$. 
By construction, every $x \in U_i$ satisfies $\frac{r}{|U_i|}q_x(U_i,\phi_i) \geq Q-\sqrt{\eps}$;
that is, every non-exceptional vertex has large contribution.
Since red edges at a vertex reduce its contribution, summing these contributions implies that
$$
e_{\rm red}(G_i) \leq \eps^{1/3}n_i^2
\quad\text{and}\quad
q(\psi_f,\bb_i) \geq Q-\eps^{1/3},
$$
so $\bb_i$ is close to an optimal vertex weighting.
By our choice of $\eps$ there is a vertex weighting $\ba_i'$
with $\|\ba_i' - \bb_i\|_1 \leq \gG^2$
such that $(r_f,\psi_f,\ba_i')$ is in fact optimal.
Relabelling parts and removing zero parts, we have a basic optimal solution $(\tilde{r}_i,\psi_f,\tilde{\ba}_i) \in \opt^*(\x)$, which is a slight abuse of notation since we really mean the restriction of $\psi_f$ to $[\tilde{r}_i] \subseteq [r_f]$.

Next, we show that, in $\phi_i$, every $z \in B_i$ is \emph{$\frac{\delta}{2}$-close} to being a $\psi_f$-clone of some $j \in [\tilde{r}_i]$,
meaning that one can change at most $\frac{\delta}{2}$ proportion of pairs at $z$ so that its attachment is the same as the attachment of $j$ in $\psi_f$.
Here is where we need the extension property
(\autoref{extprop}),
which states that any vertex added to a basic optimal solution with \emph{maximal} contribution $Q$ is in fact a clone of one of the existing vertices.
A version of this remains true for vertices which have \emph{almost} maximal contribution (\autoref{lm:ext}), and for a small perturbation of a basic optimal solution, as $\phi_i$ is of $\psi_f$
(see~\cite[Lemmas 2.10--2.12]{katherine_stability} which are for monochromatic patterns but generalise straightforwardly). This version can be used to show that indeed any $z \in B_i$ is $\frac{\delta}{2}$-close to being a clone of some vertex as claimed above. Owing to the way we allocated $z$ to a part in the backwards symmetrisation, this yields that $z$ is added to a part $U^{j_z}_i$ such that $z$ is $\frac{\delta}{2}$-close to a $\psi_f$-clone of $j_z$ under $\phi_i$. Since the closeness measure (\ref{eq:close}) counts non-green edges, a short calculation reveals that almost every edge in $G_i$ incident to $z$ is green:
$$
d_{\rm green}(z) \geq (1-\delta)n_i
\quad\text{for all }z \in B_i.
$$
The next step is to show that every $x \in U_i \sm B_i$ has small red degree in $G_i$ (and hence again almost every incident edge is green). Indeed, $x$ can only have blue edges incident with the small set $B_i$, and thus too many red edges mean the contribution of $x$ in $\phi_i$ is too small. We have
$d_{\rm red}(x) \leq \gG n_i$
and hence $d_{\rm green}(x) \geq (1-2\gG)n_i$ for all $x \in U_i \sm B_i$.
Altogether, we have shown that
$$
d_{\rm green}(x) \geq (1-\delta)n_i
\quad\text{for all }x \in U_i.
$$
A short calculation shows that this implies that for every $y \in U_i$ we have $q_{j_y}(\psi_f,\ba_i') \geq Q - \sqrt{\dD}$.
The extension property now implies that $\bb_i$ has the same support as $\ba_i'$ (which is the support of $\tilde{\ba}_i$ by definition); i.e.~$[\tilde{r}_i]$.
In particular, $U_i=\bigcup_{j \in [\tilde{r}_i]}U_i^j$.
To see this implication, since every vertex in $U^j_i$ with $\tilde{r}_i < j \leq r_f$ has large contribution and the parts outside of $[\tilde{r}_i]$ are small, $j$ must have large contribution to $\psi_f$ induced on $[\tilde{r}_i]$. So, by~\autoref{lm:ext}, $j$ is a clone under $\psi_f$ of some vertex $j^*$ in $[\tilde{r}_i]$. In particular, $|\psi_f(jj^*)|\leq 1$ which contradicts $\psi_f \in \Phi_{\x,2}(r_f)$.

Since $\|\bb_i-\ba'_i\|_1 < \gG^2$, and since our choice of $\mu$ implies that $\tilde{\alpha}_i \geq \mu$ for all $i \in [\tilde{r}_i]$, it follows that $|U_i^j| = \beta_{i,j}|U_i| \geq \mu n_i/2$ for all $j \in [\tilde{r}_i]$.
Until here the proof is essentially the same as in~\cite{katherine_stability}.

\medskip
We now complete the claim by comparing $\phi_i$ and the partition $\bigcup_{j \in [\tilde{r}_i]}U^j_i$ of $U_i$ to $(\tilde{r}_i,\psi_f,\tilde{\ba}_i) \in \opt^*(\x)$.
Suppose for a contradiction that there is a blue edge $y_1y_2$ between parts $U_i^{j_1},U_i^{j_2}$ where $j_1,j_2 \in [\tilde{r}_i]$ are distinct. Then there is some $c \in \phi_i(y_1y_2) \setminus\psi_f(j_1j_2)$. 
Since adding $c$ to $\psi_f(j_1j_2)$ to form $\psi_f'$ increases $q$, there is a forbidden pattern in $\psi_f'$ (this is \autoref{fact:max}), say on the vertices $j_1, j_2,j_3,\ldots, j_k$. Then as $d_{\rm green}(x)\geq (1-\dD)n_i$ for all $x \in U_i$  and $|U_i^j| \geq \mu n_i/2 \geq k\dD n_i$  for all $j \in [\tilde{r}_i]$, there are $y_3, \dots, y_k$ with $y_\ell \in U_i^{j_\ell}$ such that all the edges $y_\ell y_{\ell'}$ for $\ell, \ell' \in [k]$ are green except for $y_1y_2$. Thus, in $\phi_i$, we can find the same forbidden pattern on $y_1, \dots, y_k$ as found in $\psi_f'$ above, which contradicts $\phi_i\in \Phi_{\x,0}(r)$.

Suppose for a contradiction that there is a blue edge $y_1y_2$ inside a part $U_i^j$ for $j \in [\tilde{r}_i]$. Then $|\phi_i(y_1y_2)| \geq 2$ and the argument is very similar: To obtain $(\tilde{r}_i+1,\psi_f',\tilde{\ba}_i')$ from $(\tilde{r}_i,\psi_f,\tilde{\ba}_i)$, add a clone $j'$ of $j$ in $\psi_f$ and the colours in $\phi_i(y_1y_2)$ between $j$ and $j'$ and evenly split the previous weight of $j$ between $j$ and $j'$, i.e.\ let $\tilde{\alpha}'_{i,j}=\tilde{\alpha}_{i,j'}'=\tilde{\alpha}_{i,j}/2$. We then have $q(\tilde{r}_i+1,\psi_f',\tilde{\ba}_i')-q(\tilde{r}_i,\psi_f,\tilde{\ba}_i)\geq (\tilde{\alpha}_{i,j})^2 \log(2)/4>0$, hence $\psi_f'$ contains a forbidden pattern, say on vertices $j,j'$, $j_3, \dots, j_k$. Then as above we find $y_3, \dots, y_k$ with $y_\ell\in U_i^{j_\ell}$ with all the edges $y_\ell y_{\ell'}$ for $\ell, \ell' \in [k]$ green except for $y_1y_2$. In $\phi_i$ we can then find the same forbidden configuration on $y_1, \ldots, y_k$ as found in $\psi_f'$, which is again a contradiction.
This completes the proof of the claim.
\end{proof}

We showed in the claim that for each $i=0,\ldots,f$
there is a partition $\bigcup_{j \in [\tilde{r}_i]}U^j_i$ of $[r]$ where $\tilde{r}_i \leq r_f$ such that $\sum_{j \in [\tilde{r}_i]}||U^j_i|-\tilde{\alpha}_{i,j}n_i| \leq \gG^2 n_i$
for some $(\tilde{r}_i,\psi_f,\tilde{\ba}_i) \in \opt^*(\x)$.
We now claim that setting $(r^*,\phi^*,\ba^*):=(\tilde{r}_0,\psi_f,\tilde{\ba}_0)$, and defining the sets $Y_j:=U_0^j$ for all $j\in [r^*]$ and $Y_0:=[r]\sm \bigcup_{j\in [r^*]}Y_j$ yields a partition satisfying~\ref{SOone}--\ref{SOthree}. For part~\ref{SOone}, setting $\beta_j:=\sum_{j'\in Y_j}\alpha_{j'}$, we have that $r\beta_j=|U_0^j|=\beta_{0,j}n_0$ (recall that without loss of generality, we assumed $\alpha_i=1/r$ for all $i$) so the inequality $\sum_{j \in [\tilde{r}_i]}||U^j_i|-\tilde{\alpha}_{i,j}n_i| \leq \gG^2 n_i$ for $i=0$ implies $\sum_{j\in[r^*]}|r\beta_j/n_0-\alpha^*_j|\le \gamma^2$. Substituting $(1-2\sqrt{\eps})r \le n_0\le r$ and $\gamma, \eps \ll \nu$ yields~\ref{SOone}. For~\ref{SOtwo}, by \autoref{cl:blue_edges} for $i=0$, the auxiliary graph $G_0$ has no blue edges, that is, if $i'\in U_0^i=Y_i$ and $j'\in U_0^j=Y_j$ for $ij\in \binom{[r_f]}{2}\supseteq \binom{[r^*]}{2}$, then $\phi(i'j')\sm \phi^*(ij)=\emptyset$ (recall that $\phi_0=\phi$ and $\phi^*=\psi_f)$. 

Property~\ref{SOthree} requires a bit of extra work. We know from \autoref{cl:blue_edges} that pairs within parts see at most one colour in $\phi_i$ but not that they all see the same colour. The following paragraph is true for all intermediate partitions $\mc{U}_i$, in particular for $i=0$, which will yield the final property~\ref{SOthree}.

We start by showing that in each part $U_i^j$ with $j \in [\tilde{r}_i]$ we see at most one colour. Suppose that there are $j \in [\tilde{r}_i]$, vertices $y_1, y_2, z_1, z_2 \in U_i^j$ and distinct colours $c,c'$ such that $c \in \phi_i(y_1 y_2), c' \in \phi_i(z_1 z_2)$. Splitting part $j$ into parts $j_1,j_2$ in  $\tilde{\ba}_i$, each of size  $\tilde{\aA}_{i,j}/2$, and adding colours $c,c'$ to the edge between $j_1, j_2$ increases $q(\psi_f,\tilde{\ba}_i)$ above $Q$. Therefore this colour template must contain a forbidden pattern involving $j_1$ and $j_2$. Assume that it uses colour $c$ between $j_1,j_2$ and let $j_3, \dots, j_k$ be the other vertices of the pattern. Since $d_{\rm green}(x)\geq (1-\dD)n$ for all $x \in U_i$, we then find $y_3, \dots, y_k$ with $y_\ell \in U_i^\ell$ such that the same forbidden pattern appears on $y_1, \dots, y_k$ in $\phi_i$, a contradiction. Thus for each $j\in [\tilde{r}_i]$ there is a unique colour $c_j$ such that all edges inside $U_i^j$ see no other colour than $c_j$.

Finally, for the second part of~\ref{SOthree}, suppose that $\phi^*$ is stable inside, in which case we must show that there are no colours inside any part. Indeed, suppose there are $x,y \in U_i^j$ with $c \in \phi_i(xy)$ for some colour $c$. Since the pattern is stable inside, splitting $j$ into $j_1$ and $j_2$ as above and letting $\phi_i(j_1j_2)=\{c\}$ creates a forbidden pattern on parts $j_1, j_2, \dots, j_k$. Analogously to previous arguments we can show that this results in a forbidden pattern in $\phi_i$, a contradiction.
\end{proof}

\subsection{Proof of \autoref{th:stability_graphs}}\label{sec:proofs:stability}

The idea of this proof is to apply the multicolour regularity lemma, and the coloured regularity partition thus obtained can be approximated by an almost optimal solution. We can then apply~\autoref{lm:stability_opt} to this solution.

We use standard terminology and tools related to Szemer\'edi regularity.
Given a graph $G$, disjoint $A,B \subseteq V(G)$ 
and $0 \leq \delta \leq d \leq 1$, we say that $G[A,B]$ is
\emph{$\delta$-regular} if
$|d_G(X,Y)-d_G(A,B)|\leq\delta$ for all $X\subseteq A$, $Y \subseteq B$ with $|X|/|A|, |Y|/|B| \geq \delta$.
Recall the definition from the beginning of~\autoref{sec:stabgraphs} that $G[A,B]$ is \emph{$(\delta,d)$-regular} if it is $\delta$-regular
and $d_G(A,B) = d \pm \delta$.
We further say that $G[A,B]$ is
\emph{$(\delta, \geq\! d)$-regular} if it is $\delta$-regular and $d_G(A,B) \geq d-\delta$.
Subscripts are omitted where they are clear from the context.

We use the multicolour regularity lemma, and
the Embedding lemma, which says that if $0<\eps\ll d,1/k$ and $G$ is a sufficiently large $k$-partite graph with vertex classes $V_1,\ldots,V_k$ such that each $G[V_i,V_j]$ is $(\eps, \geq d)$-regular, then $K_k\subseteq G$.
See \cite[Theorems 1.18 and 2.1]{komlos2002regularity} for full statements and relevant definitions.

\begin{proof}[Proof of \autoref{th:stability_graphs}]
Let $\x$ be a bounded family of $s$-edge colourings of $K_k$ which has the extension property and let $\dD>0$ be given.
Let $\mu$ be the constant guaranteed by~\autoref{lm:mu}.
We may assume without loss of generality that $0 < \dD \ll \mu \ll 1$.
Let $0<1/n_0 \ll \gamma_1\ll\gamma_2\ll\eps\ll\nu \ll \gamma \ll \delta \ll \mu \ll 1/s,1/k$, and suppose that $G$ is a graph on $n \geq n_0$ vertices satisfying the theorem hypothesis. As in \cite{katherine_stability}, the multicolour regularity lemma yields a map $\mathrm{RL}$ from the set of all $\x$-free $s$-colourings $\chi: E(G)\to [s]$ to the set of pairs $(\mathcal{U},\phi)$ consisting of an equipartition $\mathcal{U} = U_1\cup\ldots\cup U_r$ of $V(G)$ and $\phi:\binom{[r]}{2}\to 2^{[s]}$ given by $\phi(ij)=\{c:\chi^{-1}(c)[U_i,U_j] \mbox{ is a }(\gamma_1,\ge \gamma_2)\mbox{-regular pair}\}$ such that
\begin{enumerate}
    \item[(R1)] at most $s\gamma_1\binom{r}{2}$ pairs of sets in $\mathcal{U}$ are $\gamma_1$-irregular with respect to at least one colour,
    \item[(R2)] between pairs $U_i,U_j$ which are $\gamma_1$-regular with respect to all colours, at most $s\gamma_2|U_i||U_j|$ edges have colours not in $\phi(ij)$
\end{enumerate}
and $r$ does not depend on $n$. Thus
we may suppose that $1/n \ll 1/r\ll \gamma_1$. Again as in \cite[Definition 4.9, Proposition 4.10]{katherine_stability} we call $(\mathcal{U}, \phi)$ \emph{popular} if its pre-image under $\mathrm{RL}$ satisfies $|\mathrm{RL}^{-1}(\mathcal{U},\phi)| \geq e^{-3\eps n^2}F(G;\x)$, and define
$\mathrm{Col}(G)$ to be the set of colourings $\chi$ mapping to a popular pair. By an adaptation of \cite[Proposition 4.10]{katherine_stability} we have that \begin{align}\label{eq:Col}
    |\mathrm{Col}(G)|\ge (1-e^{-2\eps n^2})F(G;\x).
\end{align}
In particular, the set of popular pairs is non-empty.

We now repeat \cite[Claim 4.1]{katherine_stability} to show that if $\mathrm{RL}(\chi)=(\mathcal{U},\phi)$ is popular, then setting $\ba :=(|U_1|/n,\ldots,|U_r|/n)$ yields a near-optimal feasible solution $(r,\phi,\ba)\in \feas_\x(s)$.

\begin{secclaim}\label{cl:stability_graphs:1} Suppose that $\mathrm{RL}(\chi)=(\mathcal{U},\phi)$ is popular.
Then $(r,\phi,\ba)\in \feas_\x(s)$. Moreover,
$q(\phi,\ba)\ge Q(\x)-8\eps$ 
and there are at most $s\gamma_2 n^2$ edges $xy\in E(G)$, $x\in U_i,y\in U_j$ such that $i=j$ or $\chi(xy)\not\in \phi(ij)$.
\end{secclaim}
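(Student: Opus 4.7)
The plan has three parts, corresponding to the three assertions.

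For feasibility, I would suppose for contradiction that $\phi$ fails to be $\x$-free, meaning some $\sS\in\x$ has an injection $\psi:[k]\to[r]$ with $\sS(ij)\in\phi(\psi(i)\psi(j))$ for all $ij\in\binom{[k]}{2}$. By the definition of $\phi$ in $\mathrm{RL}$, each colour graph $\chi^{-1}(\sS(ij))[U_{\psi(i)},U_{\psi(j)}]$ is $(\gG_1,\ge\gG_2)$-regular, so the restriction of $\chi^{-1}(\sS(ij))$ to $U_{\psi(1)}\cup\ldots\cup U_{\psi(k)}$ is a $k$-partite graph where every pair is $(\gG_1,\ge\gG_2)$-regular. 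The embedding lemma (with $\gG_1\ll\gG_2\ll1/k,1/s$) then yields a copy of $K_k$ on vertices $v_1,\ldots,v_k$ with $v_i\in U_{\psi(i)}$ such that $\chi(v_iv_j)=\sS(ij)$ for every $ij$; this is a $\sS$-coloured copy of $K_k$ in $\chi$, contradicting that $\chi$ is $\x$-free. Hence $(r,\phi,\ba)\in\feas_\x(s)$.

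For the ``bad edge'' bound, I would split the edges $xy$ with $x\in U_i$, $y\in U_j$ and either $i=j$ or $\chi(xy)\notin\phi(ij)$ into three classes: (a) edges lying inside some part $U_i$, (b) edges across pairs $(U_i,U_j)$ that are $\gG_1$-irregular in some colour, and (c) edges across $\gG_1$-regular pairs but with colour outside $\phi(ij)$. Using $|U_i|=n/r$ and $1/r\ll\gG_1$, class (a) contributes at most $r\binom{n/r}{2}\le n^2/r$. Class (b) contributes at most $s\gG_1\binom{r}{2}(n/r)^2\le s\gG_1 n^2/2$ by (R1). Class (c) contributes at most $s\gG_2 n^2(1-1/r)/2$ by (R2) summed over pairs. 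Since $1/r\ll\gG_1\ll\gG_2$ and the total is well below $s\gG_2 n^2$ (leaving room under the stated bound), this gives the required estimate.

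The main substance is the objective bound $q(\phi,\ba)\ge Q(\x)-8\eps$, which I would obtain by two-sided counting of $|\mathrm{RL}^{-1}(\mc U,\phi)|$. For the upper bound, fix $(\mc U,\phi)$ and count colourings $\chi$ mapping to it: every edge $xy$ with $x\in U_i,y\in U_j$ is either a ``good'' edge with $\chi(xy)\in\phi(ij)$, contributing a factor at most $\phi_{ij}$, or one of the ``bad'' edges from the previous paragraph (of which there are at most $s\gG_2 n^2$), contributing at most $s$. Together with the trivial $s^{|U_i||U_j|}$ bound on irregular and intra-part pairs, this gives
\[
\log|\mathrm{RL}^{-1}(\mc U,\phi)|\le\sum_{ij\in\binom{[r]}{2}}|U_i||U_j|\log\phi_{ij}+s\gG_2 n^2\log s+\bigl(n^2/r+s\gG_1 n^2/2\bigr)\log s\le q(\phi,\ba)\tbinom{n}{2}+\eps n^2,
\]
using the hierarchy $1/r,\gG_1,\gG_2\ll\eps/(s\log s)$. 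For the lower bound, popularity gives $|\mathrm{RL}^{-1}(\mc U,\phi)|\ge e^{-3\eps n^2}F(G;\x)\ge e^{-3\eps n^2}e^{(Q(\x)-\eps)\binom{n}{2}}$ by hypothesis. Combining the two inequalities and using $\binom{n}{2}\ge n^2/2-n$ yields $q(\phi,\ba)\ge Q(\x)-8\eps$ once $n$ is large enough (absorbing the linear error by adjusting constants in the hierarchy).

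The main obstacle is getting the upper bound on $|\mathrm{RL}^{-1}(\mc U,\phi)|$ sharp enough: one must simultaneously account for the exceptional edges (using the previous paragraph), the irregular pairs, and the intra-part edges, and verify that the combined slack is small compared with $\eps n^2$. This is routine once the hierarchy of constants is set up properly, but sloppy bookkeeping can easily break the $8\eps$ constant; no new ideas are needed beyond standard regularity-method counting.
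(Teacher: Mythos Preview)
Your proposal is correct and follows essentially the same approach as the paper: feasibility via the embedding lemma, the bad-edge count via (R1), (R2) and intra-part edges, and the objective bound via an upper count of $|\mathrm{RL}^{-1}(\mc U,\phi)|$ combined with popularity and the hypothesis $\log F(G;\x)\ge (Q(\x)-\eps)\binom{n}{2}$. The paper's bookkeeping is organised slightly differently (it writes the upper bound as a product of four explicit factors and bounds the first three by $e^{\eps n^2/2}$), but the substance is identical.
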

\begin{proof}[Proof of claim.]
Feasibility is immediate: $\mathcal{U}$ is a partition of $V(G)$, so $\sum_i \alpha_i=1$. Also, if $\phi$ contained an element of $\x$ then by the Embedding lemma, $G$ coloured according to $\chi$ would contain the same element of $\x$, a contradiction.

The number of `atypical' edges, i.e.~edges in pairs specified by (R1) and (R2), is at most $s\gamma_1\binom{r}{2}\times \left(\frac{n}{r}\right)^2+\binom{r}{2}\times s\gamma_2\left(\frac{n}{r}\right)^2 \le 2s\gamma_2 n^2/3$. Together with edges internal to the $U_i$, this adds up to at most $2s\gamma_2 n^2/3+r\times \binom{n/r}{2}\le s\gamma_2 n^2$. All other edges $xy\in E(G)$ are between sets with $\chi(xy)\in \phi(ij)$, proving the last part of the claim.

Every colouring $\chi$ in $\mathrm{RL}^{-1}(\mathcal{U},\phi)$ satisfies (R1) and (R2) above, so 
\begin{align*}|\mathrm{RL}^{-1}(\mathcal{U},\phi)|&\le \underbrace{s^{r\times\left(\frac{n}{r}\right)^2}}_{\mbox{\scriptsize edges in each }U_i}\times \underbrace{\binom{\binom{r}{2}}{\le s\gamma_1\binom{r}{2}}s^{s\gamma_1\binom{r}{2}\times \left(\frac{n}{r}\right)^2}}_{\mbox{\scriptsize edges in irregular pairs}}\times \underbrace{\binom{\left(\frac{n}{r}\right)^2}{\le s\gamma_2 \left(\frac{n}{r}\right)^2}s^{s\gamma_2 \left(\binom{r}{2} \times \frac{n}{r}\right)^2}}_{\mbox{\scriptsize low-density regular pairs}}\\
&\times \prod_{ij:~\phi_{ij}\ge 1}\phi_{ij}^{e(G[U_i,U_j])}.
\end{align*}
Now, using standard properties of binomial coefficients, the first three terms multiply to at most %$e^{\hf(s\gamma_1)\binom{r}{2}}e^{\hf(s\gamma_2)\left(\frac{n}{r}\right)^2}s^{s\gamma_2n^2}\le 
$e^{\eps n^2/2}$, whereas the last term is at most $e^{n^2\sum_{ij}\alpha_i\alpha_j\log \phi_{ij}}=e^{q(\phi,\ba)\frac{n^2}{2}}$. Putting the last observations together, we get $|\mathrm{RL}^{-1}(\mathcal{U},\phi)|\le e^{(q(\phi,\ba)+\eps)\binom{n}{2}}$.
Since $(\mathcal{U}, \phi)$ is popular, we therefore have
\begin{equation}\label{eq:popular}
F (G;\x)\le e^{3 \eps n^2}|\mathrm{RL}^{-1}(\mathcal{U},\phi)| \leq e^{3\eps n^2}\times e^{(q(\phi,\ba)+\eps)\binom{n}{2}}\le e^{Q(\x)\binom{n}{2}+8\eps\binom{n}{2}},
\end{equation}
giving a matching upper bound to (\ref{eq:lb}) and hence a proof of \autoref{th:asymptotic}
(note that we have not used the extension property, which is an assumption of~\autoref{th:stability_graphs} but not~\autoref{th:asymptotic}).
Together with our assumption on $F(G;\x)$
we have $q(\phi,\ba)\ge Q(\x)-8\eps$, as required. 
\end{proof}
Let us fix a popular $(\mathcal{U},\phi)$ and its corresponding $\ba$. By \autoref{lm:stability_opt} there exists a partition $[r]=Y_0\cup\ldots\cup Y_{r^*}$ and a triple $(r^*,\phi^*,\ba^*)\in \opt^*(\x)$ which have the following properties:
\begin{itemize}
    \item[(i)] $\alpha_i^* > \mu$ for all $i \in [r^*]$ (from~\autoref{lm:mu}).
    \item[(ii)] Setting $\beta_i:=\sum_{j\in Y_i}\alpha_j$ for all $i \in [0,r^*]$ and $\bb := (\beta_1,\ldots,\beta_{r^*})$,
    we have $\|\b \beta-\b \alpha^*\|_1\le \nu$.
    \item[(iii)] For all $ij\in\binom{[r^*]}{2}, i'\in Y_i,j'\in Y_j$ we have $\phi(i'j')\subseteq \phi^*(ij)$.
    \item[(iv)] For all $i \in [r^*]$ there is a colour $c_i \in [s]$ such that $\phi(i'j') \subseteq \{c_i\}$ for every $i'j' \in \binom{Y_i}{2}$. If $\x$ is stable inside, then for all $i \in [r^*]$ and every $i'j' \in \binom{Y_i}{2}$, we have $\phi(i'j')= \emptyset$.
\end{itemize}
We now merge each group $Y_i$ of clusters of $\mathcal{U}$ to form a partition $\mathcal{W}=\{W_0,\ldots,W_{r^*}\}$ of $V(G)$ with $W_i=\bigcup_{j\in Y_i}U_j$. We have $|W_i|=\beta_i n$. We now want to show that most colourings $\chi$ in $\mathrm{RL}^{-1}(\mathcal{U},\phi)$ are regular with respect to $(\mathcal{W},\phi^*)$.

Note that $|W_0|=\beta_0 n = \sum_{i \in [r^*]}(\alpha_i^*-\beta_i^*)n \leq \|\ba^*-\bb\|_1 n \leq \nu n$. Similarly, the other sets have sizes $|W_i|=\beta_in \ge (\alpha_i^*-\nu)n\ge \mu n/2$. We also have that the number of edges in cluster $W_i$ not coloured in $c_i$ is small. More precisely, from the proof of \autoref{cl:stability_graphs:1} at most $s\gamma_2n^2$ edges $xy\in E(G)$ are within the clusters of $\mathcal{U}$ or between them with $\chi(xy)\not\in \phi(ij)$. Together with property (iv) above, this gives
\begin{equation}\label{eq:stability_graphs:1}\sum_{i\in[r^*]}(e(W_i)-|\chi^{-1}(c_i)[W_i]|)\le s\gamma_2n^2,\end{equation}
or, if the pattern is stable inside, $\sum_{i\in[r^*]}e(W_i)\le s\gamma_2n^2$.

The following claim is based on a standard calculation that shows that the probability that a random bipartite graph is not $\gamma$-regular is exponentially small in $n^2$.

\begin{secclaim}
At least $(1-e^{-\Omega(\gamma^3 n^2)})|\mathrm{RL}^{-1}(\mathcal{U}, \phi)|$ colourings $\chi$ in $\mathrm{RL}^{-1}(\mathcal{U},\phi)$ satisfy:
\begin{itemize}
    \item[($\dagger$)] for all $ij\in \binom{[r^*]}{2}$ and $c\in \phi^*(ij)$ the bipartite graph $\chi^{-1}(c)[W_i,W_j]$ is $(\gamma,\ge\! {\phi^*_{ij}}^{-1})$-regular.
\end{itemize}
\end{secclaim}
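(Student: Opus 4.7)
The plan is to prove the claim by a union bound over potential witnesses of failure of $(\dagger)$. For each $ij\in\binom{[r^*]}{2}$, each $c\in\phi^*(ij)$, and each pair of sets $X\subseteq W_i$, $Y\subseteq W_j$ with $|X|\ge\gamma|W_i|$ and $|Y|\ge\gamma|W_j|$, let $\mathrm{Bad}(i,j,c,X,Y)$ denote the set of $\chi\in \mathrm{RL}^{-1}(\mc U,\phi)$ with $|d_{\chi^{-1}(c)}(X,Y)-{\phi^*_{ij}}^{-1}|>\gamma$: the case $X=W_i, Y=W_j$ detects violation of the density lower bound, and general $(X,Y)$ detect $\gamma$-irregularity. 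The goal is the per-event estimate $|\mathrm{Bad}(i,j,c,X,Y)|\le e^{-\Omega(\gamma^2|X||Y|)}|\mathrm{RL}^{-1}(\mc U,\phi)|$. Since $|W_i|\ge\mu n/2$ by property (i) and $\mu\gg\gamma$, we have $|X||Y|=\Omega(\gamma^2 n^2)$, so each event contributes a factor of $e^{-\Omega(\gamma^4 n^2)}$. A union bound over $\le 4^n$ choices of $(X,Y)$ and $O_{s,k}(1)$ choices of $(i,j,c)$ gives total bad count at most $e^{O(n)-\Omega(\gamma^4 n^2)}\le e^{-\Omega(\gamma^3 n^2)}|\mathrm{RL}^{-1}(\mc U,\phi)|$ for $n$ large.

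For the per-event bound I would exploit the near-product structure of $\mathrm{RL}^{-1}(\mc U,\phi)$: a colouring $\chi$ decomposes into independent restrictions $\chi_{ab}\colon E(G[U_a,U_b])\to[s]$ across cluster pairs, each subject to (R1), (R2). A Stirling estimate on the relevant multinomial $\binom{|U_a||U_b|}{(d_{c'}|U_a||U_b|)_{c'\in[s]}}$ shows that the number of colourings of $G[U_a,U_b]$ realising a density profile $(d_{c'})_{c'\in[s]}$ concentrates exponentially around the uniform profile $d_{c'}=|\phi(ab)|^{-1}\mathbf{1}[c'\in\phi(ab)]$. By property (iii), $\phi(ab)\subseteq\phi^*(ij)$ for all $a\in Y_i,b\in Y_j$; in the ``ideal'' case $\phi(ab)=\phi^*(ij)$ the expected density of $c$ across $X\times Y$ is ${\phi^*_{ij}}^{-1}$, and a standard Chernoff-type argument on the near-independent edge indicators $\mathbf{1}[\chi(e)=c]$ yields the required exponential factor $e^{-\Omega(\gamma^2|X||Y|)}$ for deviations of size $\gamma|X||Y|$.

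Two subtleties arise. First, not all pairs $(a,b)\in Y_i\times Y_j$ are ``ideal'': property (iii) only gives $\phi(ab)\subseteq\phi^*(ij)$. However, combining $q(\phi,\ba)\ge Q(\x)-8\eps$ with $q(\phi^*,\bb)=Q(\x)$ and the inclusions $\phi(ab)\subseteq\phi^*(ij)$ shows that the $(\alpha_a\alpha_b)$-weighted mass of ``deficient'' pairs with $\phi(ab)\subsetneq\phi^*(ij)$ is $O(s\eps)$, and since $\eps\ll\gamma$ this perturbs the expected $c$-density by only $o(\gamma)$ and so does not affect the concentration. The second and main obstacle is that the colourings within a cluster pair are not fully independent Bernoullis but are constrained to produce $(\gamma_1,\ge\gamma_2)$-regular colour classes for $c'\in\phi(ab)$; this is handled by the standard observation (implicit in the probabilistic form of Szemer\'edi's lemma) that almost all uniformly random colourings of a cluster pair already satisfy regularity with the prescribed densities, so the unconstrained multinomial count agrees with the constrained count up to sub-exponential factors, validating the Chernoff estimate.
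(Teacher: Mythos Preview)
There is a genuine gap. You claim that $\mathrm{RL}^{-1}(\mc U,\phi)$ has a ``near-product structure'' in which a colouring decomposes into independent restrictions $\chi_{ab}$ across cluster pairs, and then invoke Chernoff on the indicators $\mathbf{1}[\chi(e)=c]$. But $\mathrm{RL}^{-1}(\mc U,\phi)$ is by definition the set of \emph{$\x$-free} colourings mapping to $(\mc U,\phi)$, and $\x$-freeness is a global constraint: a forbidden copy of $K_k$ typically uses edges from many different cluster pairs, so the restrictions $\chi_{ab}$ are coupled in a way you cannot control. Under the uniform measure on $\mathrm{RL}^{-1}(\mc U,\phi)$ there is no reason for edge-colour indicators to be even approximately independent, and your Chernoff step is unjustified. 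Your ``second subtlety'' addresses only the regularity constraint coming from the definition of the map $\mathrm{RL}$; the validity constraint is never mentioned.

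The paper avoids this entirely by decoupling the two directions. For the \emph{upper bound} on the number of bad colourings, validity is simply dropped: if $\chi$ fails $(\dagger)$ then (via a reduction, Proposition~4.4 in~\cite{katherine_stability}) there exist $X\subseteq W_{i^*}$, $Y\subseteq W_{j^*}$ of sizes exactly $\gamma|W_{i^*}|,\gamma|W_{j^*}|$ with $d_{\chi^{-1}(c^*)}(X,Y)<(1-\gamma/2)\phi^{*-1}_{i^*j^*}$; the number of such colourings is bounded by a product $\prod_{ij}\phi^*_{ij}{}^{|W_i||W_j|}$ times a binomial-tail factor $e^{-\Omega(\gamma^2\mu^2 n^2)}$ on $X\times Y$, times $e^{O(\nu n^2)}$ for choices and atypical edges. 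For the \emph{lower bound} on $|\mathrm{RL}^{-1}(\mc U,\phi)|$ the paper uses popularity: $|\mathrm{RL}^{-1}(\mc U,\phi)|\ge e^{-3\eps n^2}F(G;\x)$ and $F(G;\x)\ge e^{q(\phi^*,\ba^*)\binom{n}{2}+O(n)}$. Since $\prod_{ij}\phi^*_{ij}{}^{|W_i||W_j|}=e^{q(\phi^*,\bb)\binom{n}{2}}$ and $\|\bb-\ba^*\|_1\le\nu$, the product terms cancel up to $e^{O(\nu n^2)}$, leaving the Chernoff factor to dominate. This comparison of an unconstrained upper bound against a popularity lower bound is the missing idea in your proposal.
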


\begin{proof}[Proof of claim.] If $\chi \in \mathrm{RL}^{-1}(\mathcal{U},\phi)$ does not satisfy ($\dagger$), there exist $i^*j^*\in \binom{[r^*]}{2}$ and $c^*\in \phi^*(i^*j^*)$ such that $\chi^{-1}(c^*)[W_{i^*},W_{j^*}]$ is not $(\gamma,\ge\! {\phi^*_{ij}}^{-1})$-regular. By \cite[Proposition 4.4]{katherine_stability} this implies the existence of $X\subseteq W_{i^*}$ and $Y\subseteq W_{j^*}$ of size $\gamma|W_{i^*}|$ and $\gamma|W_{j^*}|$ respectively such that $d(\chi^{-1}(c^*)[X,Y])<(1-\gamma/2){\phi^*_{i^*j^*}}^{-1}$.
Therefore, the number of such colourings is at most
\begin{align*}
   \underbrace{s^{\nu n^2}}_{\mbox{\scriptsize edges incident to }W_0}\times \underbrace{s^{s\gamma_2n^2}}_{\substack{\mbox{\scriptsize edges in }W_i\\\chi(e) \neq c_i}}\times \underbrace{\binom{r^*}{2}s\binom{|W_{i^*}|}{\gamma|W_{i^*}|}\binom{|W_{j^*}|}{\gamma|W_{j^*}|}}_{\mbox{\scriptsize choice for }i^*j^*,c^*,X,Y}\times
   ~C\times D \times \prod_{ij\neq i^*j^*}{\phi^*_{ij}}^{|W_i||W_j|}
\end{align*}
where
\begin{align*}C&=\sum_{\ell\le({\phi^*_{i^*j^*}}^{-1}-\gamma/2s)|X||Y|} \binom{|X||Y|}{\ell}(\phi^*_{i^*j^*}-1)^{|X||Y|-\ell}\\
&\le e^{-\gamma^2\phi^*_{i^*j^*}|X||Y|/12s^2}{\phi^*_{i^*j^*}}^{|X||Y|}\mbox{~~~~~(using \cite[Corollary 4.8]{katherine_stability})}\\
&\le e^{-\gamma^2\mu^2n^2/48s}{\phi^*_{i^*j^*}}^{|X||Y|}\end{align*}
is an upper bound on the number of colour choices for $G[X,Y]$ and $D={\phi^*_{i^*j^*}}^{|W_{i^*}||W_{j^*}|-|X||Y|}$
is an upper bound on the number of colour choices for the rest of $G[W_{i^*},W_{j^*}]$.

Similarly to the previous claim, the product of the terms except $e^{-\gamma^2\mu^2n^2/48s} \prod{\phi^*_{ij}}^{|W_i||W_j|}$ is at most $e^{O(\nu n^2)}$ and we have
\begin{eqnarray*}
\prod{\phi^*_{ij}}^{|W_i||W_j|} &=& e^{q(\phi^*,\b \beta)\binom{n}{2}}
\le e^{(q(\phi^*,\ba^*)+2\nu\log(s))\binom{n}{2}}
\mbox{~~~~~(using~\autoref{lm:lipschitz})}\\
&\stackrel{(\ref{eq:lowerbd})}{\leq}& F(n;\x) \cdot e^{2\nu\log(s)\binom{n}{2}+O(n)}\\
&\leq& |\mathrm{RL}^{-1}(\mathcal{U},\phi)|\times e^{7\eps \binom{n}{2}}\times e^{3\nu\log(s)\binom{n}{2}}
\mbox{~~~~~(since $(\mathcal{U},\phi)$ is popular)}.
\end{eqnarray*}
Thus the number of colourings not satisfying ($\dagger$) is at most
$$
e^{O(\nu n^2)-\gamma^2\mu^2n^2/48s}|\mathrm{RL}^{-1}(\mathcal{U}, \phi)|\leq e^{-\Omega(\gamma^3 n^2)}|\mathrm{RL}^{-1}(\mathcal{U}, \phi)|,
$$
as required.
\end{proof}

We are now almost done. It remains to adjust $\mathcal{W}$ so that $|W_i|=\alpha_i^*n$ (recall that we suppress rounding) and $W_0$ is empty.  For each $i\ge 1$, let $V_i$ be obtained by taking any $\min\{\alpha_i^*n,|W_i|\}$ vertices from $W_i$, and then distributing the remaining vertices of $G$ arbitrarily so that $|V_i|=\alpha_i^*n$. This gives property~\ref{SGone} from the theorem statement. Recall that $|W_i|=\beta_in$ with $\|\b\beta -\ba^*\|_1\le \nu$, so $\sum_i|V_i\triangle W_i|=\|\b\beta -\ba^*\|_1n\le \nu n$. Since the parts of $\ba^*$ have size at least $\mu$ and $\nu \ll \gamma \ll \mu$, each colouring $\chi \in {\mathrm RL}^{-1}(\mathcal{U}, \phi)$ satisfying ($\dagger$) with respect to $\mathcal{W}$ now satisfies it with respect to $\mathcal{V}$ with a larger regularity parameter $2\gamma$ (see \cite[Proposition 4.5]{katherine_stability}), that is, each $\chi^{-1}(c)[V_i,V_j]$ is $(2\gamma, \ge\! {\phi^*_{ij}}^{-1})$-regular. This gives property~\ref{SGtwo} from the theorem statement. Finally, recall that by \eqref{eq:stability_graphs:1} at most $s\gamma_2n^2$ edges of $G$ are within the clusters of $\mathcal{W}$ and not in colour $c_i$, so at most $s\gamma_2n^2+\nu n^2\le \delta n^2$ are within the clusters of $\mathcal{V}$ and not in colour $c_i$, giving property~\ref{SGthree}. For patterns that are stable inside, there are simply at most $s\gamma_2n^2$ edges within clusters of $\mathcal{W}$, and hence at most $\delta n^2$ within the clusters of $\mathcal{V}$, giving the second part of property~\ref{SGthree}.

For each popular $(\mathcal{U},\phi)$, the above holds for at least $(1-e^{-\Omega(\gamma^3n^2)})|\mathrm{RL}^{-1}(\mathcal{U},\phi)|$ colourings in $\mathrm{RL}^{-1}(\mathcal{U},\phi)$. Thus, using~\eqref{eq:Col}, we get a total of 
\begin{align*}\sum_{\mathcal{U},\phi}(1-e^{-\Omega(\gamma^3n^2)})|\mathrm{RL}^{-1}(\mathcal{U},\phi)|&=(1-e^{-\Omega(\gamma^3n^2)})|\mathrm{Col}(G)|\\
&\ge (1-e^{-\Omega(\gamma^3n^2)})(1-e^{-2\eps n^2})F(G;\x)\\
&\ge (1-e^{-\eps n^2})F(G;\x)\end{align*}
colourings for which properties~\ref{SGone}--\ref{SGthree} hold.
\end{proof}

\subsection{Proof of \autoref{th:exact1}}\label{sec:proofs:exact}

\begin{proof}[Proof of \autoref{th:exact1}]
We are given constants $k,s,\dD$ and may choose additional constants $\eta, \xi, \eps,\dD_1,\dD_2,n_0$ so that, without loss of generality, we have $0<1/n_0 \ll \eta\ll \xi \ll \eps \ll \dD_1 \ll \dD_2 \ll \dD \ll \mu,1/k,1/s,1/R$, where for every $(r^*,\phi^*,\ba^*)\in\opt^*(\x)$, we have $r^* \leq R$ (by boundedness),
and $\aA^*_i \geq \mu$ for all $i \in [r^*]$ (by \autoref{lm:mu}), and also
$Q(\x) - o(1) \leq (\log F(n;\x))/\binom{n}{2} < Q(\x)+\eps$ for all $n \geq n_0$ (by \autoref{th:asymptotic}).

\begin{secclaim}[See Lemma 2.14,~\cite{katherine_exact}]\label{cl:partite}
If there is an $\x$-extremal graph which is not complete partite,
then there is an $\x$-extremal graph which is complete partite with a part of size one.
\end{secclaim}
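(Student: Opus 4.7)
My plan is to apply Zykov symmetrisation iteratively, using property~(S) recalled at the beginning of Section~\ref{sec:proofs}: for any two non-adjacent vertices $u, v$ in a graph $H$, at least one of ``replace $u$ by a twin of $v$'' or ``replace $v$ by a twin of $u$'' yields a graph $H'$ with $F(H'; \x) \geq F(H; \x)$. Starting from the non-complete-partite $\x$-extremal graph $G$, I would iteratively symmetrise pairs of non-adjacent vertices with distinct neighbourhoods, choosing the option that does not decrease $F$. Each step preserves extremality, since $G$ is already at the maximum $F$-value $F(n;\x)$. The number of equivalence classes of $V(G_i)$ under the relation ``same neighbourhood'' is non-increasing and strictly decreases whenever two classes merge, so the process terminates in finitely many steps at a complete partite $\x$-extremal graph $G^*$ on $n$ vertices.

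To produce a part of size one, I would consider the non-empty collection $\mc H$ of all complete partite $\x$-extremal graphs on $n$ vertices, and choose $H \in \mc H$ with the maximum number of parts. If $H$ has a singleton part we are done, so suppose every part of $H$ has size at least $2$; the goal is to reach a contradiction. Pick a part $W$ of $H$ and a vertex $v \in W$, and let $H'$ be the graph obtained from $H$ by placing $v$ in a new singleton part, i.e.\ by adding the edges $\{vu : u \in W \sm \{v\}\}$ to $H$. Then $H'$ is complete partite with one more part than $H$, and the desired contradiction will follow once I establish $F(H'; \x) \geq F(H; \x)$, since extremality of $H$ then forces $H' \in \mc H$.

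The main obstacle is the inequality $F(H';\x) \geq F(H;\x)$: property~(S) does not apply directly because $\{v, u\}$ for $u \in W \sm \{v\}$ is an adjacent pair in $H'$, so no symmetrisation operation on $H'$ reproduces $H$. Instead I would construct an explicit map from $\x$-free colourings $\chi$ of $H$ to $\x$-free colourings $\chi'$ of $H'$: keep $\chi' = \chi$ on $E(H) \cap E(H')$ and colour each new edge $vu$ (with $u \in W \sm \{v\}$) by $\chi'(vu) := \chi(wu)$, where $w$ is a fixed reference vertex in some other part of $H$ (which exists since $H$ is complete partite with $r \geq 2$ parts). Checking that $\chi'$ is $\x$-free reduces to verifying that every $K_k$ of $H'$ passing through a new edge $vu$ receives a valid colouring; such a clique has the form $\{v, u, x_1, \ldots, x_{k-2}\}$ with $x_i \notin W$, and its colour pattern under $\chi'$ can be matched to the colour pattern under $\chi$ of the corresponding clique $\{w, u, x_1, \ldots, x_{k-2}\}$ of $H$, which is $\x$-free. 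The injectivity (or more generally, the $F$-comparison) then relies on the symmetric structure of $\x$ under colour permutations. This is the main technical step and is essentially the same argument as the monochromatic Lemma~2.14 of \cite{katherine_exact}, which carries over to symmetric families since property~(S) is the only structural input it uses.
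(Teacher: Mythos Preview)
Your step~1 is fine: starting from an extremal $G$ that is not complete partite, property~\ref{it:S} lets you symmetrise to a complete partite extremal graph $G^*$. The gap is in step~2. You take an arbitrary complete partite extremal $H$ with all parts of size $\geq 2$ and try to show that splitting off a vertex $v$ into a new singleton part gives $H'$ with $F(H';\x)\geq F(H;\x)$. This is false in general: adding edges can only destroy valid colourings, and indeed when $T_{k-1}(n)$ is the unique extremal graph the inequality fails outright. Your proposed injection also does not work: in the new clique $\{v,u,x_1,\ldots,x_{k-2}\}$ of $H'$ the edges $vx_i$ carry colours $\chi(vx_i)$, not $\chi(wx_i)$, so its colour pattern does not match that of $\{w,u,x_1,\ldots,x_{k-2}\}$ in $H$. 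More fundamentally, step~2 never uses the hypothesis that the original extremal graph was not complete partite, so you are attempting to prove a strictly stronger (and false) statement.

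The paper's argument exploits the hypothesis in a different way. For an extremal $G$ and non-adjacent $u,v$, writing $F(G;\x)=\sum_{\chi\in C(G-u-v)}\chi_u\chi_v$ (which uses only that forbidden configurations are on cliques), one gets
\[
0\leq \sum_{\chi}(\chi_u-\chi_v)^2 = F(G_{uv};\x)-2F(G;\x)+F(G_{vu};\x)\leq 0,
\]
so \emph{both} $G_{uv}$ and $G_{vu}$ are extremal. This two-sided freedom at every step of the symmetrisation, not just the one-sided~\ref{it:S}, is what lets you steer the process so that the final complete partite graph has a part of size one. That is the missing idea.
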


\begin{proof}[Sketch proof of claim.]
The proof of \cite[Lemma 2.14]{katherine_exact} goes through verbatim, by property~\ref{it:S} at the beginning of this section.
The proof proceeds by \emph{symmetrising}, that is, successively replacing a vertex $u$ of an $\x$-extremal graph $G$ by a twin of another vertex $v$ which is not adjacent to $u$. The graph $G_{uv}$ obtained in this way is such that \emph{both} $G_{uv}$ and $G_{vu}$ are extremal.
Indeed, if $C(J)$ is the set of valid colourings of a graph $J$, 
we have
$F(G;\x) = \sum_{\chi \in C(G-u-v)}\chi_u \chi_v$
where e.g.~$\chi_u$ is the number of extensions of $\chi$ to $G-v$,
since $u$ and $v$ are not adjacent and every forbidden colouring in $\x$ is on a clique.
Thus $F(G_{uv};\x)=\sum_{\chi \in C(G-u-v)}\chi_v^2$ and so
$$
0 \leq \sum_{\chi \in C(G-u-v)}(\chi_u-\chi_v)^2 = F(G_{uv};\x)-2F(G;\x)+F(G_{vu};\x) \leq 0.
$$
The choice of which pair $u,v$ to symmetrise at each step can be made so that the final graph has a part of size one.
\end{proof}

Let $G$ be an $\x$-extremal graph on $n \geq n_0$ vertices.
By \autoref{cl:partite} either $G':=G$ is complete partite or there is a complete partite extremal graph $G'$ with a part of size one with the same number of vertices. Let $\mathcal{W}=W_1\cup\ldots\cup W_r$ be the partition of $V(G')$. 
Apply \autoref{th:stability_graphs} with regularity parameter $\delta_1$ to $G'$. Let $\chi:E(G')\to [s]$ be one of the $(1-e^{-\eps n^2})F(G';\x)$ `typical' $\x$-free colourings of $G'$ described in \autoref{th:stability_graphs}. Then there exist $(r^*,\phi^*,\ba^*)\in \opt^*(\x)$ and a partition $\mathcal{V} = V_1\cup \ldots\cup V_{r^*}$ of $V(G')$ satisfying properties~\ref{SGone}--\ref{SGthree}. 

Since $\x$ is hermetic and hence stable inside,~\ref{SGthree} implies that one can change at most $\dD_1{r^*}^2n^2$ adjacencies to obtain the complete $r^*$-partite graph $K[V_1,\ldots,V_{r^*}]$.
Thus our partition $W_1,\ldots,W_r$ must be close to $V_1,\ldots,V_{r^*}$:
after relabelling we have $|W_i \bigtriangleup V_i| \leq 2r^*\sqrt{s\dD_1}n$ and so $|W_i|=(\aA^*_i\pm \dD_2)n > \mu n/2$ for all $i \in [r^*]$ (in particular, $r\ge r^*$) and the union of the $r-r^*$ other parts $W_j$ has size at most $r^*\sqrt{s\dD_1} n \leq \dD_2 n$.

We now proceed to show that $G'$ is complete $r^*$-partite, which will also imply that $G=G'$ and together with the previous observations give~\ref{seone}.
Suppose for a contradiction that $r>r^*$, i.e. that $G'$ has at least one small part $W_{r^*+1}$.
Let $x\in W_{r^*+1}$ be an arbitrary vertex in the small part. We define an extension $\phi$ of $\phi^*$ to $r^*+1$ parts which records the majority colour in $\chi$ between $x$ and each of $W_1,\ldots,W_{r^*}$. More precisely, let $\phi:\binom{[r^*+1]}{2}\to 2^{[s]}$ be such that $\phi(ij):=\phi^*(ij)$ for all $ij\in \binom{[r^*]}{2}$ and $\phi(i,r^*+1):=\{c\in [s]:~d_{\chi^{-1}(c)}(x, W_i)\ge |W_i|/s\}$. 

\begin{secclaim}\label{cl:valid}
    $\phi\in \Phi_{\x,1}(r^*+1)$.
\end{secclaim}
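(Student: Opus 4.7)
The plan is to verify two things about $\phi$: that each multiplicity $\phi_{ij}$ is at least $1$, and that $\phi$ contains no forbidden pattern from $\mc X$. The first is an easy pigeonhole: for $ij \in \binom{[r^*]}{2}$ we have $\phi_{ij} = \phi^*_{ij} \geq 2$ because $(r^*,\phi^*,\ba^*) \in \opt^*(\mc X) \subseteq \opt_2(\mc X)$. For the edges from $r^*+1$ to $i \in [r^*]$, since $G'$ is complete partite and $x \in W_{r^*+1}$, the vertex $x$ sends exactly $|W_i|$ edges into $W_i$ in $G'$, coloured by $\chi$ with $s$ colours, so some colour is used on at least $|W_i|/s$ of them, and hence belongs to $\phi(i,r^*+1)$.

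For the harder part, $\mc X$-freeness, I would argue by contradiction. Suppose some $\Sigma \in \mc X$ embeds in $\phi$ via an injection $\psi:[k]\to [r^*+1]$. If $r^*+1 \notin \psi([k])$, then $\psi$ witnesses $\Sigma$ inside $\phi^*$, contradicting $\phi^* \in \Phi_{\mc X,2}(r^*)$. So we may relabel and assume $\psi(1)=r^*+1$ and $\psi(j)=i_j \in [r^*]$ for $j=2,\ldots,k$, with $i_2,\ldots,i_k$ distinct. Define
\[
T_j := \{ y \in W_{i_j} : \chi(xy) = \Sigma(1j)\} \quad\text{for } j=2,\ldots,k;
\]
by definition of $\phi$ on edges incident to $r^*+1$, each $|T_j| \geq |W_{i_j}|/s \geq \mu n/(2s)$.

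The goal is then to find vertices $y_j \in T_j$ for $j=2,\ldots,k$ such that $\chi(y_j y_{j'}) = \Sigma(jj')$ for all $2 \leq j<j' \leq k$; together with $x$, these would realise $\Sigma$ in $\chi$, contradicting that $\chi$ is $\mc X$-free. This is where I invoke the regularity information from~\ref{SGtwo}: for each pair $jj' \in \binom{[2,k]}{2}$ and each colour $c = \Sigma(jj') \in \phi^*(i_j i_{j'})$, the bipartite graph $\chi^{-1}(c)[V_{i_j},V_{i_{j'}}]$ is $(\dD_1,|\phi^*(i_j i_{j'})|^{-1})$-regular. Since $|W_{i_j} \triangle V_{i_j}| \leq 2r^*\sqrt{s\dD_1}\, n \ll \mu n$, the same pair is approximately regular when restricted to $W_{i_j},W_{i_{j'}}$, with density at least $1/s - 2\dD_1$. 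The subsets $T_j \subseteq W_{i_j}$ satisfy $|T_j|/|W_{i_j}| \geq 1/s$, which is much larger than the regularity parameter $\dD_1$, so the standard Embedding Lemma applied to the $(k-1)$-partite coloured graph with parts $T_2,\ldots,T_k$ and colour-restricted edges produces the desired embedding.

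The only potential obstacle is the bookkeeping around the regularity-versus-density inequalities, since we need the density $1/s$ in the colour slices to dominate the accumulated error from transferring regularity from $V_i$ to $W_i$ and from restricting to the dense sets $T_j$. This is accommodated by the hierarchy $0 < \eta \ll \xi \ll \eps \ll \dD_1 \ll \dD_2 \ll \dD \ll 1/s,1/k$ chosen at the start of the proof of~\autoref{th:exact1}, so the Embedding Lemma applies cleanly and the claim follows.
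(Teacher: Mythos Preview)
Your proof is correct and follows essentially the same approach as the paper: pigeonhole for the multiplicity bound, then embedding a forbidden copy via regularity of the colour graphs on the large neighbourhoods $T_j$. The only cosmetic difference is that the paper takes the neighbourhoods inside $V_{i_j}$ directly (using $|N_j|\ge |W_{i_j}|/s-|W_{i_j}\triangle V_{i_j}|$) rather than first transferring regularity from $V_{i_j}$ to $W_{i_j}$ as you do, but both are valid bookkeeping choices under the given hierarchy of constants.
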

\begin{proof}[Proof of claim.]
    We have that $\phi^*\in \Phi_{\x,2}(r^*)$ so every pair $ij$ in $\binom{[r^*]}{2}$ satisfies $\phi_{ij}=\phi^*_{ij}\ge 2$. There are $|W_i|$ edges between $x$ and $W_i$ coloured in $s$ colours, so by the pigeonhole principle at least one colour will appear $|W_i|/s$ times, thus $|\phi(i,r^*+1)|\ge 1$.

    We now need to show that $\phi$ is $\x$-free. Suppose not. Then since $\phi^*$ is $\x$-free, there is a copy of some $X\in \x$ containing the vertex $r^*+1$, say with vertex set $\{r^*+1,i_1,\ldots,i_{k-1}\}$ and colouring $\sigma$. 
    Suppose that $a\in V(X)$ is the vertex mapped to $r^*+1$. For each $j\in[k-1]$, consider the neighbourhood $N_j$ of $x$ in colour $\sigma(r^*+1,i_j)$ in $V_{i_j}$. Each $N_j$ has size $|N_j|\ge|W_{i_j}|/s-|W_{i_j}\triangle V_{i_j}|\ge |V_{i_j}|/2s$. 
    Here we are using $(|V_{i_j}|\pm1)/n=\alpha_i^*\ge \mu\gg \delta_1,1/s,1/r^*$ and the fact that $|W_{i_j}\triangle V_{i_j}|\le \delta n$. For each pair of indices $i_h,i_j$ with colour $c:=\sigma(i_h i_j)$ in $X$ we have that $c\in\phi^*(i_hi_j)$, so the bipartite graph $\chi^{-1}(c)[V_{i_h},V_{i_j}]$ is $(\delta_1,{\phi^*_{i_hi_j}}^{-1})$-regular by property~\ref{SGtwo}. 
    Then $\chi^{-1}(c)[N_h,N_j]$ is $(2s\delta_1,{\phi^*_{i_hi_j}}^{-1})$-regular. 
    The Embedding lemma then gives a copy of $X-a$ in the neighbourhood of $x$, which together with $x$ gives a copy of $X$ in $G'$. This contradicts the fact that $\chi$ is $\x$-free, thus proving the claim.
\end{proof}

However, the family $\x$ is hermetic, so no such $\phi$ exists. Thus $r=r^*$ and since $|W_i|\geq \mu n/2$ for all $i \in [r^*]$, $G'$ has no part of size 1 and thus $G=G'$. Thus we have shown that for each typical $\chi$ there is $(r^*, \phi^*, \ba^*)$ 
%\in \opt^*(\x)$ 
such that 
\begin{enumerate}        
    \item \ref{seone} holds with a smaller error term $\dD_2$, i.e. %\ for $r^*$ and $\ba^*$, i.e.\ 
    $G$ is  a complete $r^*$-partite graph whose $i$-th part $W_i$ has size $(\aA^*_i \pm \dD_2)n$ for all $i \in [r^*]$, and 
    \item \ref{se0} holds, i.e.\ $(r^*, \phi^*,\ba^*)\in \opt^*(\x)$, and~\ref{se1} holds with a smaller error $\dD_1$, i.e.~$\chi^{-1}(c)[W_i,W_j]$ is $(\dD_1,{\phi^*_{ij}}^{-1})$-regular for all $ij \in \binom{[r^*]}{2}$ and $c \in \phi^*(ij)$.
\end{enumerate}
Note that thus far, $(r^*, \phi^*, \ba^*)$ may depend on $\chi$.
Observe that  $G$ being $r^*$-partite, where $r^*$ came from an arbitrary $\chi$, means that $r^*$ in fact is independent of the choice of $\chi$. 
We now argue that we may also choose $\ba^*$ independently of $\chi$ (which is at least a priori possible as (1) means each of the $\ba^*$ are all very close). For this, note that for a given $\phi^*$, we may use the same choice of $\ba^*_{\phi^*}$ for any $\chi$ that uses a triple involving ${\phi^*}$, maintaining the above properties. This gives rise to a family $\Phi \subseteq \Phi_{\x,2}(r^*)$ such that for each typical $\chi$, there is $(r^*, \phi^*, \ba^*_{\phi^*})$ with $\phi^* \in \Phi$ for which~(1) and~(2) hold for $\chi$.~\autoref{lem:uni_ba} applies to show that there is $\ba^*$ such that for all $\phi^*\in \Phi$, $(r^*,\phi^*,\ba^*) \in \opt(\x)$ and $\|\ba_{\phi^*}-\ba^*\|_1<\dD/2$. 
In fact, we get $(r^*,\phi^*,\ba^*) \in \opt^*(\x)$ since for each $i \in [r^*]$ we have $\alpha^*_{\phi^*,i}>\mu$ and $|\alpha^*_i-\alpha^*_{\phi^*,i}| < \delta/2$,
so $\alpha^*_i > \mu/2$.
Since for each typical $\chi$ and $i \in [r^*]$ we have
$||W_i|-\alpha_i^*|\leq ||W_i|-\alpha^*_{\phi^*,i}|+|\alpha^*_{\phi^*,i}-\alpha_i^*|\leq \dD_2+\dD/2\leq \dD$, we have found choices of $r^*$ and $\ba^*$ independent of $\chi$ such that the properties~\ref{seone},~\ref{se0} and~\ref{se1} hold. 
It remains to show that for most of the typical colourings $\chi$, our choice of $(r^*,\phi^*,\ba^*)$ satisfies~\ref{se2}, which is somewhat harder.

We say that a vertex $x\in G$ has \emph{large contribution} if 
$$\log F(G;\x)\ge \log F(G-x;\x)+(Q(\x)-\xi)n.$$
\begin{secclaim}[See Lemma~2.15,~\cite{katherine_exact}]
Every vertex in $G$ has large contribution.    
\end{secclaim}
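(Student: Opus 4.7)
The plan is to argue by contradiction: I will assume some $x \in W_i$ has small contribution, i.e., $\log F(G-x;\x) > \log F(G;\x) - (Q(\x)-\xi)n$, and derive a contradiction with the extremality of $G$. Combining this assumption with the lower bound $F(G;\x) \geq e^{Q(\x)\binom{n}{2}+O(n)}$ from~\eqref{eq:lb}, I obtain $\log F(G-x;\x) \geq Q(\x)\binom{n-1}{2}+\Omega(n)$, so \autoref{th:stability_graphs} is applicable to $G-x$. Since $G$, and hence $G-x$, is already known to be complete $r^*$-partite from the first part of this proof, and $\x$ is bounded so $\opt^*(\x)$ is finite, I pigeonhole on $\opt^*(\x)$ to fix a single triple $(r^*,\phi^*,\ba^*)\in \opt^*(\x)$ governing a set $N$ of at least $(1-o(1))F(G-x;\x)$ valid colourings of $G-x$ satisfying the regularity property~\ref{SGtwo} with respect to the partition $W_1,\ldots,W_i\setminus\{x\},\ldots,W_{r^*}$.

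Next I will count valid extensions of each $\chi'\in N$ to an $\x$-free colouring of $G$. I focus on extensions assigning, to each edge $xv$ with $v\in W_j$ and $j\neq i$, a colour drawn from $\phi^*(ij)$. The total number of such potential extensions is $\prod_{j\neq i}|\phi^*(ij)|^{|W_j|}$, which by \autoref{lm:cont1} equals $\exp(Q(\x)(n-1)+O(1))$. The heart of the argument is to verify that almost all of these extensions are genuinely $\x$-free. The key observation is that any forbidden $k$-clique in the extended colouring must contain $x$, and its other $k-1$ vertices must lie in distinct parts of $G$ different from $W_i$ (using that $G$ is complete $r^*$-partite). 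For each forbidden pattern $\sigma\in \x$, the regularity of $\chi'$ together with an Embedding lemma argument analogous to \autoref{cl:valid} will bound the number of bad extensions realising $\sigma$ by $e^{-\Omega(n)}$ times the total; summing over the finitely many patterns in $\x$ shows that at least $(1-o(1))\exp(Q(\x)(n-1)+O(1))$ extensions remain valid.

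Combining the two estimates, and noting that distinct $(\chi',\text{extension})$ pairs give rise to distinct valid colourings of $G$, I obtain
\[
F(G;\x)\geq |N|\cdot (1-o(1))\exp\bigl(Q(\x)(n-1)+O(1)\bigr) \geq (1-o(1))F(G-x;\x)\cdot e^{Q(\x)(n-1)+O(1)}.
\]
Plugging in the contradiction hypothesis $F(G-x;\x) > F(G;\x)\cdot e^{-(Q(\x)-\xi)n}$ then forces $F(G;\x) > F(G;\x)\cdot e^{\xi n + O(1)}$, which is absurd for sufficiently large $n$. This contradiction shows that every vertex has large contribution.

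The main obstacle will be the extension-counting step. Bounding the number of invalid extensions requires carefully exploiting the $(\delta,|\phi^*(ij)|^{-1})$-regularity of $\chi'$ from~\ref{SGtwo} together with the $\x$-freeness of $\phi^*$, and it is here that the hermetic structure of $\x$ and the detailed regularity conclusions of \autoref{th:stability_graphs} are used; the rest of the argument is then a direct comparison of exponential rates.
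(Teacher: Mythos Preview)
Your approach differs substantially from the paper's, and the extension-counting step has a genuine gap. The paper does not extend colourings of $G-x$ back to $G$. Instead it exploits the twin structure: since $G$ is complete $r^*$-partite, $x$ lies in a part $T=W_i$ of at least $\mu n/2$ twins, all of which necessarily have the same contribution as $x$. A Cauchy--Schwarz (log-convexity) argument applied to the power sums $\sum_{\chi'} e(\chi')^{p}$, where $e(\chi')$ counts valid one-vertex attachments to a colouring $\chi'$ of $G-T$, then yields $\log F(G;\x)\le \log F(G-T;\x)+(Q(\x)-\xi)|T|n$. The contradiction comes from the asymptotic upper bound on $F(G-T;\x)$ via \autoref{th:asymptotic} combined with the extremality lower bound on $F(G;\x)$. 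Removing a whole part at once, and using only the global bounds of \autoref{th:asymptotic}, sidesteps any extension counting.

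The gap in your route is the claim that almost all of the $\prod_{j\ne i}|\phi^*(ij)|^{|W_j|}$ candidate extensions are $\x$-free. \autoref{th:stability_graphs} only guarantees that a typical $\chi'$ satisfies the \emph{regularity} condition~\ref{SGtwo}; it does not say $\chi'$ follows $\phi^*$ perfectly. Thus $\chi'$ can have $\Theta(\delta_1 n^2)$ ``bad'' edges $yz$ with $\chi'(yz)\notin\phi^*(j_yj_z)$. You are right that any forbidden clique through $x$ in an extension must use such an edge (since edges at $x$ take colours in $\phi^*$ and $\phi^*$ is $\x$-free), but for each dangerous bad edge a \emph{constant} fraction of extensions completes it to a forbidden clique, so the expected number of forbidden cliques under a random extension is of order $\delta_1 n^{k-1}\gg 1$; neither a union bound nor Markov yields $e^{-\Omega(n)}$, nor even a positive proportion. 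The analogy with \autoref{cl:valid} points the wrong way: that claim uses the Embedding lemma to \emph{produce} a forbidden copy, whereas you need to \emph{avoid} one. The statement you would really want---that most colourings follow $\phi^*$ perfectly---is exactly \autoref{cl:perf_align}, but that is proved later using the present claim, so invoking it here would be circular.

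A smaller issue: your assertion that $\prod_{j\ne i}|\phi^*(ij)|^{|W_j|}=\exp(Q(\x)(n-1)+O(1))$ via \autoref{lm:cont1} presumes $|W_j|=\alpha_j^*n+O(1)$, but at this stage of the proof the part sizes are only known to satisfy $|W_j|=(\alpha_j^*\pm\delta_2)n$, and $\delta_2\gg\xi$ in the hierarchy, so the $O(\delta_2 n)$ error in the exponent already swamps the $\xi n$ you need for the contradiction.
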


\begin{proof}[Sketch proof of claim] 
Again, this proof goes through verbatim.
If $x$ did not have large contribution, then since it has many twins all necessarily with the same contribution, we would have many vertices with small contribution. A calculation using the Cauchy-Schwarz inequality shows that $\log F(G;\x) \leq \log F(G-T;\x) + (Q(\x)-\xi)|T|n$, where $T$ is the set of twins of $x$ and satisfies $\mu n/2\le |T|\le (1-\mu/2)n$ since every $\alpha_i^* \geq \mu$. Apply \autoref{th:asymptotic} to $G-T$ with parameter $\eta$ (noting that $|G-T|\ge \mu n_0/2\gg 1/\eta$) to obtain $\log F(G-T;\x) \leq (Q(\x)+\eta)\binom{n-|T|}{2}$.
Together with $\log F(G;\x) \geq Q(\x)\binom{n}{2}-O(n)$, we obtain a contradiction since $\eta\ll \xi$.
\end{proof}

Applying the claim twice yields that for every $x,y \in V(G)$ we have
\begin{equation}\label{eq:xyattach}
\log F(G;\x) \geq \log F(G-x-y;\x)+(Q(\x)-\xi)(n+n-1).
\end{equation}

We say that an $\x$-free colouring $\chi$ of $G$ is
\emph{locally good} if there is $\phi^*$ such that $(r^*,\phi^*,\ba^*) \in \opt^*(\x)$ and
\begin{enumerate}[label=(LG\arabic*)]
        \item\label{LG1} for all $ij \in \binom{[r^*]}{2}$ and $c \in \phi^*(ij)$, we have that $\chi^{-1}(c)[W_i,W_j]$ is $(\dD_1,{\phi^*_{ij}}^{-1})$-regular;
        \item\label{LG2} for all $hij \in \binom{[r^*]}{3}$ and vertices $x \in W_i$ and $y \in W_j$, and colours $c \in \phi^*(ih)$ and $c' \in \phi^*(jh)$, we have $|N_{\chi^{-1}(c)}(x,W_h) \cap N_{\chi^{-1}(c')}(y,W_h)| \geq \dD_2 |W_h|$.
    \end{enumerate}

\begin{secclaim}[See Lemma 2.11, \cite{katherine_exact}]\label{cl:perf_align}
    Every locally good valid colouring $\chi$ of $G$ is \emph{perfect},
    meaning that $\chi(xy) \in \phi^*(ij)$ for all $x \in W_i$ and $y \in W_j$ and $ij \in \binom{[r^*]}{2}$.
\end{secclaim}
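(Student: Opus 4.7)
The plan is to derive a contradiction by directly embedding a forbidden coloured clique into $\chi$. Suppose that some locally good valid colouring $\chi$ of $G$ is not perfect: then there exist $ij \in \binom{[r^*]}{2}$, vertices $x \in W_i$, $y \in W_j$, and a colour $c := \chi(xy) \notin \phi^*(ij)$. Define $\phi$ by adding $c$ to $\phi^*(ij)$ and leaving the rest of $\phi^*$ unchanged. Since $(r^*,\phi^*,\ba^*) \in \opt^*(\x)$, \autoref{fact:max} implies that $\phi$ is not $\x$-free, so there exist some $\sigma \in \x$ and an injection $\psi : [k] \to [r^*]$ with $\sigma(ab) \in \phi(\psi(a)\psi(b))$ for every $ab \in \binom{[k]}{2}$. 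Because $\phi^*$ itself is $\x$-free, this embedding must use the new colour $c$ somewhere; after relabelling the domain $[k]$ we may assume $\psi(1) = i$, $\psi(2) = j$ and $\sigma(12) = c$, while $\sigma(ab) \in \phi^*(\psi(a)\psi(b))$ for every other pair $ab$.

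The goal now is to embed this forbidden pattern into $\chi$ by setting $w_1 := x$, $w_2 := y$ and picking distinct $w_\ell \in W_{\psi(\ell)}$ for $3 \leq \ell \leq k$ so that $\chi(w_a w_b) = \sigma(ab)$ for every $ab \in \binom{[k]}{2}$; this will contradict the validity of $\chi$. For each $\ell \geq 3$, applying~\ref{LG2} to the triple $\{i,j,\psi(\ell)\}$ with colours $\sigma(1\ell) \in \phi^*(i\psi(\ell))$ and $\sigma(2\ell) \in \phi^*(j\psi(\ell))$ produces a candidate set
\[
T_\ell := N_{\chi^{-1}(\sigma(1\ell))}(x,W_{\psi(\ell)}) \cap N_{\chi^{-1}(\sigma(2\ell))}(y,W_{\psi(\ell)})
\]
of size at least $\dD_2 |W_{\psi(\ell)}| \geq \dD_2 \mu n /3$. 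It remains to locate distinct $w_\ell \in T_\ell$ with $\chi(w_\ell w_m) = \sigma(\ell m)$ for all $3 \leq \ell < m \leq k$.

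For every such pair, property~\ref{LG1} guarantees that $\chi^{-1}(\sigma(\ell m))[W_{\psi(\ell)},W_{\psi(m)}]$ is $(\dD_1, |\phi^*(\psi(\ell)\psi(m))|^{-1})$-regular. Since each $T_\ell$ occupies at least a $\dD_2$-fraction of $W_{\psi(\ell)}$, a standard slicing argument transfers regularity to the induced bipartite graph on $T_\ell \times T_m$ with parameter $O(\dD_1/\dD_2) \ll 1/s$, while the density remains bounded away from zero. The multicolour Embedding lemma then supplies the required $(k-2)$-clique in $T_3 \times \cdots \times T_k$ with the prescribed colour pattern, which together with $w_1 = x$, $w_2 = y$ yields a copy of $\sigma \in \x$ in $\chi$, the desired contradiction.

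The main obstacle is primarily bookkeeping rather than conceptual: one must verify that the parameter cascade $\eta \ll \xi \ll \eps \ll \dD_1 \ll \dD_2 \ll \dD \ll \mu, 1/k, 1/s$ chosen at the start of the proof of \autoref{th:exact1} is strong enough both for the regularity slicing step and for the quantitative form of the Embedding lemma to go through. This is routine; the argument above is essentially the analogue of~\cite[Lemma~2.11]{katherine_exact} in our more general setting, where~\ref{LG2} plays the role previously occupied by a joint-neighbourhood estimate in the monochromatic case.
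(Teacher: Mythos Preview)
Your proof is correct and follows essentially the same approach as the paper: use \autoref{fact:max} to find a forbidden pattern using the bad colour $c$ at $ij$, invoke~\ref{LG2} to obtain large candidate sets $T_\ell$ in the common neighbourhoods of $x$ and $y$, and then apply~\ref{LG1} together with the Embedding lemma to complete the coloured clique. The paper's argument is identical up to notation (it writes $W_h'$ for your $T_\ell$).
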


\begin{proof}[Proof of claim] Let $\chi$ be such a colouring and suppose, for contradiction, that there are $x\in W_i$ and $y\in W_j$ with $\chi(xy)=c\not \in \phi^*(ij)$.
By \autoref{fact:max}, adding $c$ to $\phi^*(ij)$ creates $K\cong K_k$ coloured by $\sigma:E(K)\to[s]$ which is an element of $\x$. Note that $i,j\in V(K)$,  $\sigma(ij)=c$ and $\sigma(hf)\in \phi^*(hf)$ for all other pairs $hf\in \binom{[r^*]}{2}\setminus \{ij\}$. We now show that $xy$ lies in a copy of $K$ in $G$ coloured by $\chi$.

For every $h\in V(K)\setminus\{ i,j\}$ let $W_h':=N_{\chi^{-1}(\sigma(ih))}(x,W_h) \cap N_{\chi^{-1}(\sigma(jh))}(y,W_h)$. 
By~\ref{LG2}, we have $|W_h'| \geq \dD_2 |W_h|$. By~\ref{LG1}, for any $h,h'\in V(K)\setminus \{i,j\}$ we have that $\chi^{-1}(\sigma(hh'))[W_i,W_j]$ is $(\delta_1/\delta_2, {\phi^*_{hh'}}^{-1})$-regular. Thus by the Embedding lemma we can find a copy of $K-i-j$ coloured according to $\sigma$ with each vertex $h\in V(K)\setminus \{i,j\}$ mapped to $W_h'$. By definition of the sets $W_h'$, any such copy, together with $x$ and $y$, forms a copy of $K$ coloured according to $\sigma$.
\end{proof}

Thus, to complete the proof of~\ref{se1}, it suffices to show that most colourings are locally good.

\begin{secclaim}[See Theorem 2.12,~\cite{katherine_exact}, Claims~2.12.2--2.12.4]
    At least $(1-e^{-\eps n})F(G;\x)$ valid colourings $\chi$ of $G$ are locally good. 
\end{secclaim}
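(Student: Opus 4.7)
The plan is to bound the number of valid colourings failing~\ref{LG2} by $e^{-\eps n}F(G;\x)$; combined with the earlier fact that all but $e^{-\eps n^2}F(G;\x)$ valid colourings satisfy~\ref{LG1} with respect to our chosen $(r^*,\phi^*,\ba^*)$, this yields the claim. A colouring $\chi$ fails~\ref{LG2} if some tuple $(x,y,h,i,j,c,c')$ with $x\in W_i$, $y\in W_j$, $c\in\phi^*(ih)$ and $c'\in\phi^*(jh)$ witnesses $|N_{\chi^{-1}(c)}(x,W_h)\cap N_{\chi^{-1}(c')}(y,W_h)|<\dD_2|W_h|$. There are $O(n^2)$ such witness tuples (the other parameters range over a set of size depending only on $r^*$ and $s$), so by a union bound it suffices to show that, for each fixed tuple, the number of valid colourings exhibiting that specific bad configuration is at most $e^{-\Omega(n)}F(G;\x)$.

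Fix a witness tuple and partition the valid colourings $\chi$ of $G$ according to the restriction $\chi^-:=\chi|_{G-x-y}$. Writing $\mathrm{Ext}(\chi^-)$ for the set of valid extensions of $\chi^-$ to $G$ and $\mathrm{Ext}_{\mathrm{bad}}(\chi^-)\subseteq\mathrm{Ext}(\chi^-)$ for those exhibiting the bad configuration, we have $F(G;\x)=\sum_{\chi^-}|\mathrm{Ext}(\chi^-)|$, so it suffices to prove the pointwise ratio bound $|\mathrm{Ext}_{\mathrm{bad}}(\chi^-)|\leq e^{-\Omega(n)}|\mathrm{Ext}(\chi^-)|$ for every valid $\chi^-$. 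An extension is determined by edge colours $\chi(xu),\chi(yu)$ for $u\in V(G)\setminus\{x,y\}$ and $\chi(xy)$. The bad condition constrains only the pairs $(\chi(xu),\chi(yu))$ for $u\in W_h$, requiring at most $\dD_2|W_h|$ of them to equal $(c,c')$. For each $u$, the admissible pair list $L(u)$---pairs consistent with $\x$-freeness of the extension---has size at most $s^2$, and among extensions satisfying~\ref{LG1} the density of $(c,c')$ in $L(u)$ is bounded below by a positive constant (roughly $(\phi^*_{ih}\phi^*_{jh})^{-1}\geq s^{-2}\gg\dD_2$). A Chernoff-type concentration bound applied to $|W_h|=\Theta(n)$ near-independent choices then yields $|\mathrm{Ext}_{\mathrm{bad}}(\chi^-)|\leq e^{-\Omega(n)}|\mathrm{Ext}(\chi^-)|$, and summing over $\chi^-$ and over the $O(n^2)$ tuples finishes the proof.

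The main obstacle is justifying the approximate independence of the pair choices at distinct $u\in W_h$: the $\x$-free condition couples them through potential forbidden $k$-cliques spanning $\{x,y\}$, several $u\in W_h$, and other vertices outside $W_h$. This is handled by showing that for all but a negligible (weighted by $|\mathrm{Ext}(\chi^-)|$) fraction of $\chi^-$, the lists $L(u)$ are essentially determined by the local structure near $u$, so the pair choices are indeed near-independent. Atypical $\chi^-$ contribute negligibly thanks to the large-contribution inequality~\eqref{eq:xyattach}, while the hermetic property and the extension property preclude degenerate $\chi^-$ on which $|\mathrm{Ext}(\chi^-)|$ concentrates on configurations where $(c,c')\notin L(u)$ for almost all $u$. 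The argument closely mirrors Claims~2.12.2--2.12.4 of~\cite{katherine_exact}, with a local switching operation on $W_h$ used to map each bad colouring injectively (up to a polynomial factor in $n$) to many good ones, thereby establishing the ratio bound directly without having to reason about correlations.
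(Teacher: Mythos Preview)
Your high-level outline---union-bound over the $O(n^2)$ witness tuples, partition by the restriction $\chi^-$, and use~\eqref{eq:xyattach} to control the contribution of $\chi^-$ with few extensions---matches the paper's. But the crucial mechanism is missing, and the Chernoff/switching remarks do not substitute for it.

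The pointwise ratio bound $|\mathrm{Ext}_{\mathrm{bad}}(\chi^-)|\le e^{-\Omega(n)}|\mathrm{Ext}(\chi^-)|$ is false for general $\chi^-$: one can have $\chi^-$ whose valid extensions all force $\chi(xu)\neq c$ for every $u\in W_h$, so every extension is bad. You acknowledge this and appeal to ``typical'' $\chi^-$, but never say what makes a $\chi^-$ typical or how the extension property enters. The paper's argument is as follows. Assuming too many colourings are bad, average (as you do) to a fixed $\chi^-$ with at least $e^{(Q(\x)-O(\dD_1))(2n-1)}$ bad extensions, and further to a single popular-colour template $\phi^*_x$ (and $\phi^*_y$) recording, for each $h$, the set of colours used on $\ge\dD_1|W_h|$ edges from $x$ to $W_h$. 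The large number of extensions forces $\ext(\phi^*_x,\ba^*)\ge Q(\x)-\sqrt{\dD}$; now Lemma~\ref{lm:ext} (the strong extension property, available because $\x$ is hermetic and has the extension property) implies that $r^*+1$ is a \emph{strong clone} of some $i'\in[r^*]$ under $\phi^*_x$, and one checks $i'=i$. This is the step that pins down the lists: for all but $O(\dD_1 n)$ vertices $z\in W_{h^*}$ one has $\chi(xz)\in\phi^*(ih^*)$ and $\chi(yz)\in\phi^*(jh^*)$, so for these $z$ there are at most $|\phi^*(ih^*)||\phi^*(jh^*)|-1$ choices of the pair (the pair $(c,c')$ being excluded by badness on all but $\dD_2|W_{h^*}|$ of them). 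A direct product count then gives fewer than $e^{(Q(\x)-5\dD_1)(2n-1)}$ bad extensions, a contradiction. No Chernoff bound, no switching, and no independence assumption is needed; the clone structure does all the work.

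In short: your sketch names the right tools but does not use them. The missing idea is that many extensions $\Rightarrow$ near-optimal $\ext(\phi^*_x,\ba^*)$ $\Rightarrow$ (via Lemma~\ref{lm:ext}) clone structure $\Rightarrow$ the admissible pair lists at almost every $u\in W_{h^*}$ are exactly $\phi^*(ih^*)\times\phi^*(jh^*)$, after which the count is elementary.
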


\begin{proof}[Sketch proof of claim]
We note that the locally good condition in~\cite{katherine_exact} is stronger than the one here.
Recall that for $(1-e^{-\eps n^2})F(G;\x)$ valid colourings there is $\phi^*$ (which depends on the colouring) for which~\ref{LG1} holds, so it suffices to show that $(1-e^{-\dD_1n})F(G;\x)$ valid colourings satisfy~\ref{LG2} for that same $\phi^*$.
So suppose this does not hold. 
Then, by averaging, there are two vertices $x,y$ such that at least $\binom{n}{2}^{-1}e^{-\delta_1n}F(G;\x)$ valid colourings of $G$ do satisfy~\ref{LG1} but do not satisfy~\ref{LG2} at $x,y$. By~(\ref{eq:xyattach}) the number of such colourings is at least
$$
    \binom{n}{2}^{-1}e^{-\delta_1n}\left(F(G-x-y;\x)\cdot e^{(Q(\x)-\xi)(n+n-1)}\right)\ge F(G-x-y;\x)\cdot e^{(Q(\x)-2\delta_1)(n+n-1)}.
    $$
    Again by averaging, there is a valid colouring $\chi$ of $G-x-y$ and associated colour template $\phi^*:\binom{[r^*]}{2}\to 2^{[s]}$ from~\ref{LG1}, with at least $e^{(Q(\x)-3\dD_1)(n+n-1)}$ valid extensions $\overline{\chi}$ to $G$ which do not satisfy the~\ref{LG2} condition at $x,y$.
    Averaging once more, there are $c_x,c_y\in [s]$ and $h^* \in [r^*]$ such that at  least $e^{(Q(\x)-4\dD_1)(n+n-1)}$ valid extensions of $\chi$ violate~\ref{LG2} at $x,y$ with part $W_{h^*}$ and colours $c_x,c_y$. 
    For each such extension $\overline{\chi}$, we define colour templates $\phi^*_{\overline{\chi},x},\phi^*_{\overline{\chi},y}$ on $r^*+1$ parts which are extensions of $\phi^*$ by taking popular colours at $x,y$, as follows: for every $h\in[r^*]$, let $\phi^*_{\overline{\chi},x}(hf):=\phi^*(hf)$ for all $hf \in \binom{[r^*]}{2}$, and let $\phi^*_{\overline{\chi},x}(\{h,r^*+1\}):=\{c\in[s]:~|N_{\overline{\chi}^{-1}(c)}(x)\cap W_h|\ge \delta_1|W_h|\}$, and similarly for $\phi_{\overline{\chi},y}$.
    These colour templates are valid, via an almost identical argument to \autoref{cl:valid}. 
    Choose $\phi^*_{x},\phi^*_{y}$ that appear most frequently among the $\phi^*_{\overline\chi,x}, \phi^*_{\overline\chi,y}$. The number of extensions $\overline{\chi}$ with these $\phi^*_{x},\phi^*_{ y}$ is at least $e^{(Q(\x)-5\dD_1)(n+n-1)}$.

    We next argue that this lower bound on the number of valid extensions implies that $\ext(\phi^*_{x},\ba^*)\ge Q(\x)-\sqrt{\delta}$, say (and similarly for $\phi^*_{y}$). For this implication, note that since $\phi^*_{x}$ records all colours appearing in at least a $\delta_1$ fraction and we have part sizes $|W_i|=(\alpha_i^*\pm \delta)n$, the logarithm of the number of valid extensions of $\chi$ to $G$ (or, more precisely, to $G-xy$) corresponds closely to $\ext(\phi^*_{x},\ba^*)+\ext(\phi^*_{y},\ba^*)$. This gives the desired conclusion since $\ext(\phi^*_{y},\ba^*)\le Q(\x)$ (see Proposition~2.6 in~\cite{katherine_stability}). 

    Recall that $\x$ has the strong extension property since it has the extension property and is hermetic.
    Thus we can apply~\autoref{lm:ext} to see that in $r^*+1$ is a strong clone of $i'$ under $\phi^*_x$ for some $i' \in [r^*]$, and $r^*+1$ is a strong clone of $j'$ under $\phi^*_y$ for some $j' \in [r^*]$. 
    If $i,j$ are such that $x \in W_i$ and $y \in W_j$,
    then $i=i'$ and $j=j'$ since, for example, $i \neq i'$ would imply $\emptyset=\phi^*_{x}(\{i,r^*+1\})=\phi^*_{x}(ii')=\phi^*(ii')$ which is a contradiction to $\phi^* \in \opt^*(\x)$.
    
    Finally we bound from above the number of valid extensions of $\chi$ to $\overline{\chi}$ that follow $\phi^*_{x},\phi^*_{y}$ to get a contradiction: we choose at most $s\dD_1 n$ vertices $z$ in $V(G)\setminus\{x,y\}$ where at least one of $xz,yz$ is not coloured according to the colour template $\phi^*$, and at most $\dD_2$ vertices $z$ in $W_{h^*}$ for which $(\chi(xz),\chi(yz))=(c,c')$. 
    The remaining at least $(1-\delta_2)|W_{h^*}| - s \dD_1 n > \mu n/2$ vertices $z$ in $W_{h^*}$ are coloured according to the colour template except that $(\chi(xz),\chi(yz))=(c,c')$ is forbidden. 
    Since, for most vertices, there are at most $|\phi^*(ih^*)||\phi^*(jh^*)|-1$ choices of $(\chi(xz),\chi(yz))$ as $(c,c')$ is forbidden, rather than the $|\phi^*(ih^*)||\phi^*(jh^*)|$ we expect, we obtain fewer than $e^{(Q(\x)+O(\dD))(n+n-1)} \cdot \left(\frac{s-1}{s}\right)^{\mu n/2} < e^{(Q(\x)-5\dD_1)(n+n-1)}$ extensions, a contradiction.
\end{proof}
This completes the proof of the theorem.
\end{proof}

\section{Concluding remarks}\label{sec:conclude}

In this paper we studied the \emph{generalised Erd\H{o}s-Rothschild problem} on maximising the number of $s$-edge colourings of graphs on $n$ vertices where each colouring we count must not contain any forbidden colouring in some given family $\x$ of $s$-edge coloured cliques $K_k$.
Our main results were for two specific forbidden families: the dichromatic triangle family (\autoref{th:K32})
and the family of improperly coloured cliques (\autoref{th:proper}).
Along the way, we adapt results of~\cite{katherine_stability,katherine_exact} on monochromatic colour patterns to obtain some general results (\autoref{sec:general}) which are likely to be useful for further cases.

\subsection{A more general framework}
A key property of families $\x$ in many of our arguments was boundedness: that there is at least one basic optimal solution and every basic optimal solution has a bounded number of parts.
For example, Ramsey's theorem guarantees that the monochromatic pattern $K_k^{(1)}$ is bounded (recall \autoref{sec:optk1}).

As we mentioned in \autoref{sec:opt}, our framework does not capture all of the colour patterns we would like to study. Indeed, in \autoref{lm:rainbowclique}, we showed that the family $(K^{(3)}_3,3)$ for the rainbow triangle with three colours is not bounded.

We can introduce a generalisation Problem $Q^\bullet_t(\x)$ of Problem $Q_{t}(\x)$ that extends the set of feasible solutions by allowing vertex weights (the bullet represents the loops at vertices present in the new problem).

Suppose we have $r \in \mb{N}$ and a function $\phi : \binom{[r]}{\leq 2} \to 2^{[s]}$, which maps singletons and pairs of vertices to sets of colours. Let $\bm{a}=(a_1,\ldots,a_r) \in \mb{N}^r$ be an integer vector. We define the \emph{$\bm{a}$-blow-up of $\phi$} to be the edge-multicoloured graph $G_\phi(\bm{a})$ with parts $V_1,\ldots,V_r$ such that $|V_i|=a_i$, and each $G[V_i]$ is empty if $\phi(i)=\emptyset$, and otherwise it is a clique and every edge receives all of the colours $\phi(i)$; and for every distinct $i,j \in [r]$, $G[V_i,V_j]$ has no edges if $\phi(ij)=\emptyset$, and otherwise it is a complete bipartite graph and every edge receives all of the colours $\phi(ij)$. Given a family $\x$ of forbidden colourings of $K_k$, we say that $\phi$ is \emph{$\x$-free} if the $(k,\ldots,k)$-blow up $G_{\phi}((k,\ldots,k))$ of $\phi$ is $\x$-free.

We define $\Phi^\bullet_{\x,t}(r)$ to be the set of $\x$-free colourings $\phi: \binom{[r]}{\leq 2} \to 2^{[s]}$ such that $|\phi(ij)|\geq t$ for all $i\neq j$ and each $\phi(i)$ satisfies $|\phi(i)|=0$ or $|\phi(i)| \geq t$.

\medskip
\noindent\fbox{%
    \parbox{\textwidth}{%
\textbf{Problem $Q_t^\bullet(\x)$.}

Maximise
$$
q(\phi,\ba) := \sum_{i \in [r]}\aA_i^2\log|\phi(i)| + 2\sum_{ij \in \binom{[r]}{2}: \phi(ij) \neq \emptyset}\aA_i\aA_j\log|\phi(ij)|
$$
subject to
$r \in \mb{N}$
and $\phi \in \Phi^\bullet_{\x,t}(r)$
and $\ba \in \DD^r$.
}}
\medskip

Any feasible solution of Problem $Q_{t}(\x)$ is a feasible solution of Problem $Q_{t}^\bullet(\x)$, by mapping all singletons to the empty set, so $Q_t(\x)\le Q_{t}^\bullet(\x)$.
Again we can show that
\begin{equation}
F_{\x}(n,s) \geq e^{Q_{0}^\bullet(\x)\binom{n}{2}+O(n)}.\label{eq:lb_general}
\end{equation}
To remove degenerate solutions, we define the set of \emph{basic optimal solutions} $\opt^*_{\bullet}(\x)$ to be the set of $(r^*,\phi^*,\ba^*)$ such that 
\begin{itemize}
\item $\phi^* \in \Phi^\bullet_{2}(\x)$,
\item $\aA_i^*>0$ for all $i \in [r^*]$,
\item whenever there are distinct $i,j \in [r^*]$ with $\phi^*(i)=\phi^*(ij)=\phi^*(j)$, there is $h \in [r^*]\sm\{i,j\}$ such that $\phi^*(hi) \neq \phi^*(hj)$.
\end{itemize}

We say that $\x$ is \emph{$\bullet$-bounded} if $\opt^*_{\bullet}(\x)$ is non-empty and there is some $R>0$ such that $r^* \leq R$ for all $(r^*,\phi^*,\ba^*) \in \opt^*_{\bullet}(\x)$.

We will show that $(K^{(3)}_3,3)$ is $\bullet$-bounded.
Suppose that there is $(r^*,\phi^*,\ba^*) \in \opt^*_{\bullet}(K^{(3)}_3,3)$ in which all three colours appear on pairs. Then, without loss of generality, there are distinct $h,i,j \in [r^*]$ such that $\phi^*(hi)=\{1,2\}$ and $\phi^*(hj)=\{1,3\}$.
But then $\phi^*(ij)=\emptyset$, a contradiction.
Thus, again without loss of generality, $\phi^*(ij)=\{1,2\}$ for all pairs $ij$ in $[r^*]$.
The colour 3 cannot appear in any $\phi^*(i)$, so  by optimality we have $\phi^*(i)=\{1,2\}$ for all $i \in [r^*]$. But then we must have $r^*=1$ and $\phi^*(1)=\{1,2\}$ by the third property of $\opt^*_{\bullet}$ solutions. 
So $Q_0^{\bullet}(K^{(3)}_3,3) = \log(2)$.
It was shown in \cite{balogh2019typical} that $K_n$ is the graph with the most 3-edge colourings free of rainbow triangles, and that the number of such colourings is $(\binom{3}{2}+o(1))2^{\binom{n}{2}}$, which matches $Q_0^{\bullet}(K^{(3)}_3,3)$.

The question of whether $\x$ is $\bullet$-bounded is a Ramsey-type question. 
If it has a positive answer, then this increases the motivation for studying Problem $Q^\bullet(\x)$ and it could lead us to general results that apply to an even wider class of colour patterns $\x$.

\begin{problem}
    For all integers $s \geq 2$ and $k \geq 3$ and every (symmetric) family $\x$ of $s$-edge colourings of $K_k$, is $\x$ $\bullet$-bounded?
\end{problem}

To show that the monochromatic family $(K^{(1)}_k,s)$ and the dichromatic triangle family $(K^{(2)}_3,s)$ are bounded, we showed the stronger statement that there is some $R$ for which $r<R$ for every \emph{feasible} solution $(r,\phi,\ba)$ where all multiplicities are at least 2.
As we know, for general patterns, this is not true (e.g.~for the rainbow triangle and $3$ colours), so this approach will not suffice to prove $\bullet$-boundedness.

\subsection{A more general exact result}

The `hermetic' assumption in~\autoref{th:exact1} allows us to simplify the proof in~\cite{katherine_exact} significantly,
and is sufficient to prove our main results,~\autoref{th:K32} and~\autoref{th:proper}.
However, the assumption is fairly strong. 
We see no obstacle to proving an analogue of the exact result in~\cite{katherine_exact} which would replace the assumption that $\x$ has the extension property and is hermetic
with the assumption that $\x$ has the strong extension property (recall \autoref{extprop}). 
Such a result should allow the recovery of the results in the literature on non-monochromatic patterns.
We did not attempt this in the present paper since it is already rather long and would only yield new results where we could solve the corresponding optimisation problem.

\subsection{The dichromatic triangle problem}

We have essentially solved the dichromatic triangle problem completely, for large $n$. The outstanding part concerns the set $\Rev(s)$ of optimal $r$ in the optimisation problem, which we show contains at most two values $\rev(s)=2\lfloor (W(s/e)+1)/2 \rfloor$ and $\rev(s)+2$.
Given any $s$, one can simply calculate $\gf_s(r)$ for both these values $r$ to see which is optimal.
We strongly suspect that, apart from for $s=27$, the set contains a single element. This is a number theoretic statement: we conjecture that the equation $\gf_s(r)=\gf_s(r+2)$, or equivalently, 
$$
z^{(r-1-a)(r+2)}(z+1)^{a(r+2)} = y^{(r+1-b)r}(y+1)^{br}
$$
where 
$z=\lfloor\frac{s}{r-1}\rfloor$, $y=\lfloor\frac{s}{r+1}\rfloor$,
$a=s-(r-1)z$, $b=s-(r+1)y$
has no solutions for any integer $s \neq 27$, where $r=\rev(s)$. 
It is not too hard to show that, for example, there is no other solution when $(r-1)(r+1) \ | \ s$,
in other words, no other solution to $\hf_s(r)=\hf_s(r+2)$.

In~\autoref{lm:Rsprecise}(iii) we showed that the set $S_2(r)$ of $s$ for which $r \in \Rev(s)$ is an interval, and furthermore that $S_2(r)$ and $S_2(r+2)$ overlap in at most one value of $s$. We provided some fairly weak bounds for the startpoint $\tilde s_r$ (and endpoint) of $S_2(r)$.
From~\autoref{table:1} which shows $\Rev(s)$ for small values of $s$, it seems that, as $r \to \infty$, we have $W(\tilde s_{r+2}/e) - r \to 0$.
That is, the rough value of $\tilde s_{r+2}$ when $r+2$ becomes optimal instead of $r$ is about when $W(\tilde s_{r+2}/e)=r$, 
which yields $\tilde s_{r+2}\approx re^{r+1}$. 

Even if we cannot prove that $|\Rev(s)|=1$ for all $s \neq 27$, 
one could try to show that there is nevertheless a unique extremal graph by comparing the number of colourings $c(n,r) := (C+o(1))e^{\frac{r}{r-1}\gf(s)t_r(n)}$ for $r=\rev(s),\rev(s)+2$ arising from the two potential extremal graphs $T_{\rev(s)}(n)$ and $T_{\rev(s)+2}(n)$
in the case that $\Rev(s)=\{\rev(s),\rev(s)+2\}$, for which it can be shown that $e^{\frac{r-1}{r}\gf(s)t_r(n)}$
and $e^{\frac{r+1}{r+2}\gf(s)t_{r+2}(n)}$ differ by a multiplicative factor independent of $n$. Determining the constant $C$ which depends on $r,s$ and $n~(\!\!\!\mod r)$, is straightforward for small values of the parameters but is in general difficult. Even with the favourable divisibility conditions $r \ | \ n$ and $(r-1) \ | \ s$, in which case all parts of $T_r(n)$ have the same size and all multiplicities are equal, $C$ equals the number of decompositions of the multigraph $\frac{s}{r-1}K_r$ into perfect matchings (see~\autoref{cons:general}).

\subsection{Other colour patterns}

We hope that the ideas and methods of this paper will be useful for other colour patterns, including the central case of monochromatic patterns.
For the family $(K^{(1)}_k,s)$, it is expected that, typically, every basic optimal solution $(r^*,\phi^*,\ba^*)$ satisfies that $(k-1) \ | \ r^*$,
${\phi^*}^{-1}(c) \cong T_{k-1}(r^*)$ for all $c \in [s]$ and $\ba^*$ is uniform. (Note that this is not always true; there is a single known example $(K^{(1)}_3,5)$ where there are basic optimal solutions without these properties.)
Recall that our key idea for the dichromatic triangle problem was to consider the contribution of a largest part, and to show that if $r^*$ is odd then this contribution is non-optimal. A similar idea may rule out $(k-1) \not| \ r^*$ for instances of the monochromatic problem.

We conjecture that for every pattern, every extremal graph for the generalised Erd\H{o}s-Rothschild problem is complete partite.

\begin{conj}
    For all integers $k \geq 3$ and $s \geq 2$ and every colour pattern $P$ of $K_k$, whenever $n$ is sufficiently large, every $n$-vertex $(P,s)$-extremal graph is complete partite.
\end{conj}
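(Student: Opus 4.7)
By~\autoref{th:ext} the conjecture is already established for every \emph{non}-monochromatic pattern, so the remaining case is the monochromatic pattern $P = K_k^{(1)}$ with $s\ge 2$. For $s\in\{2,3\}$ this is in~\cite{alon2004number}, and the sporadic cases $(k,s)\in\{(3,4),(3,6),(3,7),(4,4)\}$ are in~\cite{botler2019maximum,katherine_exact,pikhurko2012maximum}, so attention focuses on the unresolved monochromatic instances. My plan is to work inside the framework of~\autoref{sec:general}. Recall from~\autoref{sec:optk1} that $(K_k^{(1)},s)$ is bounded (via Ramsey's theorem), which makes \autoref{lm:stability_opt}--\autoref{th:stability_graphs} available; and one should prove (as hinted in~\autoref{sec:conclude}) a strengthening of \autoref{th:exact1} which replaces the hermetic hypothesis by the strong extension property alone, following the blueprint of~\cite{katherine_exact}. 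Provided such a strengthened exact result is in hand, it suffices to prove the strong extension property for monochromatic families.

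\textbf{Reduction via symmetrisation.} An $(K_k^{(1)},s)$-extremal graph $G$ can be iteratively Zykov-symmetrised using property~\ref{it:S}: for any non-adjacent $u,v$, the Cauchy--Schwarz identity $F(G_{uv})+F(G_{vu}) \geq 2F(G)$ of the sketch proof of~\autoref{cl:partite} forces both $G_{uv}$ and $G_{vu}$ to remain extremal. As in~\autoref{cl:partite}, after finitely many symmetrisations one reaches an extremal $G'$ that is either complete partite or complete partite with an apex vertex $x$ (i.e.\ a part of size one). To rule out the latter, apply~\autoref{th:stability_graphs} to $G'$: almost every valid colouring $\chi$ admits a partition $W_1\cup\cdots\cup W_{r^*}$ of $V(G')$ and $(r^*,\phi^*,\ba^*)\in \opt^*(K_k^{(1)},s)$ with $\chi$ following $\phi^*$ on nearly every edge. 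Exactly as in~\autoref{cl:valid} in the proof of~\autoref{th:exact1}, the majority colours sent by $x$ into each $W_i$ define an extension $\phi\in \Phi_{\x}(r^*+1)$ of $\phi^*$, and since $x$ is adjacent to every vertex of $G'$, this extension satisfies $|\phi(i,r^*+1)| \geq 1$ for every $i\in [r^*]$. Averaging the contribution of $x$ over extremal colourings and using $\log F(G';\x)\ge \log F(G'-x;\x)+(Q(\x)-o(1))n$ gives $\ext(\phi,\ba^*)\ge Q(\x)-o(1)$, so by~\autoref{lm:ext} (and the strong extension property), $r^*+1$ would be forced to be a \emph{strong} clone of some $j\in[r^*]$, i.e.\ $\phi(j,r^*+1)=\emptyset$, which contradicts $|\phi(i,r^*+1)|\ge 1$ for all $i$. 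Hence $G$ has no apex and is itself complete partite.

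\textbf{Main obstacle.} The decisive and delicate step is verifying the strong extension property for $(K_k^{(1)},s)$ for \emph{all} $k,s$. Explicitly, one must show that whenever $(r^*,\phi^*,\ba^*)\in \opt^*(K_k^{(1)},s)$ and $\phi\in \Phi_{\x}(r^*+1)$ extends $\phi^*$ with $\ext(\phi,\ba^*)=Q(K_k^{(1)},s)$, the new part $r^*+1$ is a strong clone of some existing part. The difficulty is that $\opt^*(K_k^{(1)},s)$ is not known in general: the only hooks provided by our theory are that each colour class ${\phi^*}^{-1}(c)$ is maximally $K_k$-free (\autoref{fact:max}), that every part contributes exactly $Q(\x)$ (\autoref{lm:cont1}), and that parts have weight bounded away from zero (\autoref{lm:mu}). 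Turning these into the full strong extension property appears to require either a Lagrange/variational rigidity analysis of the critical points of $q$ under the Ramsey-type constraints, or a fine understanding of which $K_k$-free graphs on $r^*$ vertices can jointly cover $K_{r^*}$ with multiplicity matching $\ba^*$. In effect, resolving this obstacle is comparable in depth to solving the monochromatic \ER problem itself; any partial progress on $\opt^*(K_k^{(1)},s)$ (e.g.\ a proof that each ${\phi^*}^{-1}(c)$ must be Tur\'an-like, which is widely expected for $k=3$) would slot directly into this plan and yield the complete partite conclusion by the argument above.
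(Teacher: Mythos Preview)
This statement is a \emph{conjecture} in the paper, not a theorem; the paper offers no proof. Immediately after stating it, the authors make precisely the reduction you do: by \autoref{th:ext} together with the earlier works~\cite{alon2004number,balogh2006remark,benevides2017edge}, one may assume $P=K_k^{(1)}$, and in that case the conjecture is implied by~\cite[Conjecture~16]{katherine_asymptotic}. They do not attempt to go further.

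Your strategy is sound and matches what the paper itself suggests in \autoref{sec:conclude}: upgrade \autoref{th:exact1} by replacing ``hermetic $+$ extension property'' with ``strong extension property'' (the authors explicitly say they see no obstacle to this, following~\cite{katherine_exact}), and then verify the strong extension property for $(K_k^{(1)},s)$. Your symmetrisation-plus-apex argument is essentially the opening of the proof of \autoref{th:exact1} (\autoref{cl:partite} and \autoref{cl:valid}), with the hermetic step swapped for the strong extension property via \autoref{lm:ext}; this is exactly the modification the paper invites. One technical point you gloss over: to obtain $\ext(\phi,\ba^*)\ge Q(\x)-o(1)$ you cannot simply ``average over extremal colourings'', since $\phi$ depends on the colouring; you must first pass to a colouring of $G'-x$ with at least $e^{(Q(\x)-o(1))n}$ extensions and a typical extension, as in the final claim of the proof of \autoref{th:exact1}. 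This is routine but should be said.

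Your assessment of the main obstacle is accurate and honest. Establishing the strong extension property for $(K_k^{(1)},s)$ without first determining $\opt^*(K_k^{(1)},s)$ would be a genuine advance; the paper offers no route to this, and neither do you. So what you have written is a correct outline of why the conjecture is open and what a proof would need, not a proof.
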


The results of this paper together with the results of~\cite{alon2004number,balogh2006remark,benevides2017edge} imply that we may assume $P$ is monochromatic, in which case this conjecture is implied by~\cite[Conjecture~16]{katherine_asymptotic} (which is slightly stronger).

As we have noted, results are very sporadic and there are many patterns which have not been studied. Monochromatic patterns are still of central interest. We finish with some questions about specific patterns of particular interest.

\begin{problem}
    What is the extremal graph for the following?
    \begin{enumerate}
        \item The monochromatic triangle pattern $K^{(1)}_3$ for $s \geq 8$ colours. It is believed $s=11$ could be the next most tractable case (see~\cite{katherine_exact}).
        \item The rainbow pattern $K^{(6)}_4$ for any number $s \geq 6$ of colours. It was shown in \cite{hoppen2021extension} that for $s\ge 5434$ the extremal graph is $T_{3}(n)$. We conjecture, by comparing the number of 5-colourings of $K_n$ and the number of $s$-colourings of $T_3(n)$, that this holds for $s\ge 12$, and that for $s\le 11$ the extremal graph is $K_n$.
        \item The dichromatic family $K_4^{(2)}$ as well as $K_4^{(\ge \ell)}$ and $K_4^{(\le \ell)}$ for $2\le\ell \le 6$.
    \end{enumerate}
\end{problem}

\section*{Acknowledgements}

The work leading to these results began at the workshop \emph{Young Researchers in Combinatorics} which took place 18--22 July 2022 at the ICMS in Edinburgh. We are very grateful for the excellent research environment provided during the workshop.
We would also like to thank Akshay Gupte for helpful conversations at the beginning of the project.

\bibliographystyle{abbrv}
\bibliography{biblio}

\begin{thebibliography}{10}

\bibitem{alon2004number}
N.~Alon, J.~Balogh, P.~Keevash, and B.~Sudakov.
\newblock The number of edge colorings with no monochromatic cliques.
\newblock {\em J. London Math. Soc. (2)}, 70(2):273--288, 2004.

\bibitem{alonorientations}
N.~Alon and R.~Yuster.
\newblock The number of orientations having no fixed tournament.
\newblock {\em Combinatorica}, 26(1):1--16, 2006.

\bibitem{araujo2020counting}
P.~{Ara{\'u}jo}, F.~{Botler}, and G.~{Oliveira Mota}.
\newblock {Counting graph orientations with no directed triangles}.
\newblock {\em arXiv e-prints}, arXiv:2005.13091, May 2020.

\bibitem{balogh2006remark}
J.~Balogh.
\newblock A remark on the number of edge colorings of graphs.
\newblock {\em European J. Combin.}, 27(4):565--573, 2006.

\bibitem{balogh2019typical}
J.~Balogh and L.~Li.
\newblock The typical structure of {G}allai colorings and their extremal
  graphs.
\newblock {\em SIAM J. Discrete Math.}, 33(4):2416--2443, 2019.

\bibitem{BASTOS2019gallai}
J.~d.~O. Bastos, F.~S. Benevides, G.~O. Mota, and I.~Sau.
\newblock Counting {G}allai 3-colorings of complete graphs.
\newblock {\em Discrete Math.}, 342(9):2618--2631, 2019.

\bibitem{bastos2023graphs}
J.~d.~O. Bastos, C.~Hoppen, H.~Lefmann, A.~Oertel, and D.~R. Schmidt.
\newblock Graphs with many edge-colorings such that complete graphs are
  rainbow.
\newblock {\em Discrete Appl. Math.}, 333:151--164, 2023.

\bibitem{benevides2017edge}
F.~S. Benevides, C.~Hoppen, and R.~M. Sampaio.
\newblock Edge-colorings of graphs avoiding complete graphs with a prescribed
  coloring.
\newblock {\em Discrete Math.}, 340(9):2143--2160, 2017.

\bibitem{botler2019maximum}
F.~Botler, J.~Corsten, A.~Dankovics, N.~Frankl, H.~H\`an, A.~Jim\'enez, and
  J.~Skokan.
\newblock Maximum number of triangle-free edge colourings with five and six
  colours.
\newblock {\em Acta Math. Univ. Comenian. (N.S.)}, 88(3):495--499, 2019.

\bibitem{bucic2023counting}
M.~Buci\'c, O.~Janzer, and B.~Sudakov.
\newblock Counting {$H$}-free orientations of graphs.
\newblock {\em Math. Proc. Cambridge Philos. Soc.}, 174(1):79--95, 2023.

\bibitem{schurtriple}
Y.~Cheng, Y.~Jing, L.~Li, G.~Wang, and W.~Zhou.
\newblock Integer colorings with forbidden rainbow sums.
\newblock {\em J. Combin. Theory Ser. A}, 199:Paper No. 105769, 28, 2023.

\bibitem{CLEMENS2018EKR}
D.~Clemens, S.~Das, and T.~Tran.
\newblock Colourings without monochromatic disjoint pairs.
\newblock {\em European J. Combin.}, 70:99--124, 2018.

\bibitem{improvedstars}
L.~Colucci, E.~Gy{\H{o}}ri, and A.~Methuku.
\newblock Edge colorings of graphs without monochromatic stars.
\newblock {\em Discrete Math.}, 343(12):112140, 8, 2020.

\bibitem{fanoplane2021}
L.~d.~O. Contiero, C.~Hoppen, H.~Lefmann, and K.~Odermann.
\newblock Rainbow {E}rd{\H{o}}s-{R}othschild problem for the {F}ano plane.
\newblock {\em SIAM J. Discrete Math.}, 35(3):1927--1951, 2021.

\bibitem{DASchains}
S.~Das, R.~Glebov, B.~Sudakov, and T.~Tran.
\newblock Colouring set families without monochromatic {$k$}-chains.
\newblock {\em J. Combin. Theory Ser. A}, 168:84--119, 2019.

\bibitem{erdos1974some}
P.~Erd{\H{o}}s.
\newblock Some new applications of probability methods to combinatorial
  analysis and graph theory.
\newblock In {\em Proceedings of the {F}ifth {S}outheastern {C}onference on
  {C}ombinatorics, {G}raph {T}heory and {C}omputing ({F}lorida {A}tlantic
  {U}niv., {B}oca {R}aton, {F}la., 1974)}, volume No. X of {\em Congress.
  Numer.}, pages 39--51. Utilitas Math., Winnipeg, MB, 1974.

\bibitem{gallai1967transitiv}
T.~Gallai.
\newblock Transitiv orientierbare {G}raphen.
\newblock {\em Acta Math. Acad. Sci. Hungar.}, 18:25--66, 1967.

\bibitem{gyarfas2004edge}
A.~Gy\'arf\'as and G.~Simonyi.
\newblock Edge colorings of complete graphs without tricolored triangles.
\newblock {\em J. Graph Theory}, 46(3):211--216, 2004.

\bibitem{han2018improved}
H.~H\`an and A.~Jim\'enez.
\newblock Improved bound on the maximum number of clique-free colorings with
  two and three colors.
\newblock {\em SIAM J. Discrete Math.}, 32(2):1364--1368, 2018.

\bibitem{hoppen2012hypergraphs}
C.~Hoppen, Y.~Kohayakawa, and H.~Lefmann.
\newblock Hypergraphs with many {K}neser colorings.
\newblock {\em European J. Combin.}, 33(5):816--843, 2012.

\bibitem{HOPPEN2014linearturan}
C.~Hoppen, Y.~Kohayakawa, and H.~Lefmann.
\newblock Edge-colorings of graphs avoiding fixed monochromatic subgraphs with
  linear {T}ur\'an number.
\newblock {\em European J. Combin.}, 35:354--373, 2014.

\bibitem{hoppen2015edge}
C.~Hoppen and H.~Lefmann.
\newblock Edge-colorings avoiding a fixed matching with a prescribed color
  pattern.
\newblock {\em European J. Combin.}, 47:75--94, 2015.

\bibitem{hoppen2019remarks}
C.~Hoppen and H.~Lefmann.
\newblock Remarks on an edge-coloring problem.
\newblock In {\em The proceedings of {L}agos 2019, the tenth {L}atin and
  {A}merican {A}lgorithms, {G}raphs and {O}ptimization {S}ymposium ({LAGOS}
  2019)}, volume 346 of {\em Electron. Notes Theor. Comput. Sci.}, pages
  511--521. Elsevier Sci. B. V., Amsterdam, 2019.

\bibitem{hoppen2021extension}
C.~Hoppen, H.~Lefmann, and D.~Nolibos.
\newblock An extension of the rainbow {E}rd{\H{o}}s-{R}othschild problem.
\newblock {\em Discrete Math.}, 344(8):Paper No. 112443, 18, 2021.

\bibitem{HOPPENvector}
C.~Hoppen, H.~Lefmann, and K.~Odermann.
\newblock A coloring problem for intersecting vector spaces.
\newblock {\em Discrete Math.}, 339(12):2941--2954, 2016.

\bibitem{hoppen2017rainbow}
C.~Hoppen, H.~Lefmann, and K.~Odermann.
\newblock A rainbow {E}rd{\H{o}}s-{R}othschild problem.
\newblock {\em SIAM J. Discrete Math.}, 31(4):2647--2674, 2017.

\bibitem{hoppenstars}
C.~Hoppen, H.~Lefmann, K.~Odermann, and J.~Sanches.
\newblock Edge-colorings avoiding rainbow stars.
\newblock {\em J. Graph Theory}, 87(4):399--429, 2018.

\bibitem{hoppen2022}
C.~Hoppen, H.~Lefmann, and D.~R. Schmidt.
\newblock Edge-colorings avoiding patterns in a triangle.
\newblock {\em Discrete Math.}, 347(4):Paper No. 113896, 16, 2024.

\bibitem{komlos2002regularity}
J.~Koml\'os, A.~Shokoufandeh, M.~Simonovits, and E.~Szemer\'edi.
\newblock The regularity lemma and its applications in graph theory.
\newblock In {\em Theoretical aspects of computer science ({T}ehran, 2000)},
  volume 2292 of {\em Lecture Notes in Comput. Sci.}, pages 84--112. Springer,
  Berlin, 2002.

\bibitem{lazebnik1989greatest}
F.~Lazebnik.
\newblock On the greatest number of {$2$} and {$3$} colorings of a
  {$(v,e)$}-graph.
\newblock {\em J. Graph Theory}, 13(2):203--214, 1989.

\bibitem{linearhypergraphs}
H.~Lefmann and Y.~Person.
\newblock Exact results on the number of restricted edge colorings for some
  families of linear hypergraphs.
\newblock {\em J. Graph Theory}, 73(1):1--31, 2013.

\bibitem{FanoPlaneLefmann}
H.~Lefmann, Y.~Person, V.~R\"{o}dl, and M.~Schacht.
\newblock On colourings of hypergraphs without monochromatic {F}ano planes.
\newblock {\em Combin. Probab. Comput.}, 18(5):803--818, 2009.

\bibitem{HypergraphEr-LefmannPersonSchacht}
H.~Lefmann, Y.~Person, and M.~Schacht.
\newblock A structural result for hypergraphs with many restricted edge
  colorings.
\newblock {\em J. Comb.}, 1(3-4):441--475, 2010.

\bibitem{sumfree2021}
H.~Liu, M.~Sharifzadeh, and K.~Staden.
\newblock On the maximum number of integer colourings with forbidden
  monochromatic sums.
\newblock {\em Electron. J. Combin.}, 28(1):Paper No. 1.59, 35, 2021.

\bibitem{LohPikhurkoSudakov}
P.-S. Loh, O.~Pikhurko, and B.~Sudakov.
\newblock Maximizing the number of {$q$}-colorings.
\newblock {\em Proc. Lond. Math. Soc. (3)}, 101(3):655--696, 2010.

\bibitem{ma2015maximizing}
J.~Ma and H.~Naves.
\newblock Maximizing proper colorings on graphs.
\newblock {\em J. Combin. Theory Ser. B}, 115:236--275, 2015.

\bibitem{katherine_stability}
O.~Pikhurko and K.~Staden.
\newblock Stability for the {E}rd{\H{o}}s-{R}othschild problem.
\newblock {\em Forum Math. Sigma}, 11:Paper No. e23, 43, 2023.

\bibitem{katherine_exact}
O.~Pikhurko and K.~Staden.
\newblock Exact solutions to the {E}rd{\H{o}}s-{R}othschild problem.
\newblock {\em Forum Math. Sigma}, 12:Paper No. e8, 45, 2024.

\bibitem{katherine_asymptotic}
O.~Pikhurko, K.~Staden, and Z.~B. Yilma.
\newblock The {E}rd{\H{o}}s-{R}othschild problem on edge-colourings with
  forbidden monochromatic cliques.
\newblock {\em Math. Proc. Cambridge Philos. Soc.}, 163(2):341--356, 2017.

\bibitem{pikhurko2012maximum}
O.~Pikhurko and Z.~B. Yilma.
\newblock The maximum number of {$K_3$}-free and {$K_4$}-free edge 4-colorings.
\newblock {\em J. Lond. Math. Soc. (2)}, 85(3):593--615, 2012.

\bibitem{yuster1996number}
R.~Yuster.
\newblock The number of edge colorings with no monochromatic triangle.
\newblock {\em J. Graph Theory}, 21(4):441--452, 1996.

\bibitem{zykov}
A.~A. Zykov.
\newblock On some properties of linear complexes.
\newblock {\em Mat. Sbornik N.S.}, 24/66:163--188, 1949.

\bibitem{zykov_en}
A.~A. Zykov.
\newblock On some properties of linear complexes.
\newblock {\em Amer. Math. Soc. Translation}, 1952(79):33, 1952.

\end{thebibliography}

\begin{appendices}

\numberwithin{equation}{section}

\section{Appendix}\label{appendix}

\begin{proof}[Proof of \autoref{lm:hanalytic}]
To prove~(i) and (ii), note that
$$
\hf_s'(x) = \frac{\log\left(\frac{s}{x-1}\right)-x}{x^2},
$$
and letting $m(s)$ be the unique solution to $\hf_s'(m(s))=0$, we have 
$$
    s/e=(m(s)-1)e^{m(s)-1}
$$
and hence
$$
m(s)=W(s/e)+1.
$$
We also see that $\hf_s'(x)$ is positive for $x \in(1,m(s))$ and
negative for $x \in (m(s),\infty)$, so $\hf_s$ is increasing on $(1,m(s))$ and decreasing on $(m(s),\infty)$ with $m(s)$ being the unique maximum. This proves~(i).

Noting that $m(s)-1=s/e^{m(s)}$, we have
$$
\hf(s) = \hf_s(m(s)) = \frac{m(s)-1}{m(s)}\log\left(\frac{s}{s/e^{m(s)}}\right) = m(s)-1=W(s/e),
$$
proving~(ii).

For part~(iii), we only prove the case where $a,b \geq 0$, since the other case can be proved analogously (in fact, it is slightly easier).
We will show that $\hf_s(m(s)+a')-\hf_s(m(s)+b')\ge \frac{1}{16}(\log(s)+5/2)^{-2}$ where $a'=a+1/4$ and $b'=\min\{b,5/2\}$, which implies~(iii) since $a< a'< b'\leq b$ and $\hf_s$ is monotone decreasing on $[m(s),\infty)$. By the mean value theorem there is $x \in (m(s)+a',m(s)+b')$ such that
 \begin{align*}
 \hf_s(m(s)+a')-\hf_s(m(s)+b') &=
(b'-a') \frac{x-\log(\frac{s}{x-1})}{x^2}\\
&=(b'-a')\frac{-\log(\frac{s}{x-1})+\log(\frac{s}{m(s)-1})+x-m(s)}{x^2}
\\ &=(b'-a')\frac{\log(\frac{x-1}{m(s)-1})+x-m(s)}{x^2} \\
&\geq (b'-a')\frac{a'}{x^2}\ge \frac{1}{16(\log(s)+5/2)^2}
\end{align*}
using $a'\ge 1/4$, $b'-a'\ge 1/4$, $b'\le 5/2$ and the fact that $x \leq m(s)+b'\leq \log(s)+5/2$ which follows from $\log(s) \geq 2$, \eqref{eq:rs} and part (i).

For part~(iv), we only cover the case $b<0$ which is trickier. Again by the mean value theorem there is some $x\in (m(s)+b,m(s))$ such that $\hf_s(m(s))-\hf_s(m(s)+b)=
|b| \hf_s'(x).$ Using that $-b=|b|\leq m(s)-2<m(s)-1$ and $\log(1+y)\geq \frac{y}{1+y}$ for $y>-1$, we have $$
\left|\log\left(\frac{x-1}{m(s)-1}\right)\right|\le \left|\log\left(\frac{m(s)+b-1}{m(s)-1}\right)\right|\le \frac{|b|}{m(s)+b-1}\le |b|.
$$
Thus
\begin{align*}
 \hf_s(m(s))-\hf_s(m(s)+b)=  |b|\frac{\log(\frac{m(s)-1}{x-1})+m(s)-x}{x^2} \le |b|\frac{2|b|}{(m(s)-2)^2}\le \frac{8|b|^2}{(\log(s)-4)^2}, 
\end{align*}
where we used that $m(s)-2\ge \log(s)/2-2>0$ by part (i),  \eqref{eq:W} and our lower bound $s\ge 55$.
\end{proof}

\begin{proof}[Proof of \autoref{lm:gapprox}]
\autoref{lm:Rsprecise}(i) implies that $\gf(s)=\gf_s(r)$ for some $r \in\{ \rev(s),\rev(s)+2\}$. Also $\hf_s(m(s))=W(s/e)$ is the unique maximum of $\hf_s$ and $|r-m(s)|\le 2$ by definition of $\rev(s)$. From~\autoref{lm:esr} we have 
$$W(s/e)-\gf(s)=\hf_s(m(s))-\gf_s(r)\ge \hf_s(m(s))-\hf_s(r)\ge 0.$$
From~\autoref{lm:esr} and~\eqref{eq:rs} we have
$$
0 \leq e_s(r) \leq \frac{1}{4}\left(\frac{s}{\log(s)+1}-1\right)^{-2} \leq \frac{(\log(s))^2}{s^2},
$$
which yields 
$$W(s/e)-\gf(s)=\hf_s(m(s))-(\hf_s(r)-e_s(r))\le \frac{32}{(\log(s)-4)^2} + \frac{(\log(s))^2}{s^2}\le \frac{600}{(\log(s))^2}.$$
where to bound the term $\hf_s(m(s))-\hf_s(r)$ we used~\autoref{lm:hanalytic}(iv) with $b=r-m(s)$.
The upper and lower bounds on $W(s/e)-\gf(s)$ give the first part of the lemma.
Since the upper bound tends to $0$ as $s \to \infty$, and $e^{W(s/e)}=(s/e)/W(s/e)$, the second assertion follows.
\end{proof}

\begin{proof}[Proof of \autoref{lm:maxoflog}]
We use the weighted AM-GM inequality.
It implies that
\begin{align*}
x = \sum_{i \in [n]}\frac{a_i}{a}\cdot \frac{ax_i}{a_i} \geq \prod_{i \in [n]}\left(\frac{ax_i}{a_i}\right)^{a_i/a}
\end{align*}
and therefore
$$
\prod_{i \in [n]}x_i^{a_i} \leq x^a\prod_{i \in [n]}\left(\frac{a_i}{a}\right)^{a_i} = \prod_{i \in [n]}\left(\frac{xa_i}{a_i}\right)^{a_i}.
$$
Taking logs (noting that all terms in both products are positive by our assumption) completes the proof of the first part.

Suppose now that we have positive integers $x_1,\ldots,x_n$ whose sum is $x$.
It suffices to show that the maximum product $x_1\ldots x_n$ is attained whenever $|x_i-x_j| \in \{0,1\}$ for all $i,j \in [n]$.
Suppose that $x_2-x_1 \geq 2$.
Let $y_1=x_1+1$, $y_2=x_2-1$, and $y_i=x_i$ for all $i\in [3,n]$. Then $y_1,\ldots,y_n$ are positive integers whose sum is $x$ and
    \begin{align*}
        y_1\cdots y_n=(x_1x_2+x_2-x_1-1)x_3\cdots x_n\geq x_1\cdots x_n+ x_3\cdots x_n> x_1\cdots x_n.
    \end{align*}
Thus if $x_1,\ldots,x_n$ does not satisfy the condition, then we can always increase the product.
Finally, there is a unique multiset $\{x_1,\ldots,x_n\}$ for which $x_1,\ldots,x_n$ satisfies the condition,
so any such $x_1,\ldots,x_n$ must attain the maximum.
This completes the proof of the second part.
\end{proof}

\begin{proof}[Proof of \autoref{lm:fcomp}]
For $s\le 1100$ we verify the lemma directly. A script for this calculation is provided in the ancillary file \texttt{small\_s.py}. For the rest of the proof, we assume that $s>1100$. 
Let $r \in \Rmax(s)$. By \autoref{lm:Rsprecise}(iv) we have $r\ge 6$. Moreover, by \autoref{lm:Rsprecise}(i), \eqref{eq:rs} and \eqref{eq:rs_lower}, we have $\log(s)/2\le r\le \log(s)+1$. 
Let
$$
x_0 := \frac{r}{r^2-1}
\quad\text{and}\quad
\eps := \frac{e_s(r+1)}{(r-1)\log(s/r)}.
$$
By \autoref{lm:esr} and our bounds on $r$ and $s$ we have
\begin{equation}\label{eq:eps}
1/\eps
\geq 4\left\lfloor\frac{s}{r-1}\right\rfloor^{2}(r-1)\log(s/r) \geq 4 \cdot \frac{1}{2}\left(\frac{s}{r}\right)^2 \frac{5r}{6} \cdot 4 = \frac{20s^2}{3r}
\end{equation}
and so
\begin{equation}\label{eq:x0}
x_0-\eps \ge\frac{r}{r^2-1}-\frac{3r}{20s^2}>\frac{1}{r}.
\end{equation}
To prove the lemma, we need to show that
\begin{equation}\label{eq:goal}
\ff_{s,r}(x) < \max\{\gf_s(r-1),\gf_s(r+1)\}
\quad\text{for all }\quad
x \in [x_0-\eps,\tfrac{1}{r-1}).
\end{equation}

\begin{claim}
    It suffices to show that 
    \begin{equation}\label{eq:app1}
\frelax_{s,r}(x_0) \leq \max\{\hf_s(r-1), \hf_s(r+1)\}-1/(5r^2).
\end{equation}
\end{claim}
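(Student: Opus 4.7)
The plan is to deduce \eqref{eq:goal} from \eqref{eq:app1} by combining the convexity of $\frelax_{s,r}$ with the observation that both $\eps$ and the gap $e_s(r\pm 1)$ between $\hf_s$ and $\gf_s$ are negligible compared to $1/r^2$ for $s \geq 1100$.

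First, by \autoref{lm:fprops}(ii) it suffices to bound $\frelax_{s,r}(x)$ on $[x_0-\eps,\tfrac{1}{r-1})$. By \eqref{eq:x0}, this is contained in $[\tfrac{1}{r},\tfrac{1}{r-1}]$, the domain on which $\frelax_{s,r}$ is convex by \autoref{lm:fprops}(i). Hence
$$
\frelax_{s,r}(x) \leq \max\bigl\{\frelax_{s,r}(x_0-\eps),\ \frelax_{s,r}(\tfrac{1}{r-1})\bigr\}
$$
for all $x$ in the interval, where $\frelax_{s,r}$ is extended continuously to the right endpoint using the convention $\log 0 = 0$ from \autoref{sec:opt}. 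A direct substitution gives $\frelax_{s,r}(\tfrac{1}{r-1}) = \hf_s(r-1)$.

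For the other endpoint, I plan to use the mean value theorem to show
$$
\frelax_{s,r}(x_0-\eps) - \frelax_{s,r}(x_0) \leq \eps \cdot \sup_{x \in [x_0-\eps,\,x_0]}|\frelax'_{s,r}(x)|.
$$
A direct differentiation of $\frelax_{s,r}$ shows $|\frelax'_{s,r}(x)| = O(\log s)$ on this interval, since each of $x$, $1-x$ and $1-(r-1)x$ is bounded below by a positive multiple of $1/r$ there. Combined with the lower bound on $1/\eps$ in \eqref{eq:eps} and the estimate $r \leq \log(s)+1$ from~\autoref{lm:Rsprecise}(i), this error is $O(r\log(s)/s^2)$, which is much less than $1/(10r^2)$ for $s \geq 1100$. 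Hence \eqref{eq:app1} gives
$$
\frelax_{s,r}(x_0-\eps) \leq \max\{\hf_s(r-1),\hf_s(r+1)\} - \frac{1}{10r^2}.
$$

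Finally, I convert $\hf_s$ to $\gf_s$: by \autoref{lm:esr}, $e_s(r\pm 1) \leq \tfrac{1}{4}\lfloor s/r\rfloor^{-2} = O(r^2/s^2)$, which for $r \leq \log(s)+1$ and $s \geq 1100$ is again much smaller than $1/(20r^2)$. Combining these bounds yields
$$
\ff_{s,r}(x) \leq \frelax_{s,r}(x) \leq \max\{\gf_s(r-1),\gf_s(r+1)\} - \frac{1}{20r^2} < \max\{\gf_s(r-1),\gf_s(r+1)\},
$$
as required. The main obstacle is verifying that the two error terms (the Lipschitz slack from moving from $x_0$ to $x_0-\eps$ and the $\hf_s$-to-$\gf_s$ correction) fit comfortably inside the gap $1/(5r^2)$ provided by \eqref{eq:app1}; this boils down to an explicit but routine bound on $\frelax'_{s,r}$ on $[x_0-\eps,x_0]$.
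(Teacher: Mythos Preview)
There is a real gap at the right endpoint. Your convexity bound on $\frelax_{s,r}$ over $[x_0-\eps,\tfrac{1}{r-1}]$ gives
\[
\frelax_{s,r}(x)\le \max\bigl\{\frelax_{s,r}(x_0-\eps),\ \frelax_{s,r}(\tfrac{1}{r-1})\bigr\}
=\max\bigl\{\frelax_{s,r}(x_0-\eps),\ \hf_s(r-1)\bigr\}.
\]
But $\hf_s(r-1)=\gf_s(r-1)+e_s(r-1)\ge \gf_s(r-1)$, with strict inequality whenever $(r-2)\nmid s$. So for $x$ close to $\tfrac{1}{r-1}$ your bound on $\frelax_{s,r}(x)$ can exceed $\max\{\gf_s(r-1),\gf_s(r+1)\}$, and your final displayed chain
\[
\ff_{s,r}(x)\le \frelax_{s,r}(x)\le \max\{\gf_s(r-1),\gf_s(r+1)\}-\tfrac{1}{20r^2}
\]
fails. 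The ``$\hf_s\to\gf_s$'' correction goes the wrong way here: it adds $e_s(r-1)$ rather than subtracting it.

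The fix is to apply convexity to $\ff_{s,r}$ itself rather than to $\frelax_{s,r}$. The point is that $\ff_{s,r}(\tfrac{1}{r-1})=\gf_s(r-1)$ exactly (the value already incorporates integrality), whereas $\frelax_{s,r}(\tfrac{1}{r-1})=\hf_s(r-1)$ overshoots. Writing $x=(1-\lambda)(x_0-\eps)+\lambda/(r-1)$ and using convexity of $\ff_{s,r}$ from \autoref{lm:fprops}(i) gives
\[
\ff_{s,r}(x)\le (1-\lambda)\ff_{s,r}(x_0-\eps)+\lambda\,\gf_s(r-1)
\le (1-\lambda)\frelax_{s,r}(x_0-\eps)+\lambda\,\gf_s(r-1),
\]
and now it suffices to bound the single quantity $\frelax_{s,r}(x_0-\eps)$ below $\max\{\gf_s(r-1),\gf_s(r+1)\}$, exactly as the rest of your argument (the Lipschitz estimate and the $e_s(r\pm 1)$ bound) does. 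This is precisely the route the paper takes.
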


\begin{proof}[Proof of claim]
First we will show that~\eqref{eq:goal} is implied by
\begin{equation}\label{eq:suffice}
\frelax_{s,r}(x_0-\eps) < \max\{\gf_s(r-1),\gf_s(r+1)\}.
\end{equation}
To see this, let $x \in [x_0-\eps,\frac{1}{r-1})$. Then~\eqref{eq:x0} implies that there is $\lambda \in [0,1)$ such that $x=(1-\lambda)(x_0-\eps)+\lambda/(r-1)$.
By~\autoref{lm:fprops}(i), $f_{s,r}$ is convex on $[\frac{1}{r},\frac{1}{r-1}] \supseteq [x_0-\eps,\frac{1}{r-1})$, so
$\ff_{s,r}(x) \leq (1-\lambda) \ff_{s,r}(x_0-\eps)+\lambda \ff_{s,r}(\tfrac{1}{r-1})$.
But $f_s(\frac{1}{r-1}) = \gf_s(r-1)$ and by~\autoref{lm:fprops}(ii), we have $\ff_{s,r}(x_0-\eps) \leq \frelax_{s,r}(x_0-\eps)$,
so $\ff_{s,r}(x) \leq (1-\lambda) \frelax_{s,r}(x_0-\eps)+ \lambda g_s(r-1)$.
Now~\eqref{eq:suffice} implies~\eqref{eq:goal}.

Thus it remains to show that~\eqref{eq:app1} implies~\eqref{eq:suffice}.
This will follow from the fact that $\frelax_{s,r}$ is close to $\ff_{s,r}$ and $\gf_s$ is close to $\hf_s$; more specifically, it suffices to show the following three inequalities.
\begin{align}\hspace{-10pt}
\frelax_{s,r}\left(x_0-\eps\right) - \frelax_{s,r}\left(x_0\right) &\le \eps (r^2-1) + \eps(r-1)\log\left(\frac{s}{r-1}\right)+\eps^2 r^3, \label{eq:get1}\\
    \hf_s(r-1) &\leq \gf_s(r-1) + \frac{1}{4}\left\lfloor \frac{s}{r-2}\right\rfloor^{-2} \leq \gf_s(r-1)+\frac{1}{4}\left\lfloor\frac{s}{r}\right\rfloor^{-2}, \label{eq:get2}\\
   \frac{1}{5r^2} &\geq \frac1{4\lfloor s/r\rfloor ^2}+\eps(r-1) +\eps (r-1)\log \left(\frac{s}{r-1}\right)+\eps^2 r^3.\label{eq:get3}
   \end{align}

To show~\eqref{eq:get1}, we estimate $\frelax_{s,r}(x_0-\eps)$ in terms of $\frelax_{s,r}(x_0)$ and $\eps$. We have
$$
\frelax_{s,r}(x_0) = (r-2)x_0\log\left(\frac{sx_0}{1-x_0}\right) + (1-(r-1)x_0)h(x_0)
$$
and
$$
\frelax_{s,r}(x_0-\eps) \leq (r-2)x_0\log\left(\frac{sx_0}{1-x_0}\right) + (1-(r-1)(x_0-\eps))h(x_0-\eps)
$$
where $h(y):=\log\left(\frac{(1-(r-1)y)s}{1-y}\right)$.
Using the mean value theorem, for $x_0=\frac{r}{r^2-1}$ and $\eps <x_0-1/r$, we have
\begin{align*}\hspace{-20pt}
h(x_0-\eps) &\leq h(x_0) + \eps\cdot \max_{y \in (x_0-\eps,x_0)}\frac{r-2}{(1-y)(1-(r-1)y)}\\
&= h(x_0) + \eps (r-2)\frac1{(1-r/(r^2-1))(1-r/(r+1))}\le h(x_0)+\eps (r^2-1).
\end{align*}
Thus, combining inequalities, we get 
\begin{align*}\hspace{-10pt}
\frelax_{s,r}\left(x_0-\eps\right) - \frelax_{s,r}\left(x_0\right) &\leq (1-(r-1)x_0)\eps (r^2-1) + \eps(r-1)h(x_0)+\eps^2(r-1)^2(r+1) \nonumber \\
&\le \eps (r^2-1) + \eps(r-1)\log\left(\frac{s}{r-1}\right)+\eps^2 r^3,
\end{align*}
where for the last inequality we used that $h(y)$ is decreasing on $[\frac{1}{r},\frac{1}{r-1}]$, so $h(x_0) \leq h(\frac{1}{r}) = \log(\frac{s}{r-1})$.
Thus~\eqref{eq:get1} holds.

Next,~\eqref{eq:get2} follows from~\autoref{lm:esr}.

To see~\eqref{eq:get3}, 
we first recall from~\eqref{eq:eps} that $\eps \leq 3r/20s^2$.
We can bound $20r^2$ times each of the four terms on the right hand side by $1$ using $r\le \log(s)+1$ and the fact that $s$ is large, as follows:
\begin{itemize}
    \item for the first term we have $\frac{20r^2}{4\lfloor s/r\rfloor^2}\le 5\left(\frac{r^2}{s-r}\right)^2 %\le 5\left(\frac{(\log(s)+1)^2}{s-\log(s)-1}\right)^2
    \le 1$ which holds for all $s \geq 65$;
\item the second term is at most the third;
\item for the third term we have $20r^2\eps (r-1)\log \left(\frac{s}{r-1}\right)\le \frac{3r^4}{s^2}\log(s)<1$ for $s \geq 133$; and 
\item for
the fourth term we have $20r^2\eps^2r^3 < \frac{9r^7}{20s^2} < 1$, using $s>881$.
\end{itemize}
This completes the proof that~\eqref{eq:get1}--\eqref{eq:get3} hold, and hence completes the proof of the claim.
\end{proof}

It remains to show that~\eqref{eq:app1} holds.
Let
$$
a := \frac{s}{r-2}, \quad b := \frac{sr}{r^2-r-1}, \quad c := \frac{s(r-1)}{r^2-r-1}, \quad d := \frac{s}{r}.
$$
Then $a>b>c>d$.
We have
$$
\hf_s(r-1) = \frac{r-2}{r-1}\log(a) \quad \text{and} \quad \hf_s(r+1) = \frac{r}{r+1}\log(d).
$$
Now,
\begin{align*}\hspace{-30pt}
\frelax_{s,r}(x_0) %&= \frac{(r-2)r}{(r-1)(r+1)}\log\left(\frac{sr}{r^2-r-1}\right) + \left(1-\frac{r}{r+1}\right)\log\left(\frac{s(1-\frac{r}{r+1})(r^2-1)}{r^2-r-1}\right)\\
&= \frac{r(r-2)}{(r-1)(r+1)}\log(b) + \frac{1}{r+1}\log(c)
= \begin{cases} \hf_s(r-1) + \frac{1}{r^2-1}(\log(a)-u_{r-1})\\ \hf_s(r+1) - \frac{1}{r^2-1}(\log(d)-u_{r+1})
\end{cases}
\end{align*}
where
\begin{align*}
u_{r-1} &= r(r-2)\log\left(\frac{r^2-r-1}{r^2-2r}\right) + (r-1)\log\left(\frac{r^2-r-1}{r^2-3r+2}\right)
\quad\text{and}\\
u_{r+1} &= r(r-2)\log\left(\frac{r^2}{r^2-r-1}\right) + (r-1)\log\left(\frac{r^2-r}{r^2-r-1}\right).
\end{align*}
Then using inequalities approximating $\log(1+x)$ by $x$ we have that
\begin{align*}
u_{r-1}-u_{r+1} =  r(r-2)\log\left(\frac{(r^2-r-1)^2}{(r^2-2r)r^2}\right) + (r-1)\log\left(\frac{(r^2-r-1)^2}{(r^2-3r+2)(r^2-r)}\right) \geq 0.9
\end{align*}
for all $r \geq 6$ (and indeed a Taylor expansion shows that $u_{r-1} - u_{r+1} \approx 1$ for large $r$).
Suppose that~(\ref{eq:app1}) does not hold.
Then
\begin{align*}
\log(a) &\geq u_{r-1} - (r^2-1)/(5r^2)
\quad\text{and}\quad
\log(d) \leq u_{r+1} + (r^2-1)/(5r^2).
\end{align*}
Subtracting the first inequality from the second, we have
\begin{align*}\hspace{-20pt}
\frac{2(r^2-1)}{5r^2} \geq \log\left(\frac{r-2}{r}\right) + u_{r-1} - u_{r+1} \geq \log\left(\frac{r-2}{r}\right) + 0.9
\end{align*}
which yields a contradiction for every $r \geq 6$.
Thus~(\ref{eq:app1}) holds, completing the proof of the lemma.
\end{proof}

\end{appendices}

\end{document}